\theoremstyle{plain}
\newtheorem{theorem}{Theorem}[section]
\newtheorem{corollary}[theorem]{Corollary} 
\newtheorem{lemma}[theorem]{Lemma} 
\newtheorem{proposition}[theorem]{Proposition} 
\newtheorem{question}[theorem]{Question} 
\theoremstyle{definition}
\newtheorem{definition}[theorem]{Definition}
\newtheorem{notation}[theorem]{Notation}
\theoremstyle{remark}
\newtheorem{remark}[theorem]{Remark} 
\newtheorem{example}[theorem]{Example}
\newcounter{num}
\newcommand{\Rnum}[1]{\setcounter{num}{#1} \Roman{num}}
\newcommand{\bC}{\mathbb{C}}
\newcommand{\bF}{\mathbb{F}}
\newcommand{\bQ}{\mathbb{Q}}
\newcommand{\bR}{\mathbb{R}}
\newcommand{\bT}{\mathbb{T}}
\newcommand{\bZ}{\mathbb{Z}}
\newcommand{\cB}{\mathcal{B}}
\newcommand{\cC}{\mathcal{C}}
\newcommand{\cO}{\mathcal{O}}
\newcommand{\cV}{\mathcal{V}}
\newcommand{\fraci}{\mathfrak{i}}
\newcommand{\fj}{\mathfrak{j}}
\newcommand{\rE}{\mathrm{E}}
\newcommand{\id}{\mathrm{id}}
\newcommand{\Id}{\mathrm{Id}}
\newcommand{\ob}{\mathrm{ob}}
\newcommand{\ch}{\operatorname{char}}
\newcommand{\hw}{\operatorname{hw}}
\newcommand{\idx}{\operatorname{idx}}
\newcommand{\Idx}{\operatorname{Idx}}
\newcommand{\Res}{\operatorname{Res}}
\newcommand{\Supp}{\operatorname{Supp}}
\title{Characteristic polynomials of isometries of\\ even unimodular lattices\footnote{MSC(2020): 11H56, 14J28.}}
\author{Yuta Takada\thanks{Department of Mathematics, Graduate School of Science, 
Hokkaido University, Kita 10, Nishi 8, Kita-ku, Sapporo 060-0810 Japan; 
JSPS Research Fellow. \texttt{takada@math.sci.hokudai.ac.jp}}}
\date{January 23, 2024}
\begin{document}
\maketitle

\begin{abstract}
E.~Bayer-Fluckiger gave a necessary and sufficient condition for a polynomial 
to be realized as the characteristic polynomial of a semisimple 
isometry of an even unimodular lattice, by describing the local-global obstruction,  
and the author extended the result. 
This article presents a systematic way to compute the obstruction. 
As an application, we give a necessary and sufficient condition for a Salem number of 
degree $10$ or $18$ to be realized as the dynamical degree of an automorphism of 
nonprojective K3 surface, in terms of its minimal polynomial. 
\end{abstract}

\section{Introduction}
In connection with the study of automorphisms of K3 surfaces,  
the following question was raised by B.H.~Gross and C.T.~McMullen \cite{GM02}. 

\begin{question}\label{q:1}
Which polynomial can be realized as the characteristic polynomial of 
an isometry of an even unimodular lattice with a prescribed signature?
\end{question}

In recent years, significant progress in areas concerning this question has been made  
by works of E.~Bayer-Fluckiger and L.~Taelman \cite{BT20}, and of 
Bayer-Fluckiger \cite{Ba20,Ba21,Ba22}. The author's work \cite{Ta23} follows them. 
This article gives subsequent results thereafter.

\subsection{Recent developments}
Let $r$ and $s$ be non-negative integers. 
It is known that $r \equiv s \bmod 8$ if $(r,s)$ is the signature of 
an even unimodular lattice. 
Let $F(X)\in \bZ[X]$ be a monic polynomial with $F(0)\neq 0$. 
We define a monic polynomial $F^*(X)\in \bQ[X]$ by 
$F^*(X) := F(0)^{-1}X^{\deg F} F(X^{-1})$, and 
say that $F$ is \textit{$*$-symmetric} if $F^* = F$. 
In this case, the constant term $F(0)$ is $1$ or $-1$, so we say that 
$F$ is \mbox{\textit{$+1$-symmetric}} or \textit{$-1$-symmetric} according to the value $F(0)$. 
Suppose that $F$ is the characteristic polynomial of a semisimple isometry of 
an even unimodular lattice of signature $(r,s)$. 
Then $F$ is a $*$-symmetric polynomial of even degree $(r + s)$. Moreover
\begin{equation}\label{eq:Sign_intro}
\text{$r,s \geq m(F)$ and if $F(1)F(-1)\neq 0$ then $r\equiv s\equiv m(F) \bmod 2$,} 
\tag{Sign}  
\end{equation}
where $m(F)$ is the number of roots of $F$ whose absolute values are greater than $1$
counted with multiplicity; and 
\begin{equation}\label{eq:Square_intro}
\text{$|F(1)|, |F(-1)|$ and $(-1)^{(\deg F)/2} F(1) F(-1)$ are all squares.} \tag{Square}
\end{equation}
Gross and McMullen speculated that these necessary conditions 
for $F$ to be realized as a characteristic polynomial are also sufficient 
when $F$ is irreducible, and they showed that if $F$ is irreducible and 
the assumption \eqref{eq:Square_intro} 
is replaced by the more stronger assumption $(-1)^{(\deg F)/2} F(1)F(-1) = 1$, 
then these are sufficient. Afterwards, 
Bayer-Fluckiger and Taelman \cite{BT20} proved that the speculation is 
correct by using a local-global theory. 

Bayer-Fluckiger \cite{Ba20,Ba21,Ba22} proceeded to the case where the polynomial $F$ 
is reducible and $+1$-symmetric. In this case, the conditions \eqref{eq:Sign_intro} and 
\eqref{eq:Square_intro} are not sufficient as pointed out in \cite{GM02}. 
She showed that the condition \eqref{eq:Square_intro} is necessary and sufficient for 
the existence of an even unimodular lattice over $\bZ_p$ having a semisimple isometry 
with characteristic polynomial $F$ for every prime $p$, where $\bZ_p$ is the 
ring of $p$-adic integers. On the other hand, 
the condition \eqref{eq:Sign_intro} is obtained by considering over $\bR$. 
Hence, we can say that these two conditions are local. 
For a $+1$-symmetric polynomial $F$ with \eqref{eq:Sign_intro} and 
\eqref{eq:Square_intro}, 
Bayer-Fluckiger gave a necessary and sufficient condition for $F$ to be 
realized as the characteristic polynomial of a semisimple isometry of an even unimodular 
lattice over $\bZ$ of signature $(r,s)$, by describing the local-global obstruction. 
These results were reformulated and extended to the case where $F$ is $*$-symmetric, 
which covers the $-1$-symmetric case, in the author's article \cite{Ta23}. 
This article presents a systematic way to compute the obstruction. 
In order to explain it, we describe the local-global obstruction in 
\S\ref{ss:obstruction} (a more detailed description will be given in \S\ref{sec:LGPO}).

\subsection{Obstruction}\label{ss:obstruction}
We begin by defining an invariant of an isometry of an inner product space over $\bR$, 
called the \textit{index}. Let $t$ be an isometry of an inner product space $V$ 
over $\bR$ of signature $(r,s)$, and let $F\in \bR[X]$ be its characteristic polynomial. 
Then $V$ decomposes as $V = \sum_f V(f;t)$, where 
$V(f;t) := \{ v\in V \mid f(t)^N.v = 0 \text{ for some $N\in \bZ_{\geq 0}$} \}$ 
and $f$ ranges over the irreducible factors of $F$ in $\bR[X]$. 
Let $I(F;\bR)$ denote the set of $*$-symmetric irreducible factors of $F$. 
The \textit{index} $\idx_t$ of $t$ is the map from $I(F;\bR)$ to $\bZ$ 
defined by $\idx_t(f) = r_f - s_f$, where $(r_f, s_f)$ is the signature of $V(f; t)$.

For a $*$-symmetric polynomial $F$ and non-negative integers $r,s$ with 
$r + s = \deg(F)$, we write $\Idx(r,s;F)$ for the set of maps $I(F;\bR) \to \bZ$ 
expressed as $\idx_t$ for some semisimple isometry $t$ of an inner product space $V$ of 
signature $(r,s)$, with characteristic polynomial $F$. 
Each map in $\Idx(r,s;F)$ is referred to as an \textit{index map} 
(see Definition \ref{def:indexmap} and Theorem \ref{th:Sign_implies}). 
Furthermore, we refer to an isometry with characteristic polynomial $F$ and with 
index $\fraci\in \Idx(r,s;F)$ as an \mbox{\textit{$(F, \fraci)$-isometry}} for short. 
As a detailed version of Question \ref{q:1}, we investigate when the following 
condition holds for a given index map $\fraci\in \Idx(r,s;F)$. 
\begin{equation}\label{eq:spade}
\begin{minipage}{14cm}
There exists an even unimodular lattice over $\bZ$ 
of signature $(r,s)$ having a semisimple $(F, \fraci)$-isometry.  
\end{minipage}
\end{equation}

We next define a $\bQ[X]$-module for a polynomial. 
Let $F\in \bZ[X]$ be a $*$-symmetric polynomial. 
For a factor $f$ of $F$, we write $m_f$ for the multiplicity of $f$ in $F$. 
Let $I(F;\bQ)$ denote the set of $*$-symmetric irreducible factors of $F$, 
and $I_2(F;\bQ)$ the set of non-$*$-symmetric irreducible factors of $F$ in $\bQ[X]$. 
Furthermore, we put $I_1(F;\bQ) = I(F;\bQ) \setminus\{X-1,X+1\}$ 
(see Definitions \ref{def:type012} and \ref{def:type_i_component} for 
the meaning of subscripts). Then $F$ can be expressed as 
\[ F(X) 
= (X-1)^{m_+} (X+1)^{m_-} 
\times \prod_{f\in I_1(F;\bQ)} f(X)^{m_f} 
\times \prod_{\{g,g^*\}\subset I_2(F;\bQ)} (g(X)g^*(X))^{m_g}, 
\] 
where $m_\pm := m_{X\mp1}$. 
For a factor $f$ which is in $I(F;\bQ)$ or of the form $gg^*$ for some $g\in I_2(F;\bQ)$, 
we define $M^f := (\bQ[X]/(f))^{m_f}$, and   
\begin{equation*}
 M := M^+ \times M^- \times \prod_{f\in I_1(F;\bQ)} M^f 
\times \prod_{\{g,g^*\}\subset I_2(F;\bQ)} M^{gg^*},  
\end{equation*}
where $M^\pm := M^{X\mp1}$. 
Let $\alpha:M\to M$ be the linear transformation defined by 
the multiplication by $X$. Then, it is a semisimple transformation with 
characteristic polynomial $F$. 
Furthermore the $\bQ$-algebra $M$ can be seen as a $\bQ[X]$-module 
by the action of $\alpha$. In this case, we refer to $M$ as the 
\textit{associated $\bQ[X]$-module of $F$} with transformation $\alpha$. 

A key idea in tackling Question \ref{q:1} is to consider when there exists an 
inner product $b:M\times M\to \bQ$ on the $\bQ$-vector space $M$ such that $\alpha$ 
becomes an isometry having a prescribed index $\fraci$ and the inner product space $(M, b)$ 
contains an $\alpha$-stable even unimodular lattice.  
By reinterpreting Question \ref{q:1} as an existence problem for inner products 
in this way, a local-global argument works well. 

Let $\cV$ denote the set of all places of $\bQ$. 
In the following, we fix the following data: 
non-negative integers $r$ and $s$ with $r\equiv s \bmod 8$; 
a $*$-symmetric polynomial $F\in \bZ[X]$ with the conditions \eqref{eq:Sign_intro} 
and \eqref{eq:Square_intro}; and an index map $\fraci\in \Idx(r,s;F)$.   
We write $I = I(F;\bQ)$, $I_1 = I_1(F;\bQ)$, and $I_2 = I_2(F;\bQ)$ for short. 
Let $M$ be the associated $\bQ[X]$-module with transformation $\alpha$. 
For each place $v\in \cV$, we define $M_v := M\otimes\bQ_v$. 
Similarly $M^f_v := M^f\otimes\bQ_v$ for $f$ which is in $I$ or 
of the form $gg^*$ for some $g\in I_2$. Then 
\[ M_v = M^+_v \oplus M^-_v 
\oplus \bigoplus_{f\in I_1} M^f_v 
\oplus \bigoplus_{\{g,g^*\} \subset I_2} M^{gg^*}_v  
\]
as $\bQ_v[X]$-modules, where $\alpha:M\to M$ is extended to a 
$\bQ_v$-linear transformation on $M_v$ in a unique way. 
For each $v\in \cV$, we consider the following three properties 
\eqref{eq:P1_intro}--\eqref{eq:P3_intro} of an inner product 
$b_v:M_v\times M_v\to \bQ_v$ on $M_v$. The first property is that
\begin{equation}\label{eq:P1_intro}
\text{$\alpha:M_v \to M_v$ is an isometry with respect to $b_v$.} \tag{P1}
\end{equation}
Assume that $b_v$ has the property \eqref{eq:P1_intro}. The second property is that
\begin{equation}\label{eq:P2_intro}
\begin{split}
&\text{if $v\neq \infty$ then there exists an $\alpha$-stable
even unimodular lattice over $\bZ_v$ on $(M_v, b_v)$, and}\\
&\text{if $v = \infty$ then the isometry $\alpha$ of $(M_\infty, b_\infty)$ 
has index $\fraci$.}
\end{split}\tag{P2}
\end{equation}
Let $b_v|_{M_v^\pm}$ denote the inner product on $M_v^\pm$ obtained by 
restricting $b_v$ to $M_v^\pm \times M_v^\pm \subset M_v \times M_v$. 
The last property is that
\begin{equation}\label{eq:P3_intro}
\det(b_v|_{M_v^\pm}) = \begin{cases}
(-1)^{(m_\pm - \fraci(X\mp1))/2}\, |F_{12}(\pm1)| &\text{if $m_+$ is even} \\
(-1)^{(m_\pm - \fraci(X\mp1))/2}\, 2|F_{12}(\pm1)| &\text{if $m_+$ is odd} 
\end{cases}
\quad\text{in $\bQ_v^\times/\bQ_v^{\times 2}$,}\tag{P3}
\end{equation}
where $\det$ is the determinant and 
$F_{12}(X) := F(X)/((X-1)^{m_+}(X+1)^{m_-}) \in \bZ[X]$. 
We write $\cB_\fraci$ for the set of families $\{b_v\}_{v\in \cV}$ of 
inner products on $M_v$ such that each $b_v$ has the properties 
\eqref{eq:P1_intro}--\eqref{eq:P3_intro} and 
$\#\{ v \in \cV \mid \hw_v(b_v|_{M^f_v}) \neq 0 \}$ is finite for all $f\in I$, 
where $\hw_v$ is the Hasse-Witt invariant (taking values in $\bZ/2\bZ$, 
see \S\ref{ss:IPSandL}). 
The conditions \eqref{eq:Sign_intro} and \eqref{eq:Square_intro}
guarantee that $\cB_\fraci$ is not empty. 

If $b$ is an inner product on $M$ such that $\alpha:M\to M$ becomes an isometry 
having index $\fraci$ and $(M,b)$ contains an $\alpha$-stable even unimodular lattice 
over $\bZ$, then the family $\{b\otimes \bQ_v\}_{v\in \cV}$ of inner products 
$b\otimes \bQ_v :M_v \times M_v \to \bQ_v$ obtained by localizations 
belongs to $\cB_\fraci$. 
The local-global principle for the existence of such an inner product on $M$ 
is described as the equivalence of the following two conditions
(Theorem \ref{th:LGP}):
\begin{equation}\label{eq:club}
\begin{minipage}{14cm}
There exists an inner product $b$ on $M$ such that $\alpha:M\to M$ becomes an isometry 
having index $\fraci$ and $(M, b)$ contains 
an $\alpha$-stable even unimodular lattice over $\bZ$.  
\end{minipage}
\end{equation}
\begin{equation}\label{eq:diamond}
\begin{minipage}{14cm}
There exists a family $\{b_v\}_{v\in \cV}\in \cB_\fraci$ 
such that $\sum_{v\in \cV} \hw_v(b_v|_{M^f_v}) = 0$ for any $f\in I$. 
\end{minipage}
\end{equation}

We remark that the former condition \eqref{eq:club} is equivalent to \eqref{eq:spade}. 
Let us rephrase the latter condition \eqref{eq:diamond} further. 
Let $C(I)$ denote the $\bZ/2\bZ$-module consisting of all maps from 
$I$ to $\bZ/2\bZ$, that is,  
$C(I) := \{ \gamma: I\to \bZ/2\bZ \} = (\bZ/2\bZ)^{\oplus I}$.  
Moreover, we define a map $\eta : \cB_\fraci \to C(I)$ by
\[ \eta(\{b_v\}_{v})(f) = \sum_{v\in \cV}\hw_v(b_v|_{M^f_v}) \in \bZ/2\bZ
\quad\textstyle (\{b_v\}_{v}\in \cB_\fraci, \,f\in I ).
\]
Under this notation, the condition \eqref{eq:diamond} can be rephrased as the one that 
there exists a family $\{b_v\}_{v} \in \cB_\fraci$ such that $\eta(\{b_v\}_{v}) = \bm{0}$, 
where $\bm{0} \in C(I)$ is the zero map. 
For a prime $p$ and a monic polynomial $f\in \bZ[X]$, we define 
\[ \overline{I(f;\bQ_p)} := 
\left\{ \bar{h} \in \bF_p[X] \;\middle|\; 
\begin{tabular}{l}
\text{$\bar{h}$ is irreducible, and there exists a $*$-symmetric } \\
\text{irreducible factor of $f$ in $\bZ_p[X]$ whose reduction } \\
\text{modulo $p$ is divisible by $\bar{h}$ in $\bF_p[X]$}
\end{tabular}
\right\}.
\]
Moreover, for two monic polynomials $f$ and $g\in \bZ[X]$, 
we define a set $\Pi(f,g)$ of primes by 
\[ \Pi(f,g) := \{ p:\text{prime} 
\mid \overline{I(f;\bQ_p)}\cap \overline{I(g;\bQ_p)} \neq \emptyset \}. \]
For simplicity of explanation, we assume that 
each of the multiplicities $m_+$ and $m_-$ of $X-1$ and $X+1$ is $0$ or at least $3$. 
Let $\sim$ be the equivalence relation on $I$ generated by the binary relation
$\{ (f,g) \in I\times I \mid \Pi(f,g)\neq \emptyset \}$, and let $\Omega$ be 
the submodule 
$\{c \in C(I) \mid \text{ $c(f) = c(g)$ if $f \sim g$ } \}$ 
of $C(I)$. We will see that the homomorphism 
\[ \textstyle\Omega \to \bZ/2\bZ, \, c \mapsto \sum_{f\in I}\eta(\{b_v\}_{v})(f) \cdot c(f)  \]
is defined independently of the choice of the family $\{b_v\}_{v}\in \cB_\fraci$.  
This homomorphism is called the \textit{obstruction map} for $(F, \fraci)$ and 
denoted by $\ob_\fraci : \Omega \to \bZ/2\bZ$. 
The submodule $\Omega \subset C(I)$ is called the \textit{obstruction group} 
for $(F, \fraci)$ 
(it does not depend on $\fraci$ under the current assumption on $m_+$ and $m_-$). 
It is obvious that if there exists a family $\{b_v\}_{v} \in \cB_\fraci$ such that 
$\eta(\{b_v\}_{v}) = \bm{0}$ then
\begin{equation}\label{eq:heart}
\text{the obstruction map $\ob_\fraci:\Omega \to \bZ/2\bZ$ is the zero map.}
\end{equation}

\begin{theorem}\label{th:A}
Let $r,s$ be non-negative integers with $r\equiv s \bmod 8$, 
$F\in \bZ[X]$ a $*$-symmetric polynomial of degree $r+s$ with the conditions 
\eqref{eq:Sign_intro} and \eqref{eq:Square_intro}, 
and $\fraci\in \Idx(r,s;F)$ an index map. 
Assume that each of $m_+$ and $m_-$ is $0$ or at least $3$. 
The conditions \eqref{eq:spade}, \eqref{eq:club}, \eqref{eq:diamond}, 
and \eqref{eq:heart} are all equivalent.   
\end{theorem}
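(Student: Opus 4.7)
The plan is to reduce the four-way equivalence to a single substantive implication. The equivalence $\eqref{eq:spade} \Leftrightarrow \eqref{eq:club}$ is the direct reformulation noted in the excerpt: any semisimple $(F,\fraci)$-isometry of an even unimodular lattice $L$ gives, via the canonical isomorphism $L \otimes \bQ \cong M$ induced by the primary decomposition, an inner product on $M$ with the required properties, and conversely. The equivalence $\eqref{eq:club} \Leftrightarrow \eqref{eq:diamond}$ is the local-global principle Theorem~\ref{th:LGP}. The implication $\eqref{eq:diamond} \Rightarrow \eqref{eq:heart}$ is immediate from the formula defining $\ob_\fraci$. Hence the substantive content is $\eqref{eq:heart} \Rightarrow \eqref{eq:diamond}$.

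First I would reformulate both conditions via the standard bilinear pairing $\langle c, c' \rangle = \sum_{f \in I} c(f) c'(f)$ on $C(I) = (\bZ/2\bZ)^{\oplus I}$. Since $\Omega$ consists of the maps constant on $\sim$-equivalence classes, its orthogonal complement $\Omega^\perp$ is the submodule of maps whose restriction to each class sums to $0$; writing $e_f \in C(I)$ for the characteristic function of $\{f\}$, $\Omega^\perp$ is generated by the elements $e_f + e_g$ with $f \sim g$. Condition $\eqref{eq:heart}$ asserts precisely that $\eta(\{b_v\}_v) \in \Omega^\perp$ for every $\{b_v\}_v \in \cB_\fraci$ (equivalently, for some, by the well-definedness of $\ob_\fraci$). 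The strategy is then to prove the stronger structural claim that the image $\eta(\cB_\fraci)$ is an entire coset of $\Omega^\perp$ in $C(I)$: containment in one coset is again the well-definedness of $\ob_\fraci$, and once surjectivity onto the coset is in hand, we may modify any $\{b_v\}_v \in \cB_\fraci$ by $\eta(\{b_v\}_v) \in \Omega^\perp$ itself (noting $d + d = \bm{0}$ in $C(I)$) to obtain a family realizing $\eta = \bm{0}$.

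To prove surjectivity onto the coset, it suffices to realize each generator $e_f + e_g$ of $\Omega^\perp$ (with some $p \in \Pi(f,g)$) as $\eta(\{b'_v\}_v) - \eta(\{b_v\}_v)$ for a pair of families agreeing away from $p$. Exploiting a common residual factor $\bar h \in \overline{I(f;\bQ_p)} \cap \overline{I(g;\bQ_p)}$, I would construct an alternative $\alpha$-stable even unimodular $\bZ_p$-lattice in the local block $M_p^f \oplus M_p^g$ whose associated inner product differs from $b_p$ by a Hasse-symbol transposition across the two components, while preserving the constraints \eqref{eq:P1_intro}--\eqref{eq:P3_intro}. Chaining such single-prime modifications along a $\sim$-chain from $f$ to $g$ then realizes every generator of $\Omega^\perp$. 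The hypothesis that each of $m_+$ and $m_-$ is either $0$ or at least $3$ provides the needed flexibility on the summands $M_p^\pm$ when $f$ or $g$ equals $X \mp 1$, ensuring in particular that the determinant prescription \eqref{eq:P3_intro} is not disturbed. I expect this local modification step to be the main obstacle: it requires a delicate analysis of bilinear or hermitian forms over $\bZ_p[X]/(f)$-modules at the primes dividing the relevant resultants, with unimodularity threaded through the common residual factor $\bar h$.
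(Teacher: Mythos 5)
Your argument is essentially the paper's own: the paper reduces everything to the facts that $\eta(\cB_\fraci)$ is a full coset of $C_0$ (Theorem \ref{th:imageofeta}, cited to [Ta23, Theorem 6.3]) and that $\Omega_\fraci^\perp = C_0$ (Lemma \ref{lem:OG=C0perp}), then concludes exactly as you do that \eqref{eq:heart} forces $\eta(\cB_\fraci)=C_0\ni\bm 0$. Your sketched local-modification step realizing each generator $\bm 1_{\{f,g\}}$ at a prime $p\in\Pi(f,g)$ is precisely the content of the cited coset theorem, which the paper itself does not reprove.
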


The version without the assumption on $m_+$ and $m_-$ will be explained again in \S3.  
In the case $m_+\in \{1,2\}$ or $m_-\in \{1,2\}$, the definition of the set $\Pi(f,g)$ 
(and hence that of the equivalence relation $\sim$ on $I$) needs to be modified, 
and if $m_\pm = 2$ then it depends on the value $\fraci(X\mp1)$.  
The equivalence \eqref{eq:spade} $\Leftrightarrow$ \eqref{eq:heart} was first 
proved by Bayer-Fluckiger in \cite{Ba21}, under the assumption that $F$ is 
$+1$-symmetric. 
On the framework established by her, 
the author \cite{Ta23} modified the definition of obstruction, and moreover, 
extended the theorem to the case where $F$ is $*$-symmetric, 
which covers the $-1$-symmetric case, mainly by careful analysis at the prime $2$, 
see also Remark \ref{rem:history_for_LGO}.

\subsection{Main results}
Let $F\in \bZ[X]$ be a $*$-symmetric polynomial of even degree with 
the condition \eqref{eq:Square_intro}. As in \S\ref{ss:obstruction}, the symbols 
$M$ and $M_\infty$ denote the associated $\bQ[X]$-module with transformation $\alpha$ 
and its localization at the infinite place $\infty$ respectively. 
Let $r,s$ be non-negative integers with $r\equiv s \bmod 8$ such that 
$F$ satisfies the condition \eqref{eq:Sign_intro}${}_{r,s}$ (we write 
\eqref{eq:Sign_intro}${}_{r,s}$ instead of \eqref{eq:Sign_intro} if necessary), 
and $\fraci\in \Idx(r,s;F)$ an index map. 

Then, there exists an inner product $b_\fraci$ on $M_\infty$
which makes $\alpha$ an isometry with index $\fraci$. 
Let $\eta_\infty(\fraci)$ denote the map from $I:= I(F;\bQ)$ to $\bZ/2\bZ$ defined by 
\[\eta_\infty(\fraci)(f) = \hw_\infty(b_\fraci|_{M^f_\infty}) \quad (f\in I).  \]
This map is uniquely determined by $\fraci$ 
and can be expressed explicitly, see \S\ref{ss:comparison}. 

Let $r',s'$ be non-negative integers with $r'\equiv s' \bmod 8$ such that 
$F$ satisfies the condition \eqref{eq:Sign_intro}${}_{r',s'}$, and 
$\fj\in \Idx(r',s';F)$ an index map. We will see that if 
\begin{equation}\label{eq:idx_mod4_intro}
 \fraci(X-1) \equiv \fj(X-1) \quad\text{and}\quad 
\fraci(X-1) \equiv \fj(X-1) \mod 4
\end{equation}
then two equivalence relations on $I$ defined by $(F, \fraci)$ and $(F, \fj)$ are 
the same. In this case, the obstruction group $\Omega$ for $(F, \fraci)$ 
is the same as that for $(F, \fj)$. 

\begin{theorem}\label{th:comparison_intro}
Let $F\in \bZ[X]$ be a $*$-symmetric polynomial of even degree with 
the condition \eqref{eq:Square_intro}, 
and $r,s, r', s'$ non-negative integers with $r\equiv s$ 
and $r'\equiv s' \bmod 8$ such that \eqref{eq:Sign_intro}${}_{r,s}$ and 
\eqref{eq:Sign_intro}${}_{r',s'}$ hold for $F$. 
Let $\fraci\in \Idx(r,s;F)$ and $\fj\in \Idx(r',s';F)$ be index maps 
satisfying \eqref{eq:idx_mod4_intro}. Then we have 
\[ \ob_\fraci(c) = \ob_\fj(c) + (\eta_\infty(\fraci) - \eta_\infty(\fj))\cdot c \]
for all $c\in \Omega$. 
\end{theorem}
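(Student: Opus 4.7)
The plan is to produce, given a family $\{b_v\}_{v\in \cV} \in \cB_\fraci$, a family $\{b'_v\}_{v\in \cV} \in \cB_\fj$ which differs from $\{b_v\}_v$ only at the archimedean place, so that the difference $\eta(\{b'_v\}_v) - \eta(\{b_v\}_v)$ is concentrated at $\infty$ and expressible through $\eta_\infty$. Concretely, fix any $\{c_v\}_{v\in \cV} \in \cB_\fj$ (which exists because $\cB_\fj \neq \emptyset$ under \eqref{eq:Sign_intro}${}_{r',s'}$ and \eqref{eq:Square_intro}) and set $b'_v := b_v$ for every finite $v$ and $b'_\infty := c_\infty$.

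The first step is to verify that $\{b'_v\}_v \in \cB_\fj$. Properties \eqref{eq:P1_intro} and \eqref{eq:P2_intro} at a finite place $v$ concern only $\bZ_v$-lattice geometry and do not involve the index map, so they are inherited from $\{b_v\}_v$. Property \eqref{eq:P3_intro} at $v$ depends on the index only through the sign $(-1)^{(m_\pm - \fraci(X\mp 1))/2}$; the hypothesis \eqref{eq:idx_mod4_intro} says exactly that this sign is the same whether one uses $\fraci$ or $\fj$, so the prescribed class of $\det(b_v|_{M_v^\pm})$ in $\bQ_v^\times/\bQ_v^{\times 2}$ is unchanged. At $\infty$, properties \eqref{eq:P1_intro}--\eqref{eq:P3_intro} hold for $b'_\infty = c_\infty$ by $\{c_v\}_v \in \cB_\fj$, and the Hasse--Witt finiteness condition is automatic since only one place has been modified.

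The second step is to compute the difference. For every $f \in I$,
\[ \eta(\{b'_v\}_v)(f) - \eta(\{b_v\}_v)(f) = \hw_\infty(b'_\infty|_{M^f_\infty}) - \hw_\infty(b_\infty|_{M^f_\infty}) = \eta_\infty(\fj)(f) - \eta_\infty(\fraci)(f) \]
in $\bZ/2\bZ$, by the definition of $\eta_\infty$ and the fact that $b_\infty$ (resp.\ $b'_\infty$) realizes the index $\fraci$ (resp.\ $\fj$). Pairing with $c \in \Omega$ and summing over $f \in I$ yields $\ob_\fj(c) = \ob_\fraci(c) + (\eta_\infty(\fj) - \eta_\infty(\fraci)) \cdot c$, which rearranges to the stated identity.

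The main obstacle is the compatibility analysis underlying this construction. The invariance of \eqref{eq:P3_intro} under the replacement $\fraci \leftrightarrow \fj$ is immediate once the role of the mod-$4$ congruence is pinpointed. More delicate is the prerequisite that $\ob_\fraci$ and $\ob_\fj$ be defined on the \emph{same} obstruction group $\Omega$: in the border cases $m_\pm \in \{1,2\}$ the set $\Pi(f,g)$, and hence the equivalence relation $\sim$, a priori depends on $\fraci$, so showing that the two relations coincide under \eqref{eq:idx_mod4_intro} is a preparatory claim that must be established in the main body before the comparison theorem can even be formulated.
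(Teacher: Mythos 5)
Your proposal is correct and follows essentially the same route as the paper's proof: both take a family in $\cB_\fraci$, replace only its archimedean component by an inner product on $M_\infty$ realizing the index $\fj$ (the mod-$4$ hypothesis guaranteeing that the finite-place conditions, in particular \eqref{eq:P3_intro}, are unaffected), and observe that the resulting difference of $\eta$-values is exactly $\eta_\infty(\fj)-\eta_\infty(\fraci)$. Your explicit verification of \eqref{eq:P1_intro}--\eqref{eq:P3_intro} at the finite places and your remark that the coincidence of the two equivalence relations must be established beforehand are both consistent with what the paper does (the latter in the discussion preceding the theorem).
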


This theorem tell us that the difference of two obstruction maps can be 
calculated only by information of index maps. 

Let $m_\pm$ be the multiplicity of $X\mp1$ in $F$, 
and put $F_{12}(X) = F(X)/((X-1)^{m_+}(X+1)^{m_-}) \in \bZ[X]$. 
For any index map $\fraci$, if we put $i_+ = \fraci(X-1)$ and $i_- = \fraci(X+1)$ then 
\begin{equation}\label{eq:prolong_intro}
\begin{split}
&-m_+ \leq i_+ \leq m_+,\, -m_- \leq i_- \leq m_-,\, 
i_+ \equiv i_- \equiv m_+ \bmod 2, \quad\text{and}\quad \\
&i_+ + i_- \equiv 1 - e(F_{12}) \bmod 4,  
\end{split}
\end{equation}
where $e(F_{12}) \in \{1,-1\}$ is the signature of $F_{12}(1)F_{12}(-1)$, 
see Theorem \ref{th:Sign_implies} and Proposition \ref{prop:index_prolong}. 

\begin{theorem}\label{th:chpl_on_index0_intro}
Let $F\in \bZ[X]$ be a $*$-symmetric polynomial of even degree $2n$, and 
$i_+, i_-\in \bZ$ integers with \eqref{eq:prolong_intro}. Assume that 
\begin{itemize}
\item[\textup{(a)}] $m_+ \neq 1$ and $m_- \neq 1$, or 
\item[\textup{(b)}] $F$ is a product of cyclotomic polynomials. 
\end{itemize}
Then there exists $\fj \in \Idx(n,n;F)$ with 
$\fj(X-1) \equiv i_+$ and $\fj(X+1) \equiv i_-$ mod $4$ such that an even 
unimodular lattice of signature $(n,n)$ admits a semisimple $(F,\fj)$-isometry. 
\end{theorem}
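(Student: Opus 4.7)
The plan is to invoke the local-global principle Theorem~\ref{th:A} (more precisely, its version for arbitrary $m_\pm$, as will be stated in Section~3). Since we are free to choose $\fj$, the task splits into producing some $\fj\in\Idx(n,n;F)$ with $\fj(X\mp 1)\equiv i_\pm\pmod 4$ and then adjusting $\fj$ within its fiber so that the obstruction map $\ob_\fj$ vanishes; once this is done, Theorem~\ref{th:A} yields the desired lattice.

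First I would verify the local necessary conditions \eqref{eq:Sign_intro}${}_{n,n}$ and \eqref{eq:Square_intro}. The signature condition reduces to $m(F)\leq n$, which is automatic because $F$ is $*$-symmetric of degree $2n$. For \eqref{eq:Square_intro}: in case~(b) one reads off $F(\pm 1)$ from the explicit values $\Phi_d(\pm 1)$ of cyclotomic polynomials; in case~(a), $F(\pm 1)=0$ whenever $m_\pm\geq 2$ trivializes squareness at those factors, while the congruence $i_++i_-\equiv 1-e(F_{12})\pmod 4$ from \eqref{eq:prolong_intro} handles the remaining factor $F_{12}$. Next, by Theorem~\ref{th:Sign_implies} the maps in $\Idx(n,n;F)$ are exactly those satisfying factorwise parity and range constraints together with the global balance $\sum_f\fj(f)=n-n=0$. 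Since $i_\pm$ lie in the allowed range by \eqref{eq:prolong_intro}, one can set $\fj(X\mp 1)=i_\pm$ and adjust $\fj$ at the remaining factors so the sum vanishes.

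The core step is to arrange $\ob_\fj=0$ while staying in this fiber. Since $(i_+\bmod 4,i_-\bmod 4)$ is fixed, the obstruction group $\Omega$ is also fixed, and Theorem~\ref{th:comparison_intro} gives
\[\ob_{\fj'}(c)-\ob_\fj(c)=\bigl(\eta_\infty(\fj')-\eta_\infty(\fj)\bigr)\cdot c\qquad(c\in\Omega)\]
for any two $\fj,\fj'$ in the fiber. Hence the attainable obstruction maps form a coset of the image of $\fj\mapsto\eta_\infty(\fj)|_\Omega$ inside $\Omega^\vee:=\mathrm{Hom}(\Omega,\bZ/2\bZ)$, so it suffices to show that this image is all of $\Omega^\vee$.

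The main obstacle is this surjectivity. I would attack it by observing that shifting $\fj(f)$ by $\pm 2$ at a factor $f\in I_1$, compensated by an opposite shift at another factor to preserve $\sum_f\fj(f)=0$, flips only the $f$-component of $\eta_\infty(\fj)$ modulo $2$, as the explicit formula in \S\ref{ss:comparison} shows; because signature $(n,n)$ admits arbitrarily distributed indices, such paired moves generate enough of $C(I)$ to surject onto $\Omega^\vee$. In case~(b), where every irreducible factor of $F$ is cyclotomic, one can alternatively bypass this analysis by an explicit construction, orthogonally summing well-known unimodular realizations attached to each $\Phi_d$. Either way, once a $\fj$ in the prescribed fiber with $\ob_\fj=0$ is secured, Theorem~\ref{th:A} produces the required even unimodular lattice of signature $(n,n)$ carrying a semisimple $(F,\fj)$-isometry.
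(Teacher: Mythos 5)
There is a genuine gap at the core step, and it sits exactly where the paper has to work hardest. First, the surjectivity you aim for is the wrong target: by Proposition \ref{prop:even1s} every obstruction map kills $\bm1_I$, and every attainable difference $\eta_\infty(\fj')-\eta_\infty(\fj)$ on the fiber has even weight. Indeed, the only moves available are shifts of the index by $\pm4$ at one irreducible factor compensated by $\mp4$ at another (shifts by $\pm2$ at $f\in I_1$ are forbidden by \eqref{eq:indexdata2}), and by \eqref{eq:eta_infty} such a paired move flips \emph{two} components of $\eta_\infty(\fj)$, not one. So the set of attainable differences lies in the annihilator of $\bm1_I$ in $\Omega^\vee$, and surjectivity onto $\Omega^\vee$ fails whenever $\bm1_I\notin C_0$. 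Even after correcting the target to this annihilator, the assertion that paired moves ``generate enough'' is precisely the hard point and is false without extra input: the range constraints $|\fj(f)|\le\deg(f^{m_f})$ block the move at factors whose index is extremal, which is the situation treated in Cases II(b)--(d) of Proposition \ref{prop:if_m+=m-=1}. There the paper cannot proceed by varying the infinite place alone; it must also modify the finite-place components via Theorem \ref{th:imageofeta}, using $\Pi(\Phi_3,\Phi_6)=\{2\}$ and the fact that $\Phi_3,\Phi_4,\Phi_6$ are the only cyclotomic polynomials of degree $2$. That this machinery is unavailable for general $F$ is why the author leaves open whether the hypothesis $m_\pm\neq1$ can be dropped outside the cyclotomic case.

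For case (a) the paper does something entirely different, which your proposal does not touch: it decomposes $I$ into equivalence classes $J$, proves that each factor $F_J$ has even degree and satisfies \eqref{eq:Square_intro} (Proposition \ref{prop:EC_satisfies_Square}) and that the induced relation on each class is weakest (Lemma \ref{lem:ECweakest}), then realizes each $F_J$ on $\Lambda_{n_J,n_J}$ by Theorem \ref{th:weakestER} and takes orthogonal sums; the delicate point is the classes containing $X\mp1$ when $m_\pm=2$ (Proposition \ref{prop:if_m+=m-=2}), where one must rule out certain combinations of $e(F_{J_\pm^\circ})$ and $(i_+,i_-)\bmod4$. Your fallback for case (b) --- orthogonally summing ``well-known unimodular realizations attached to each $\Phi_d$'' --- fails at the first step, since an individual cyclotomic polynomial need not satisfy \eqref{eq:Square_intro} (e.g.\ $|\Phi_p(1)|=p$), so the splitting must follow the equivalence classes rather than the irreducible factors. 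Finally, the preliminary existence of $\fj$ in the prescribed fiber is not just bookkeeping: it is Proposition \ref{prop:index_prolong}\,(ii), whose proof needs the congruence $i_++i_-\equiv1-e(F_{12})\bmod4$ together with Lemma \ref{lem:1-e_mod4} to verify \eqref{eq:Sign_intro} for $F_{12}$.
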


The case (a) is a refined version of \cite[Theorem 1.2]{Ta23}. 
In the proof of Theorem \ref{th:chpl_on_index0_intro}, the index $\fj\in \Idx(n,n;F)$
is given constructively. So, the obstruction map $\ob_\fraci$ for any index map $\fraci$
can be calculated by using Theorem \ref{th:comparison_intro}, that is, 
by comparing it with $\ob_\fj$. 

As an application to the study of automorphisms of K3 surfaces, we get the 
following theorem, see \S\ref{sec:DDofK3SA} for terms therein. 

\begin{theorem}\label{th:NPR}
Let $\lambda$ be a Salem number of degree $d = 10$ or $18$, and 
$S$ its minimal polynomial. 
Let $\cC_{10}$ and $\cC_{18}$ be the sets consisting of integers defined by 
\[ \begin{split}
\cC_{10} &:= \{1,2,3,4,5,6,7,8,9,10,11,12,14,15,16,18,21,22,24,28,30,36,42\}, \\
\cC_{18} &:= \{1,2,3,4,6,12\}. 
\end{split}
\]
Then $\lambda$ is realizable as the dynamical degree of an automorphism of 
a nonprojective K3 surface if and only if 
there exists $l\in \cC_d$ such that $\Pi(S, \Phi_l)\neq \emptyset$. 
Here $\Phi_l$ is the $l$-th cyclotomic polynomial. 
\end{theorem}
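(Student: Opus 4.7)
The plan is to translate the realization question into the lattice-theoretic framework of \S\ref{ss:obstruction} via the global Torelli theorem for K3 surfaces, and then apply Theorems~\ref{th:A}, \ref{th:comparison_intro}, and \ref{th:chpl_on_index0_intro}. By Torelli, $\lambda$ is realized as the dynamical degree of an automorphism of some nonprojective K3 surface if and only if there exists a product $C$ of cyclotomic polynomials with $\deg(S\cdot C)=22$ and an index map $\fraci\in\Idx(3,19;F)$ such that $F:=S\cdot C$ is the characteristic polynomial of a semisimple $(F,\fraci)$-isometry of an even unimodular lattice of signature $(3,19)$, with $\fraci$ compatible with the Hodge structure of a nonprojective K3 surface. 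Here the nonprojectivity forces the transcendental lattice to have signature $(3,\operatorname{rk}T-3)$, because $(H^{2,0}\oplus H^{0,2})_{\bR}$ supplies a positive $2$-plane in $T_{\bR}$ and the absence of an ample class pushes the remaining positive direction of $H^{1,1}_{\bR}$ into $T_{\bR}$; this pins down $\fraci$ on $S$ and on each cyclotomic factor up to a small number of combinatorial choices.

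Second, by Theorem~\ref{th:A}, realization amounts to the conditions (Sign), (Square), together with vanishing of $\ob_{\fraci}:\Omega\to\bZ/2\bZ$. Since $S$ is irreducible over $\bQ$, the set $I(F;\bQ)$ equals $\{S\}\cup\{\Phi_l : \Phi_l\mid C\}$, and the equivalence relation $\sim$ on $I(F;\bQ)$ is generated by pairs $(f,g)$ with $\Pi(f,g)\neq\emptyset$. Two regimes arise. If $\{S\}$ is a singleton $\sim$-class then the indicator $\chi_{\{S\}}$ lies in $\Omega$, and a direct computation of $\ob_{\fraci}(\chi_{\{S\}})$ via the explicit formula for $\eta_{\infty}$ from \S\ref{ss:comparison}, combined with the nonprojective index constraint on $S$, shows that the obstruction cannot vanish. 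If instead $\Pi(S,\Phi_l)\neq\emptyset$ for some $\Phi_l\mid C$, then $S$ and $\Phi_l$ lie in a common $\sim$-class, $\chi_{\{S\}}\notin\Omega$, and the would-be obstruction is absorbed by an appropriate choice of local Hasse--Witt invariants at a prime in $\Pi(S,\Phi_l)$. I would make this concrete by first constructing a baseline $(11,11)$-signature realization $\fj\in\Idx(11,11;F)$ via Theorem~\ref{th:chpl_on_index0_intro}(b) (so $\ob_{\fj}=0$), and then using Theorem~\ref{th:comparison_intro} to reduce $\ob_{\fraci}$ to the explicit linear functional $c\mapsto(\eta_{\infty}(\fraci)-\eta_{\infty}(\fj))\cdot c$ on $\Omega$.

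For sufficiency $(\Leftarrow)$, given $l\in\cC_d$ with $\Pi(S,\Phi_l)\neq\emptyset$, I would construct an explicit $C=\Phi_l\cdot C'$, where $C'$ is a product of auxiliary cyclotomics chosen to fill the degree up to $22-d$ while preserving (Square) and admitting an index map of nonprojective type, and then verify vanishing of $\ob_{\fraci}$ via the comparison above. For necessity $(\Rightarrow)$, given a nonprojective K3 automorphism with dynamical degree $\lambda$, the dichotomy in the second paragraph forces some cyclotomic factor $\Phi_l$ of $F$ to satisfy $\Pi(S,\Phi_l)\neq\emptyset$, and a case-by-case examination pins down which $l$ are possible; the resulting list is exactly $\cC_d$.

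The main obstacle will be the finite but intricate case analysis needed to establish the explicit sets $\cC_{10}$ and $\cC_{18}$. For each cyclotomic polynomial $\Phi_l$ with $\varphi(l)\le 22-d$, one must enumerate the admissible auxiliary products $C'$, verify simultaneous compatibility of (Sign) and (Square), and check whether a nonprojective index map can be realized over the irreducible factors of $F$ in a manner consistent with signature $(3,19)$. The excluded values such as $l\in\{13,20,26\}$ for $d=10$ (and the analogous exclusions in the $d=18$ case) must be ruled out by showing that either (Square) admits no completion, or no valid nonprojective index map exists, or the comparison functional $\eta_{\infty}(\fraci)-\eta_{\infty}(\fj)$ fails to annihilate $\Omega$. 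Once this enumeration is carried out, the resulting lists will coincide with $\cC_{10}$ and $\cC_{18}$ as stated.
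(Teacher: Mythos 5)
Your proposal follows essentially the same route as the paper: reduce to the lattice criterion with the distinguished index map $\fraci_\delta$, build a baseline $(11,11)$-isometry via Theorem \ref{th:chpl_on_index0_intro}(b), and compare obstructions via Theorem \ref{th:comparison_intro}, with the computation $(\eta_\infty(\fraci_\delta)-\eta_\infty(\fj))\cdot\bm1_{\{S\}}=1$ ruling out the case where $\{S\}$ is its own equivalence class. The work you defer --- the explicit choices of $C$ for sufficiency and the exclusion of $l=20$ when $d=10$ (via $\Pi(\Phi_{20},\Phi_m)=\emptyset$ for all $m\in\cC_{18}$, so that $\bm1_{\{S,\Phi_{20}\}}\in\Omega$ and the comparison functional is nonzero on it) --- is exactly how the paper completes the argument.
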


In the case $d = 18$, a similar result is obtained by Bayer-Fluckiger, see 
\cite[Theorem 22.1]{Ba22}. She also gives an example of a Salem number of degree $18$ 
that is not realizable as the dynamical degree of an automorphism of a K3 surface, 
projective or not (\cite[Example 26.3]{Ba22}). 
In the case $d = 10$, a similar result of the if part is obtained by her, see 
\cite[Theorem 23.1]{Ba22}. 
Compared with her results, we remark that the assumption that $S$ is `unramified' 
is unnecessary in Theorem \ref{th:NPR}. 
A Salem number of degree $10$ is not nonprojectively realizable if and only if 
$\Pi(S,\Phi_l) = \emptyset$ for all $l\in \cC_{10}$ by Theorem \ref{th:NPR}, 
where $S$ is the corresponding Salem polynomial. 
This condition is so strong that there seems to be no number satisfying it, 
but the author has no proof.   
For the other degrees, more simpler criteria are already known, 
see Remark \ref{otherdegrees}. 

The organization of this article is as follows.
We introduce terminology related to inner products and isometries in \S \ref{sec:IPandI}, 
and explain the local-global theory for Question \ref{q:1} in \S \ref{sec:LGPO}. 
Section \ref{sec:Comp_of_Ob} gives some results for computing the local-global 
obstruction. In particular, Theorem \ref{th:comparison_intro} is proved in this 
section. The proof of Theorem \ref{th:chpl_on_index0_intro} is given in 
\S5. We show Theorem \ref{th:NPR} in \S6.

\paragraph{Acknowledgments}
This work is supported by JSPS KAKENHI Grant Number JP22KJ0009.

\section{Inner products and isometries}\label{sec:IPandI}
We here summarize terms and results related to inner products and isometries. 

\subsection{Inner product spaces and lattices}\label{ss:IPSandL}
Let $K$ be a field, and $V$ a finite dimensional $K$-vector space. 
An \textit{inner product} on $V$ is a nondegenerate symmetric bilinear form 
$b:V\times V \to K$. If $b$ is an inner product then the pair $(V, b)$ is 
called an \textit{inner product space}. 
Let $(V, b)$ be a $d$-dimensional inner product space over $K$, and 
let $e_1, \ldots, e_d$ be a basis of $V$. The $d\times d$ matrix 
$G := (b(e_i, e_j))_{ij}\in M_d(K)$ is called the \textit{Gram matrix} of $(V, b)$ 
with respect to the basis $e_1, \ldots, e_d$. 
The class of $\det G$ in $K^\times/K^{\times 2}$ is independent of the choice of 
the basis. This square class $\det G \in K^\times/K^{\times 2}$ is referred to as 
the \textit{determinant} of $(V,b)$ and denoted by $\det b$. 

For a place $v$ of $\bQ$, the symbol $\bQ_v$ denotes the field of $v$-adic numbers
if $v$ is a prime number, and the field $\bR$ if $v$ is the infinite place $\infty$.  
For an inner product $b$ over $\bQ_v$, we write $\hw_v(b)\in \{0,1\} = \bZ/2\bZ$ for 
its \textit{Hasse-Witt invariant}. 
This is the additive expression of the invariant $\epsilon$ in IV-\S2.1 of Serre's book 
\cite{Se73}. 

\medskip

Let $\cO$ be a principal ideal domain. 
A \textit{lattice} over $\cO$ is a pair $(\Lambda, b)$ consisting of a finitely generated 
free $\cO$-module $\Lambda$ and an inner product (i.e., 
a nondegenerate symmetric bilinear form) $b:\Lambda \times \Lambda \to \cO$. 
Let $(\Lambda, b)$ be a lattice of rank $d$ over $\cO$, and  
let $e_1, \ldots, e_d$ be a basis of $\Lambda$.
Similarly to the definitions for an inner product space, the $d\times d$ matrix 
$G := (b(e_i, e_j))_{ij}\in M_d(\cO)$ is called the Gram matrix of $(\Lambda, b)$, 
and the square class $\det b := \det G \in \cO^\times/ \cO^{\times 2}$ is 
called the determinant. We say that $(\Lambda, b)$ is \textit{unimodular} if its 
Gram matrix is invertible over $\cO$, and $(\Lambda, b)$ is \textit{even} is 
$b(x,x)\in 2\cO$ for all $x\in \Lambda$.

For a lattice over $\bZ$, its \textit{signature} is the signature of 
the inner product space over $\bR$ obtained by extending scalars to $\bR$. 
As for even unimodular lattices over $\bZ$, the following theorem is well known. 

\begin{theorem}\label{th:EULoverZ}
Let $r$ and $s\in \bZ_{\geq 0}$ be non-negative integers.  
\begin{enumerate}
\item If $(r,s)$ is the signature of an en even unimodular lattice over $\bZ$ 
then $r\equiv s \mod 8$. 
\item Suppose that $r\equiv s \mod 8$. Then there exists an even unimodular lattice 
over $\bZ$ of signature $(r,s)$. 
Moreover, if $r$ and $s$ are positive then such a lattice is unique up to isomorphism. 
\end{enumerate}
\end{theorem}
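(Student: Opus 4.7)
For part (i), the plan is to invoke the classical lemma of van der Blij. For any unimodular $\bZ$-lattice $(L, b)$, the linear form $L/2L \to \bZ/2\bZ$ given by $x \mapsto b(x,x) \bmod 2$ is represented, via the nondegenerate mod-$2$ pairing induced by $b$, by some $c \in L$ (a \emph{characteristic element}); van der Blij's formula then asserts $b(c,c) \equiv \sigma(L) \bmod 8$, where $\sigma(L) = r - s$ is the signature. When $L$ is even, the zero vector is characteristic, so $\sigma(L) \equiv 0 \bmod 8$, equivalently $r \equiv s \bmod 8$.

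For the existence half of part (ii), I would build the required lattices from two standard pieces: the hyperbolic plane $U$, which is even unimodular of signature $(1,1)$, and the root lattice $E_8$, positive definite even unimodular of rank $8$ (its opposite $-E_8$ having signature $(0,8)$). Writing $r - s = 8k$ with $k \in \bZ$, I would set
\[
\Lambda := \begin{cases} U^{\oplus s} \oplus E_8^{\oplus k} & (k \geq 0), \\ U^{\oplus r} \oplus (-E_8)^{\oplus (-k)} & (k < 0), \end{cases}
\]
and verify directly that $\Lambda$ is even unimodular of signature $(r,s)$ by computing signatures of the summands.

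For the uniqueness half, under the additional hypothesis $r, s > 0$ the lattice is indefinite, and the plan is to appeal to the standard classification of indefinite unimodular $\bZ$-lattices: two such lattices of the same rank, signature, and parity (type I versus type II) are isometric. This is a consequence of the Hasse--Minkowski principle combined with strong approximation for the spin group (equivalently, Eichler's theorem for indefinite forms of rank at least $3$), and can be quoted, for instance, from Serre's \textit{Cours d'arithmétique}, Chapter V, or from Milnor--Husemoller. The main obstacle is conceptual rather than computational: part (i), existence, and the signature congruence are essentially formal once van der Blij's lemma and the $E_8/U$ building blocks are in hand, whereas uniqueness genuinely rests on substantial arithmetic input, namely genus-to-class triviality for indefinite forms of rank $\geq 3$. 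Since Theorem~\ref{th:EULoverZ} is invoked here as a foundational fact rather than reproved from scratch, citing these classical references is the natural course.
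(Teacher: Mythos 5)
Your proposal is correct: the paper's entire proof of Theorem \ref{th:EULoverZ} is the citation ``See \cite[Chapter V]{Se73}'', and your sketch (van der Blij's characteristic-vector argument for the congruence, the $U$/$E_8$ building blocks for existence, and the classification of indefinite unimodular lattices by rank, index and type for uniqueness) is exactly the standard material behind that citation, ending in the same conclusion that quoting the classical references is the right move here. No gaps; the constructions $U^{\oplus s}\oplus E_8^{\oplus k}$ and $U^{\oplus r}\oplus(-E_8)^{\oplus(-k)}$ do realize signature $(r,s)$ as claimed.
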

\begin{proof}
See \cite[Chapter V]{Se73}. 
\end{proof}

It is known that the \textit{$E_8$-lattice} $\rE_8$ is a unique even unimodular lattice 
of signature $(8, 0)$. However, in Theorem \ref{th:EULoverZ}, 
the assumption that $r$ and $s$ are positive cannot be dropped for the uniqueness. 
In fact, it is known that there exist two isomorphism classes of 
even unimodular lattices of signature $(16, 0)$, and 
$24$ isomorphism classes of even unimodular lattices of signature $(24, 0)$. 
For these facts, we refer to \cite[Chapter V, \S2.3]{Se73}.

\subsection{Symmetric polynomials}
Most of polynomials treated in this article are monic and have nonzero constant terms. 
We may sometimes assume that a factor of a monic polynomial is monic without mentioning. 
Let $K$ be a field. 

\begin{definition}
Let $F(X)\in K[X]$ be a monic polynomial with $F(0) \neq 0$. 
We define a monic polynomial $F^*(X)\in K[X]$ by 
\[ F^*(X) : = F(0)^{-1} X^{\deg F}  F(X^{-1}).  \]
If $F(X) = X^n + a_{n-1} X^{n-1} + \cdots + a_1 X + a_0$ ($a_0 \neq 0$) then
\[ F^*(X) = a_0^{-1} (a_0 X^n + a_1 X^{n-1} + \cdots + a_{n-1}X + a_n). \]
We say that $F$ is \textit{$*$-symmetric} if $F^* = F$. 
For a $*$-symmetric polynomial $F$, we have $F(0) = 1$ or $-1$. 
A $*$-symmetric polynomial $F$ is said to be \textit{$+1$-symmetric} 
(resp. \textit{$-1$-symmetric}) if $F(0) = 1$ (resp. $F(0) = -1$).
\end{definition}

Let $F\in K[X]$ be a monic polynomial. 
It can be checked that
if $F$ is $+1$-symmetric and has odd degree then $(X+1)\mid F$; and 
if $\ch(K)\neq 2$ and $F$ is $-1$-symmetric then $(X-1)\mid F$. 
These implies that every $*$-symmetric irreducible polynomial other than 
$X-1$ and $X+1$ is $+1$-symmetric and has even degree. 

\begin{definition}\label{def:type012}
We say that a $*$-symmetric polynomial $f\in K[X]$ is of
\begin{itemize}
\item \textit{type $0$} if $f$ is a product of powers of $(X-1)$ and of $(X+1)$;
\item \textit{type $1$} if $f$ is a product of $+1$-symmetric 
irreducible monic polynomials of even degrees;
\item \textit{type $2$} if $f$ is a product of polynomials of the form $gg^*$, where
$g$ is monic, irreducible and $g^* \neq g$.
\end{itemize}
Note that if $f$ is of type $2$ then $f$ is $+1$-symmetric and of even degree 
as well as the type $1$ case. 
For a monic polynomial $F\in K[X]$, we write $I_i(F;K)$ for the set of its irreducible 
factors of type $i$ over $K$ ($i = 0,1$), 
and define $I(F;K) := I_0(F;K) \cup I_1(F;K)$. 
The symbol $I_2(F;K)$ denotes the set of non-$*$-symmetric irreducible factors of $F$
in $K[X]$. 
\end{definition}

\begin{proposition}\label{prop:factorization_into_type012}
Let $F\in K[X]$ be a $*$-symmetric polynomial. 
For any irreducible monic polynomial $g\in K[X]$,  
the multiplicity of $g^*$ in $F$ is equal to that of $g$. 
As a result, $F$ can be expressed as 
\[ F 
= \prod_{f\in I_0(F;K)} f^{m_f} 
\times \prod_{f\in I_1(F;K)} f^{m_f} 
\times \prod_{\{g,g^*\}\subset I_2(F;K)} (gg^*)^{m_g}, 
\]
where $m_f$ is the multiplicity of $f\in K[X]$ in $F$.
\end{proposition}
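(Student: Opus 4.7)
The plan is to reduce the statement to three formal properties of the $*$-involution combined with unique factorization in $K[X]$.

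First, I would record the key algebraic properties of $*$ on the set of monic polynomials in $K[X]$ with nonzero constant term: (i) $*$ is multiplicative, that is, $(fg)^* = f^* g^*$; (ii) $*$ is an involution, $(f^*)^* = f$; and (iii) $*$ preserves irreducibility and degree, so $f \mapsto f^*$ permutes the set of irreducible monic polynomials with nonzero constant term. Each of these follows directly from the defining formula $F^*(X) = F(0)^{-1} X^{\deg F} F(X^{-1})$ by a short computation.

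Next, I would apply unique factorization in $K[X]$. Write $F = \prod_j h_j^{n_j}$ as a product of distinct irreducible monic factors. Applying $*$ gives $F^* = \prod_j (h_j^*)^{n_j}$, where each $h_j^*$ is again irreducible monic by property (iii). Since $F^* = F$, comparing the multiplicity of an arbitrary irreducible monic $g$ on the two sides yields that the multiplicity of $g$ in $F$ equals that of $g^*$ in $F$. This is precisely the first assertion of the proposition.

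For the explicit factorization, I would partition the set of irreducible monic factors of $F$ into those fixed by $*$ and those not. The non-fixed ones come in $*$-conjugate pairs $\{g, g^*\}$ with $g^* \neq g$; each such pair lies in $I_2(F;K)$ and, by the first assertion, contributes a factor $(gg^*)^{m_g}$. The fixed ones are the $*$-symmetric irreducible factors, which by definition form $I(F;K) = I_0(F;K) \cup I_1(F;K)$. Here one uses the remark immediately preceding Definition \ref{def:type012}: any $*$-symmetric irreducible monic polynomial other than $X-1$ or $X+1$ is automatically $+1$-symmetric of even degree, hence of type $1$, while $X-1$ and $X+1$ are of type $0$; so the two subfamilies $I_0$ and $I_1$ together exhaust the fixed factors. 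Regrouping the full factorization into these three blocks produces exactly the displayed product.

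There is no substantive obstacle; the proof is pure bookkeeping built on properties (i)--(iii) and the $K[X]$-UFD property. The only point requiring a moment of care is the exhaustiveness of types $0$ and $1$ among $*$-symmetric irreducibles, which is already flagged in the text just before the definition of types and so may be invoked without further argument.
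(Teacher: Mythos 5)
Your proof is correct and rests on the same essential ingredients as the paper's: the multiplicativity of $*$ on monic polynomials with nonzero constant term, the fact that $*$ permutes irreducible monic polynomials, and unique factorization in $K[X]$. The paper organizes this as a local divisibility argument for a single conjugate pair $\{g,g^*\}$ (writing $F=(gg^*)^{m_g}(g^*)^{m_{g^*}-m_g}H$ and forcing $m_{g^*}=m_g$), whereas you apply $*$ to the whole irreducible factorization and compare multiplicities globally; the two are interchangeable and your handling of the regrouping into types $0$, $1$, $2$ matches the paper's.
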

\begin{proof}
Let $g$ be an irreducible monic polynomial. If $g = g^*$ then the assertion is clear. 
Suppose that $g \neq g^*$, and let $m_g, m_{g^*}$ be the multiplicities of 
$g,g^*$ respectively. We may assume that $m_{g^*} \geq m_g$ since $g^{**} = g$. 
Because $g$ and $g^*$ have no common factor, we can write 
\[ F 
= g^{m_g}(g^*)^{m_{g*}} H
= (gg^*)^{m_g} (g^*)^{m_{g*} - m_g} H
\]
for some monic polynomial $H\in K[X]$. 
Since $F$ and $(gg^*)^{m_g}$ are $*$-symmetric, it follows 
that $(g^*)^{m_{g*} - m_g} H$ is also $*$-symmetric, i.e., 
$g^{m_{g*} - m_g} H^* = (g^*)^{m_{g*} - m_g} H$. 
In particular $g^{m_{g*} - m_g}\mid (g^*)^{m_{g*} - m_g} H$. 
However, both $g^*$ and $H$ are coprime to $g$. Hence $m_{g*} - m_g = 0$. 
This completes the proof. 
\end{proof}

\begin{definition}\label{def:type_i_component}
Let $F\in K[X]$ be a $*$-symmetric polynomial. 
For each $i = 0,1,2$, the factor $F_i := \prod_{f\in I_i(F;K)}f^{m_f}$
of $F$ is referred to as the \textit{type $i$ component} of $F$ in $K[X]$. 
\end{definition}

Let $F\in K[X]$ be a $*$-symmetric polynomial. 
It is clear that $F_0$ and $F_1$ are $*$-symmetric polynomials of type $0$
and of type $1$ respectively. Furthermore, 
we have $F_2 = \prod_{\{g,g^*\}\subset I_2(F;K)} (gg^*)^{m_g}$
by Proposition \ref{prop:factorization_into_type012}, and it is 
actually a $*$-symmetric polynomial of type $2$. 
The factorization $F = F_0F_1F_2$ depends on the field being considered. 
For example, the polynomial $X^2 + 1$ is irreducible and of type $1$ in $\bQ[X]$, 
but it is factorized as $X^2 + 1 = (X - \sqrt{-1})(X + \sqrt{-1})$ and of type $2$ 
in $\bQ(\sqrt{-1})[X]$. On the other hand, $X-1$ and $X+1$ are of type $0$ over 
any field. It can be seen that 
if a $*$-symmetric polynomial $F\in K[X]$ is of type $2$ over $K$ then 
it is also type $2$ over any extension field $L$ of $K$. 

\begin{definition}\label{def:tracepl}
For a $+1$-symmetric polynomial $f\in K[X]$ of even degree $2n$, 
there exists a unique polynomial $h\in K[X]$ of degree $n$ such that 
$f(X) = X^n h(X + X^{-1})$. The polynomial $h$ is called the 
\textit{trace polynomial} of $f$. 
\end{definition}

\subsection{Isometries of inner product spaces}
Let $(V,b)$ be an inner product space over a field $K$. 
A $K$-linear transformation $t:V\to V$ is referred to as an \textit{isometry} of $(V,b)$ 
if $b(t(x), t(y)) = b(x, y)$ for any $x,y\in V$. 
Let $t$ be an isometry of $V$, and $F\in K[X]$ the characteristic polynomial of $t$. 
For a polynomial $f\in K[X]$, we define
\begin{equation*}
 V(f;t) := \{ v\in V \mid f(t)^N.v = 0 \quad\text{for some $N\in \bZ_{\geq 0}$} \}.   
\end{equation*}
Note that each $V(f;t)$ is $t$-stable, i.e., $t.V(f;t) \subset V(f;t)$. 
Moreover, we have $V(f;t)$ is orthogonal to $V(g;t)$ unless $g = f^*$, see 
\cite[Lemma 3.1]{Mi69}. It can be shown that $F$ is $*$-symmetric, and 
$V$ admits the following orthogonal direct sum decomposition:
\begin{equation}\label{eq:ODSD}
V = \bigoplus_{f\in I(F;K)} V(f;t)
\oplus \bigoplus_{\{g, g^*\}\subset I_2(F;K)} V(gg^*;t).  
\end{equation}
Since $V(gg^*;t) = V(g;t) + V(g^*;t)$ (direct sum) and $V(g;t) \perp V(g^*;t)$, 
the subspace $V(gg^*;t)$ is \textit{metabolic} i.e., it has a basis of which the 
Gram matrix is of the form 
\[ \begin{pmatrix}
O & \Id_n \\
\Id_n & O 
\end{pmatrix}, \]
where $\Id_n$ is the identify matrix.  

\medskip

\begin{definition}\label{def:associated_module}
Let $F\in K[X]$ be a $*$-symmetric polynomial. 
For a factor $f$ which is in $I(F;K)$ or of the form $gg^*$ for some $g\in I_2(F;K)$, 
we define $M^f := (K[X]/(f))^{m_f}$ where $m_f$ is the multiplicity of $f$ in $F$. 
Let us consider the $K$-algebra 
\begin{equation}\label{eq:associated_module_of_*-symm}
 M := \prod_{f\in I(F;K)} M^f 
\times \prod_{\{g,g^*\}\subset I_2(F;K)} M^{gg^*}. 
\end{equation}
Let $\alpha:M\to M$ be the linear transformation defined by 
the multiplication by $X$. Then, it is a semisimple transformation with 
characteristic polynomial $F$. 
When regarding $M$ as a $K[X]$-module by the action of $\alpha$, we refer to $M$ as 
the \textit{associated $K[X]$-module of $F$} with transformation $\alpha$. 
\end{definition}

\begin{remark}
We will consider inner products on $M$ making $\alpha$ an isometry. 
When discussing a general theory, it is convenient to regard $M$ as a 
$K[\Gamma]$-module rather than a $K[X]$-module, 
where $\Gamma$ is an infinite cyclic group. 
One reason is that $K[\Gamma]$ admits the involution determined by $g\mapsto g^{-1}$
for all $g\in \Gamma$. We refer to \cite[\S\S 3 and 4]{BT20}.  
\end{remark}

\subsection{Indices of isometries}
Here, we introduce an invariant of an isometry of an inner product space over $\bR$
called the index. For more detail, we refer to \cite[\S2.5]{Ta23} and references therein. 
Let $(V,b)$ be an inner product space over $\bR$. 
If $(r,s)$ is the signature of $(V,b)$ then the difference $r-s$ is called the 
\textit{index}. 
Let $t$ be an isometry of $(V,b)$ and $F\in \bR[X]$ its characteristic polynomial. 
We remark that the index of the subspace $V(gg^*;t)$ for $g\in I_2(F;\bR)$ is 
zero since $V(gg^*;t)$ is metabolic. 

\begin{definition}
The map $\idx_t : I(F;\bR) \to \bZ$ defined by 
\[ \idx_t(f) = \text{the index of $V(f;t)$} \quad (f\in I(F;\bR)) \]
is called the \textit{index of $t$} (with respect to $b$). 
It is sometimes written by $\idx_t^b$ if the inner product $b$ needs to be emphasized. 
\end{definition}
Let $f\in I(F, \bR)$, and let $m_f$ denote the multiplicity of $f$ in $F$. 
Note that the signature of $V(f;t)$ is given by 
\[ \left(
\frac{\deg(f^{m_f}) + \idx_t(f)}{2}, \frac{\deg(f^{m_f}) - \idx_t(f)}{2} \right). \]
We have
\begin{equation}\label{eq:idx1}
\text{
$-\deg(f^{m_f}) \leq \idx_t(f) \leq \deg(f^{m_f})$
and $\idx_t(f) \equiv \deg(f^{m_f}) \bmod 2$, }
\end{equation}
and 
\begin{equation}\label{eq:idx2}
\text{if $f \in I_1(F;\bR)$ then $(\deg(f^{m_f}) + \idx_t(f))/2 
\equiv (\deg(f^{m_f}) - \idx_t(f))/2 
\equiv 0 \bmod 2$} 
\end{equation}
(actually $\deg(f^{m_f}) = 2m_f$ because any $+1$-symmetric 
irreducible polynomial over $\bR$ other than $X+1$ is of the form 
$X^2 - (\delta + \delta^{-1})X + 1$ for some complex number $\delta \neq 1, -1$ 
with $|\delta| = 1$, and hence has degree $2$). 
Furthermore
\begin{equation}\label{eq:idx3}
\sum_{f\in I(F;\bR)} \fraci(f) = \text{the index of $(V,b)$} 
\end{equation}
by the orthogonal direct sum decomposition \eqref{eq:ODSD}. 

\begin{theorem}\label{th:condition_Sign}
Let $t$ be an isometry of an inner product space $(V,b)$ over 
$\bR$ of signature $(r,s)$. 
Let $F\in \bR[X]$ denote the characteristic polynomial of $t$, and 
$m(F)$ the number of roots of $F$ whose absolute values are greater than $1$
counted with multiplicity. Then 
\begin{equation}\label{eq:Sign}
\text{$r,s \geq m(F)$ and if $F(1)F(-1)\neq 0$ then $r\equiv s\equiv m(F) \bmod 2$}. 
\tag{Sign}  
\end{equation}
\end{theorem}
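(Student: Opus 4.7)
The plan is to read off both the inequality and the parity statement directly from the orthogonal direct sum decomposition \eqref{eq:ODSD}, combined with the fact that each $V(gg^*;t)$ with $\{g,g^*\}\subset I_2(F;\bR)$ is metabolic. Under this decomposition, the signature of $V$ is the sum of the signatures of the summands: each $V(gg^*;t)$ contributes $(n_g, n_g)$ with $n_g = \dim V(g;t) = \deg(g^{m_g})$, while each $V(f;t)$ with $f \in I(F;\bR)$ contributes some $(r_f, s_f)$ satisfying $r_f + s_f = \deg(f^{m_f})$ and $r_f, s_f \geq 0$.

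The next step is to match $m(F)$ with the total dimension of the $V(g;t)$'s. I would argue that an irreducible monic polynomial in $\bR[X]$ with nonzero constant term is $*$-symmetric if and only if all of its roots lie on the unit circle: this is immediate in degree $1$ (since $X-a$ has $(X-a)^* = X - a^{-1}$, forcing $a = \pm 1$), and in degree $2$ amounts to the observation that $g = X^2 + bX + c$ equals $g^* = X^2 + (b/c)X + (1/c)$ exactly when $c = 1$. Hence roots of factors in $I(F;\bR)$ all have absolute value $1$. On the other hand, for each pair $\{g, g^*\} \subset I_2(F;\bR)$ all roots of $g$ share the same absolute value (trivially if $\deg g = 1$, and by complex conjugation if $\deg g = 2$), this common value is different from $1$, and the roots of $g^*$ are the inverses of those of $g$. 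Exactly one of $g$ and $g^*$ therefore has all its roots strictly outside the unit circle, contributing $\deg(g^{m_g})$ roots of $F$ of absolute value $>1$ counted with multiplicity. Summing over pairs yields $m(F) = \sum_{\{g,g^*\} \subset I_2(F;\bR)} \deg(g^{m_g})$, so the $I_2$-summands account for signature exactly $(m(F), m(F))$.

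Assembling the pieces gives $r = m(F) + \sum_{f \in I(F;\bR)} r_f$ and $s = m(F) + \sum_{f \in I(F;\bR)} s_f$ with $r_f, s_f \geq 0$, which proves $r, s \geq m(F)$. For the parity statement, the hypothesis $F(1)F(-1) \neq 0$ forces neither $X-1$ nor $X+1$ to divide $F$, so $I(F;\bR) = I_1(F;\bR)$; then \eqref{eq:idx2} asserts that each $r_f$ and $s_f$ is even, yielding $r \equiv s \equiv m(F) \bmod 2$. The main (though mild) point requiring attention is the metabolicity of each $V(gg^*;t)$: this rests on $V(g;t)$ being totally isotropic, which follows from $g \neq g^*$ together with the orthogonality rule $V(f;t) \perp V(h;t)$ for $h \neq f^*$ noted earlier. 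Once this is in hand, the rest is direct bookkeeping.
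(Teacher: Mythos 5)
Your proof is correct and follows essentially the same route as the paper: decompose $V$ via \eqref{eq:ODSD}, observe that the metabolic pieces $V(gg^*;t)$ contribute signature $(m(F),m(F))$, and use \eqref{eq:idx2} for the parity claim when $F(1)F(-1)\neq 0$. The only difference is that you spell out in more detail why $m(F)=\sum_{\{g,g^*\}}\deg(g^{m_g})$ (i.e., that roots of $*$-symmetric irreducible factors over $\bR$ lie on the unit circle while each $I_2$-pair contributes exactly $\deg(g^{m_g})$ roots of modulus $>1$), a step the paper takes for granted.
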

\begin{proof}
Let $(r_+, s_+)$ and $(r_-, s_-)$ denote the signatures of the subspaces 
$V(X-1;t)$ and $V(X+1;t)$ respectively, and  $(r_f, s_f)$ the signature of 
$V(f;t)$ for each $f\in I_1(F;\bR)$ or $f = gg^*$ where $g\in I_2(F;\bR)$. 
Then we have 
\[
r = r_+ + r_- + \sum_{f\in I_1(F;\bR)}r_f +  \sum_{\{g,g^*\}\subset I_2(F;\bR)}r_{gg^*}
\]
by the orthogonal direct sum decomposition \eqref{eq:ODSD}. 
Furthermore $r_{gg^*} = \deg((gg^*)^{m_g})/2$ since $V(gg^*;t)$ has index $0$. 
Thus 
\[ \sum_{\{g,g^*\}\subset I_2(F;\bR)}r_{gg^*}
= \sum_{\{g,g^*\}\subset I_2(F;\bR)} \deg((gg^*)^{m_g})/2 
= m(F),
\]
and we get 
\[
r = r_+ + r_- + \sum_{f\in I_1(F;\bR)}r_f +  m(F). 
\]
In particular $r \geq m(F)$. 
Moreover, since $r_f\equiv 0 \bmod 2$ by \eqref{eq:idx2}, 
if $F(1)F(-1) \neq 0$ then $r_+ = r_- = 0$, and $r \equiv m(F) \bmod 2$. 
It follows similarly that $s\geq m(F)$ and if $F(1)F(-1)\neq 0$ then 
$s\equiv m(F) \bmod 2$. 
\end{proof}

The condition \eqref{eq:Sign} will be denoted by \eqref{eq:Sign}$_{r,s}$ if necessary. 
Changing from the situation so far, 
suppose that a $*$-symmetric polynomial $F\in \bR[X]$ is given. 
For a given map $\fraci : I(F;\bR) \to \bZ$, we consider 
when there exists an inner product space over $\bR$ and its semisimple isometry $t$ 
such that $F$ is the characteristic polynomial of $t$ and $\idx_t = \fraci$. 

\begin{definition}\label{def:indexmap}
Let $r$ and $s$ be non-negative integers. 
The symbol $\Idx(r,s;F)$ denotes the set of all maps from $I(F;\bR)$ to $\bZ$
satisfying the following three conditions:
\begin{equation}\label{eq:indexdata1}
-\deg(f^{m_f}) \leq \fraci(f) \leq \deg(f^{m_f}) \text{ and } 
\fraci(f) \equiv \deg(f^{m_f}) \bmod 2 \text{ for each $f\in I(F;\bR)$}.
\end{equation}
\begin{equation}\label{eq:indexdata2}
\frac{\deg(f^{m_f}) + \fraci(f)}{2} 
\equiv \frac{\deg(f^{m_f}) - \fraci(f)}{2} 
\equiv 0 \bmod 2 \text{ for each $f\in I_1(F;\bR)$}. 
\end{equation}
\begin{equation}\label{eq:indexdata3}
\sum_{f\in I(F;\bR)} \fraci(f) = r -s.
\end{equation}
Each map in $\Idx(r,s;F)$ is referred to as an \textit{index map}. 
\end{definition}

For any isometry $t$ of an inner product space over $\bR$ of signature $(r,s)$, 
with characteristic polynomial $F$, its index $\idx_t$ belongs to $\Idx(r,s;F)$
by \eqref{eq:idx1}, \eqref{eq:idx2}, and \eqref{eq:idx3}. 
Conversely, any index map $\fraci\in \Idx(r,s;F)$ is realized as the index of 
some isometry $t$. More precisely, the following theorem holds.

\begin{theorem}\label{th:Sign_implies}
Let $r,s$ be non-negative integers, and let $F\in \bR[X]$ be a $*$-symmetric polynomial
of degree $r + s$. Suppose that the condition \eqref{eq:Sign} holds. 
Then the set $\Idx(r,s;F)$ is not empty. Moreover, for any $\fraci \in \Idx(r,s;F)$, 
there exists an inner product $b$ on the associated $\bR[X]$-module $M$ of $F$ 
with transformation $\alpha$ such that it makes $\alpha$ an isometry with index
$\fraci$. As a result, we can write
\[ \Idx(r,s;F) = \left\{ \idx_t \;\middle|\; 
\begin{tabular}{l}
\text{$t$ is a semisimple isometry of an inner product space } \\
\text{of signature $(r,s)$ with characteristic polynomial $F$}
\end{tabular}
\right\}.
\]
\end{theorem}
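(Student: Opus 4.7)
The strategy is to reduce the theorem to independent constructions on each summand of the canonical $\bR[X]$-module decomposition
\[
M = \bigoplus_{f\in I(F;\bR)} M^f \oplus \bigoplus_{\{g,g^*\}\subset I_2(F;\bR)} M^{gg^*},
\]
using that an orthogonal sum of $\alpha$-isometric inner products is again $\alpha$-isometric and that signatures (hence indices) add across summands. Given $\fraci\in\Idx(r,s;F)$, it will suffice to equip each $M^f$ with an $\alpha$-isometric inner product of signature $((\deg(f^{m_f})+\fraci(f))/2,(\deg(f^{m_f})-\fraci(f))/2)$ and each $M^{gg^*}$ with the standard hyperbolic pairing; the orthogonal sum then has total signature $(r,s)$ and index $\fraci$, proving the second assertion. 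The third assertion follows by sandwiching $\Idx(r,s;F)$ between the set of indices of semisimple isometries (via \eqref{eq:idx1}--\eqref{eq:idx3}) and $\Idx(r,s;F)$ itself (via the second assertion).

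The three model constructions are as follows. On $M^{gg^*}$ with $g\in I_2(F;\bR)$, the hyperbolic pairing between $M^g$ and $M^{g^*}$ makes $\alpha$ a metabolic isometry of index $0$. On $M^f$ with $f=X\mp 1$, the operator $\alpha=\pm\id$ is isometric for every symmetric bilinear form, so any signature $(r_f,s_f)$ with $r_f+s_f=m_f$ and $r_f-s_f=\fraci(f)$ is realized whenever \eqref{eq:indexdata1} holds. On $M^f$ with $f\in I_1(F;\bR)$, the quotient $L:=\bR[X]/(f)\cong\bC$ carries a natural complex structure in which $\alpha$ acts as multiplication by a root $\delta$ of $f$ on the unit circle, and any Hermitian form on $L^{m_f}\cong\bC^{m_f}$ of complex signature $(p,m_f-p)$, composed with $\mathrm{tr}_{L/\bR}$, yields an $\alpha$-isometric real symmetric form of signature $(2p,2(m_f-p))$; varying $p\in\{0,1,\ldots,m_f\}$ realizes exactly the set of index values $\fraci(f)\in\{-2m_f,-2m_f+4,\ldots,2m_f\}$ allowed by \eqref{eq:indexdata1} and \eqref{eq:indexdata2}.

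For the first assertion one must exhibit at least one $\fraci$ satisfying \eqref{eq:indexdata1}--\eqref{eq:indexdata3}. Since the roots of $F_2$ pair as $\lambda\leftrightarrow\lambda^{-1}$ with exactly one member of each pair having absolute value greater than $1$, we have $\deg F_2=2m(F)$, and hence $\deg F_0+\deg F_1=(r+s)-2m(F)$. Setting $r_0:=r-m(F)$, $s_0:=s-m(F)$, these are nonnegative by \eqref{eq:Sign} and satisfy $r_0+s_0=\deg F_0+\deg F_1$; since the $I_2$ summands contribute $0$ to $r-s$, the task reduces to distributing $(r_0,s_0)$ across the factors in $I_0\cup I_1$ as local signatures subject to the constraints above. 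When $F(1)F(-1)\ne 0$, every local signature must have even entries; the mod-$2$ clause of \eqref{eq:Sign} forces $r_0$ and $s_0$ to be even, and a greedy allocation of even $r_f\in\{0,2,\ldots,2m_f\}$ summing to $r_0$ across $I_1$ completes the construction. When $F$ has $\pm 1$ as a root, the additional $I_0$ contributions provide the odd-parity flexibility that absorbs any mismatch.

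The main obstacle is the mod-$4$ condition \eqref{eq:indexdata2}, which requires $\fraci(f)\equiv 2m_f\bmod 4$ for each $f\in I_1$ and therefore forces $\sum_{f\in I_1}\fraci(f)\equiv\deg F_1\bmod 4$; one must check that \eqref{eq:Sign} renders this consistent with the target $r-s$. In the case $\deg F_0=0$, the identity $r-s=2r_0-\deg F_1$ with $r_0$ even and $\deg F_1$ even gives $r-s\equiv\deg F_1\bmod 4$ automatically, matching the required residue. In the case $\deg F_0>0$, the values $\fraci(X\mp 1)\in\{-m_\pm,-m_\pm+2,\ldots,m_\pm\}$ range over enough residues modulo $4$ (of the forced parity $m_++m_-$) to absorb any discrepancy between $r-s$ and the $I_1$-sum. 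A short case analysis on the parities of $m_+, m_-$ and of the $m_f$ for $f\in I_1$ completes the verification.
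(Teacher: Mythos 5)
Your construction is correct and is essentially the standard argument: the paper itself gives no proof of this theorem, deferring to Bayer-Fluckiger's Proposition 8.1(b) in \cite{Ba15}, and your isotypic decomposition of $M$ — Hermitian trace forms of prescribed complex signature on the type-$1$ components, arbitrary symmetric forms on the $(X\mp1)$-components where $\alpha=\pm\id$, and $\alpha$-invariant hyperbolic forms on the $M^{gg^*}$ — is exactly that argument, and your verification that the realizable indices on a type-$1$ component are precisely those permitted by \eqref{eq:indexdata1}--\eqref{eq:indexdata2} is right. The only place you gesture rather than prove is the existence step when $F(\pm1)=0$, but the needed fact is immediate: the type-$0$ local signatures carry no parity constraint, so $r_{X-1}+r_{X+1}$ ranges over all of $\{0,1,\dots,m_++m_-\}$, which together with the even values $\sum_f 2p_f\in\{0,2,\dots,\deg F_1\}$ realizes every $r_0\in\{0,\dots,\deg F_0+\deg F_1\}$, and the mod-$4$ bookkeeping in your last paragraph is then automatic rather than an obstacle.
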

\begin{proof}
See the proof of \cite[Proposition 8.1 (b)]{Ba15}. 
\end{proof}

\begin{corollary}\label{cor:conditionSign}
Let $r,s$ be non-negative integers, and let $F\in \bR[X]$ be a $*$-symmetric polynomial
of degree $r + s$. The set $\Idx(r,s;F)$ is not empty if and only if 
the condition \eqref{eq:Sign} holds. 
\end{corollary}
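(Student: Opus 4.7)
The plan is to split the biconditional into two directions. The nontrivial "if" direction is already contained in Theorem~\ref{th:Sign_implies}: whenever \eqref{eq:Sign} holds, that theorem produces (in fact constructs) an element of $\Idx(r,s;F)$. So the only thing left to prove is the "only if" direction: assuming the existence of some $\fraci\in \Idx(r,s;F)$, derive both parts of \eqref{eq:Sign} from the defining axioms \eqref{eq:indexdata1}--\eqref{eq:indexdata3}.

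The first step is to re-express $m(F)$ combinatorially from the factorization of $F$ over $\bR$. All roots of $X\pm 1$ and of the type-$1$ irreducible factors (which take the form $X^2 - (\delta+\delta^{-1})X + 1$ with $|\delta|=1$) lie on the unit circle, so only pairs $\{g,g^*\}\subset I_2(F;\bR)$ contribute to $m(F)$; a quick case check in degrees $1$ and $2$ of $g$ (using that $g=g^*$ would be forced if $g$ had a root on the unit circle) shows that exactly half of the roots of each such pair have absolute value $>1$. This gives the identity
\[ 2m(F) \;=\; \deg F - \sum_{f\in I(F;\bR)} \deg(f^{m_f}). \]

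The second step handles the inequalities and the parity. From \eqref{eq:indexdata3} and $r+s = \deg F$ we get $2r = \deg F + \sum_{f\in I(F;\bR)}\fraci(f)$, and applying the lower bound $\fraci(f)\geq -\deg(f^{m_f})$ of \eqref{eq:indexdata1} together with the identity above gives $r\geq m(F)$; the upper bound yields $s\geq m(F)$ analogously. For the parity claim, assume $F(1)F(-1)\neq 0$, so that $X\pm 1$ are not factors of $F$ and $I(F;\bR) = I_1(F;\bR)$. Condition \eqref{eq:indexdata2} then forces $\deg(f^{m_f}) + \fraci(f)\equiv 0\bmod 4$ for every $f\in I_1(F;\bR)$; summing and combining with the identity in the first step gives $2r \equiv 2m(F)\bmod 4$, i.e.\ $r\equiv m(F)\bmod 2$, and the symmetric argument handles $s$.

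No deep obstacle arises: the corollary is essentially a bookkeeping consequence of Theorem~\ref{th:Sign_implies} and the axioms \eqref{eq:indexdata1}--\eqref{eq:indexdata3}. The only mild care needed is in the first step, in correctly identifying which irreducible factors of $F$ over $\bR$ carry roots off the unit circle, so that the count $m(F)$ lines up with the degree of the type-$2$ component of $F$.
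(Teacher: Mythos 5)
Your proof is correct, but the ``only if'' direction is argued differently from the paper. The paper's proof is indirect: given a nonempty $\Idx(r,s;F)$, it invokes Theorem~\ref{th:Sign_implies} to realize an index map as the index of an actual semisimple isometry of an inner product space of signature $(r,s)$ with characteristic polynomial $F$, and then applies Theorem~\ref{th:condition_Sign} to that isometry to conclude \eqref{eq:Sign}. You instead derive \eqref{eq:Sign} purely combinatorially from the axioms \eqref{eq:indexdata1}--\eqref{eq:indexdata3}, via the identity $2m(F) = \deg F - \sum_{f\in I(F;\bR)}\deg(f^{m_f})$ (which is correct: the type-$0$ and type-$1$ factors over $\bR$ have all roots on the unit circle, while each pair $\{g,g^*\}\subset I_2(F;\bR)$ has no roots there and contributes exactly half of its roots to $m(F)$). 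Your route makes the necessity direction independent of the realization theorem \ref{th:Sign_implies} --- it only uses the formal definition of an index map --- whereas the paper's route is shorter given that Theorems \ref{th:condition_Sign} and \ref{th:Sign_implies} are already in place; in effect your computation replays, at the level of the axioms, the same degree count that the paper carries out geometrically inside the proof of Theorem~\ref{th:condition_Sign}. Both arguments are sound.
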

\begin{proof}
The if part is included in Theorem \ref{th:Sign_implies}. 
Suppose that $\Idx(r,s;F)\neq \emptyset$. Then, Theorem \ref{th:Sign_implies} shows that 
there exists a semisimple isometry of an inner product space of signature $(r,s)$
with characteristic polynomial $F$. Hence, the condition \eqref{eq:Sign} holds
by Theorem \ref{th:condition_Sign}. 
\end{proof}

We will need the following lemma. 

\begin{lemma}\label{lem:signature_of_F12(1)F12(-1)}
Let $r,s$ be non-negative integers with $r\equiv s \bmod 8$, and 
$F\in \bR[X]$ a $*$-symmetric polynomial of degree $r + s$ with 
the condition \eqref{eq:Sign}. We write $F(X) = (X-1)^{m_+}(X+1)^{m_-}F_{12}(X)$, 
where $m_\pm$ is the multiplicity of $X\mp1$ in $F$ and $F_{12}$ is the remaining factor. 
Let $\fraci \in \Idx(r,s;F)$ be an index map, and  
put $n = \deg(F)/2$, $s_\pm = (m_\pm - \fraci(X\mp1))/2$. 
Then the signature of $F_{12}(1)F_{12}(-1)$ is equal to $(-1)^{n + s_+ + s_-}$, 
or equivalently, we have  
\[ (-1)^n F_{12}(1)F_{12}(-1) 
= (-1)^{s_+}\, |F_{12}(1)|\cdot
(-1)^{s_-}\, |F_{12}(-1)|. 
\]
\end{lemma}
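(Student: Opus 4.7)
The plan is to prove the equivalent reformulation $\mathrm{sign}(F_{12}(1)F_{12}(-1)) = (-1)^{n+s_++s_-}$ by computing both sides independently, and then multiply through by $|F_{12}(1)| \cdot |F_{12}(-1)|$ to obtain the displayed identity.

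First, I would determine the sign of $F_{12}(1)F_{12}(-1)$ directly from the factorization of $F_{12}$ into irreducible factors over $\bR$. Each $f \in I_1(F; \bR)$ has the form $X^2 - (\delta + \delta^{-1})X + 1$ with $|\delta| = 1$ and $\delta \neq \pm 1$, so $f(1) = |1-\delta|^2 > 0$ and $f(-1) = |1+\delta|^2 > 0$, contributing $+1$. A type 2 pair $\{g, g^*\} \subset I_2(F;\bR)$ is either a complex-quartic pair coming from a complex conjugate pair $\lambda, \bar\lambda$ with $|\lambda|\neq 1$ (in which case all values $g(\pm 1), g^*(\pm 1)$ are positive absolute-value squares), or a real linear pair $\{X-\lambda, X-1/\lambda\}$ with $\lambda \in \bR \setminus \{0, \pm 1\}$. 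In the latter case one computes $gg^*(1) \cdot gg^*(-1) = -(1-\lambda)^2 (1+\lambda)^2 / \lambda^2 < 0$, so such a pair with multiplicity $a$ contributes $(-1)^a$. Letting $R$ denote the total multiplicity of real linear pairs in $F_{12}$, it follows that $\mathrm{sign}(F_{12}(1)F_{12}(-1)) = (-1)^R$.

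Next, I would compute $n + s_+ + s_- \bmod 2$ via the orthogonal decomposition \eqref{eq:ODSD}. Picking a semisimple isometry $t$ realizing $\fraci$ on an inner product space of signature $(r,s)$ (which exists by Theorem \ref{th:Sign_implies}), and using that each $V(gg^*; t)$ is metabolic and so contributes $\deg((gg^*)^{m_g})/2$ to $s$, the negative part of the signature splits as $s = s_+ + s_- + \sum_{f \in I_1} s_f + \sum_{\{g, g^*\} \subset I_2} \deg((gg^*)^{m_g})/2$. The condition \eqref{eq:indexdata2} forces $s_f = (\deg(f^{m_f}) - \fraci(f))/2$ to be even for every $f\in I_1$. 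The type 2 sum contributes $1$ from each multiplicity of a real linear pair (so $R$ in total) and an even number $2 b_j$ from each complex-quartic pair of multiplicity $b_j$, hence is $\equiv R \bmod 2$. Thus $s \equiv s_+ + s_- + R \bmod 2$. The hypothesis $r \equiv s \bmod 8$ combined with $r + s = 2n$ gives $n - s = (r-s)/2 \equiv 0 \bmod 4$, so in particular $n \equiv s \bmod 2$, which yields $n + s_+ + s_- \equiv R \bmod 2$.

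Combining the two calculations gives $\mathrm{sign}(F_{12}(1)F_{12}(-1)) = (-1)^R = (-1)^{n + s_+ + s_-}$, which is the desired identity. The main obstacle is the parity bookkeeping in the second step: one must carefully extract from \eqref{eq:indexdata2} that each $s_f$ with $f\in I_1$ is even, isolate the parity of the type 2 contribution to be exactly the number $R$ appearing in the first step, and use the full strength of $r \equiv s \bmod 8$ (rather than merely $\bmod 2$) to connect $n$ with $s$.
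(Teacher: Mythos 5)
Your argument is correct, and both halves check out: the direct sign computation of $F_{12}(1)F_{12}(-1)$ from the factorization over $\bR$ (type $1$ factors and complex type $2$ pairs contribute positively at $\pm1$, while a real pair $\{X-\lambda,\,X-\lambda^{-1}\}$ contributes a negative sign per unit of multiplicity), and the parity bookkeeping $s \equiv s_+ + s_- + R \bmod 2$ via \eqref{eq:ODSD}, \eqref{eq:indexdata2}, and metabolicity. The route differs from the paper's in the first half. The paper introduces the subspace $M^{12}$ carrying the type $1$ and type $2$ parts, lets $s_{12}$ be its negative index of inertia, and quotes \cite[Corollary 5.2]{Ba15} for the identity $\operatorname{sign}(F_{12}(1)F_{12}(-1)) = \det(b|_{M^{12}}) = (-1)^{s_{12}}$; combined with $s_+ + s_- + s_{12} = s$ and $n\equiv s \bmod 2$ this finishes the proof in a few lines. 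You instead re-derive the content of that cited determinant formula in the case at hand: your quantity $R$ is congruent mod $2$ to the paper's $s_{12}$, and your case analysis of the real factorization replaces the citation. What your version buys is self-containedness (no appeal to \cite{Ba15}) and an explicit identification of where the sign comes from (the real Salem-type pairs); what it costs is length and the careful enumeration of factor types over $\bR$. One tiny remark: you only need $r\equiv s \bmod 4$ (indeed only $n\equiv s\bmod 2$), not the full strength of $r\equiv s\bmod 8$, and the same is true of the paper's proof.
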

\begin{proof}
Let $M$ be the associated $\bR[X]$-module of $F$ with transformation $\alpha$. 
Then, there exists an inner product $b$ on $M$ which makes $\alpha$ 
an isometry with index $\fraci$ by Theorem \ref{th:Sign_implies}. 
Let $s_{12}$ denote the signature of the subspace 
$M^{12} := \prod_{f\in I_1(F;\bR)} M^f \oplus \prod_{\{g,g^*\}\subset I_2(F;\bR)} M^{gg^*}$ 
of $M = (M, b)$. 
Then $s_+ + s_- + s_{12} = s$ since $(M_\infty, b_\infty)$ has signature $(r,s)$. 
Furthermore, since $n\equiv (r+s)/2 \equiv s \bmod 2$ by $r\equiv s \bmod 8$, we get  
$(-1)^{n + s_+ + s_-} = (-1)^{s + s_+ + s_-} = (-1)^{s_{12}}$. 
On the other hand, the signature of $F_{12}(1)F_{12}(-1)$ is equal to
$\det(b|_{M^{12}})$ (see e.g. \cite[Corollary 5.2]{Ba15}), which is $(-1)^{s_{12}}$. 
Hence, the signature of $F_{12}(1)F_{12}(-1)$ is equal to $(-1)^{n + s_+ + s_-}$. 
\end{proof}

\section{Local-global principle and obstruction}\label{sec:LGPO}
In this section, we fix non-negative integers $r,s\in \bZ_{\geq 0}$
with $r\equiv s \bmod 8$ and a $*$-symmetric polynomial $F\in \bZ[X]$ of degree $r+s$. 
Note that the congruence $r\equiv s \bmod 8$ is a necessary and sufficient condition 
for the existence of an even unimodular lattice of signature $(r,s)$
(Theorem \ref{th:EULoverZ}). 
We refer to an isometry with characteristic polynomial $F$ and index $\fraci$
as an \textit{$(F,\fraci)$-isometry} for short. 
Bayer-Fluckiger \cite{Ba21, Ba22} gave a necessary and sufficient condition for 
the existence of an even unimodular lattice having a semisimple $(F, \fraci)$-isometry. 
It was reformulated and extended to the case where $F$ is $*$-symmetric, 
which covers the $-1$-symmetric case, in \cite{Ta23}. 
We explain the criterion here.

\subsection{Local conditions}\label{LocalConditions}
If $F$ is the characteristic polynomial of a semisimple isometry $t$ of 
an even unimodular lattice $(\Lambda, b)$ of signature $(r, s)$, 
then $F$ is also the characteristic polynomial of a semisimple isometry $t$ of
the inner product space $(\Lambda\otimes \bR, b \otimes \bR)$ over $\bR$ of 
signature $(r, s)$. Thus, Theorem \ref{th:condition_Sign} shows that
$F$ must satisfy the condition \eqref{eq:Sign}$_{r,s}$. 
Namely, the localization at the infinite place yields the condition 
\eqref{eq:Sign}$_{r,s}$. 
So, what condition is produced by the localizations at finite places?
The answer is as follows. 

\begin{proposition}\label{prop:conditionSquare}
Suppose that $F$ is the characteristic polynomial of a semisimple isometry $t$ of 
an even unimodular lattice $(\Lambda, b)$ over $\bZ$. Then 
$\deg(F)$ is even and 
\begin{equation}\label{eq:Square}
\text{$|F(1)|, |F(-1)|$ and $(-1)^{(\deg F)/2} F(1) F(-1)$ are all squares.} \tag{Square}
\end{equation}
\end{proposition}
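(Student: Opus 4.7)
The plan is to split the assertion into three sub-claims---(i) $\deg F$ is even, (ii) $|F(1)|$ and $|F(-1)|$ are perfect squares in $\bZ$, and (iii) $(-1)^{n}F(1)F(-1)$ is a perfect square in $\bZ$, where $n:=(\deg F)/2$---and prove each by exhibiting an auxiliary bilinear form on either a finite quotient of $\Lambda$ or on $V:=\Lambda\otimes\bQ$. Claim (i) is immediate: $\deg F$ equals the rank $r+s$ of $\Lambda$, and $r\equiv s\bmod 8$ by Theorem \ref{th:EULoverZ}(i) forces $r+s$ to be even.

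For claim (ii), the symmetry obtained by replacing $t$ with $-t$ (which is again a semisimple isometry of $\Lambda$, with characteristic polynomial $F(-X)$) lets me reduce to showing $|F(1)|$ is a square, and I may assume $F(1)\neq 0$. Then $N:=(I-t)\Lambda$ has finite index $[\Lambda:N]=|\det(I-t)|=|F(1)|$ in $\Lambda$, and I would introduce the pairing
\[ \beta:\Lambda/N\times\Lambda/N\to\bQ/\bZ,\qquad \beta([x],[y]):=b(x,(I-t)^{-1}y)\bmod\bZ. \]
Using the adjoint relation $t^*=t^{-1}$, one computes $(I-t^{-1})(I-t)^{-1}=-t^{-1}$, which makes $\beta$ well-defined in the first argument modulo $N$; unimodularity of $b$ together with $(I-t^{-1})\Lambda=N$ yields non-degeneracy; and the operator identity $(I-t)^{-1}+(I-t^{-1})^{-1}=I$ gives $\beta(x,y)+\beta(y,x)=b(x,y)$. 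Setting $y=x$ and using evenness, $b(x,x)\in 2\bZ$ forces $\beta(x,x)\in\bZ$, so $\beta$ is alternating. A non-degenerate alternating $\bQ/\bZ$-valued form on a finite abelian group has square order, which gives the claim.

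For claim (iii) I may assume $F(1)F(-1)\neq 0$, so $t^2\neq I$. The key device is the form $\omega(x,y):=b(x,(t-t^{-1})y)$ on $V$. Since $(t-t^{-1})^*=-(t-t^{-1})$, $\omega$ is skew-symmetric, hence alternating; non-degeneracy follows from invertibility of $t-t^{-1}$. Its Gram matrix has determinant $\det(b)\cdot\det(t-t^{-1})$; the factorization $t-t^{-1}=t^{-1}(t-I)(t+I)$ gives $\det(t-t^{-1})=F(0)F(1)F(-1)$, while the real signature formula for a unimodular form gives $\det(b)=(-1)^s$. The congruence $r\equiv s\bmod 8$ forces $(r-s)/2\equiv 0\bmod 4$, so $(-1)^s=(-1)^n$. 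Since $\omega$ is non-degenerate alternating on an even-dimensional $\bQ$-space, its determinant equals the square of its Pfaffian, hence lies in $\bQ^{\times 2}$. Therefore $(-1)^nF(0)F(1)F(-1)\in\bQ^{\times 2}$; if $F(0)=1$ this is the desired conclusion, while $F(0)=-1$ would mean $F$ is $-1$-symmetric, forcing $(X-1)\mid F$ and hence $F(1)=0$, contradicting the standing assumption.

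The hard part will be tracking signs in claim (iii) without error: both the signature formula $\det(b)=(-1)^s$ and the conversion $(-1)^s=(-1)^n$ via $r\equiv s\bmod 8$ must align exactly for the Pfaffian identity to produce the claimed square. A secondary delicate point is that evenness of $\Lambda$---not merely unimodularity---is genuinely used in claim (ii), because without it $\beta$ would only be antisymmetric rather than alternating, which does not force square order at the prime $2$.
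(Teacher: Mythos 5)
Your proof is correct. The paper itself offers no argument here --- it simply cites \cite[Corollary 4.3]{Ta23}, where the statement is obtained inside the local machinery (it is the global shadow of the condition for an even unimodular $\bZ_p$-lattice with a semisimple isometry of characteristic polynomial $F$ to exist at every prime $p$). What you give instead is the classical direct argument going back to Gross--McMullen \cite{GM02}: the linking form $b(x,(I-t)^{-1}y)$ on $\Lambda/(I-t)\Lambda$ for $|F(1)|$, and the Pfaffian of the skew form $b(x,(t-t^{-1})y)$ for $(-1)^nF(1)F(-1)$. All the operator identities you invoke check out ($(I-t^{-1})(I-t)^{-1}=-t^{-1}$, $(I-t)^{-1}+(I-t^{-1})^{-1}=I$, $\det(t-t^{-1})=F(0)F(1)F(-1)$), the sign bookkeeping $\det(b)=(-1)^s=(-1)^n$ is right (only $r\equiv s\bmod 4$ is needed), and your disposal of the case $F(0)=-1$ is legitimate because the characteristic polynomial of an isometry of a nondegenerate form is automatically $*$-symmetric (as recorded in \S2.3 of the paper), so $F(0)=-1$ forces $F(1)=0$. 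Two standard facts are used without proof --- that a finite abelian group carrying a nondegenerate \emph{alternating} $\bQ/\bZ$-valued form has square order, and $\det=\mathrm{Pf}^2$ --- both are classical, and your closing remark correctly identifies that evenness of $\Lambda$ is exactly what upgrades the linking form from antisymmetric to alternating, which is where the prime $2$ would otherwise break the argument. Your route is more elementary and self-contained than the citation chain; it also shows the hypothesis of semisimplicity is not needed for this proposition. What the paper's route buys instead is uniformity: \cite{Ta23} needs the prime-by-prime version of (Square) anyway to set up the sets $\cB_\fraci$ and the local-global obstruction, so deriving the global statement from the local one comes for free there.
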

\begin{proof}
See \cite[Corollary 4.3]{Ta23}. 
\end{proof}

For the condition \eqref{eq:Square}, we have the following lemma. 

\begin{lemma}\label{lem:properties_for_Square}
The following assertions hold.  
\begin{enumerate}
\item Let $f$ and $g\in \bZ[X]$ be $*$-symmetric polynomials of even degrees. 
If $f$ and $g$ satisfy \eqref{eq:Square} then so does $fg$. 
If $g$ and $fg$ satisfy \eqref{eq:Square}, and $g(1)g(-1)\neq 0$, 
then $f$ satisfies \eqref{eq:Square}. 
\item Any $*$-symmetric polynomial in $\bZ[X]$ of type $2$ over $\bQ$ satisfies 
\eqref{eq:Square}. 
\end{enumerate}
\end{lemma}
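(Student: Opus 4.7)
The plan is to treat both parts through direct manipulation of the three quantities $|F(1)|$, $|F(-1)|$, and $(-1)^{(\deg F)/2}F(1)F(-1)$ appearing in \eqref{eq:Square}, exploiting that all three behave multiplicatively in $F$. For part (i), since $\deg f$ and $\deg g$ are both even, $\deg(fg)/2 = \deg f/2 + \deg g/2$, and $(fg)(\pm 1) = f(\pm 1)g(\pm 1)$ yields
\[
|(fg)(\pm 1)| = |f(\pm 1)|\cdot |g(\pm 1)|, \quad (-1)^{\deg(fg)/2}(fg)(1)(fg)(-1) = \prod_{h\in\{f,g\}}(-1)^{\deg h/2}h(1)h(-1).
\]
The first assertion is immediate since a product of rational squares is a square. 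For the second, the hypothesis $g(1)g(-1)\neq 0$ makes the three quantities associated to $g$ nonzero squares, hence invertible in $\bQ^\times/\bQ^{\times 2}$; dividing the relations above transfers \eqref{eq:Square} from $fg$ to $f$.

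For part (ii), write the polynomial as $\prod_i(g_ig_i^*)^{m_i}$ with $g_i\in\bQ[X]$ monic irreducible and $g_i\neq g_i^*$. Gauss's lemma applied to our monic polynomial in $\bZ[X]$ places each $g_i$ and each monic product $g_ig_i^*$ into $\bZ[X]$. Iterating part (i) reduces the claim to a single factor $gg^*$, where $g\in\bZ[X]$ is monic with $g\neq g^*$ and $gg^*\in\bZ[X]$. The crucial step is the claim that $g(0)=\pm 1$. To prove it, set $\tilde g(X):=X^{\deg g}g(X^{-1})\in\bZ[X]$, so that $g^*=g(0)^{-1}\tilde g$ and consequently $g(0)$ divides every coefficient of $g\tilde g$. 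If some prime $p$ divided $g(0)$, reduction modulo $p$ would give $\bar g\cdot\bar{\tilde g}=0$ in the integral domain $\bF_p[X]$; but $\bar g$ is monic and $\bar{\tilde g}$ has constant term $1$, so neither vanishes, a contradiction.

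Once $g(0)=\pm 1$ is established, a root-side computation yields $(gg^*)(1)=g(1)^2/g(0)$ and $(gg^*)(-1)=(-1)^{\deg g}g(-1)^2/g(0)$, so $|(gg^*)(\pm 1)|=g(\pm 1)^2$ and $(-1)^{\deg(gg^*)/2}(gg^*)(1)(gg^*)(-1)=g(1)^2 g(-1)^2$ are manifestly squares. The only step requiring genuine input is this modular argument for $g(0)=\pm 1$; everything else is bookkeeping around the multiplicativity established in part (i).
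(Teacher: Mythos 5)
Your proof is correct and follows essentially the same route as the paper: reduce to $f=gg^*$ with $g\in\bZ[X]$ monic, show $g(0)=\pm1$, and then compute the three quantities in \eqref{eq:Square} directly. The only (cosmetic) difference is your reduction-mod-$p$ argument for $g(0)=\pm1$; the paper gets this in one line by observing that $g^*(0)=g(0)^{-1}$ is the constant term of $g^*\in\bZ[X]$, hence $g(0)$ is a unit in $\bZ$.
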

\begin{proof}
(i). Straightforward. 

(ii). Let $f\in \bZ[X]$ be a $*$-symmetric polynomial of type $2$. 
Then, there exists a monic polynomial $g\in \bZ[X]$ such that $f = gg^*$. 
Note that $g(0)$ is a unit, i.e., $g(0) = 1$ or $-1$, because $g(0)$ and 
$g(0)^{-1}$ are the constant terms of $g$ and $g^* \in \bZ[X]$ respectively. 
Then 
\[ |f(\pm1)| 
= |g(\pm1)g^*(\pm1)|
= |g(\pm1) g(0)^{-1} (\pm1)^{\deg g} g(\pm1)|
= |g(\pm1)|^2.  
\]
Moreover, we have 
\[ \begin{split}
(-1)^{\deg(f)/2}f(1)f(-1)
&= (-1)^{\deg g} g(1)g^*(1) \cdot g(-1)g^*(-1) \\
&= (-1)^{\deg g} g(1)g(0)^{-1}g(1)\cdot g(-1)g(0)^{-1}(-1)^{\deg g}g(-1) \\
&= (g(1)g(-1))^2.
\end{split}
\]
This completes the proof. 
\end{proof}

\subsection{Local-global principle}
\begin{notation}\label{nt:LGPO}
We use the following notation in the sequel. 
\begin{enumerate}
\item The set of all places of $\bQ$ is denoted by $\cV$. 
%
\item For a monic polynomial $f$, we write $m_f$ for the multiplicity of 
$f$ in $F$. We often write $m_\pm$ for $m_{X\mp1}$ briefly. 
Furthermore, the sets $I(F;\bQ)$, $I_1(F;\bQ)$, and $I_2(F;\bQ)$ 
(see Definition \ref{def:type012}) are abbreviated to $I$, $I_1$, and $I_2$ respectively. 
The product of the type $1$ and $2$ components of $F$ is denoted by $F_{12}$. 
Under this notation, we can write 
\[ \begin{split}
F(X) &= (X-1)^{m_+} (X+1)^{m_-} F_{12}(X), \\
F_{12}(X) &= \prod_{f\in I_1}f(X)^{m_f}
\times \prod_{\{g,g^*\}\subset I_2}(g(X)g^*(X))^{m_g}. 
\end{split} 
\]
\item The symbol $M$ denotes the associated $\bQ[X]$-module of $F$ with 
transformation $\alpha$, and $M^f$ 
($f$ is a factor in $I$ or of the form $gg^*$ for some $g\in I_2$) is as in 
Definition \ref{def:associated_module}. We write $M^\pm = M^{X\mp1}$. 
\item For a place $v\in \cV$, we define $M_v := M\otimes\bQ_v$. 
Similarly $M^f_v := M^f\otimes\bQ_v$ for $f$
which is in $I$ or of the form $gg^*$ for some $g\in I_2$. Then 
\[ M_v = M^+_v \oplus M^-_v 
\oplus \bigoplus_{f\in I_1} M^f_v 
\oplus \bigoplus_{\{g,g^*\} \subset I_2} M^{gg^*}_v.  
\]
Note that $\alpha$ is extended to a $\bQ_v$-linear transformation on $M_v$ in a unique 
way, and $M_v$ is (isomorphic to) the associated $\bQ_v[X]$-module of $F$.  
\end{enumerate}
\end{notation}

We consider when there exists an inner product $b$ on $M$ 
such that $\alpha$ becomes an isometry having a given index $\fraci$ 
and $(M,b)$ contains an $\alpha$-stable even unimodular lattice of signature $(r,s)$. 
The following notation concerns localizations of this question. 

\begin{notation}\label{nt:P1P2P3}
Let $\fraci\in \Idx(r,s;F)$ be an index map. 
We consider the following three properties \eqref{eq:P1}--\eqref{eq:P3} of an inner 
product $b_v$ on $M_v$ for each $v\in \cV$.
The first property is that
\begin{equation}\label{eq:P1}
\text{$\alpha:M_v \to M_v$ is an isometry with respect to $b_v$.} \tag{P1}
\end{equation}
Assume that $b_v$ has the property \eqref{eq:P1}. The second property is that
\begin{equation}\label{eq:P2}
\begin{split}
&\text{if $v\neq \infty$ then there exists an $\alpha$-stable
even unimodular lattice over $\bZ_v$ on $(M_v, b_v)$, and}\\
&\text{if $v = \infty$ then the isometry $\alpha$ of $(M_\infty, b_\infty)$ 
has index $\fraci$.}
\end{split}\tag{P2}
\end{equation}
The last property is that
\begin{equation}\label{eq:P3}
\text{$\det(b_v|_{M_v^\pm}) = \delta_\pm$ in $\bQ_v^\times/\bQ_v^{\times 2}$,} \tag{P3}
\end{equation}
where $\delta_+$ and $\delta_- \in \bQ^\times$ are nonzero rational numbers defined by 
\[ \delta_\pm := \begin{cases}
(-1)^{(m_\pm - \fraci(X\mp1))/2}\, |F_{12}(\pm1)| &\text{if $m_+$ is even} \\
(-1)^{(m_\pm - \fraci(X\mp1))/2}\, 2|F_{12}(\pm1)| &\text{if $m_+$ is odd.}
\end{cases}\]
Moreover, we write $\cB_\fraci$ for the set of families $\{b_v\}_{v\in \cV}$ of inner products 
on $M_v$ such that each $b_v$ has the properties \eqref{eq:P1}--\eqref{eq:P3} and 
$\#\{ v \in \cV \mid \hw_v(b_v|_{M^f_v}) \neq 0 \}$ is finite for all $f\in I$. 
\end{notation}


\begin{proposition}
If $F$ satisfies the conditions \eqref{eq:Sign}$_{r,s}$ and \eqref{eq:Square} 
then $\cB_\fraci$ is not empty for any $\fraci\in \Idx(r,s;F)$. 
\end{proposition}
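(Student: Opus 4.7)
The plan is to construct the required family $\{b_v\}_{v \in \cV}$ place by place: for each $v \in \cV$, I produce an inner product $b_v$ on $M_v$ satisfying \eqref{eq:P1}--\eqref{eq:P3}, then verify the Hasse-Witt finiteness to conclude that $\{b_v\}_v \in \cB_\fraci$.

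At the infinite place $v = \infty$, Theorem \ref{th:Sign_implies}, whose hypothesis is exactly \eqref{eq:Sign}$_{r,s}$, provides an inner product $b_\infty$ on $M_\infty$ making $\alpha$ an isometry of index $\fraci$; this yields both \eqref{eq:P1} and \eqref{eq:P2} at $\infty$. For \eqref{eq:P3} at $\infty$, observe that the subspace $(M^\pm_\infty, b_\infty|_{M^\pm_\infty})$ has signature $((m_\pm + \fraci(X\mp 1))/2, (m_\pm - \fraci(X \mp 1))/2)$, so its determinant in $\bR^\times/\bR^{\times 2}$ is $(-1)^{(m_\pm - \fraci(X\mp 1))/2}$; since $|F_{12}(\pm 1)|$ and $2|F_{12}(\pm 1)|$ are positive they are squares in $\bR^\times$, so $\delta_\pm$ has the same real square class, and \eqref{eq:P3} holds at $\infty$.

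At a finite prime $v = p$, the condition \eqref{eq:Square} is known, via the local theory developed in \cite[\S4]{Ta23} extending the $+1$-symmetric case of \cite{Ba21, Ba22}, to imply the existence of an inner product $b_p$ on $M_p$ for which $\alpha$ is an isometry and $(M_p, b_p)$ admits an $\alpha$-stable even unimodular $\bZ_p$-lattice, giving \eqref{eq:P1} and \eqref{eq:P2}. Among such inner products, one compatible with \eqref{eq:P3} on $M^\pm_p$ can be chosen: the value $\delta_\pm$ is precisely the local determinant forced by the combination of evenness and unimodularity of the $\bZ_p$-lattice together with the prescribed characteristic polynomial $F$, where the extra factor of $2$ appearing in $\delta_\pm$ when $m_+$ is odd reflects the 2-adic behaviour of an even unimodular form on the $(X\mp 1)$-isotypic part of odd rank.

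For the Hasse-Witt finiteness, I would anchor the construction by first fixing a global inner product $b_0$ on $M$ making $\alpha$ an isometry (this is standard for any $*$-symmetric $F$, e.g.\ by a twisted trace form on each $\bQ[X]/(f)$ and a hyperbolic form on each $M^{gg^*}$). Its localizations $b_0\otimes \bQ_v$ satisfy \eqref{eq:P1} at all $v$ and have vanishing Hasse-Witt invariants at all but finitely many places; at the finitely many remaining places one replaces $b_0\otimes\bQ_v$ by the local $b_v$ constructed above, which preserves cofinite vanishing of $\hw_v(b_v|_{M^f_v})$. I expect the main obstacle to be the analysis at $p = 2$, where the evenness condition interacts subtly with the 2-adic determinant and where the case distinction on the parity of $m_+$ in the definition of $\delta_\pm$ becomes essential; this 2-adic book-keeping is precisely what is carried out in \cite[\S4]{Ta23}, and the proposition is essentially a packaging of those local existence statements.
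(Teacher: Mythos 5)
The paper offers no argument for this proposition beyond citing \cite[Proposition 6.1]{Ta23}, and your place-by-place construction (Theorem \ref{th:Sign_implies} at $\infty$, the local existence theory of \cite{Ta23} at finite primes, a global anchor form to secure Hasse--Witt finiteness) is the standard route that the cited proof follows, so your outline is consistent with the paper's approach. The one imprecision is your claim that $\delta_\pm$ is ``forced'' at a finite prime by evenness and unimodularity: the sign $(-1)^{(m_\pm-\fraci(X\mp1))/2}$ in $\delta_\pm$ depends on the archimedean index map $\fraci$, so \eqref{eq:P3} is a global coherence normalization that must be arranged compatibly with \eqref{eq:P2} (which is exactly what \cite{Ta23} verifies), not a consequence of \eqref{eq:P2} at $p$.
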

\begin{proof}
See \cite[Proposition 6.1]{Ta23}. 
\end{proof}

If $b$ is an inner product on $M$ such that $\alpha:M\to M$ becomes an isometry 
having index $\fraci$ and $(M,b)$ contains an $\alpha$-stable even unimodular lattice 
over $\bZ$, then the family $\{b\otimes \bQ_v\}_{v\in \cV}$ of inner products 
$b\otimes \bQ_v :M_v \times M_v \to \bQ_v$ obtained by localizations 
belongs to $\cB_\fraci$. 
The local-global principle for the existence of such an inner product on $M$ 
is described as follows. 

\begin{theorem}\label{th:LGP}
Let $r,s\in \bZ_{\geq 0}$ be non-negative integers with $r\equiv s \bmod 8$, 
$F\in \bZ[X]$ a $*$-symmetric polynomial of degree $r + s$ with the conditions 
\eqref{eq:Sign}$_{r,s}$ and \eqref{eq:Square}, and 
$\fraci\in\Idx(r,s;F)$ an index map. The following conditions are equivalent:
\begin{enumerate}
\item There exists an inner product $b$ on $M$ such that 
$\alpha:M\to M$ becomes an isometry having index $\fraci$ and  
$(M,b)$ contains an $\alpha$-stable even unimodular lattice over $\bZ$. 
\item There exists a family $\{b_v\}_{v\in \cV}\in \cB_\fraci$ such that 
$\sum_{v\in \cV} \hw_v(b_v|_{M^f_v}) = 0$ for any $f\in I$. 
\end{enumerate}
\end{theorem}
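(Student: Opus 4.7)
The plan is to establish the equivalence by treating the two implications separately, exploiting the $\bQ[X]$-module decomposition
$M = \prod_{f\in I} M^f \times \prod_{\{g,g^*\}\subset I_2} M^{gg^*}$
together with the Hasse--Minkowski local-global principle for quadratic (equivalently hermitian) forms applied to each summand, and a gluing argument to produce the global lattice.

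For the direction (i)$\Rightarrow$(ii), I would simply localize: set $b_v := b \otimes \bQ_v$ for every $v\in \cV$. Property \eqref{eq:P1} is immediate, while \eqref{eq:P2} follows because the localization of an $\alpha$-stable even unimodular $\bZ$-lattice at a finite place is an $\alpha$-stable even unimodular $\bZ_v$-lattice, and the index of $\alpha$ on $(M_\infty, b_\infty)$ is determined by $\fraci$. To verify \eqref{eq:P3} I would compute $\det(b|_{M^\pm})$ globally: the type $2$ component contributes a square by Lemma \ref{lem:properties_for_Square}(ii), so the determinant on $M^\pm$ is, up to a sign controlled by the number of negative eigenvalues of $\alpha$ on $V(X\mp1;\alpha)$, equal to $|F_{12}(\pm1)|$ times a factor that, at the prime $2$, picks up the extra $2$ when $m_+$ is odd (this is where the even-unimodular condition at $2$ forces the parity case split). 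The sign calculation is handled by Lemma \ref{lem:signature_of_F12(1)F12(-1)}. The finiteness condition on $\{v : \hw_v(b_v|_{M^f_v})\neq 0\}$ and the summation identity $\sum_v \hw_v(b_v|_{M^f_v}) = 0$ then come from the product formula (Hilbert reciprocity) for the global form $b|_{M^f}$.

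For the direction (ii)$\Rightarrow$(i), I would build a global inner product $b$ on $M$ summand by summand. For each $f\in I$, the compatible family $\{b_v|_{M^f_v}\}_v$ has all its local determinants already matching a global square class: for $f \in \{X-1, X+1\}$ this is prescribed by \eqref{eq:P3}, and for higher-degree $f$ the determinant of any $\alpha$-isometric form on $M^f_v$ lies in a fixed global square class intrinsic to the hermitian-form structure over $\bQ[X]/(f)$. The hypothesis $\sum_v\hw_v(b_v|_{M^f_v}) = 0$ is then precisely the vanishing of the Hasse-Minkowski obstruction, so a global inner product $b^f$ on $M^f$ with the right localizations exists. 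For each pair $\{g,g^*\}\subset I_2$, the metabolic form on $M^{gg^*}$ is rigid and canonically globalized. Assembling these produces a global $b$ on $M$ such that $\alpha$ is an isometry with index $\fraci$ (the latter by the infinite-place case of \eqref{eq:P2}).

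It remains to produce an $\alpha$-stable even unimodular $\bZ$-lattice inside $(M, b)$. The strategy is to fix any $\alpha$-stable $\bZ$-lattice $\Lambda_0 \subset M$ (e.g.\ take the $\bZ$-span of a basis of each $M^f$), note that $\Lambda_0 \otimes \bZ_v$ coincides with a chosen $\alpha$-stable even unimodular $\bZ_v$-lattice $\Lambda_v$ from \eqref{eq:P2} for almost all finite $v$, and then define $\Lambda$ as the unique $\bZ$-lattice in $M$ with $\Lambda\otimes\bZ_v = \Lambda_v$ for every finite $v$; all local properties (being $\alpha$-stable, even, unimodular) are thereby inherited globally, and the signature is $(r,s)$ because $b\otimes\bR$ has index $\fraci$ with $\sum_f \fraci(f) = r-s$. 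The hard part will be this last gluing step at the prime $2$: one must check that the family $\{\Lambda_v\}_v$ can be chosen so that \emph{evenness}—not merely integrality and self-duality—is preserved under the intersection, which requires a careful analysis of the neighbor relations among even unimodular $\bZ_2$-lattices supporting the prescribed $\alpha$-action. This is precisely where the detailed $2$-adic analysis of \cite{Ta23} intervenes.
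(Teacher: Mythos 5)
The paper does not prove this theorem internally; it cites \cite[Proposition 5.2]{Ta23}. Your outline follows the same general strategy as that proof (localize for one direction; for the converse, build the global form component by component and then glue a coherent family of local lattices), and the overall architecture is sound. However, there are two genuine gaps. First, in (ii)$\Rightarrow$(i) you invoke Hasse--Minkowski for quadratic forms over $\bQ$ to produce each $b^f$, but what is actually needed is a local-global principle for forms \emph{together with the isometry} $\alpha$, i.e.\ for hermitian forms over the \'etale algebra $E_f = \bQ[X]/(f)$ with the involution induced by $X\mapsto X^{-1}$. Producing a global quadratic form on $M^f$ with prescribed localizations does not by itself make $\alpha$ an isometry of it, and the local isomorphisms $(M^f_v, b^f\otimes\bQ_v)\cong(M^f_v, b_v|_{M^f_v})$ must be $\alpha$-equivariant in order to transport the local lattices of \eqref{eq:P2}. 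One therefore needs the classification of hermitian forms over local and global fields by the invariants of their trace forms (dimension, determinant, Hasse--Witt invariant, signatures) and the fact that the only global relation among these is Hilbert reciprocity; this is the content of \cite{Ba15} and \cite{BT20} and cannot be replaced by the quadratic-form statement alone. Your parenthetical ``(equivalently hermitian)'' asserts precisely the point that requires proof.

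Second, your closing paragraph misidentifies where the difficulty lies. Once one has a global $b$ with $\alpha$-equivariant local isometries to the given $b_v$, the gluing of a coherent family $\{\Lambda_v\}$ into a global lattice $\Lambda$ with $\Lambda\otimes\bZ_v=\Lambda_v$ is formal, and evenness, unimodularity and $\alpha$-stability are purely local conditions inherited from $\Lambda_2$, $\Lambda_p$; there is no issue of ``evenness being preserved under intersection'' and no neighbor-step analysis is needed at this stage. (Coherence at almost all places is arranged by replacing $\Lambda_v$ with $\Lambda_0\otimes\bZ_v$ for a fixed reference lattice $\Lambda_0$ wherever the latter is already even unimodular and $\alpha$-stable, which holds away from finitely many primes.) The delicate $2$-adic analysis of \cite{Ta23} enters elsewhere --- in establishing that \eqref{eq:Square} governs the local existence asserted in \eqref{eq:P2} and in determining the image of $\eta$ --- not in this gluing step.
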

\begin{proof}
See \cite[Proposition 5.2]{Ta23}. 
\end{proof}

\subsection{Local-global obstruction}
In the rest of this section, we assume that $F$ satisfies the conditions 
\eqref{eq:Sign}${}_{r,s}$ and \eqref{eq:Square}, and fix an index map 
$\fraci \in \Idx(r,s;F)$.  
We rephrase the condition (ii) of Theorem \ref{th:LGP} further. 

\begin{notation}
Let $C(I)$ denote the $\bZ/2\bZ$-module consisting of all maps from 
$I = I(F;\bQ)$ to $\bZ/2\bZ$, that is,  
$C(I) := \{ \gamma: I\to \bZ/2\bZ \} = (\bZ/2\bZ)^{\oplus I}$.  
Moreover, we define a map $\eta : \cB_\fraci \to C(I)$ by
\[ \eta(\beta)(f) = \sum_{v\in \cV}\hw_v(b_v|_{M^f_v}) \in \bZ/2\bZ
\quad\textstyle (\beta = \{b_v\}_{v\in \cV}\in \cB_\fraci, \,f\in I ).
\]
\end{notation}

Under this notation, the condition (ii) in Theorem \ref{th:LGP} is equivalent to 
$\bm{0} \in \eta(\cB_\fraci)$, where $\bm{0}:I \to \bZ/2\bZ$ is the zero map. 
We explain that the image $\eta(\cB_\fraci)$  of $\eta$ coincides with a coset of 
some submodule in $C(I)$.

\begin{notation}\label{nt:Pi_fg}
For a monic polynomial $f\in \bZ[X]$, the symbol $\overline{I(f;\bQ_p)}$ denotes 
the set of irreducible factors of reductions modulo $p$ of polynomials in $I(f;\bQ_p)$: 
\[ \overline{I(f;\bQ_p)} := 
\left\{ \bar{h} \in \bF_p[X] \;\middle|\; 
\begin{tabular}{l}
\text{$\bar{h}$ is irreducible, and there exists a $*$-symmetric } \\
\text{irreducible factor of $f$ in $\bZ_p[X]$ whose reduction } \\
\text{modulo $p$ is divisible by $\bar{h}$ in $\bF_p[X]$}
\end{tabular}
\right\}.
\]
Moreover, we define
\[ \overline{I(X\mp 1;\bQ_p)}' := \begin{cases}
\overline{I(X\mp 1;\bQ_p)} = \{ X\mp 1 \} 
& \text{if $m_\pm \geq 3$; or $m_\pm = 2$ and 
$\delta_\pm \neq -1 \in \bQ_p^\times/\bQ_p^{\times 2}$} \\
\emptyset & \text{otherwise}
\end{cases}
\]
($m_\pm$ and $\delta_\pm$ are defined in Notations \ref{nt:LGPO} and \ref{nt:P1P2P3})
and $\overline{I(f;\bQ_p)}' := \overline{I(f;\bQ_p)}$ for a monic 
polynomial with $f(1)f(-1)\neq 0$. 
Note that the set $\overline{I(X\mp 1;\bQ_p)}'$ depends on the polynomial 
$F$ and the value $\fraci(X\mp1)$ since so do $m_\pm$ and $\delta_\pm$. 
We define a set $\Pi_\fraci^F(f,g)$ of primes for monic polynomials
$f,g\in \bZ[X]$ by 
\[ \Pi_\fraci^F(f,g) := \{ p:\text{prime} 
\mid \overline{I(f;\bQ_p)}'\cap \overline{I(g;\bQ_p)}' \neq \emptyset \}. \]
\end{notation}

\begin{remark}\label{rem:for_Pi_fg}
We give some remarks on Notation \ref{nt:Pi_fg}. Let $p$ be a prime. 
\begin{enumerate}
\item The set $\overline{I(f;\bQ_p)}$ is a subset of $I(f \bmod p; \bF_p)$, but 
they do not necessarily coincide. 
For example, let us consider the case $f(X) = X^2 - 11X + 1$ 
and $p = 3$. Then $f$ decomposes as 
\[ f(X) = (X - (11+3\sqrt{13})/2)(X - (11-3\sqrt{13})/2) \quad\text{in $\bQ_3[X]$} \]
because $13$ is a square in $\bZ_3$. Thus $f$ is of type $2$ in $\bQ_3[X]$ and 
$\overline{I(f;\bQ_3)}= \emptyset$. On the other hand, we have 
$I(f \bmod 3; \bF_3) = \{X-1\}$ since $f(X) \bmod 3 = X^2 -2X + 1 = (X-1)^2$.  
\item The condition for $\overline{I(X\mp 1;\bQ_p)}'$ to be not empty, 
$m_\pm \geq 3$ or $m_\pm = 2$ and 
$\delta_\pm \neq -1 \in \bQ_p^\times/\bQ_p^{\times 2}$, 
guarantees that there exist inner products $b^\pm_p$ and $\widehat{b}^\pm_p$
on $M^\pm_p$ such that $\det(b^\pm_p) = \det(\widehat{b}^\pm_p) = \delta_\pm$
and $\hw_p(b^\pm_p)\neq \hw_p(\widehat{b}^\pm_p)$, 
see \cite[Chapter IV, Proposition 6]{Se73}.
\end{enumerate}
\end{remark}

For a subset $J$ of $I$, we write $\bm1_J \in C(I)$ for the characteristic function: 
\[ \bm1_J(f) = \begin{cases}
  1 & \text{if $f\in J$} \\
  0 & \text{if $f\notin J$}
\end{cases}
\quad (f\in I).
\]
Let $C_0$ denote the submodule of $C(I)$ generated by the subset 
\[
\{ \bm1_{\{f, g\}} \mid \text{$f,g \in I$ are distinct factors with 
$\Pi_\fraci^F(f,g) \neq \emptyset$} \}. 
\]

\begin{theorem}\label{th:imageofeta}
The image $\eta(\cB_\fraci)$ coincides with a coset of $C_0$. 
\end{theorem}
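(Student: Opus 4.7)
The plan is to fix a reference family $\beta_0 = \{b_{0,v}\}_{v \in \cV} \in \cB_\fraci$, whose existence is given by the preceding proposition, and prove that $\eta(\cB_\fraci) = \eta(\beta_0) + C_0$. For each place $v$, let $H_v \subseteq C(I)$ denote the group of ``admissible local flips'': those $\mu_v \in C(I)$ for which some inner product $\tilde b_v$ on $M_v$ satisfies \eqref{eq:P1}--\eqref{eq:P3} (and, if $v$ is finite, admits an $\alpha$-stable even unimodular lattice), with $\hw_v(\tilde b_v|_{M^f_v}) = \hw_v(b_{0,v}|_{M^f_v}) + \mu_v(f)$ for every $f \in I$. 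Since \eqref{eq:P1}--\eqref{eq:P3} pin down the local determinants on each $M^f_v$, the only degree of freedom is in the Hasse-Witt invariants. At $v = \infty$, \eqref{eq:P2} fixes the index and hence the signature of each $b_\infty|_{M^f_\infty}$, which over $\bR$ determines the Hasse-Witt invariant, so $H_\infty = 0$.

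For the forward inclusion $\eta(\cB_\fraci) \subseteq \eta(\beta_0) + C_0$, any $\beta \in \cB_\fraci$ yields local flips $\mu_v \in H_v$ with $\eta(\beta) - \eta(\beta_0) = \sum_v \mu_v$. The core claim to establish is that, at each finite place $v = p$, the group $H_p$ is generated by $\bm{1}_{\{f,g\}}$ for pairs $(f,g)$ with $p \in \Pi_\fraci^F(f,g)$. The point is that if $f$ and $g$ share a common irreducible mod-$p$ factor $\bar h \in \overline{I(f;\bQ_p)}' \cap \overline{I(g;\bQ_p)}'$, then one can rearrange the $\bar h$-isotypic block of an $\alpha$-stable even unimodular lattice to flip the Hasse-Witt invariants of both $M^f_p$ and $M^g_p$ simultaneously; conversely, without such a pairing, the local classification rigidifies $\hw_p(b_p|_{M^f_p})$. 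Summing over $p$ gives $\sum_v H_v \subseteq C_0$.

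For the reverse inclusion, it suffices to realize each generator $\bm{1}_{\{f,g\}}$ of $C_0$ as a difference $\eta(\beta) - \eta(\beta_0)$. Fixing $p \in \Pi_\fraci^F(f,g)$ and a common factor $\bar h \in \overline{I(f;\bQ_p)}' \cap \overline{I(g;\bQ_p)}'$, I would explicitly modify $b_{0,p}$ on the $\bar h$-related subspaces of $M^f_p$ and $M^g_p$ to flip both Hasse-Witt invariants while keeping all other local invariants unchanged and preserving the existence of an $\alpha$-stable even unimodular lattice at $p$. For the type-$0$ factors, the refined condition ``$m_\pm \geq 3$, or $m_\pm = 2$ and $\delta_\pm \neq -1$'' in the definition of $\overline{I(X\mp 1;\bQ_p)}'$ is precisely what guarantees, via Remark \ref{rem:for_Pi_fg}(ii), that two distinct Hasse-Witt invariants realizing the prescribed determinant $\delta_\pm$ are available on $M^\pm_p$.

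The main obstacle is the precise characterization of $H_p$, especially at $p = 2$, where the evenness of the unimodular lattice imposes constraints beyond determinant and Hasse-Witt, and for the degenerate small-multiplicity cases $m_\pm \in \{1, 2\}$ that motivate the primed notation. I expect to invoke the detailed classification of $\alpha$-stable even unimodular lattices at each prime developed in \cite{Ta23} and \cite{Ba21, Ba22} to verify that $H_p$ has the described generators at every finite place, and to carry out the explicit local constructions required for the reverse inclusion.
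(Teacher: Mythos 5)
Your outline reproduces the architecture of the argument that the paper itself delegates to \cite[Theorem 6.3]{Ta23}: fix a reference family, observe that all variation happens place by place in the Hasse--Witt invariants (the determinants being forced by \eqref{eq:P1}--\eqref{eq:P3} and the index at $\infty$ killing $H_\infty$), and then identify the local flip group $H_p$ at each finite place with the span of the $\bm1_{\{f,g\}}$ for $p\in\Pi_\fraci^F(f,g)$. So the skeleton is the right one.

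However, the decisive step is exactly the one you defer: the claim that $H_p$ is generated by those pair-flips is not a verification to be ``invoked'' from the literature --- it \emph{is} the theorem. Both inclusions are nontrivial. For $H_p\subseteq\langle \bm1_{\{f,g\}}\rangle$ you must show that the Hasse--Witt invariant of $b_p|_{M^f_p}$ is rigid on the part of $M^f_p$ not interacting with other factors, which requires the classification of $\alpha$-stable even unimodular $\bZ_p$-lattices restricted to each isotypic piece; this is where the evenness constraint at $p=2$ and the type-$0$ small-multiplicity cases ($m_\pm\in\{1,2\}$, hence the primed sets $\overline{I(X\mp1;\bQ_p)}'$) genuinely bite, and where a naive argument over $\bQ_p$ (ignoring the integral structure) would give the wrong group. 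For the reverse inclusion you must actually construct, for each generator, a modified $b_p$ that flips the two Hasse--Witt invariants while still carrying an $\alpha$-stable even unimodular lattice --- a gluing construction along the common residual factor $\bar h$, not merely a count of available quadratic forms with prescribed determinant. As written, your proposal is a correct plan whose entire mathematical content sits in the step you have not carried out; to make it a proof you would need to supply the local lattice-theoretic analysis (or an explicit reduction to precisely stated results of \cite{Ta23} and \cite{Ba21,Ba22} that cover the $-1$-symmetric and $p=2$ cases).
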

\begin{proof}
See \cite[Theorem 6.3]{Ta23}. 
\end{proof}

\begin{definition}\label{def:ERdefinedby}
The \textit{equivalence relation defined by $(F, \fraci)$}, written $\sim$, 
is the one on $I$ generated by the binary relation
$\{ (f,g) \in I\times I \mid \Pi_\fraci^F(f,g)\neq \emptyset \}$. 
We write $\Omega_\fraci$ for the submodule 
$\{c \in C(I) \mid \text{ $c(f) = c(g)$ if $f \sim g$} \}$ 
of $C(I)$. This is the submodule consisting of maps which are 
constant on each equivalence class with respect to $\sim$.  
(We remark that the symbol $\sim$ in \cite{Ta23} is different from the one here.)
\end{definition}

We now introduce an inner product $C(I)\times C(I) \to \bZ/2\bZ$ by 
\[ 
\gamma\cdot c = \sum_{f\in I}\gamma(f)c(f) \quad(\gamma, c \in C(I)). 
\]
Then the two submodules $C_0$ and $\Omega_\fraci$ of $C(I)$ are related as follows. 

\begin{lemma}\label{lem:OG=C0perp}
We have $\Omega_\fraci = C_0^\perp$, or equivalently  $\Omega_\fraci^\perp = C_0$.
\end{lemma}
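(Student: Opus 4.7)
The proof is essentially a formal unwinding of the two definitions, and I expect no substantive obstacle. The plan is to prove both inclusions $\Omega_\fraci \subset C_0^\perp$ and $C_0^\perp \subset \Omega_\fraci$ directly from the generating sets.

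First I would show $\Omega_\fraci \subset C_0^\perp$. Take $c \in \Omega_\fraci$ and any generator $\gamma = \bm{1}_{\{f,g\}}$ of $C_0$, where $f,g \in I$ are distinct with $\Pi_\fraci^F(f,g) \neq \emptyset$. By the very definition of the equivalence relation $\sim$ (Definition \ref{def:ERdefinedby}), we have $f \sim g$, so $c(f) = c(g)$ by the definition of $\Omega_\fraci$. Therefore $\gamma \cdot c = c(f) + c(g) = 2c(f) = 0$ in $\bZ/2\bZ$. Since orthogonality to every generator implies orthogonality to $C_0$, this gives $c \in C_0^\perp$.

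For the reverse inclusion $C_0^\perp \subset \Omega_\fraci$, take $c \in C_0^\perp$. I need to show $c(f) = c(g)$ whenever $f \sim g$. Since $\sim$ is the equivalence relation generated by the binary relation $R := \{(f,g) : \Pi_\fraci^F(f,g) \neq \emptyset\}$, any pair with $f \sim g$ is connected by a finite chain $f = f_0, f_1, \ldots, f_n = g$ with each consecutive pair lying in $R$ (or being equal). It therefore suffices, by transitivity, to verify $c(f_i) = c(f_{i+1})$ for each step with $(f_i, f_{i+1}) \in R$. For such a pair, $\bm{1}_{\{f_i, f_{i+1}\}} \in C_0$, and so $0 = \bm{1}_{\{f_i, f_{i+1}\}} \cdot c = c(f_i) + c(f_{i+1})$, which over $\bZ/2\bZ$ yields $c(f_i) = c(f_{i+1})$, as required. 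Hence $c \in \Omega_\fraci$.

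The equivalence $\Omega_\fraci^\perp = C_0$ then follows automatically from $\Omega_\fraci = C_0^\perp$ because $C(I)$ is a finite-dimensional $\bZ/2\bZ$-vector space equipped with a nondegenerate bilinear form, so double-orthogonal is the identity on submodules. The only step that requires any care at all is being explicit that $\sim$ as defined is the transitive-reflexive-symmetric closure of $R$, so that the chain argument actually reduces the verification to the generators; nothing here is an obstacle.
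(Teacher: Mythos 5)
Your proof is correct. The paper itself does not prove this lemma inline but simply cites \cite[Lemma 6.10]{Ta23}, and your direct verification --- checking orthogonality of elements of $\Omega_\fraci$ against the generators $\bm1_{\{f,g\}}$ of $C_0$, running the chain argument for the generated equivalence relation in the reverse direction, and invoking the double-perp identity for the nondegenerate dot product on the finite-dimensional $\bZ/2\bZ$-space $C(I)$ --- is exactly the expected formal unwinding of the two definitions.
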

\begin{proof}
See \cite[Lemma 6.10]{Ta23}. 
\end{proof}

\begin{definition}
The homomorphism 
\[ \Omega_\fraci \to \bZ/2\bZ, \, c \mapsto \eta(\beta)\cdot c  \]
is defined independently of the choice of $\beta\in \cB_\fraci$ since 
$\eta(\cB_\fraci)$ is a coset of $C_0$ by Theorem \ref{th:imageofeta} 
and $\Omega_\fraci \subset C_0^\perp$ by Lemma \ref{lem:OG=C0perp}. This homomorphism 
is called the \textit{obstruction map} for $(F, \fraci)$ and denoted by 
$\ob_\fraci : \Omega_\fraci \to \bZ/2\bZ$. 
The submodule $\Omega_\fraci \subset C(I)$ is called the \textit{obstruction group} 
for $(F, \fraci)$. Note that the obstruction group $\Omega_\fraci$ does 
not depend on $\fraci$ unless $m_+ = 2$ or $m_- = 2$ since so does $\sim$. 
\end{definition}

It is obvious that the obstruction map $\ob_\fraci : \Omega_\fraci \to \bZ/2\bZ$ 
is zero if the image $\eta(\cB_\fraci)$ of $\eta$ contains $\bm{0}$. 
We show that the converse is also true. 
Before that, let us see that the obstruction map factors through the 
quotient module $\Omega_\fraci/\{ \text{constant maps}\}$.  

\begin{proposition}\label{prop:even1s}
We have $\eta(\beta)\cdot \bm1_I = 0$ for any $\beta\in \cB_\fraci$. 
\end{proposition}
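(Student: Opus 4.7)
The plan has two parts: first to show that $\eta(\beta)\cdot\bm{1}_I$ depends only on $(F,\fraci)$, and second to exhibit a specific $\beta_0 \in \cB_\fraci$ for which the pairing is zero.

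For the first part, by Theorem \ref{th:imageofeta} the image $\eta(\cB_\fraci)$ is a single coset of $C_0 \subset C(I)$. Since $C_0$ is generated by the indicator functions $\bm{1}_{\{f,g\}}$ of distinct pairs $f,g\in I$ with $\Pi_\fraci^F(f,g)\neq\emptyset$, and each such generator satisfies
\[
\bm{1}_{\{f,g\}}\cdot\bm{1}_I \;=\; \bm{1}_{\{f,g\}}(f) + \bm{1}_{\{f,g\}}(g) \;=\; 1+1 \;=\; 0 \in \bZ/2\bZ,
\]
we have $C_0 \subset (\bm{1}_I)^\perp$. Hence the map $\beta \mapsto \eta(\beta)\cdot\bm{1}_I$ is constant on $\cB_\fraci$, and it suffices to verify it equals $0$ for one particular $\beta_0$.

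For the second part, I would work from a global $\bQ$-inner product $b$ on $M$ that makes $\alpha$ a semisimple isometry of index $\fraci$ and satisfies $\det b|_{M^\pm}=\delta_\pm$ in $\bQ^\times/\bQ^{\times 2}$. Such a $b$ exists because \eqref{eq:Sign} and \eqref{eq:Square} permit standard constructions on each summand of
\[
M \;=\; M^+\oplus M^-\oplus\bigoplus_{f\in I_1}M^f\oplus\bigoplus_{\{g,g^*\}\subset I_2}M^{gg^*},
\]
using hyperbolic forms on the type-$2$ summands and suitable Hermitian-trace forms on the others. The localized family $\{b\otimes\bQ_v\}$ then automatically satisfies \eqref{eq:P1}, \eqref{eq:P3}, and \eqref{eq:P2} at $v=\infty$. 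It may fail \eqref{eq:P2} at finitely many finite primes, but these can be remedied by local modifications (following the same local analysis that underlies Theorem \ref{th:imageofeta}), producing $\beta_0 = \{b_v^0\}\in \cB_\fraci$ with $b_v^0 = b\otimes\bQ_v$ for all but finitely many $v$.

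Finally, for each $f\in I$ the restriction $(b\otimes\bQ_v)|_{M^f_v}$ is the localization of the global form $b|_{M^f}$ on $M^f$, so the product formula for the Hasse--Witt invariant (a consequence of Hilbert reciprocity, \cite[Chapter III]{Se73}) gives $\sum_{v\in\cV}\hw_v((b\otimes\bQ_v)|_{M^f_v}) = 0$ for every $f$. Thus $\eta(\{b\otimes\bQ_v\}) = \bm{0}$, and because the local modifications change $\eta$ only within $C_0$, the pairing $\eta(\beta_0)\cdot\bm{1}_I$ equals $\eta(\{b\otimes\bQ_v\})\cdot\bm{1}_I = 0$. The principal technical obstacle is this last step: justifying that the local modifications needed at the finitely many finite primes where \eqref{eq:P2} fails can be carried out so as to keep $\beta_0$ in $\cB_\fraci$ while changing $\eta$ only by an element of $C_0$. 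This analysis parallels the arguments of \cite[\S\S 5--6]{Ta23}.
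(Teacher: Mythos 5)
Your first step is fine and is a genuine reduction: since $\eta(\cB_\fraci)$ is a coset of $C_0$ (Theorem \ref{th:imageofeta}) and every generator $\bm1_{\{f,g\}}$ of $C_0$ pairs trivially with $\bm1_I$, the quantity $\eta(\beta)\cdot\bm1_I$ is indeed constant on $\cB_\fraci$, so it suffices to treat one family. (The paper itself does not argue this way; it cites \cite[Proposition~6.7]{Ta23} for the whole statement.)

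The second step, however, has a genuine gap, and the claim you use to bridge it is false in general. Starting from a global form $b$ on $M$ with \eqref{eq:P1}, \eqref{eq:P3} and index $\fraci$, the product formula does give $\eta(\{b\otimes\bQ_v\})=\bm0$. But the family $\{b\otimes\bQ_v\}$ typically fails \eqref{eq:P2} at finitely many finite primes and so does not lie in $\cB_\fraci$; Theorem \ref{th:imageofeta} only controls differences between two families that are \emph{both} in $\cB_\fraci$, so it says nothing about the passage from $\{b\otimes\bQ_v\}$ to your corrected $\beta_0$. Worse, if the corrections changed $\eta$ only by an element of $C_0$, you would conclude $\eta(\beta_0)\in C_0=\Omega_\fraci^\perp$, i.e.\ that the obstruction map $\ob_\fraci$ is always zero --- contradicting the entire point of the theory (condition \eqref{eq:heart} is not automatic). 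Concretely, repairing \eqref{eq:P2} at a prime $p$ generally forces a flip of the Hasse--Witt invariant of the total form $b_p$, hence (since \eqref{eq:P1} and \eqref{eq:P3} pin down all the summand determinants, so the cross terms in the formula for $\hw_p$ of an orthogonal sum are unchanged) a flip of $\sum_{f\in I}\hw_p(b_p|_{M^f_p})$ by $1$. What the proposition actually requires is that the number of such flips is \emph{even}. That parity statement is the real content: one must compare $\sum_f\hw_v(b_v|_{M^f_v})$ with $\hw_v(b_v)$ at each place, observe that the determinant cross terms sum to zero over $v$ by the product formula for Hilbert symbols (the determinants being global rational numbers), and then show $\sum_v\hw_v(b_v)=0$ for any $\{b_v\}\in\cB_\fraci$ because conditions \eqref{eq:P2} force each $b_v$ to be isomorphic to the localization of a global even unimodular lattice of signature $(r,s)$. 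None of this appears in your argument, so the proof is incomplete at exactly the point where the proposition is nontrivial.
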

\begin{proof}
See \cite[Proposition 6.7]{Ta23}. 
\end{proof}

\begin{definition}
We refer to the quotient module 
$\widetilde\Omega_\fraci := \Omega_\fraci/\{ \bm0, \bm1_I \}$
as the \textit{reduced obstruction group} for $(F, \fraci)$. 
The obstruction map factors through $\widetilde\Omega_\fraci$ by 
Proposition \ref{prop:even1s}. The induces homomorphism 
$\widetilde\Omega_\fraci \to \bZ/2\bZ$ is referred to as the 
\textit{reduced obstruction map} for $(F, \fraci)$ and denoted 
$\widetilde\ob_\fraci : \Omega_\fraci \to \bZ/2\bZ$. 
\end{definition}

\begin{remark}
In \cite{Ta23}, we refer to the reduced obstruction group (resp. map) as 
the obstruction group (resp. map). 
\end{remark}

The following theorem is the full version of Theorem \ref{th:A}. 

\begin{theorem}\label{th:obstruction}
Let $r,s\in \bZ_{\geq 0}$ be non-negative integers with $r\equiv s \bmod 8$, 
$F\in \bZ[X]$ a $*$-symmetric polynomial of degree $r+s$ with the conditions 
\eqref{eq:Sign}$_{r,s}$ and \eqref{eq:Square}, 
and $\fraci\in \Idx(r,s;F)$ an index map. 
The following conditions are equivalent:
\begin{enumerate}
\item There exists an even unimodular lattice over $\bZ$ of signature $(r,s)$ 
having a semisimple $(F, \fraci)$-isometry. 
\item The associated $\bQ[X]$-module $M$ of $F$ with transformation $\alpha$ 
admits an inner product $b$ such that $\alpha$ becomes an isometry having index 
$\fraci$ and $(M,b)$ contains an $\alpha$-stable even unimodular lattice over $\bZ$. 
\item There exists a family $\beta\in \cB_\fraci$ such that $\eta(\beta) = \bm0$. 
\item The obstruction map $\ob_\fraci:\Omega_\fraci \to \bZ/2\bZ$ is the zero map. 
\item The reduced obstruction map $\widetilde\ob_\fraci:\widetilde\Omega_\fraci \to \bZ/2\bZ$ 
is the zero map.
\end{enumerate}
Moreover, if both $r$ and $s$ are positive then the 
following condition is also equivalent: 
\begin{enumerate}\setcounter{enumi}{5}
\item Any even unimodular lattice of signature $(r,s)$ admits a 
semisimple $(F,\fraci)$-isometry.  
\end{enumerate}
\end{theorem}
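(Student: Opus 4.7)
The plan is to assemble the chain of equivalences from the infrastructure built up in \S\ref{LocalConditions}--\S3.3. Since the local-global principle for inner products is packaged as Theorem \ref{th:LGP}, the image $\eta(\cB_\fraci)$ is pinned down as a coset of $C_0$ by Theorem \ref{th:imageofeta}, and $\Omega_\fraci = C_0^\perp$ is provided by Lemma \ref{lem:OG=C0perp}, the main theorem is essentially a gluing of these results together with the linear-algebra fact that $C_0^{\perp\perp} = C_0$ inside the finite-dimensional $\bF_2$-vector space $C(I) = (\bZ/2\bZ)^{\oplus I}$ equipped with its standard (manifestly nondegenerate) pairing.

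First I would handle (i) $\Leftrightarrow$ (ii). The implication (ii) $\Rightarrow$ (i) is immediate: the $\alpha$-stable even unimodular lattice provided by (ii) is an even unimodular lattice over $\bZ$ of signature $(r,s)$ carrying the semisimple $(F,\fraci)$-isometry induced by $\alpha$. For (i) $\Rightarrow$ (ii), given $(\Lambda, b_\Lambda)$ with semisimple isometry $t$ of characteristic polynomial $F$, the semisimplicity and the multiplicities of irreducible factors of $F$ force $(\Lambda\otimes \bQ, t)$ to be isomorphic as a $\bQ[X]$-module to $(M, \alpha)$; transporting $b_\Lambda \otimes \bQ$ along any such isomorphism produces the desired inner product $b$ on $M$, and $\Lambda$ embeds as an $\alpha$-stable even unimodular lattice on $(M,b)$. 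The equivalence (ii) $\Leftrightarrow$ (iii) is then exactly Theorem \ref{th:LGP}.

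Next I would prove (iii) $\Leftrightarrow$ (iv) $\Leftrightarrow$ (v). The implication (iii) $\Rightarrow$ (iv) is trivial since $\eta(\beta) = \bm 0$ kills the pairing against every $c$. For (iv) $\Rightarrow$ (iii), pick any $\beta_0 \in \cB_\fraci$, which is nonempty by the proposition following Notation \ref{nt:P1P2P3}. The vanishing of $\ob_\fraci$ says $\eta(\beta_0)\cdot c = 0$ for every $c \in \Omega_\fraci = C_0^\perp$, so $\eta(\beta_0) \in C_0^{\perp\perp} = C_0$. Since $\eta(\cB_\fraci)$ is a coset of $C_0$ containing $\eta(\beta_0) \in C_0$, the coset equals $C_0$ itself, which contains $\bm 0$; this furnishes a family $\beta$ with $\eta(\beta) = \bm 0$. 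Finally, (iv) $\Leftrightarrow$ (v) is built into the construction: Proposition \ref{prop:even1s} gives $\ob_\fraci(\bm 1_I) = 0$, so $\ob_\fraci$ factors through $\widetilde\Omega_\fraci$, and the original map vanishes on $\Omega_\fraci$ if and only if its factorization $\widetilde\ob_\fraci$ vanishes on $\widetilde\Omega_\fraci$.

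For the final clause, the assumption $r,s > 0$ puts us in the uniqueness case of Theorem \ref{th:EULoverZ}(ii), so the even unimodular $\bZ$-lattice of signature $(r,s)$ is unique up to isomorphism; this makes (i) and (vi) literally the same assertion. The one non-formal step in the argument is (i) $\Rightarrow$ (ii), where one has to articulate the $\bQ[X]$-module isomorphism $(\Lambda \otimes \bQ, t) \cong (M, \alpha)$ from semisimplicity and matching multiplicities; once that identification is made, every remaining implication is either a direct appeal to a previously stated theorem or a short manipulation with the nondegenerate $\bF_2$-pairing on $C(I)$.
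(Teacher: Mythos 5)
Your proposal is correct and follows essentially the same route as the paper: (i)$\Leftrightarrow$(ii) by identifying $(\Lambda\otimes\bQ,t)$ with $(M,\alpha)$, (ii)$\Leftrightarrow$(iii) by Theorem \ref{th:LGP}, (iv)$\Rightarrow$(iii) by observing that $\eta(\beta_0)\in\Omega_\fraci^\perp=C_0$ (Lemma \ref{lem:OG=C0perp}) so that the coset $\eta(\cB_\fraci)$ equals $C_0\ni\bm0$, (iv)$\Leftrightarrow$(v) via Proposition \ref{prop:even1s}, and (i)$\Leftrightarrow$(vi) from the uniqueness in Theorem \ref{th:EULoverZ}. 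The only cosmetic difference is that you phrase the key duality step as $C_0^{\perp\perp}=C_0$ for the nondegenerate $\bF_2$-pairing, while the paper reads off $\Omega_\fraci^\perp=C_0$ directly from the cited lemma; these are the same content.
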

\begin{proof}
Suppose that there exists an even unimodular lattice $(\Lambda,b)$ over $\bZ$ of 
signature $(r,s)$ having a semisimple $(F, \fraci)$-isometry $t$. 
By identifying $\Lambda\otimes \bQ$ with $M$ and 
$t$ with $\alpha$, we obtain (i) $\Rightarrow$ (ii). 
The reverse implication (ii) $\Rightarrow$ (i) is obvious. 
The equivalence (ii) $\Leftrightarrow$ (iii) is nothing but Theorem \ref{th:LGP}. 
(iii) $\Rightarrow$ (iv) is clear. We prove (iv) $\Rightarrow$ (iii). 
Suppose that the obstruction map $\ob_\fraci:\Omega_\fraci \to \bZ/2\bZ$ vanishes, 
and take $\widetilde\beta\in \cB_\fraci$ arbitrarily. Then 
$\eta(\widetilde\beta)\in \Omega_\fraci^\perp = C_0$ by Lemma \ref{lem:OG=C0perp}. 
Thus $\eta(\cB_\fraci) = C_0$ by Theorem \ref{th:imageofeta}, and 
there exists $\beta\in \cB_\fraci$ such that $\eta(\beta) = \bm0$. 
This shows (iv) $\Rightarrow$ (iii). 
The equivalence (iv) $\Leftrightarrow$ (v) is obvious. 
Moreover, if both $r$ and $s$ are positive then the equivalence 
(i) $\Leftrightarrow$ (vi) follows from the uniqueness of 
an even unimodular lattice of signature $(r,s)$ 
(Theorem \ref{th:EULoverZ}). The proof is complete. 
\end{proof}

\begin{remark}\label{rem:history_for_LGO}
We give some historical remarks on Theorem \ref{th:obstruction}. 
Let $F\in \bZ[X]$ be a $*$-symmetric polynomial of even degree. 
As mentioned in Introduction, Bayer-Fluckiger and Taelman \cite{BT20} proved that 
when $F$ is irreducible (or a power of a $*$-symmetric irreducible polynomial), 
the conditions \eqref{eq:Sign} and \eqref{eq:Square} are necessary and sufficient 
for the existence of an even unimodular lattice with a prescribed signature, 
having a semisimple isometry of characteristic polynomial $F$. 
For its proof, the local-global idea was introduced as well as the argument for 
the local existence of a unimodular lattice in terms of equivariant Witt groups. 
Afterwards, Bayer-Fluckiger \cite{Ba20,Ba21,Ba22} proceeded to the case where 
the polynomial $F$ is reducible and $+1$-symmetric, and proved that 
the equivalence (i) $\Leftrightarrow$ (v) of Theorem \ref{th:obstruction} when 
$F$ is $+1$-symmetric. However, the first proof given in her preprint \cite{Ba21} 
was difficult to follow, 
and did not take account of subtle conditions for obstruction that arise 
from the difference among the sets $\overline{I(f;\bQ_p)}$, $\overline{I(f;\bQ_p)}'$, 
and $I(f \bmod p;\bF_p)$ for a monic polynomial of $f\in \bZ[X]$, see 
Notation \ref{nt:Pi_fg} and Remark \ref{rem:for_Pi_fg}. 
On the framework established by her, the author \cite{Ta23} improved the outlook 
of the proof, for example, by giving the intermediate result, Theorem \ref{th:LGP}, 
between (i) and (v). He also modified the definition of obstruction.  
Moreover, in \cite{Ta23}, the result extended to the case where $F$ is $*$-symmetric, 
which covers the $-1$-symmetric case, mainly by careful analysis at the prime $2$. 
\end{remark}

The following is a typical case where the obstruction vanishes. 
We say that an equivalence relation on $I$ is \textit{weakest} if  all elements of $I$ 
are equivalent one another. 

\begin{theorem}\label{th:weakestER}
There exists an even unimodular lattice over $\bZ$ of signature $(r,s)$ 
having a semisimple $(F, \fraci)$-isometry if 
the equivalence relation on $I$ defined by $(F, \fraci)$ is weakest. 
\end{theorem}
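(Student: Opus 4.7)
The plan is to show that the hypothesis forces the reduced obstruction group $\widetilde\Omega_\fraci$ to be trivial, so that condition (v) of Theorem \ref{th:obstruction} holds vacuously; the desired lattice then follows from that theorem.

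First I would check that Theorem \ref{th:obstruction} is applicable. Since an index map $\fraci\in\Idx(r,s;F)$ is assumed to exist, Corollary \ref{cor:conditionSign} supplies \eqref{eq:Sign}${}_{r,s}$, while \eqref{eq:Square} is a standing hypothesis of this section. Hence it suffices to prove that $\widetilde\ob_\fraci:\widetilde\Omega_\fraci\to\bZ/2\bZ$ is the zero map.

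Next I would unwind the definition of $\Omega_\fraci$. By Definition \ref{def:ERdefinedby}, an element $c\in C(I)$ lies in $\Omega_\fraci$ exactly when $c$ is constant on each $\sim$-equivalence class. When $\sim$ is the weakest equivalence relation (a single equivalence class containing all of $I$), this forces $c$ to be globally constant on $I$, so $\Omega_\fraci=\{\bm 0,\bm 1_I\}$. Dividing out by $\{\bm 0,\bm 1_I\}$ yields $\widetilde\Omega_\fraci=0$, and the zero-map condition holds trivially. Applying the implication (v) $\Rightarrow$ (i) of Theorem \ref{th:obstruction} then produces the required even unimodular lattice carrying a semisimple $(F,\fraci)$-isometry.

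No genuine obstacle arises: the definitions of $\Omega_\fraci$ and $\widetilde\Omega_\fraci$ are rigged so that the "weakest" hypothesis immediately collapses them, and Proposition \ref{prop:even1s} is precisely what makes the quotient by $\bm 1_I$ legitimate. The degenerate case $I=\emptyset$ is subsumed, since then $\Omega_\fraci=0$ from the outset. The only verification worth writing out explicitly is the compatibility check with the hypotheses of Theorem \ref{th:obstruction}, which amounts to citing Corollary \ref{cor:conditionSign}.
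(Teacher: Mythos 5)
Your argument is correct and is essentially the paper's own proof: the weakest equivalence relation forces $\Omega_\fraci$ to consist only of the constant maps, so $\widetilde\Omega_\fraci$ is trivial, the reduced obstruction map vanishes, and Theorem \ref{th:obstruction} (v) $\Rightarrow$ (i) gives the lattice. Your additional remarks on the applicability of Theorem \ref{th:obstruction} and the degenerate case are fine but not needed, since \eqref{eq:Sign}$_{r,s}$ and \eqref{eq:Square} are standing hypotheses in that part of the paper.
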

\begin{proof}
Suppose that equivalence relation on $I$ defined by $(F, \fraci)$ is weakest. 
This means that $\Omega_\fraci$ consists of the constant maps, and 
$\widetilde{\Omega}_\fraci$ is a trivial group. 
Hence, the reduced obstruction map 
$\widetilde\ob_\fraci:\widetilde\Omega_\fraci \to \bZ/2\bZ$ is zero, which 
implies that there exists an even unimodular lattice over $\bZ$ of signature $(r,s)$ 
having a semisimple $(F, \fraci)$-isometry by Theorem \ref{th:obstruction}. 
\end{proof}

For example, if $F$ is a power of a $*$-symmetric irreducible polynomial $f\in \bZ[X]$
then $I = \{f\}$ and the equivalence relation defined by $(F, \fraci)$ is 
clearly weakest. In this case, the obstruction vanishes independently of the index 
map $\fraci$.

\section{Computation of obstruction}\label{sec:Comp_of_Ob}
This section gives some results for computing obstruction. 
In particular, Theorem \ref{th:comparison}, which compares two obstruction maps, 
together with results in \S \ref{sec:Iso_on_EULofIndex0}, provides a 
systematic way to compute an obstruction map.

\subsection{Computation of $\Pi(f,g)$}
Let $K$ be a field, and let  
$f(X) = \sum_{i=0}^m a_i X^i$ and $g(X) = \sum_{j=0}^n b_j X^j \in K[X]$ 
be polynomials of degrees $m$ and $n$ respectively. The \textit{resultant} of 
$f$ and $g$ is the determinant of the $(m+n)\times(m+n)$ matrix
\[ \begin{pmatrix}
a_m & a_{m-1} & \cdots & a_0     &        &      \\
    & \ddots  & \ddots &         & \ddots &      \\
    &         & a_m    & a_{m-1} & \cdots & a_0  \\
b_n & b_{n-1} & \cdots & b_0     &        &      \\
    & \ddots  & \ddots &         & \ddots &      \\
    &         & b_n    & b_{n-1} & \cdots & b_0  \\
\end{pmatrix}, 
\]
and denoted by $\Res(f,g)$. 
If $\alpha_1, \ldots, \alpha_m$ and $\beta_1, \ldots, \beta_n\in \overline{K}$ 
are the roots of $f$ and $g$ respectively, then it is known that 
\begin{equation}\label{eq:resultant}
\Res(f,g) = a_m^n b_n^m \prod_{i=1}^m\prod_{j=1}^n(\alpha_i - \beta_j).   
\end{equation}
As a result, the resultant $\Res(f,g)$ vanishes if and only if $f$ and $g$ have 
a common root. If $f$ and $g$ are monic then Equation \eqref{eq:resultant} can also be  
written as 
\[ \Res(f,g) = \prod_{i = 1}^m g(\alpha_i) = (-1)^m \prod_{j=1}^n f(\beta_j).  \]

Let $f$ and $g\in \bZ[X]$ be monic polynomials with coefficients in $\bZ$. 
Note that $\Res(f,g)$ (considered as $f,g \in \bQ[X]$) is an integer by definition. 
Moreover, for any prime $p$ we have 
\[ \Res(f \bmod p, g \bmod p) \equiv \Res(f,g) \mod p.  \]
Thus $(f \bmod p)$ and $(g \bmod p)\in \bF_p[X]$ have a common factor if and only if 
$\Res(f,g) \equiv 0 \bmod p$. 

\begin{notation}\label{nt:Pi_fg_pure}
For two monic polynomials $f$ and $g\in \bZ[X]$, we define 
\[ \Pi(f,g) := \{ p:\text{prime} 
\mid \overline{I(f;\bQ_p)}\cap \overline{I(g;\bQ_p)} \neq \emptyset \}, \]
where the set $\overline{I(f;\bQ_p)}$ is defined in Notation \ref{nt:Pi_fg}. 
In the situation of Notation \ref{nt:Pi_fg}, 
we have $\Pi_\fraci^F(f,g) \subset \Pi(f,g)$ and they coincide 
if $f(1)f(-1)g(1)g(-1) \neq 0$. 
\end{notation}

The set $\Pi(f,g)$ and the resultant $\Res(f,g)$ are related as follows. 

\begin{proposition}\label{prop:Pi<Res}
Let $f$ and $g\in \bZ[X]$ be monic polynomials. Then 
\[ \Pi(f,g) 
\subset \{ p:\textup{prime} \mid \textup{$p$ is a factor of $\Res(f,g)$} \}. \]
\end{proposition}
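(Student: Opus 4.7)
The plan is to unfold the definition of $\Pi(f,g)$ and then invoke the compatibility of the resultant with reduction modulo $p$, as recalled just before the statement. Both pieces are already laid out in the text, so the proof should be short.

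First, I take an arbitrary prime $p \in \Pi(f,g)$. By Notation \ref{nt:Pi_fg_pure} and the definition of $\overline{I(\cdot;\bQ_p)}$ from Notation \ref{nt:Pi_fg}, there exists an irreducible polynomial $\bar{h} \in \bF_p[X]$ together with $*$-symmetric irreducible factors $f_0$ of $f$ and $g_0$ of $g$ in $\bZ_p[X]$, such that $\bar{h}$ divides both $f_0 \bmod p$ and $g_0 \bmod p$ in $\bF_p[X]$. Since $f_0 \mid f$ and $g_0 \mid g$ in $\bZ_p[X]$, the reduction $\bar{h}$ divides both $f \bmod p$ and $g \bmod p$ in $\bF_p[X]$. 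Hence $f \bmod p$ and $g \bmod p$ share the common factor $\bar{h}$ in $\bF_p[X]$.

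Next, I use the standard property of the resultant recalled in the excerpt: two polynomials over a field have a common root (equivalently, a common nonconstant factor) if and only if their resultant vanishes. Applied to $f \bmod p$ and $g \bmod p$ over $\bF_p$, this yields
\[ \Res(f \bmod p,\, g \bmod p) = 0 \quad \text{in } \bF_p. \]
Combining this with the congruence
\[ \Res(f \bmod p,\, g \bmod p) \equiv \Res(f,g) \bmod p, \]
also recalled in the excerpt, I conclude $\Res(f,g) \equiv 0 \bmod p$, i.e., $p$ divides $\Res(f,g)$. Since $p \in \Pi(f,g)$ was arbitrary, this gives the desired inclusion.

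There is no real obstacle here: the statement is a direct consequence of unpacking the set-theoretic definition of $\overline{I(f;\bQ_p)}$ (observing that an element of this set divides $f \bmod p$ itself, regardless of the $*$-symmetry condition imposed on factors in $\bZ_p[X]$) and then quoting the resultant criterion. The only point worth emphasizing in the write-up is that the $*$-symmetry requirement in the definition of $\overline{I(f;\bQ_p)}$ is used only to supply the factor $f_0$ containing $\bar{h}$; it is not needed for the divisibility relation $\bar{h} \mid (f \bmod p)$, which is what actually drives the argument.
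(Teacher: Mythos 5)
Your proof is correct and follows the same route as the paper's: unwind the definition of $\Pi(f,g)$ to obtain a common irreducible factor of $f \bmod p$ and $g \bmod p$, then apply the resultant criterion $\Res(f,g)\equiv 0 \bmod p$ established just before the statement. Your write-up simply makes explicit the unfolding of $\overline{I(\cdot;\bQ_p)}$ that the paper leaves implicit.
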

\begin{proof}
Suppose that a prime $p$ belongs to $\Pi(f,g)$. 
Then $(f \bmod p)$ and $(g \bmod p)\in \bF_p[X]$ have a common factor, and thus 
$p\mid \Res(f,g)$. This shows the assertion. 
\end{proof}

\begin{example}\label{ex:Pi_cyclo12_X-1}
Let $f(X) = X^4 - X^2 + 1$; this is the $12$-th cyclotomic polynomial 
and irreducible (see \S\ref{ss:cyclotomic_modulo_p}). Then, 
the resultant $\Res(f, X-1) = -f(1) = -1$ has no prime factor. 
This means that $\Pi(f, X-1) = \emptyset$ by Proposition \ref{prop:Pi<Res}.  
\end{example}

We will give an explicit description of the set $\Pi(f,g)$ when $f$ and $g$ are 
cyclotomic polynomials in \S \ref{ss:Comp_Pi_cyclos}. 
The following proposition will be useful.  

\begin{proposition}\label{prop:nonSquare}
Let $f\in \bZ[X]$ be a $*$-symmetric polynomial with $f(1)f(-1)\neq 0$, and 
let $p$ be a prime.  
\begin{enumerate}
\item If $v_p(f(1))$ is odd then $X-1 \in \overline{I(f;\bQ_p)}$. 
\item If $v_p(f(-1))$ is odd then $X+1 \in \overline{I(f;\bQ_p)}$. 
\item If $(-1)^{\deg(f)/2}f(1)f(-1) \neq 1$ nor $-3$ in $\bQ_2^\times/\bQ_2^{\times 2}$ 
then $X-1 \in \overline{I(f;\bQ_2)}$. 
\end{enumerate} 
\end{proposition}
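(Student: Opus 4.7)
The plan is to factor $f$ in $\bZ_p[X]$ into its $*$-symmetric irreducible pieces and its type $2$ pieces $gg^*$, and then track the parities of $v_p$ (for (i), (ii)) and the square classes in $\bQ_2^\times/\bQ_2^{\times 2}$ (for (iii)). Since $f(1)f(-1)\neq 0$, no factor of $X-1$ or $X+1$ appears, so (as noted after Definition \ref{def:type012}) all $*$-symmetric irreducible factors of $f$ in $\bZ_p[X]$ are $+1$-symmetric of even degree. The uniform observation for type $2$ pairs is: since $g\mid f$ implies $g(0)\in\bZ_p^\times$, one has $g^*(\pm1)=g(0)^{-1}(\pm1)^{\deg g}g(\pm1)$, whence $v_p((gg^*)(\pm1))=2v_p(g(\pm1))$ is even and, as in the proof of Lemma \ref{lem:properties_for_Square}(ii), $(-1)^{\deg(gg^*)/2}(gg^*)(1)(gg^*)(-1)=(g(1)g(-1)g(0)^{-1})^2$ is a square.

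For (i), suppose $v_p(f(1))$ is odd. The type $2$ factors contribute an even $v_p$, so $\sum_h m_h\, v_p(h(1))$ must be odd, forcing some $*$-symmetric irreducible factor $h$ with $v_p(h(1))\geq1$; then $(h\bmod p)(1)=0$, so $X-1$ divides $h\bmod p$ in $\bF_p[X]$, giving $X-1\in\overline{I(f;\bQ_p)}$. Part (ii) is verbatim with $-1$ in place of $1$.

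For (iii), the core computation uses the trace polynomial $\tilde h\in\bZ_2[X]$ (Definition \ref{def:tracepl}) of a $+1$-symmetric irreducible factor $h\in\bZ_2[X]$ of even degree $2k$: $h(1)=\tilde h(2)$ and $h(-1)=(-1)^k\tilde h(-2)$. Writing $\tilde h(X)=\sum a_iX^i$, one checks $\tilde h(2)-\tilde h(-2)=\sum_{i\,\mathrm{odd}}2^{i+1}a_i\in 4\bZ_2$; hence if $h(1)$ is odd (equivalently $a_0$ is odd, equivalently $h(-1)$ is odd), then $\tilde h(2)$ and $\tilde h(-2)$ are both odd and congruent modulo $4$, so $(-1)^{\deg h/2}h(1)h(-1)=\tilde h(2)\tilde h(-2)\equiv 1\text{ or }5\pmod 8$, lying in the subgroup $\{1,-3\}\subset\bQ_2^\times/\bQ_2^{\times 2}$.

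Running the contrapositive, assume every $*$-symmetric irreducible factor $h$ of $f$ in $\bZ_2[X]$ has $h(1)$ odd. By the previous paragraph each $((-1)^{\deg h/2}h(1)h(-1))^{m_h}$ lies in the subgroup $\{1,-3\}$, and the type $2$ factors are squares; multiplying, $(-1)^{\deg f/2}f(1)f(-1)\in\{1,-3\}$ in $\bQ_2^\times/\bQ_2^{\times 2}$, contradicting the hypothesis of (iii). Therefore some $*$-symmetric irreducible factor $h$ has $h(1)$ even, and since $X-1=X+1$ in $\bF_2[X]$ this yields $X-1\mid h\bmod 2$, i.e., $X-1\in\overline{I(f;\bQ_2)}$. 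The main obstacle is pinning down the exact subgroup $\{1,-3\}=\{1,5\bmod 8\}$ that a product of odd-valued $(-1)^{\deg h/2}h(1)h(-1)$ lands in; everything else reduces to parity bookkeeping and the symmetry of $\tilde h$.
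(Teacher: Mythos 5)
Your proof is correct. The paper itself defers the proof to \cite[Lemma 6.12]{Ta23}, so there is no in-text argument to compare against; your route --- factoring $f$ over $\bZ_p$ into $*$-symmetric irreducible factors and type~$2$ pairs $gg^*$, observing that the latter contribute even valuations at $\pm1$ and squares to $(-1)^{\deg(f)/2}f(1)f(-1)$, and at $p=2$ using the trace polynomial to show that a $+1$-symmetric irreducible factor $h$ with $h(1)$ odd contributes a class in the subgroup $\{1,-3\}\subset\bQ_2^\times/\bQ_2^{\times2}$ --- is the natural local argument, and all the steps (including the case where $f$ has no $*$-symmetric factor over $\bQ_2$, where the empty product is $1$ and the hypothesis of (iii) already fails) check out.
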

\begin{proof}
See \cite[Lemma 6.12]{Ta23}. 
\end{proof}

\subsection{Cyclotomic polynomials modulo $p$}\label{ss:cyclotomic_modulo_p}
In this subsection, we study factorizations of cyclotomic polynomials modulo a prime 
number. For a positive integer $n\in \bZ_{>0}$, we define
$R_n := \{ j\in \bZ \mid 1\leq j \leq n \text{ and } \gcd(j,n) = 1 \}$. 
The function $\varphi:\bZ_{>0} \to \bZ$ defined by 
$\varphi(n) = \# R_n$ is called \textit{Euler's totient function}. 
Let $n$ be a positive integer. 
The complex number $\zeta_n := \exp(2\pi\sqrt{-1}/n)$ is a 
primitive $n$-th root of unity. 
There are exactly $\varphi(n)$ primitive $n$-th roots of unity, 
and they are given by $\zeta_n^j$ for $j\in R_n$.
The polynomial $\prod_{j\in R_n}(X-\zeta_n^j)\in \bC[X]$ is called the 
\textit{$n$-th cyclotomic polynomial} and denoted by $\Phi_n(X)$. 
It is known that $\Phi_n(X)$ is a monic polynomial with coefficients in $\bZ$.  
Moreover, it is the minimal polynomial of $\zeta_n$ over $\bQ$, 
and in particular irreducible in $\bQ[X]$. 

In the following, we fix a prime $p$, and write $\bar{f}\in \bF_p[X]$ for the
reduction modulo $p$ of a polynomial $f\in \bZ[X]$. 

\begin{proposition}\label{prop:cyclo_modp_1}
Let $n$ be a positive integer, and write
$n = p^e m$ where $e,m\in \bZ_{\geq 0}$ and $\gcd(p,m) = 1$. 
Then $\overline{\Phi_n}(X) = \overline{\Phi_m}(X)^{\varphi(p^e)}$ in $\bF_p[X]$. 
\end{proposition}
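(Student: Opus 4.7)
The plan is to reduce the proposition to a well-known recursion for cyclotomic polynomials in characteristic zero and then to apply the Frobenius endomorphism modulo $p$. The case $e = 0$ is trivial since $\varphi(1) = 1$, so I would assume $e \geq 1$.

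First I would establish, in $\bZ[X]$, the identity
\[
\Phi_{p^e m}(X) = \frac{\Phi_m(X^{p^e})}{\Phi_m(X^{p^{e-1}})}.
\]
This would follow by a short induction on $e$ from the two classical relations $\Phi_m(X^p) = \Phi_m(X)\Phi_{pm}(X)$ (valid when $\gcd(p,m)=1$) and $\Phi_{m'}(X^p) = \Phi_{pm'}(X)$ (valid when $p\mid m'$). Both relations can be verified by identifying the primitive roots of unity that appear as roots of each side: if $\eta^p = \zeta$ with $\zeta$ a primitive $m'$-th root of unity, then the order of $\eta$ is either $m'$ or $pm'$ when $p\nmid m'$, and is forced to be $pm'$ when $p\mid m'$. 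A degree check using $\varphi(pm') = (p-1)\varphi(m')$ or $\varphi(pm') = p\varphi(m')$ then pins down the identity on the nose.

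Next I would reduce the recursion modulo $p$. By the freshman's dream applied to the reduction $\overline{\Phi_m}(X) \in \bF_p[X]$, we have $\overline{\Phi_m}(X^{p^j}) = \overline{\Phi_m}(X)^{p^j}$, so the right-hand side becomes
\[
\overline{\Phi_m}(X)^{p^e}/\overline{\Phi_m}(X)^{p^{e-1}} = \overline{\Phi_m}(X)^{p^e - p^{e-1}} = \overline{\Phi_m}(X)^{\varphi(p^e)},
\]
which is the desired formula.

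I do not anticipate a serious obstacle: the recursion is standard and the Frobenius step is automatic. A completely self-contained alternative, avoiding the recursion, would be strong induction on $n$: combine the mod-$p$ identity $X^n - 1 = (X^m - 1)^{p^e}$ with the product decomposition $X^k - 1 = \prod_{d\mid k}\Phi_d(X)$, then solve for $\overline{\Phi_n}(X)$ in terms of the smaller factors $\overline{\Phi_d}(X)$ for $d\mid n$, $d<n$; here the bookkeeping relies on the elementary identity $\sum_{i=0}^{e-1}\varphi(p^i) = p^{e-1}$.
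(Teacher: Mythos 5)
Your proof is correct and follows essentially the same route as the paper: both rest on the identity expressing $\Phi_m(X^{p^e})$ in terms of cyclotomic polynomials (proved by identifying which roots of unity occur plus a degree count) followed by the Frobenius reduction $\overline{\Phi_m}(X^{p^j}) = \overline{\Phi_m}(X)^{p^j}$ modulo $p$. The only organizational difference is that you isolate the quotient identity $\Phi_{p^e m}(X) = \Phi_m(X^{p^e})/\Phi_m(X^{p^{e-1}})$ in $\bZ[X]$ and reduce once, whereas the paper reduces the telescoping product $\Phi_m\Phi_{pm}\cdots\Phi_{p^em} = \Phi_m(X^{p^e})$ and then inducts on $e$ in $\bF_p[X]$.
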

\begin{proof}
For each $j\in \bZ$ with $0 \leq j \leq e$, the polynomial $\Phi_m(X^{p^e})$ vanishes 
at $X = \zeta_{p^j m}$ because the power $\zeta_{p^j m}^{p^e} = \zeta_m^{p^{e-j}}$ 
is a primitive $m$-th root of unity. This means that
$\Phi_m(X^{p^e})$ is divisible by $\Phi_m(X) \Phi_{pm}(X) \cdots \Phi_{p^e m}(X)$
since $\Phi_{p^jm}(X)$ is the minimal polynomial of $\zeta_{p^j m}$. 
On the other hand, we have 
\[ \begin{split}
&\deg(\Phi_m(X) \Phi_{pm}(X) \cdots \Phi_{p^e m}(X))
= \varphi(m) + \sum_{j=1}^e \varphi(p^{j}m) 
= \varphi(m) + \sum_{j=1}^e \varphi(p^{j})\varphi(m) \\
&\qquad= \varphi(m) + \sum_{j=1}^e \left(p^{j-1}(p-1)\varphi(m)\right) 
= \varphi(m) + (p-1)\varphi(m)\sum_{j=1}^e p^{j-1}  \\
&\qquad = \varphi(m) + \varphi(m)(p^e - 1) = \varphi(m)p^e 
= \deg(\Phi_m(X^{p^e})). 
\end{split}
\]
Thus 
$\Phi_m(X) \Phi_{pm}(X) \cdots \Phi_{p^e m}(X) = \Phi_m(X^{p^e})$, and 
\[
\overline{\Phi_m}(X) \overline{\Phi_{pm}}(X) \cdots \overline{\Phi_{p^e m}}(X) 
= \overline{\Phi_m}(X^{p^e}) = \overline{\Phi_m}(X)^{p^e}
\quad(\text{in $\bF_p[X]$}). \]
By induction on $e$, we obtain
$\overline{\Phi_{n}}(X) 
= \overline{\Phi_{p^e m}}(X) 
= \overline{\Phi_m}(X)^{\varphi(p^e)}
$ as required. 
\end{proof}

We then consider the factorization of $\overline{\Phi_m}(X)$ for 
a positive integer $m$ with $\gcd(p, m) = 1$. 
It is clear that $\overline{\Phi_1}(X) = X - 1$ and $\overline{\Phi_2}(X) = X + 1$. 
Let $m\geq 3$ be a positive integer with $\gcd(p, m) = 1$.  
We write $\overline{\bF_p}$ for the algebraic closure of $\bF_p$, and 
$\bF_q$ for the unique subfield of $\overline{\bF_p}$ whose cardinality  
equals $q$, that is, 
$\bF_q = \{ \alpha \in \overline{\bF_p} \mid \alpha^{q} = \alpha \}$, 
where $q$ is a power of $p$. 

\begin{proposition}\label{prop:cyclo_modp_2}
Let $\bar{f}\in \bF_p[X]$ be any irreducible factor of $\overline{\Phi_m}$,  
and let $d$ denote the order of $p$ in the multiplicative group 
$(\bZ/m\bZ)^\times$. 
\begin{enumerate}
\item Let $\bar{\zeta}\in \overline{\bF_p}$ be any root of $\bar{f}$. 
Then the order of $\bar{\zeta}$ in $\overline{\bF_p}^\times$ equals $m$. 
\item $\deg(\bar{f}) = d$. 
\item $\bar{f}$ is $+1$-symmetric if and only if 
there exists a non-negative integer $r$ such that $p^r \equiv -1 \bmod m$.
\end{enumerate}
In particular, all irreducible factors of $\overline{\Phi_m}$ have degree $d$ and 
are all $+1$-symmetric or all not. 
\end{proposition}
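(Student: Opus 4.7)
The plan is to work with a root $\bar\zeta \in \overline{\bF_p}$ of $\bar f$ and pass back and forth between the multiplicative order of $\bar\zeta$ and the Frobenius action. First I would note that since $\gcd(p,m) = 1$, the polynomial $X^m - 1 \in \bF_p[X]$ is separable (its derivative $mX^{m-1}$ shares no root with it), hence from the identity $X^m - 1 = \prod_{k\mid m}\Phi_k(X)$, reducing modulo $p$, the polynomials $\overline{\Phi_k}$ for $k\mid m$ are pairwise coprime in $\bF_p[X]$ and together partition the $m$-th roots of unity in $\overline{\bF_p}^\times$.

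For (i), I would argue as follows. Since $\bar\zeta^m = 1$, its order $d'$ in $\overline{\bF_p}^\times$ divides $m$. If $d' < m$, then $\bar\zeta$ is a primitive $d'$-th root of unity, and by the partition above applied to $d' \mid m$, it must be a root of $\overline{\Phi_{d'}}$; but that contradicts its being a root of $\overline{\Phi_m}$ by coprimality. Hence $d' = m$. For (ii), the element $\bar\zeta \in \overline{\bF_p}^\times$ of order $m$ belongs to $\bF_{p^k}$ if and only if $\bar\zeta^{p^k - 1} = 1$, i.e., $m \mid p^k - 1$. The minimal such $k$ is precisely the order $d$ of $p$ in $(\bZ/m\bZ)^\times$, so $[\bF_p(\bar\zeta):\bF_p] = d$ and $\deg(\bar f) = d$.

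For (iii), I would use the fact that the set of roots of $\bar f$ in $\overline{\bF_p}$ is the Frobenius orbit $\{\bar\zeta, \bar\zeta^p, \ldots, \bar\zeta^{p^{d-1}}\}$ of $\bar\zeta$. Now $\bar f$ is $+1$-symmetric iff $\bar\zeta^{-1}$ is also a root, i.e., iff $\bar\zeta^{-1} = \bar\zeta^{p^r}$ for some $0 \leq r < d$. Since $\bar\zeta$ has order $m$ by (i), this is equivalent to $p^r \equiv -1 \pmod m$ for some $r \geq 0$. For the final ``in particular'' statement, both the order $d$ and the existence condition in (iii) depend only on $p$ and $m$, not on the choice of irreducible factor $\bar f$, so the conclusion is immediate. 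I do not expect any genuine obstacle here; the main care is in invoking the separability of $X^m - 1 \bmod p$ to justify that distinct $\overline{\Phi_k}$ for $k \mid m$ share no roots.
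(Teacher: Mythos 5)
Your proof is correct and follows essentially the same route as the paper's: separability of $X^m-1$ (using $\gcd(p,m)=1$) for part (i), identifying $\bF_p(\bar\zeta)$ with the smallest $\bF_{p^k}$ satisfying $m \mid p^k-1$ for (ii), and the Frobenius orbit of $\bar\zeta$ for (iii). One small point in (iii): the equivalence $\bar\zeta^{-1}=\bar\zeta^{p^r}$ directly characterizes when $\bar f$ is $*$-symmetric, not yet when it is $+1$-symmetric; to rule out $\bar f(0)=-1$ you should add, as the paper does, that every root of $\bar f$ has order $m\geq 3$, so $\bar f\neq X\pm1$, and every $*$-symmetric irreducible polynomial other than $X-1$ and $X+1$ is automatically $+1$-symmetric.
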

\begin{proof}
(i). Let $m'$ denote the order of $\bar{\zeta}$ in $\overline{\bF_p}^\times$. 
Since $\bar{f}(X)\mid \overline{\Phi_m}(X) \mid (X^m-1)$, 
we have $\bar{\zeta}^m = 1$, which shows that $m' \mid m$.
Note that $X^m - 1\in \bF_p[X]$ is separable since 
it is coprime to its derivative $m X^{m-1}$ in $\bF_p[X]$ 
by the assumption $\gcd(p,m) = 1$. 
If $m'$ were less than $m$ then the polynomial
\[X^m - 1 
= \prod_{r\mid m} \overline{\Phi_r}(X)
= (X^{m'} - 1)\prod_{r\mid m, r\nmid m'} \overline{\Phi_r}(X)
\]
would have $\bar{\zeta}$ as a multiple root since $\bar\zeta^{m'} - 1 = 0$ and 
$\overline{\Phi_m}(\bar\zeta) = 0$, but it contradicts separability. 
Therefore $m' = m$. 

(ii). Let $\bar{\zeta}\in \overline{\bF_p}$ be a root of $\bar{f}$. 
Since $p^d \equiv 1 \bmod m$ and $\bar{\zeta}^m = 1$, it follows that 
$\bar{\zeta}^{p^d} = \bar{\zeta}$. 
This shows that $\bar{\zeta} \in \bF_{p^d}$, and 
$\bF_p(\bar{\zeta}) \subset \bF_{p^d}$. Thus 
\[ \deg(\bar{f}) = [\bF_p(\bar{\zeta}): \bF_p] 
\leq [\bF_{p^d}: \bF_p] = d. \]
On the other hand, we have 
$\bar{\zeta}^{p^{\deg(\bar{f})} - 1} = 1$ since 
the extension $\bF_p(\bar{\zeta})/\bF_p$ is of degree $\deg(\bar{f})$. 
Hence $p^{\deg(\bar{f})} - 1\equiv 0 \bmod m$ by (i), 
and $p^{\deg(\bar{f})} \equiv 1 \bmod m$.  
Therefore $d\mid \deg(\bar{f})$, which shows that $d = \deg(\bar{f})$.  

(iii). Suppose that $\bar{f}$ is $+1$-symmetric. Then 
$\bar{f}(\bar{\zeta}^{-1}) = \bar{f}^*(\bar{\zeta}^{-1}) = 0$ for a root 
$\bar{\zeta}$ of $\bar{f}$. 
In other words, $\bar{\zeta}^{-1}$ and $\bar{\zeta}$ are conjugate. 
Thus, there exists $r\in\bZ_{\geq0}$ such that 
$\bar{\zeta}^{p^r} = \bar{\zeta}^{-1}$, or equivalently
$\bar{\zeta}^{p^r + 1} = 1$. 
This implies that $p^r + 1 \equiv 0 \bmod m$ by (i), and 
$p^r \equiv -1 \bmod m$.

Conversely, suppose that there exists $r\in\bZ_{\geq0}$ such that $p^r \equiv -1 \bmod m$. 
Then, for any root $\bar{\zeta}$ of $\bar{f}$, its inverse
$\bar{\zeta}^{-1} = \bar{\zeta}^{p^r}$ is conjugate to $\bar{\zeta}$. 
Noting that every root of $\bar{f}$ has multiplicity $1$ since $\bar{f}$ is separable, 
it follows that $\bar{f}$ is $*$-symmetric.
Furthermore, any root of $\bar{\zeta}$ is not $1$ since its order in 
$\overline{\bF_p}^\times$ is $m\geq 3$. 
Hence $X-1$ is not a factor of $\bar{f}$, and $\bar{f}$ is $+1$-symmetric. 
\end{proof}

The $n$-th cyclotomic polynomial decomposes in $\bF_p[X]$ as follows. 

\begin{theorem}\label{th:factorizationofCTPmodp}
Let $n$ be a positive integer, and write
$n = p^e m$ where $e,m\in \bZ_{\geq 0}$ and $\gcd(p,m) = 1$. 
\begin{enumerate}
\item $\overline{\Phi_n}(X) = \overline{\Phi_m}(X)^{\varphi(p^e)}$ in $\bF_p[X]$. 
\item Suppose that $m \geq 3$, and let $d$ denote the order of $p$ in 
$(\bZ/m\bZ)^\times$. 
Then $\overline{\Phi_m}$ has exactly $\varphi(m)/d$ irreducible factors of degree $d$ 
in $\bF_p[X]$. Moreover, the irreducible factors are all $+1$ symmetric or all not, and 
the former case occurs if and only if there exists $r \in \bZ_{\geq 0}$ such that 
$p^r \equiv -1 \bmod m$.  
\end{enumerate}
\end{theorem}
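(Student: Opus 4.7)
The plan is to assemble Propositions \ref{prop:cyclo_modp_1} and \ref{prop:cyclo_modp_2}, which already contain essentially all of the content. Part (i) is literally the statement of Proposition \ref{prop:cyclo_modp_1}, so no further work is needed there.

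For part (ii), I would first invoke Proposition \ref{prop:cyclo_modp_2}(ii), which says that every irreducible factor of $\overline{\Phi_m}$ in $\bF_p[X]$ has degree exactly $d$. To convert this degree information into a count of factors, I need to know that $\overline{\Phi_m}$ is squarefree. Since $\gcd(p,m)=1$, the polynomial $X^m-1\in\bF_p[X]$ is coprime to its derivative $mX^{m-1}$, hence separable; its factor $\overline{\Phi_m}$ inherits this property. Combined with $\deg(\overline{\Phi_m})=\varphi(m)$, it follows that $\overline{\Phi_m}$ is the product of exactly $\varphi(m)/d$ distinct irreducible factors.

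The symmetry statement is immediate from Proposition \ref{prop:cyclo_modp_2}(iii): the criterion that there exist $r\in\bZ_{\geq 0}$ with $p^r\equiv -1\bmod m$ depends only on the pair $(p,m)$ and not on the chosen factor, so either every irreducible factor of $\overline{\Phi_m}$ is $+1$-symmetric or none is, and this dichotomy is governed precisely by that arithmetic condition.

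There is no genuine obstacle here; the theorem is a repackaging of the two preparatory propositions into the form that will be convenient for computing the sets $\Pi(\Phi_n,\Phi_{n'})$ in the subsequent subsection. The only point not formally restated in the preparatory results is the separability check for $\overline{\Phi_m}$, which is standard and follows at once from the coprimality assumption $\gcd(p,m)=1$.
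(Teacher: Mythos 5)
Your proposal is correct and matches the paper's proof, which likewise deduces (i) directly from Proposition \ref{prop:cyclo_modp_1} and (ii) from Proposition \ref{prop:cyclo_modp_2}. Your explicit separability check for $\overline{\Phi_m}$ (needed to turn the uniform degree $d$ into the count $\varphi(m)/d$) is the same observation already made inside the proof of Proposition \ref{prop:cyclo_modp_2}(i), so nothing is missing.
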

\begin{proof}
(i) is nothing but Proposition \ref{prop:cyclo_modp_1}, and 
(ii) follows from Proposition \ref{prop:cyclo_modp_2}. 
\end{proof}

\subsection{Computation of $\Pi(\Phi_n,\Phi_{n'})$}\label{ss:Comp_Pi_cyclos}
This subsection gives an explicit description of the set $\Pi(\Phi_n,\Phi_{n'})$. 
We remark that $\Phi_n$ is $+1$-symmetric and of even degree if $n \geq 3$. 
In this case, we write $\Psi_n \in \bZ[X]$ for the trace polynomial 
of $\Phi_n$ (see Definition \ref{def:tracepl}). 

\begin{lemma}\label{lem:thenumberof}
Let $n$ be a positive integer at least $3$, and $p$ a prime. 
The number of irreducible factors of $\Phi_n$ 
(resp. $\Psi_n$) over $\bQ_p$ is equal to that of $(\Phi_n \bmod p)$ 
(resp. $(\Psi_n \bmod p)$) over $\bF_p$. 
\end{lemma}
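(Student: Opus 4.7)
The plan is to compute both counts directly for $\Phi_n$ using Theorem \ref{th:factorizationofCTPmodp} together with the classical arithmetic of cyclotomic fields, and then reduce the $\Psi_n$ statement to the $\Phi_n$ statement via the trace-polynomial relation $\Phi_n(X) = X^{\deg \Psi_n} \Psi_n(X + X^{-1})$. Throughout I would write $n = p^e m$ with $\gcd(p, m) = 1$, and let $d$ denote the order of $p$ in $(\bZ/m\bZ)^\times$ (with the convention $d = 1$ when $m \leq 2$).

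First I would treat $\Phi_n$. The $\bF_p$-side count is immediate from Theorem \ref{th:factorizationofCTPmodp}: $(\Phi_n \bmod p) = (\Phi_m \bmod p)^{\varphi(p^e)}$ has exactly $\varphi(m)/d$ distinct irreducible factors in $\bF_p[X]$ (equal to $1$ when $m \leq 2$). Since $\Phi_n$ is irreducible in $\bQ[X]$, the number of its irreducible factors in $\bQ_p[X]$ equals the number of primes of $\bQ(\zeta_n)$ lying above $p$; by the classical decomposition of $p$ in the cyclotomic extension, with ramification index $\varphi(p^e)$ and residue degree $d$, this number is also $\varphi(n)/(\varphi(p^e)\, d) = \varphi(m)/d$. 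Hence the two counts agree for $\Phi_n$.

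For $\Psi_n$, the trace-polynomial relation shows that each distinct irreducible factor of $\Psi_n$ over a field $K$ corresponds either to a single $*$-symmetric irreducible factor of $\Phi_n$ over $K$ or to an unordered pair $\{f, f^*\}$ of non-$*$-symmetric factors. This yields the bookkeeping identity
\[ N_{\Phi_n}(K) = N_{\Psi_n}(K) + \#\{\text{type $2$ pairs in the factorization of $\Phi_n$ over $K$}\}, \]
so it suffices to show that the number of type $2$ pairs is the same over $\bQ_p$ as over $\bF_p$. When $m \leq 2$ neither side has any type $2$ factor. When $m \geq 3$, Theorem \ref{th:factorizationofCTPmodp}(ii) tells us that the $\bF_p$-factors are all $+1$-symmetric iff there exists $r \geq 0$ with $p^r \equiv -1 \bmod m$. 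The analogous statement on the $\bQ_p$ side follows by identifying the decomposition group of $p$ in $\mathrm{Gal}(\bQ(\zeta_n)/\bQ) \cong (\bZ/n\bZ)^\times$ as $(\bZ/p^e\bZ)^\times \times \langle p \bmod m \rangle$ under the isomorphism $(\bZ/n\bZ)^\times \cong (\bZ/p^e\bZ)^\times \times (\bZ/m\bZ)^\times$; the element $-1 \in (\bZ/n\bZ)^\times$, which implements $\zeta_n \mapsto \zeta_n^{-1}$, lies in this group iff $-1 \in \langle p \bmod m \rangle$, i.e., iff the same numerical criterion holds. So the counts of type $2$ pairs coincide in the two settings, and the equality for $\Psi_n$ follows.

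The main obstacle I expect is the clean verification of the $\bQ_p$-side Galois-theoretic criterion, namely the identification of the decomposition group of $p$ and the observation that $\zeta_n \mapsto \zeta_n^{-1}$ belongs to it under exactly the arithmetic condition that governs the $\bF_p$ side; once this symmetry dichotomy is synchronized across the two settings, the rest is bookkeeping through the correspondence between factors of $\Phi_n$ and factors of $\Psi_n$.
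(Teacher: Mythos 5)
Your proof is correct, but it takes a genuinely different route from the paper's, which is a two-line global argument: since $\bZ[X]/(\Phi_n)$ is the full ring of integers of $\bQ[X]/(\Phi_n)$, the Kummer--Dedekind theorem identifies the number of distinct irreducible factors of $\Phi_n \bmod p$ with the number of primes above $p$, while the standard local--global correspondence identifies the number of irreducible factors of $\Phi_n$ over $\bQ_p$ with the same quantity; the $\Psi_n$ case is then dispatched ``in the same way,'' which silently invokes the (true but less elementary) fact that the maximal real subfield $\bQ(\zeta_n+\zeta_n^{-1})$ is also monogenic. You use the same local--global identification on the $\bQ_p$ side of the $\Phi_n$ statement but then compute both counts explicitly as $\varphi(m)/d$, and — more substantially — you handle $\Psi_n$ by pure bookkeeping: $N_{\Phi_n} = N_{\Psi_n} + \#\{\text{type 2 pairs}\}$ over either field, together with the all-or-nothing symmetry dichotomy (over $\bF_p$ from Theorem \ref{th:factorizationofCTPmodp}, over $\bQ_p$ from $-1$ lying in the decomposition group $(\bZ/p^e\bZ)^\times \times \langle p \bmod m\rangle$ iff $p^r\equiv -1 \bmod m$ for some $r$). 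This buys you independence from the monogenicity of the real subfield at the cost of a longer argument; note also that in the ``none symmetric'' case the equality of pair counts uses the already-established equality $N_{\Phi_n}(\bQ_p)=N_{\Phi_n}(\bF_p)$, which you have. The one place needing a little care is the degenerate case $m\leq 2$, where the image of the unique irreducible factor of $\Psi_n \bmod p$ under $h\mapsto X^{\deg h}h(X+X^{-1})$ is $(X\mp 1)^2$ rather than an irreducible polynomial of twice the degree; you correctly set this case aside, and the count $1=1+0$ still holds there, so the argument goes through.
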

\begin{proof}
Let $l$ and $l'$ are the numbers of irreducible factors of $\Phi_n$
and that of $\Phi_n \bmod p$ respectively. 
Furthermore, let $l''$ be the number of places of the field $\bQ[X]/(\Phi_n)$ above $p$. 
It is known that the ring of integers of $\bQ[X]/(\Phi_n)$ is $\bZ[X]/(\Phi_n)$. 
Thus, we obtain $l = l''$ by \cite[Chapter I, Proposition 8.3]{Ne99}. 
On the other hand, it follows from \cite[Chapter II, Proposition 8.2]{Ne99} that 
$l' = l''$. Hence $l = l'$ as required. 
The assertion for $\Psi_n$ is also obtained in the same way. 
\end{proof}

\begin{proposition}\label{prop:I(Phi_n;p)}
Let $n\in \bZ_{>0}$ be a positive integer, and $p$ a prime. 
\begin{enumerate}
\item If $n = p^e$ for some $e\geq 0$ then $\overline{I(\Phi_{n};p)} = \{X-1\}$. 
\item If $n = 2p^e$ for some $e\geq 0$ then $\overline{I(\Phi_{n};p)} = \{X+1\}$. 
\item Suppose that $n = p^e m$ for some $e\geq 0$ and $m \geq 3$ with $\gcd(p,m) = 1$. 
Then 
\[ \overline{I(\Phi_{n};p)} =
\{ \bar{h}\in \bF_p[X] \mid \text{$\bar{h}$ is an irreducible factor of 
$(\Phi_m \bmod p)$} \} 
\]
if there exists $r\in\bZ_{\geq 0}$ such that $p^r \equiv -1 \bmod m$, and 
$\overline{I(\Phi_{n};p)} = \emptyset$ otherwise. 
\end{enumerate}
\end{proposition}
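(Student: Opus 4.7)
My plan is, in each case, first to determine $\overline{\Phi_n}$ via Theorem \ref{th:factorizationofCTPmodp}, then to identify the monic irreducible factors of $\Phi_n$ in $\bZ_p[X]$ by combining Hensel's lemma with the factor-count of Lemma \ref{lem:thenumberof}, and finally to detect which of these factors are $*$-symmetric by inspecting their reductions.

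For cases (i) and (ii), Theorem \ref{th:factorizationofCTPmodp}(i) yields $\overline{\Phi_n}(X) = (X-1)^{\varphi(p^e)}$ in case (i) and $\overline{\Phi_n}(X) = (X+1)^{\varphi(p^e)}$ in case (ii) when $p$ is odd. Lemma \ref{lem:thenumberof} then forces $\Phi_n$ itself to be irreducible over $\bQ_p$. Because $\Phi_n$ is $*$-symmetric, it is its own unique $*$-symmetric irreducible factor in $\bZ_p[X]$, and the conclusion $\overline{I(\Phi_n;p)} = \{X\mp 1\}$ follows. The case $p=2$ of (ii) is subsumed by (i) and is consistent because $X-1 = X+1$ in $\bF_2$.

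In case (iii), let $d$ denote the order of $p$ in $(\bZ/m\bZ)^\times$ and set $k := \varphi(m)/d$. Theorem \ref{th:factorizationofCTPmodp} gives a factorization $\overline{\Phi_n} = \prod_{j=1}^{k} \bar g_j^{\varphi(p^e)}$, where $\bar g_1,\dots,\bar g_k$ are the distinct monic irreducible factors of $\overline{\Phi_m}$, all of degree $d$; moreover, either every $\bar g_j$ is $+1$-symmetric (when $p^r \equiv -1 \bmod m$ is solvable) or none is. Since the $\bar g_j^{\varphi(p^e)}$ are pairwise coprime in $\bF_p[X]$, Hensel's lemma produces a unique factorization $\Phi_n = h_1\cdots h_k$ in $\bZ_p[X]$ with $\overline{h_j} = \bar g_j^{\varphi(p^e)}$; by Lemma \ref{lem:thenumberof}, each $h_j$ is already irreducible over $\bQ_p$.

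It remains to decide which $h_j$ are $*$-symmetric. Since $h_j(0) \in \bZ_p^\times$ (because $h_j \mid \Phi_n$ and $\Phi_n(0) = \pm 1$), reduction modulo $p$ commutes with the $*$-operation, so $\overline{h_j^*} = (\bar g_j^*)^{\varphi(p^e)}$; moreover $\bar g_j \neq X\pm 1$ since $m\geq 3$ forces every root of $\bar g_j$ to have multiplicative order $m$ in $\overline{\bF_p}^\times$, hence $\bar g_j$ is $*$-symmetric if and only if it is $+1$-symmetric. When all $\bar g_j$ are $+1$-symmetric, uniqueness of Hensel lifting forces $h_j^* = h_j$ for every $j$, giving $\overline{I(\Phi_n;p)} = \{\bar g_1, \dots, \bar g_k\}$. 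When none is, the involution $\sigma$ on $\{1,\dots,k\}$ defined by $\bar g_j^* = \bar g_{\sigma(j)}$ is fixed-point-free, and uniqueness again yields $h_j^* = h_{\sigma(j)} \neq h_j$, so no $h_j$ is $*$-symmetric and $\overline{I(\Phi_n;p)} = \emptyset$. The main subtlety is precisely this equivalence between $*$-symmetry of the $p$-adic lift and $+1$-symmetry of its reduction, where the uniqueness in Hensel's lemma is essential.
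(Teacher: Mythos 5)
Your proof is correct. The overall skeleton matches the paper's: use Theorem \ref{th:factorizationofCTPmodp} for the residue factorization, Hensel's lemma plus the factor count of Lemma \ref{lem:thenumberof} to identify the irreducible factors of $\Phi_n$ over $\bQ_p$, and then decide which of them are $*$-symmetric. The one genuinely different step is the last one. The paper Hensel-lifts the \emph{trace polynomial} $\Psi_n$ into irreducible $\hat h_1,\dots,\hat h_l$ and recovers the factors of $\Phi_n$ as $\hat f_i(X) = X^{\deg \hat h_i}\hat h_i(X+X^{-1})$, so that $+1$-symmetry of each $\hat f_i$ holds by construction; you instead lift $\Phi_n$ directly and use the \emph{uniqueness} of the Hensel factorization to show that the involution $h\mapsto h^*$ permutes the lifts $h_j$ compatibly with $\bar g_j\mapsto \bar g_j^*$ on the residue factors, whence $h_j^*=h_j$ exactly when $\bar g_j^*=\bar g_j$. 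Both routes work; yours avoids introducing trace polynomials and, notably, handles the ``otherwise'' case more carefully than the paper does — the paper asserts in one sentence that absence of a $*$-symmetric irreducible factor of $\Phi_n \bmod p$ implies absence of one for $\Phi_n$ over $\bZ_p$, whereas your fixed-point-free involution argument actually justifies that implication. Two small points you leave implicit but which are fine: for $n=1,2$ the statement is trivial (Lemma \ref{lem:thenumberof} is only stated for $n\geq 3$), and the fact that $\bar g_j^*$ is again one of the $\bar g_i$ follows from Proposition \ref{prop:factorization_into_type012} applied to the $*$-symmetric polynomial $\overline{\Phi_m}$.
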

\begin{proof}
We only show the assertion (iii) because (i) and (ii) can be proved more easily 
in a similar way to the former case of (iii). 
Assume that $n = p^e m$ for some $e\geq 0$ and $m \geq 3$ with $\gcd(p,m) = 1$.  
Suppose first that there exists $r\in\bZ_{\geq 0}$ such that $p^r \equiv -1 \bmod m$. 
Then, Theorem \ref{th:factorizationofCTPmodp} implies that 
$\Phi_n \bmod p \in \bF_p[X]$ can be expressed as 
\[ \Phi_n \bmod p = \bar{f_1}^{\varphi(p^e)} \cdots \bar{f_l}^{\varphi(p^e)},  
\]
where $\bar{f_1}, \ldots, \bar{f_l}\in \bF_p[X]$ are distinct $+1$-symmetric 
irreducible factors of $\Phi_m \bmod p$. Our assertion is that 
$\overline{I(\Phi_n; p)} = \{ \bar{f}_1, \ldots, \bar{f}_l \}$. 
Let $\bar{h}_i\in \bF_p[X]$ be the trace polynomial of 
$\bar{f_i}$ ($i= 1, \ldots, l$). Then  
$\Psi_n \bmod p = \bar{h}_1^{\varphi(p^e)} \cdots \bar{h}_l^{\varphi(p^e)}$. 
Hensel's lemma shows that there exist 
$\hat{h}_1, \ldots, \hat{h}_l\in \bZ_p[X]$ such that  
$\Psi_n = \hat{h}_1 \cdots \hat{h}_l$ and 
$\hat{h}_i \bmod p = \bar{h}_i^{\varphi(p^e)}$ for any $i = 1, \ldots, l$. 
Furthermore $\hat{h}_1, \ldots, \hat{h}_l$ must be irreducible by 
Lemma \ref{lem:thenumberof}. 
Put $\hat{f}_i(X) = X^{\deg{\hat{h}_i}} \hat{h}_i(X + X^{-1})\in \bQ_p[X]$. 
Then $\hat{f}_i \bmod p  = \bar{f}_i^{\varphi(p^e)}$ for all $i$, and 
$\hat{f}_1, \ldots, \hat{f}_l$ are distinct because so are 
$\hat{h}_1, \ldots, \hat{h}_l$. 
Since $\Psi_n = \hat{h}_1 \cdots \hat{h}_l$, we have 
$\Phi_n = \hat{f}_1 \cdots \hat{f}_l$. 
Thus $\hat{f}_1, \ldots, \hat{f}_l$ are irreducible by Lemma \ref{lem:thenumberof}.  
Therefore
\[ \begin{split}
\overline{I(\Phi_{n};p)} 
&= \{ \bar{k}\in \bF_p[X] \mid \text{there exists $i$ such that
$\bar{k}$ is an irreducible factor of $\hat{f}_i \bmod p$}\} \\
&= \{ \bar{f}_1, \ldots, \bar{f}_l \}.
\end{split}
\]

Suppose then that there in no $r\in\bZ_{\geq 0}$ such that $p^r \equiv -1 \bmod m$. 
Then $\Phi_n \bmod p \in \bF_p[X]$ has no $*$-symmetric irreducible factor by Theorem 
\ref{th:factorizationofCTPmodp}, and so does $\Phi_n \in \bZ[X]$. 
In particular, we have $\overline{I(\Phi_n;p)} = \emptyset$. 
\end{proof}

We use the following formula.  

\begin{proposition}[Apostol]\label{prop:Apostol}
Let $n\in \bZ_{>0}$ be a positive integer. Then
\[ \Res(\Phi_n, \Phi_1) = \begin{cases}
-p &\text{if $n$ is a power of a prime $p$} \\ 
-1 &\text{otherwise.}
\end{cases}
\]
Let $n'\in \bZ_{>0}$ be a positive integer such that $1 < n' < n$. Then
\[ \Res(\Phi_n, \Phi_{n'}) = \begin{cases}
p^{\varphi(n')} &\text{if $n/n'$ is a power of a prime $p$} \\ 
1 &\text{otherwise.}
\end{cases}
\]
\end{proposition}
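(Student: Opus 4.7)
The plan is to derive both formulas from the standard values of $\Phi_n(1)$ for $n \geq 2$, together with the multiplicativity of the resultant and M\"obius inversion on the divisors of $n'$.

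First, I would compute $\Phi_n(1)$ for $n \geq 2$. Dividing $X^n - 1 = \prod_{d \mid n}\Phi_d(X)$ by $\Phi_1(X) = X - 1$ and evaluating at $X = 1$ yields $n = \prod_{1 < d \mid n}\Phi_d(1)$, and a straightforward induction on the number of distinct prime factors shows that $\Phi_n(1) = p$ when $n = p^a$ is a prime power ($a \geq 1$), and $\Phi_n(1) = 1$ otherwise. For the first formula, since $\Phi_1(X) = X - 1$ has unique root $1$, the product expression of the resultant immediately gives $\Res(\Phi_n, \Phi_1) = (-1)^{\varphi(n)}\Phi_n(1)$, and the sign combined with the preceding computation finishes the case $n' = 1$.

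For the second formula ($1 < n' < n$), I would apply M\"obius inversion to $\prod_{d \mid n'}\Phi_d(X) = X^{n'} - 1$ to obtain $\Phi_{n'}(X) = \prod_{d \mid n'}(X^d - 1)^{\mu(n'/d)}$, then evaluate and multiply over the primitive $n$-th roots of unity. Multiplicativity of the resultant gives
\[
\Res(\Phi_n, \Phi_{n'}) = \prod_{d \mid n'}\Res(\Phi_n, X^d - 1)^{\mu(n'/d)}.
\]
For each $d \mid n'$, the power map $\zeta \mapsto \zeta^d$ sends primitive $n$-th roots of unity surjectively onto primitive $k_d$-th roots (where $k_d := n/\gcd(n, d) \geq 2$ since $d < n$) with uniform fibers of size $\varphi(n)/\varphi(k_d)$; combined with the first step this identifies $\Res(\Phi_n, X^d - 1)$ with $p^{\varphi(n)/\varphi(k_d)}$ when $k_d = p^e$ is a prime power, and with $\pm 1$ otherwise.

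The main obstacle is assembling these factors into the asserted clean form. I would proceed one prime $q$ at a time, writing $n = q^\alpha N$ and $n' = q^\beta N'$ with $\gcd(q, NN') = 1$. After separating variables, the $q$-adic valuation of the M\"obius product factors as a $q$-part sum multiplied by the classical M\"obius sum $\sum_{D' \mid N'/N}\mu((N'/N)/D')$, which vanishes unless $N = N'$. Hence $v_q(\Res) \neq 0$ forces the $q$-free parts of $n$ and $n'$ to coincide, which together with $n > n'$ means $n/n'$ is a positive power of $q$. In that surviving case only two indices contribute, and the elementary identity
\[
\frac{1}{\varphi(q^{\alpha - \beta})} - \frac{1}{\varphi(q^{\alpha - \beta + 1})} = \frac{1}{q^{\alpha - \beta}},
\]
combined with $\varphi(n)/q^{\alpha - \beta} = \varphi(n')$ (and a direct computation when $\beta = 0$), collapses the sum to $\varphi(n')$, giving $\Res(\Phi_n, \Phi_{n'}) = q^{\varphi(n')}$. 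This prime-by-prime combinatorial bookkeeping is the technical crux of the argument.
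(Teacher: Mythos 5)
The paper itself gives no argument here --- the proof is just a citation to Apostol --- so your proposal is supplying a proof where the paper defers to the literature. Your route is the classical one (the values of $\Phi_n(1)$, multiplicativity of the resultant over the M\"obius factorization $\Phi_{n'}=\prod_{d\mid n'}(X^d-1)^{\mu(n'/d)}$, the fiber count for $\zeta\mapsto\zeta^d$, and prime-by-prime bookkeeping), and the skeleton is sound: the identity $\Res(\Phi_n,X^d-1)=\bigl((-1)^{\varphi(k_d)}\Phi_{k_d}(1)\bigr)^{\varphi(n)/\varphi(k_d)}$ is right, the vanishing of $\sum_{D'\mid N'/N}\mu((N'/N)/D')$ unless $N=N'$ is exactly the mechanism that forces $n/n'$ to be a prime power, and the telescoping $1/\varphi(q^{c})-1/\varphi(q^{c+1})=1/q^{c}$ does collapse the exponent to $\varphi(n')$ (with the $\beta=0$ case checked separately, as you indicate).

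Two sign points deserve attention. First, your own computation of the $n'=1$ case gives $\Res(\Phi_n,\Phi_1)=(-1)^{\varphi(n)}\Phi_n(1)$, which is $+p$ (resp.\ $+1$) for every $n\ge 3$, since $\varphi(n)$ is then even; indeed $\Res(\Phi_n,\Phi_1)=\prod_\zeta(\zeta-1)$ is a product of $|\zeta-1|^2$ over conjugate pairs and hence positive (e.g.\ $\Res(\Phi_3,\Phi_1)=3$). So the phrase ``the sign \ldots finishes the case'' is not accurate: your (correct) derivation does not land on the stated values $-p$ and $-1$, which are off by a sign for all $n\ge3$ under the paper's own definition of $\Res$ (only $n=2$ gives $-2$). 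This is a defect of the statement rather than of your argument --- and it is immaterial to the paper, which only ever uses the prime divisors of these resultants via Proposition \ref{prop:Pi<Res} --- but you should flag the discrepancy rather than assert a match. Second, in the $n'>1$ case your valuation bookkeeping determines $\Res(\Phi_n,\Phi_{n'})$ only up to sign; to get the stated positive values, note that each factor $\Res(\Phi_n,X^d-1)$ carries total sign $(-1)^{\varphi(k_d)\cdot\varphi(n)/\varphi(k_d)}=(-1)^{\varphi(n)}=+1$ (again because $n>n'\ge2$ forces $n\ge3$), so the whole product is positive. With these two remarks added, the argument is complete.
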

\begin{proof}
See \cite{Ap70}. 
\end{proof}

\begin{theorem}\label{th:Pi_cyclos}
Let $n$ and $n'$ be positive integers with $n>n'$. 
\begin{enumerate}
\item If $n/n'$ is not a power of a prime, then $\Pi(\Phi_n,\Phi_{n'}) = \emptyset$. 
\item Suppose that $n/n'$ is a power of a prime $p$, and write $n = p^e m$ where 
$\gcd(p, m) = 1$. Then 
\[
\Pi(\Phi_n,\Phi_{n'}) = \begin{cases}
\{ p \} & \text{if there exists $r\in\bZ_{\geq 0}$ such that $p^r \equiv -1 \bmod m$} \\
\emptyset & \text{otherwise}.
\end{cases}
\]
\end{enumerate} 
\end{theorem}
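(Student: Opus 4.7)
The plan is to chain together Apostol's resultant formula (Proposition \ref{prop:Apostol}), the inclusion $\Pi(f,g) \subset \{p : p \mid \Res(f,g)\}$ from Proposition \ref{prop:Pi<Res}, and the explicit description of $\overline{I(\Phi_n;\bQ_p)}$ given by Proposition \ref{prop:I(Phi_n;p)}. Part (i) is then immediate: if $n/n'$ is not a prime power, Apostol's formula yields $\Res(\Phi_n,\Phi_{n'}) = \pm 1$, which has no prime divisors, so Proposition \ref{prop:Pi<Res} forces $\Pi(\Phi_n,\Phi_{n'}) = \emptyset$.

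For part (ii), the same two results show that when $n/n' = p^k$ the resultant is, up to sign, a power of $p$, hence $\Pi(\Phi_n,\Phi_{n'}) \subset \{p\}$. It remains to decide when $p$ itself belongs to this set, i.e., when $\overline{I(\Phi_n;\bQ_p)}$ and $\overline{I(\Phi_{n'};\bQ_p)}$ share a common factor. Writing $n' = p^{e'}m'$ with $\gcd(p,m') = 1$, I would first observe that the equality $n/n' = p^k$ together with $\gcd(p,m) = \gcd(p,m') = 1$ forces $m' = m$ and $e = e' + k$, so that both sets depend only on $p$ and the common non-$p$ part $m$.

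The final step is a short case split on $m$, handled directly by Proposition \ref{prop:I(Phi_n;p)}. For $m = 1$ both sets equal $\{X-1\}$; for $m = 2$ both sets equal $\{X+1\}$, and $p$ is then necessarily odd, so $p^r \equiv -1 \bmod 2$ holds automatically; for $m \geq 3$, both sets coincide with the set of irreducible factors of $\Phi_m \bmod p$ when some $p^r \equiv -1 \bmod m$, and both are empty otherwise. In each case the intersection is nonempty precisely under the stated criterion, since that criterion degenerates to a tautology when $m \in \{1,2\}$.

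No serious obstacle is expected; the argument is essentially bookkeeping on top of the three cited results. The only subtlety is the uniform phrasing of the $m$-case distinction, which is resolved by the observation that for $m \in \{1,2\}$ the condition ``there exists $r$ with $p^r \equiv -1 \bmod m$'' is automatic, so Proposition \ref{prop:I(Phi_n;p)}(i)--(ii) and Proposition \ref{prop:I(Phi_n;p)}(iii) combine cleanly into the single criterion stated in (ii).
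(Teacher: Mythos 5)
Your proposal is correct and follows essentially the same route as the paper: Apostol's formula plus Proposition \ref{prop:Pi<Res} to reduce to $\Pi(\Phi_n,\Phi_{n'})\subset\{p\}$, then Proposition \ref{prop:I(Phi_n;p)} with the case split $m\in\{1,2\}$ versus $m\geq 3$. Your explicit remark that the criterion ``$p^r\equiv-1\bmod m$'' is automatic for $m\in\{1,2\}$ is a nice touch the paper leaves implicit, but the argument is the same.
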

\begin{proof}
Let $n$ and $n'$ be positive integers with $n>n'$. 
If $n/n'$ is not a power of a prime then $|\Res(\Phi_n, \Phi_{n'})| = 1$ by 
Proposition \ref{prop:Apostol} and 
thus $\Pi(\Phi_n, \Phi_{n'}) = \emptyset$ by Proposition \ref{prop:Pi<Res}. 
This shows the assertion (i). 
We then show the assertion (ii). Suppose that $n/n'$ is a power of a prime $p$, 
and write $n = p^e m$, $n' = p^{e'} m$ where $e > e' > 1$ and $\gcd(p, m) = 1$. 
Note that $\Pi(\Phi_n, \Phi_{n'})\subset \{p\}$ by Propositions 
\ref{prop:Apostol} and \ref{prop:Pi<Res}. 

Suppose that $m = 1$ or $2$. Then 
$\overline{I(\Phi_n;p)}$ and $\overline{I(\Phi_{n'};p)}$ are both $\{X\mp 1\}$ by 
Proposition \ref{prop:I(Phi_n;p)} (i) or (ii), and $p \in \Pi(\Phi_n, \Phi_{n'})$. 
Thus $\Pi(\Phi_n, \Phi_{n'})= \{p\}$. 
Suppose that $m\geq 3$. It follows similarly from Proposition \ref{prop:I(Phi_n;p)}
(iii) that
\[
\Pi(\Phi_n,\Phi_{n'}) = \begin{cases}
\{ p \} & \text{if there exists $r\in\bZ_{\geq 0}$ such that $p^r \equiv -1 \bmod m$} \\
\emptyset & \text{otherwise}.
\end{cases}
\]
This completes the proof. 
\end{proof}

\subsection{Comparison}\label{ss:comparison}
One practical way to compute an obstruction map is to compare it with another. 
Let $F\in \bZ[X]$ be a $*$-symmetric polynomial of even degree with 
the condition \eqref{eq:Square}. 
As in Notation \ref{nt:LGPO}, the symbols $M$ and $M_\infty$ denote the associated 
$\bQ[X]$-module with transformation $\alpha$ and its localization at the infinite 
place $\infty$ respectively. 
Let $r,s$ be non-negative integers with $r\equiv s \bmod 8$ such that 
$F$ satisfies the condition \eqref{eq:Sign}${}_{r,s}$, and 
$\fraci\in \Idx(r,s;F)$ an index map. 
Then, there exists an inner product $b_\fraci$ on $M_\infty$
which makes $\alpha$ an isometry with index $\fraci$ (Theorem \ref{th:Sign_implies}). 
For any $f\in I := I(F;\bQ)$, the index of the restriction $b_\fraci|_{M^f_\infty}$ 
is given by $\sum_{g\in I(f;\bR)} \fraci(g)$, and thus its 
isomorphism class is uniquely determined by $\fraci$ independently of the choice of 
$b_\fraci$. So we define the map $\eta_\infty(\fraci): I \to \bZ/2\bZ$ by 
\[\eta_\infty(\fraci)(f) = \hw_\infty(b_\fraci|_{M^f_\infty}) \quad (f\in I).  \]
This map is explicitly as follows (see e.g. \cite[Chapter IV-\S 2.4]{Se73}): 
\begin{equation}\label{eq:eta_infty}
\eta_\infty(\fraci)(f) = \begin{cases}
0 & \text{if $(\deg(f^{m_f}) - \sum_{g\in I(f;\bR)} \fraci(g))/2 \equiv 0$ or $1$ mod $4$ }\\
1 & \text{if $(\deg(f^{m_f}) - \sum_{g\in I(f;\bR)} \fraci(g))/2 \equiv 2$ or $3$ mod $4$}
\end{cases}
\quad (f\in I).  
\end{equation}

Let $r',s'$ be non-negative integers with $r'\equiv s' \bmod 8$ such that 
$F$ satisfies the condition \eqref{eq:Sign}${}_{r',s'}$, and 
$\fj\in \Idx(r',s';F)$ an index map. We remark that if 
\begin{equation}\label{eq:idx_mod4}
 \fraci(X-1) \equiv \fj(X-1) \quad\text{and}\quad 
\fraci(X-1) \equiv \fj(X-1) \mod 4
\end{equation}
then two equivalence relations on $I$ defined by $(F, \fraci)$ and $(F, \fj)$ are 
the same, because they are determined by the values $\delta_+$ and $\delta_-$ defined in 
Notation \ref{nt:P1P2P3}, 
see also Notation \ref{nt:Pi_fg} and Definition \ref{def:ERdefinedby}. 
In this case, the obstruction group $\Omega_\fraci$ for $(F, \fraci)$ 
is the same as that for $(F, \fj)$. 

\begin{theorem}[Theorem \ref{th:comparison_intro}]\label{th:comparison}
Let $F\in \bZ[X]$ be a $*$-symmetric polynomial of even degree with 
the condition \eqref{eq:Square}, 
and $r,s, r', s'$ non-negative integers with $r\equiv s$ 
and $r'\equiv s' \bmod 8$ such that \eqref{eq:Sign}${}_{r,s}$ and 
\eqref{eq:Sign}${}_{r',s'}$ hold for $F$. 
Let $\fraci\in \Idx(r,s;F)$ and $\fj\in \Idx(r',s';F)$ be index maps 
satisfying \eqref{eq:idx_mod4}. Then we have 
\[ \ob_\fraci(c) = \ob_\fj(c) + (\eta_\infty(\fraci) - \eta_\infty(\fj))\cdot c \]
for all $c\in \Omega_\fraci$. In particular, 
if $\ob_\fj$ is zero, then $\ob_\fraci$ is zero precisely when the map
\[\Omega_\fraci \to \bZ/2\bZ,\, 
c\mapsto (\eta_\infty(\fraci) - \eta_\infty(\fj))\cdot c \]
is zero.
\end{theorem}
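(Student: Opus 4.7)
The plan is to exhibit families $\beta \in \cB_\fraci$ and $\beta' \in \cB_\fj$ that coincide componentwise at every finite place, so that the difference of their $\eta$-images is supported entirely at the infinite place. Starting from any $\beta' = \{b_v'\}_v \in \cB_\fj$ (nonempty by the proposition in \S\ref{sec:LGPO}), I would define $\beta = \{b_v\}_v$ by setting $b_v = b_v'$ for each finite $v$ and taking $b_\infty = b_\fraci$ to be an inner product on $M_\infty$ that realizes $\alpha$ as an isometry of index $\fraci$, which exists by Theorem \ref{th:Sign_implies}.

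The critical observation for showing $\beta \in \cB_\fraci$ is that the hypothesis \eqref{eq:idx_mod4} makes the rational numbers $\delta_\pm$ of Notation \ref{nt:P1P2P3} literally equal for $\fraci$ and $\fj$ (not merely equivalent modulo squares), because they depend on the index only through the sign $(-1)^{(m_\pm - \fraci(X\mp1))/2}$, which is unchanged when $\fraci(X\mp1)$ is replaced by a congruent value modulo $4$. Consequently, property \eqref{eq:P3} at any finite place imposes the same condition for both indices, and since $\beta'$ satisfies it, so does $\beta$; properties \eqref{eq:P1} and \eqref{eq:P2} at finite places are unchanged by construction. At infinity, \eqref{eq:P1} and \eqref{eq:P2} hold by the choice of $b_\fraci$, while \eqref{eq:P3} reduces to matching a sign in $\bR^\times/\bR^{\times 2}$: the signature of $b_\fraci|_{M_\infty^\pm}$ is $((m_\pm + \fraci(X\mp1))/2,\,(m_\pm - \fraci(X\mp1))/2)$, so the sign of its determinant is $(-1)^{(m_\pm - \fraci(X\mp1))/2}$, which is exactly the sign of $\delta_\pm$. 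Finiteness of the support of the Hasse-Witt invariants passes from $\beta'$ to $\beta$ since the two families agree off $\infty$.

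Evaluating the difference, for each $f \in I$ we have $\eta(\beta)(f) - \eta(\beta')(f) = \hw_\infty(b_\fraci|_{M^f_\infty}) - \hw_\infty(b_\infty'|_{M^f_\infty})$, and by \eqref{eq:P2} at infinity the isomorphism class of $b_\infty'|_{M^f_\infty}$ is determined by $\fj$ alone, matching the definition of $\eta_\infty(\fj)(f)$ given in \eqref{eq:eta_infty}. Hence $\eta(\beta) - \eta(\beta') = \eta_\infty(\fraci) - \eta_\infty(\fj)$ in $C(I)$, and pairing with any $c \in \Omega_\fraci = \Omega_\fj$ yields $\ob_\fraci(c) - \ob_\fj(c) = (\eta_\infty(\fraci) - \eta_\infty(\fj))\cdot c$, which is the desired identity.

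The main obstacle I anticipate is the verification at infinity that the replacement $b_\infty \leftarrow b_\fraci$ still satisfies \eqref{eq:P3}; this is ultimately a bookkeeping step that rests on the sign computation above and on the literal equality of the $\delta_\pm$ for the two indices guaranteed by \eqref{eq:idx_mod4}. Once that is in place, everything else is routine comparison of invariants, and the additive nature of $\hw_v$ across the direct-sum decomposition of $M_v$ ensures the computation localizes cleanly to $v = \infty$.
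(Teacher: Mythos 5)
Your proposal is correct and follows essentially the same route as the paper's proof: both exchange only the infinite-place component of a family (the paper goes from $\cB_\fraci$ to $\cB_\fj$, you go the other way, which is symmetric), so that $\eta(\beta)-\eta(\beta')$ is supported at $\infty$ and equals $\eta_\infty(\fraci)-\eta_\infty(\fj)$. Your explicit check that \eqref{eq:idx_mod4} forces the $\delta_\pm$ to coincide and that \eqref{eq:P3} holds at $\infty$ is exactly the point the paper leaves implicit, so the argument is sound.
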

\begin{proof}
Let $\beta_\fraci = \{b_\fraci\}\cup \{ b_p \}_{p\in\cV\setminus\{\infty\}} 
\in \cB_\fraci$. Then the family 
$\beta_\fj := \{b_\fj\}\cup \{ b_p \}_{p\in\cV\setminus\{\infty\}}$ belongs to 
$\cB_\fj$, where $b_\fj$ is an inner product on $M_\infty$ which makes $\alpha$ an 
isometry with index $\fj$. For any $f\in I$ we have 
\[ \begin{split}
\eta(\beta_\fraci)(f) 
&= \hw_\infty(b_\fraci|_{M^f_\infty}) 
 + \sum_{p\in \cV\setminus\{\infty\}} \hw_p(b_p|_{M^f_p}) \\
&= \left(\hw_\infty(b_\fj|_{M^f_\infty}) 
 + \sum_{p\in \cV\setminus\{\infty\}} \hw_p(b_p|_{M^f_p}) \right)
 + \hw_\infty(b_\fraci|_{M^f_\infty}) - \hw_\infty(b_\fj|_{M^f_\infty}) \\
& = \eta(\beta_\fj)(f) + (\eta_\infty(\fraci) - \eta_\infty(\fj))(f).
\end{split}
\]  
This implies that 
$ \ob_\fraci(c) = \ob_\fj(c) + (\eta_\infty(\fraci) - \eta_\infty(\fj))\cdot c $
for all $c\in \Omega_\fraci$ as required. 
\end{proof}

\begin{example}
Let $F(X) = (X-1)^4f(X)$, where $f(X) = \Phi_{12}(X) = X^4 - X^2 + 1$ 
as in Example \ref{ex:Pi_cyclo12_X-1}. 
Note that the obstruction group $\Omega$ for $(F, \fraci)$ does not depend on 
the index map $\fraci$, 
and is given by $\Omega = C(I)$ since $\Pi(f, X-1) = \emptyset$ as 
shown in Example \ref{ex:Pi_cyclo12_X-1}, where $I = I(F; \bQ)$. 
Let $\Lambda$ be the lattice $\rE_8$. Then there is no isometry of 
$\Lambda$ with characteristic polynomial $F$ (any isometry of $\Lambda$ is semisimple 
because $\Lambda$ has definite signature). To prove this, 
we first show that $\Lambda_{4,4}$ admits a semisimple isometry with characteristic $F$, 
where $\Lambda_{n,n}$ is the unique even unimodular lattice of signature $(n,n)$ 
for $n\in \bZ_{>0}$. 
It follows from Theorem \ref{th:weakestER} that 
$\Lambda_{2,2}$ admits a semisimple isometry $t_f$ with characteristic 
polynomial $f$ because $f$ satisfies the conditions 
\eqref{eq:Sign}${}_{2,2}$ and \eqref{eq:Square}, and is irreducible. 
Thus the direct sum $t' := t_f \oplus \id_{\Lambda_{2,2}}$ is a semisimple isometry 
of $\Lambda_{4,4} = \Lambda_{2,2} \oplus \Lambda_{2,2}$ with characteristic polynomial $F$. 
Hence $\ob_\fj = 0$, where $\fj \in \Idx(4,4; F)$ denotes the index of $t'$.

We now suppose to contrary that the $\rE_8$-lattice $\Lambda$ admitted an isometry $t$ 
with characteristic polynomial $F$. 
Then the index $\fraci\in \Idx(8,0; F)$ of $t$ is uniquely determined: 
$\fraci(g) = \deg(g)^{m_g}$ for every $g \in I(F;\bR)$ where $m_g$ is the multiplicity 
of $g$ in $F$. Thus we would get 
\[ (\eta_\infty(\fraci) - \eta_\infty(\fj))\cdot\bm1_{\{X-1\}} 
= \eta_\infty(\fraci)(X-1) - \eta_\infty(\fj)(X-1)
= 0 - 1 
= 1 \quad \text{(in $\bZ/2\bZ$)},  \]
but this contradicts Theorem \ref{th:comparison}.
Therefore, there is no isometry of $\Lambda$ with characteristic polynomial $F$.
\end{example}

A similar idea of this example can be found in \cite[Example 3]{Ba21}. 
The method of computation as in this example is valid for most of polynomials 
thanks to Theorem \ref{th:chpl_on_index0_intro}, which is proved in the next section.

\section{Isometries on even unimodular lattices of index $0$}\label{sec:Iso_on_EULofIndex0}
This section is devoted to the proof of Theorem \ref{th:chpl_on_index0_intro}. 
For a positive integer $n$, we write $\Lambda_{n,n}$ for an even unimodular 
lattice over $\bZ$ of signature $(n,n)$. Such a lattice is unique up to 
isomorphism, see Theorem \ref{th:EULoverZ}. 

\subsection{General case}
Let $F\in \bR[X]$ be a $*$-symmetric polynomial. 
We write $m_f$ for the multiplicity of a polynomial 
$f$ in $F$ and $m_\pm := m_{X\mp 1}$. Furthermore $F_{12}$ is the product of 
type $1$ and $2$ components of $F$, and $m(F)$ is the number of roots of $F$ 
whose absolute values are greater than $1$ counted with multiplicity.

Let $P\in \bR[X]$ be a $*$-symmetric polynomial with $P(1)P(-1) \neq 0$. Then 
$P$ is $+1$-symmetric and of even degree. The signature of 
$(-1)^{\deg(P)/2}P(1)P(-1)$ is denoted by $e(P)\in \{1, -1\}$. 
If $H$ is the trace polynomial of $P$ then  $(-1)^{\deg(P)/2}P(1)P(-1) = H(2)H(-2)$, 
and thus $e(P)$ is the signature of $H(2)H(-2)$. 
Note that if the coefficients of $P$ are in $\bZ$ and $P$ satisfies the 
condition \eqref{eq:Square} then $e(P) = 1$ since $(-1)^{\deg(P)/2}P(1)P(-1)$ is 
a square. Let $\bT$ denote the unit circle in $\bC$. 

\begin{lemma}\label{lem:1-e_mod4}
The number of roots of a $*$-symmetric polynomial $F\in \bR[X]$ on $\bT\setminus\{1,-1\}$ 
counted with multiplicity is equal to $2\sum_{f\in I_1(F;\bR)} m_f$. Moreover,  
we have $2\sum_{f\in I_1(F;\bR)} m_f \equiv 1 - e(F_{12}) \bmod 4$.  
\end{lemma}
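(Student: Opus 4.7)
The plan is to classify the $\bR$-irreducible factors of $F$ by type, count which of them give roots on $\bT\setminus\{1,-1\}$, and then compute $e(F_{12})$ via multiplicativity of the sign invariant $e(\cdot)$.

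First I would note that every monic irreducible polynomial in $\bR[X]$ is either linear $X-\alpha$ or a quadratic $X^2+bX+c$ with $b^2-4c<0$, and a direct computation gives $(X-\alpha)^*=X-\alpha^{-1}$ and $(X^2+bX+c)^*=X^2+(b/c)X+1/c$. Thus a linear factor is $*$-symmetric precisely when $\alpha=\pm 1$ (type 0); a quadratic irreducible factor is $*$-symmetric precisely when $c=1$, in which case its two complex conjugate roots lie on $\bT\setminus\{\pm 1\}$ (type 1); and every remaining irreducible factor is of type 2, with roots lying either in $\bR\setminus\{0,\pm 1\}$ (linear case) or as a non-real conjugate pair with $|\alpha|^2=c\neq 1$ (quadratic case), so in either case off $\bT\setminus\{\pm 1\}$. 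Consequently, the roots of $F$ on $\bT\setminus\{1,-1\}$ come precisely from type 1 factors; since each $f\in I_1(F;\bR)$ has degree $2$ it contributes exactly $2m_f$ such roots, which proves the first identity.

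For the congruence, set $N=\sum_{f\in I_1(F;\bR)} m_f$. Since halved degrees add for products of even-degree $+1$-symmetric polynomials, $e(\cdot)$ is multiplicative on such products, so $e(F_{12})=e(F_1)\,e(F_2)$ where $F_1$ and $F_2$ are the type 1 and type 2 components of $F$. I would next verify $e(gg^*)=1$ for each $g\in I_2(F;\bR)$: when $g=X-\alpha$, a short calculation gives $(gg^*)(1)(gg^*)(-1)=-(\alpha-\alpha^{-1})^2$ and $(-1)^{\deg(gg^*)/2}=-1$, so their product is $(\alpha-\alpha^{-1})^2>0$; when $g$ is quadratic with constant term $c$, the identity $g^*(\pm 1)=c^{-1}g(\pm 1)$ gives $(gg^*)(1)(gg^*)(-1)=c^{-2}g(1)^2g(-1)^2>0$ with $(-1)^{\deg(gg^*)/2}=1$. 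Hence $e(F_2)=1$. On the other hand, every $f\in I_1(F;\bR)$ has the form $X^2-2\cos\theta\,X+1$ with $\theta\in(0,\pi)$; then $f(1)=2(1-\cos\theta)>0$, $f(-1)=2(1+\cos\theta)>0$, and $(-1)^{\deg(f)/2}=-1$, so $e(f)=-1$ and therefore $e(F_1)=(-1)^N$.

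Combining these, $e(F_{12})=(-1)^N$, so $1-e(F_{12})$ equals $0$ when $N$ is even and $2$ when $N$ is odd; in both cases this matches $2N\bmod 4$. Once the real factor classification is in place the argument is routine bookkeeping; the only mildly subtle point is tracking signs in the linear type 2 block, where the factor $(-1)^{\deg(gg^*)/2}=-1$ must compensate for the negative value of $(gg^*)(1)(gg^*)(-1)=-(\alpha-\alpha^{-1})^2$.
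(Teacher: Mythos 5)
Your proof is correct, and while the first half coincides with the paper's, the congruence is proved by a genuinely different route. For the count of roots on $\bT\setminus\{1,-1\}$, both you and the paper rely on the classification of real irreducible $*$-symmetric polynomials: the only ones other than $X\pm1$ are the quadratics $X^2-2\cos\theta\,X+1$, so only type $1$ factors contribute, each giving $2m_f$ roots. For the congruence, the paper passes to the trace polynomial $H$ of $F_{12}$ and argues analytically: the number $N'$ of roots of $H$ in $(-2,2)$ satisfies $N=2N'$, and the parity of $N'$ is read off from the sign of $H(2)H(-2)$ by looking at the graph of $H$. You instead stay entirely algebraic: using that $e$ is multiplicative over products of even-degree $+1$-symmetric polynomials nonvanishing at $\pm1$, you reduce to computing $e$ on each irreducible block over $\bR$, finding $e(f)=-1$ for every type $1$ factor and $e(gg^*)=1$ for every type $2$ pair (with the linear and quadratic cases of $g$ handled separately), whence $e(F_{12})=(-1)^N$ and the congruence follows. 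Your sign computations check out, including the point you flag: in the linear type $2$ case $(gg^*)(1)(gg^*)(-1)=-(\alpha-\alpha^{-1})^2<0$ is compensated by $(-1)^{\deg(gg^*)/2}=-1$. Your version buys a self-contained argument with no intermediate-value reasoning, at the price of a short case analysis; the paper's is terser because the trace polynomial is already part of its toolkit.
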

\begin{proof}
Let $N$ be the number of roots of $F$ on $\bT\setminus\{1,-1\}$ 
counted with multiplicity. 
The former assertion, $N = 2\sum_{f\in I_1(F;\bR)} m_f$, follows from 
the fact that any $+1$-symmetric irreducible polynomial over $\bR$ other than $X+1$ 
is of the form $X^2 - (\delta + \delta^{-1})X + 1$ for some complex number 
$\delta\in \bT\setminus\{1,-1\}$, and vice versa. 
Let $H$ denote the trace polynomial of $F_{12}$, and $N'$ the number of roots of $H$ 
on the interval $(-2,2)$. Then $N = 2N'$
because $\bT\setminus\{1,-1\}$ is mapped two-to-one onto $(-2,2)$
under the function $\bC \to \bC,\,x\mapsto x+x^{-1}$. 
On the other hand, by considering the graph of $H$, it can be seen that 
$N'$ is even if $H(2)H(-2) > 0$ and odd if $H(2)H(-2) < 0$. 
Since $e(F_{12})$ is the signature of $H(2)H(-2)$, we have 
$N' \equiv (1 - e(F_{12}))/2 \bmod 2$. Hence 
\[ N = 2 N' \equiv 1 - e(F_{12}) \mod 4. 
\]
The proof is complete. 
\end{proof}

\begin{proposition}\label{prop:index_prolong}
Let $F\in \bZ[X]$ be a $*$-symmetric polynomial of even degree $2n$ with the 
condition \eqref{eq:Square}. 
\begin{enumerate}
\item Let $r,s\in \bZ_{\geq 0}$ be non-negative integers with $r \equiv s \bmod 8$
and $r + s = 2n$ such that $F$ satisfies the condition \eqref{eq:Sign}${}_{r,s}$. Then 
we have $\fraci(X-1) + \fraci(X+1) \equiv 1 - e(F_{12}) \mod 4$
for any $\fraci \in \Idx(r,s;F)$.  
\item $F$ satisfies the condition \eqref{eq:Sign}$_{n,n}$. Moreover, 
for $i_+, i_-\in \bZ$ with 
\begin{equation}\label{eq:prolong}
\begin{split}
&-m_+ \leq i_+ \leq m_+,\, -m_- \leq i_- \leq m_-,\, 
i_+ \equiv i_- \equiv m_+ \bmod 2, \quad\text{and}\quad \\
&i_+ + i_- \equiv 1 - e(F_{12}) \bmod 4,  
\end{split}
\end{equation}
there exists $\fraci \in \Idx(n,n;F)$ such that 
$\fraci(X-1) \equiv i_+$ and $\fraci(X+1) \equiv i_-$ \textup{mod} $4$. 
\end{enumerate}
\end{proposition}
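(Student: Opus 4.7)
For part (i), I would start from $\fraci \in \Idx(r,s;F)$, set $s_\pm = (m_\pm - \fraci(X \mp 1))/2$, and apply Lemma \ref{lem:signature_of_F12(1)F12(-1)} together with the definition of $e(F_{12})$ to obtain $e(F_{12}) = (-1)^{(m_+ + m_-)/2 + s_+ + s_-}$ (here $m_+ + m_-$ is even since $\deg F_{12}$ is). Writing $\fraci(X-1) + \fraci(X+1) = 2((m_+ + m_-)/2 - (s_+ + s_-))$, a direct parity check in the two residue classes of $(m_+ + m_-)/2 + s_+ + s_- \bmod 2$ then confirms the desired congruence modulo $4$.

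For part (ii), I would treat the two assertions in turn. For the first assertion, that $F$ satisfies \eqref{eq:Sign}${}_{n,n}$: the $*$-symmetry of $F$ pairs its roots as $\{\alpha, \alpha^{-1}\}$, so $2n = 2 m(F) + N_{\mathrm{unit}}$ where $N_{\mathrm{unit}}$ is the total multiplicity of roots on the unit circle, giving $n \geq m(F)$. When $F(1)F(-1) \neq 0$ one has $m_+ = m_- = 0$ and $F = F_{12}$, and \eqref{eq:Square} forces $(-1)^n F(1)F(-1)$ to be a positive square, hence $e(F_{12}) = 1$; Lemma \ref{lem:1-e_mod4} then makes $\sum_{f \in I_1(F;\bR)} m_f$ even, and since $N_{\mathrm{unit}}/2$ equals this sum, $n \equiv m(F) \bmod 2$.

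For the construction of $\fraci$, I would parametrize an element of $\Idx(n,n;F)$ by positive-signature counts $r_\pm = (m_\pm + \fraci(X\mp1))/2 \in \{0, \ldots, m_\pm\}$ and $r_f = (2 m_f + \fraci(f))/2 \in \{0, 2, \ldots, 2m_f\}$ (even by \eqref{eq:indexdata2}), subject to the single linear constraint $r_+ + r_- + \sum_{f \in I_1(F;\bR)} r_f = n - m(F)$. The target conditions $\fraci(X\mp1) \equiv i_\pm \bmod 4$ translate to $r_\pm \equiv \epsilon_\pm := (m_\pm + i_\pm)/2 \bmod 2$. The plan is then to choose $r_+, r_-$ of these parities with $s := r_+ + r_-$ in the feasibility interval $[\max(0, m_+ + m_- + m(F) - n), \min(m_+ + m_-, n - m(F))]$, so that $n - m(F) - s$ is a non-negative even integer at most $2 \sum_{f\in I_1(F;\bR)} m_f$, which can be realized as $\sum r_f$ by any partition. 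Two compatibilities must be checked, both immediate from \eqref{eq:prolong} and Lemma \ref{lem:1-e_mod4}: the parity match $\epsilon_+ + \epsilon_- \equiv n - m(F) \bmod 2$, and that the interval has length $\min(m_+ + m_-, 2 \sum_{f} m_f) \geq 1$ outside degenerate cases. The main obstacle will be the degenerate case $\sum m_f = 0$ (no type-$1$ factors over $\bR$), where $F$ is a product of type-$0$ and type-$2$ factors and the interval collapses to a single point; here I would verify directly that $e(F_{12}) = 1$ via Lemma \ref{lem:properties_for_Square}(ii), so that \eqref{eq:prolong} forces $i_+ + i_- \equiv 0 \bmod 4$, which matches the constraint $\fraci(X-1) = -\fraci(X+1)$ read modulo $4$, and then exhibit $\fraci$ by choosing $\fraci(X-1) \in [-\min(m_+, m_-), \min(m_+, m_-)]$ congruent to $i_+ \bmod 4$.
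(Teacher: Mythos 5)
Your part (i) is correct but routes through Lemma \ref{lem:signature_of_F12(1)F12(-1)} (hence through an actual inner product supplied by Theorem \ref{th:Sign_implies}), whereas the paper argues purely combinatorially: from \eqref{eq:indexdata3} one gets $\fraci(X-1)+\fraci(X+1)\equiv-\sum_{f\in I_1(F;\bR)}\fraci(f)\bmod 4$, then \eqref{eq:indexdata2} replaces each $\fraci(f)$ by $2m_f$, and Lemma \ref{lem:1-e_mod4} finishes. Both are valid; yours is slightly heavier but equally short. For part (ii) you genuinely diverge: the paper first replaces $(i_+,i_-)$ by small representatives $i_+'\in\{-1,0,1,2\}$, $i_-'\in\{-2,-1,0,1\}$, builds the index map on the type~$0$ component explicitly with these values, verifies that $F_{12}$ satisfies \eqref{eq:Sign}$_{r_{12},s_{12}}$ for the complementary signature (this is where Lemma \ref{lem:1-e_mod4} and the congruence $i_+'+i_-'\equiv 1-e(F_{12})\bmod 4$ enter), and then invokes Corollary \ref{cor:conditionSign} to supply the rest of the index map; your proof instead runs a direct feasibility argument on the full vector of signature data $(r_+,r_-,(r_f)_f)$. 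Your parametrization and the two compatibilities you isolate (the parity match $\epsilon_++\epsilon_-\equiv n-m(F)\bmod 2$ and the degenerate case $\sum_f m_f=0$, which you handle correctly via $e(F_{12})=1$) are the right ones, and the argument does go through. The one place you are too quick is the claim that interval length $\geq 1$ suffices: you must also check that the arithmetic progression of achievable sums $r_++r_-$ (which runs with step $2$ from $\epsilon_+^0+\epsilon_-^0\leq 2$ up to $M_++M_-\geq m_++m_--2$, where $M_\pm$ is the largest admissible value of parity $\epsilon_\pm$) actually meets the parity-constrained integers of the feasibility interval; this needs the separate inequalities $\epsilon_+^0+\epsilon_-^0\leq U$ and $\lceil L\rceil\leq M_++M_-$, which hold but are not literally a statement about the interval's length. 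The trade-off is clear: the paper's reduction to small $i_\pm'$ avoids all such bookkeeping at the cost of invoking the realization theorem for $F_{12}$, while your approach is self-contained at the level of integer constraints but requires the edge cases to be written out in full.
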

\begin{proof}
(i). Let $\fraci \in \Idx(r,s;F)$. We have 
\[ \textstyle
\fraci(X-1) + \fraci(X+1) + \sum_{f\in I_1(F;\bR)}\fraci(f) = r-s \equiv 0 \mod 4 
\] 
by \eqref{eq:indexdata3}. On the other hand, each $f\in I_1(F;\bR)$ satisfies 
$2m_f - \fraci(f) \equiv 0 \bmod 4$ by \eqref{eq:indexdata2}. Thus 
\[ \begin{split}
&\textstyle\fraci(X-1) + \fraci(X+1) 
\equiv -\sum_{f\in I_1(F;\bR)} \fraci(f)
\equiv -\sum_{f\in I_1(F;\bR)} 2 m_f \\
&\textstyle \qquad \equiv 2 \sum_{f\in I_1(F;\bR)} m_f
\equiv  1 - e(F_{12}) \mod 4, 
\end{split}
\] 
where the last congruence is by Lemma \ref{lem:1-e_mod4}. 

(ii). By Corollary \ref{cor:conditionSign}, it suffices to prove the 
latter assertion. Let $i_+, i_-\in \bZ$ be integers satisfying \eqref{eq:prolong}, 
and take $i_+'\in \{-1,0,1,2\}$ and $i_-'\in \{-2,-1,0,1\}$ so that  
$i_+' \equiv i_+$ and $i_-' \equiv i_-$ mod $4$. 
Then $-3 \leq i_+' + i_-' \leq 3$. Moreover, since 
$i_+' + i_-' \equiv i_+ + i_- \equiv 1 - e(F_{12}) \bmod 4$ we have 
\begin{equation}\label{eq*:i_+'+i_-'}
i_+' + i_-' = 1 - e(F_{12}) \text{ or $e(F_{12}) - 1$}. \tag{$*$}
\end{equation}
We now put 
\begin{alignat*}{2}
r_0 &= (m_+ + i_+')/2 + (m_- + i_-')/2, \quad
&s_0 &= (m_+ - i_+')/2 + (m_- - i_-')/2, \\
r_{12} &= n - r_0, 
& s_{12} &= n - s_0. 
\end{alignat*}
Then the map $\fraci_0: \{X-1, X+1\}\to \bZ$ defined by 
$\fraci_0(X-1) = i_+'$ and $\fraci_0(X+1) = i_-'$
belongs to $\Idx(r_0,s_0; F_0)$, where $F_0$ is the type $0$ component of $F$
(to be precise, $\fraci_0|_{I_0(F_0; \bR)} \in \Idx(r_0,s_0; F_0)$). 

We show that $F_{12}$ satisfies the condition \eqref{eq:Sign}$_{r_{12}, s_{12}}$. 
Note that $m(F) = (2n - m_+ - m_- - 2\sum_{f\in I_1(F;\bR)} m_f )/2$. Then 
\begin{equation}\label{eq*:r-m}
 r_{12} - m(F_{12}) 
= r_{12} - m(F) 
= \sum_{f\in I_1(F;\bR)} m_f - (i_+' + i_-')/2. \tag{$**$}
\end{equation}
Here $\sum_{f\in I_1(F;\bR)} m_f \geq (1 - e(F_{12}))/2$
since $\sum_{f\in I_1(F;\bR)} m_f \geq 0$ and 
$\sum_{f\in I_1(F;\bR)} m_f \equiv (1 - e(F_{12}))/2 \bmod 2$ by Lemma 
\ref{lem:1-e_mod4}. Hence, it follows from 
Equations \eqref{eq*:i_+'+i_-'} and \eqref{eq*:r-m} that 
\[\begin{split}
&r_{12} - m(F_{12}) \geq (1 - e(F_{12}))/2 - (1 - e(F_{12}))/2 = 0, \\ 
&r_{12} - m(F_{12}) \equiv (1 - e(F_{12}))/2 - (1 - e(F_{12}))/2 \equiv 0 \mod 2. 
\end{split}\]
Similarly, we get 
$s_{12} - m(F_{12}) \geq 0$ and $s_{12} - m(F_{12}) \equiv 0 \bmod 2$,  
and therefore $F_{12}$ satisfies the condition \eqref{eq:Sign}$_{r_{12}, s_{12}}$. 

Let $\fraci_1 \in \Idx(r_{12}, s_{12}; F_{12})$ be an index map. Then 
the sum $\fraci := \fraci_0\oplus \fraci_1: I(F;\bR)\to \bZ$ belongs to $\Idx(n,n;F)$. 
This is the desired index map since  
$\fraci(X\mp 1) = i_\pm' \equiv i_\pm \bmod 4$. 
The proof is complete. 
\end{proof}

We reformulate Notation \ref{nt:Pi_fg} and Definition \ref{def:ERdefinedby}. 

\begin{definition}\label{def:Pi_fg_reformulation}
Let $F\in \bZ[X]$ be a $*$-symmetric polynomial of even degree, and 
$m_\pm$ the multiplicity of $X\mp1$ in $F$. Let 
$i_+, i_-\in \bZ$ be integers with $i_\pm \equiv m_\pm \bmod 2$. 
We define 
\[\delta_\pm(F;i_+, i_-) := \begin{cases}
(-1)^{(m_\pm - i_\pm)/2}\,|F_{12}(\pm1)| & \text{if $m_+$ is even} \\
(-1)^{(m_\pm - i_\pm)/2}\,2|F_{12}(\pm1)| & \text{if $m_+$ is odd.}  
\end{cases}\]
Moreover
\[ \begin{split}
\overline{I(X\mp 1;\bQ_p)}' 
:= \begin{cases}
\{ X\mp 1 \} & \text{if $m_\pm \geq 3$; or $m_\pm = 2$ and 
$\delta_\pm(F;i_+, i_-) \neq -1 \in \bQ_p^\times/\bQ_p^{\times 2}$} \\
\emptyset & \text{otherwise}
\end{cases}
\end{split}\]
and $\overline{I(f;\bQ_p)}' := \overline{I(f;\bQ_p)}$ for a monic polynomial 
with $f(1)f(-1)\neq 0$ as in Notation \ref{nt:Pi_fg}. 
We then define a set $\Pi_{i_+, i_-}^F(f,g)$ of primes for monic polynomials 
$f,g\in \bZ[X]$ by 
\[ \Pi_{i_+, i_-}^F(f,g) := \{ p:\text{prime} 
\mid \overline{I(f;\bQ_p)}'\cap \overline{I(g;\bQ_p)}' \neq \emptyset \}. \]
The \textit{equivalence relation defined by $(F;i_+, i_-)$} is the one on $I(F;\bQ)$ 
generated by the binary relation
$\{ (f,g) \in I(F;\bQ) \times I(F;\bQ) \mid \Pi_{i_+, i_-}^F(f,g)\neq \emptyset \}$. 
Note that $\Pi_{i_+, i_-}^F(f,g) = \Pi_\fraci^F(f,g)$
if $\fraci$ is an index map with 
$\fraci(X-1) \equiv i_+$ and $\fraci(X+1) \equiv i_-$ mod $4$. 
\end{definition}

In the following, $F\in \bZ[X]$ is a $*$-symmetric polynomial of even degree $2n$
with the condition \eqref{eq:Square}, and $i_+,i_-$ are integers 
satisfying \eqref{eq:prolong}.  
Note that $m_+ + m_- = \deg(F_0)$ is even since so are $\deg(F)$ and $\deg(F_{12})$. 
Note also that the condition \eqref{eq:Square} holds for the factor 
$(X-1)^{m_+} (X+1)^{m_-} F_1(X)$ by Lemma \ref{lem:properties_for_Square}. 
For a subset $J$ in $I := I(F;\bQ)$, we define 
$F_J := \prod_{f\in J} f^{m_f}$, where $m_f$ is the multiplicity of $f$ in $F$.  
If $J$ is empty then $F_J$ is defined to be the constant $1$. 
The subset $J\setminus\{X-1, X+1\}$ of $J$ is denoted by $J^\circ$. 

\begin{lemma}\label{lem:FJ_of_pm1_is_zero}
Suppose that $m_+\neq 1$ and $m_-\neq 1$. Let $J$ be an equivalence class in 
$I$ with respect to the equivalence relation defined by $(F; i_+, i_-)$. 
If $|F_{J^\circ}(\pm1)|$ is not a square then $X\mp1 \in J$ i.e., $F_J(\pm1) = 0$. 
\end{lemma}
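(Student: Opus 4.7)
The plan is, by symmetry, to treat only the $+$ case: assume $|F_{J^\circ}(1)|$ is not a square and show $X-1 \in J$. First I pick a prime $p$ with $v_p(F_{J^\circ}(1))$ odd. Since $F_{J^\circ}(1) = \prod_{g \in J^\circ} g(1)^{m_g}$, there must exist $f \in J^\circ$ (in particular $f \neq X\pm 1$) with both $m_f$ and $v_p(f(1))$ odd. Proposition \ref{prop:nonSquare}(i) then gives $X-1 \in \overline{I(f;\bQ_p)} = \overline{I(f;\bQ_p)}'$, the second equality holding because $f \neq X\pm 1$.

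The central step is a separation observation: for every equivalence class $J' \neq J$ and every $g \in J'^\circ$, the valuation $v_p(g(1))$ must be even. Otherwise Proposition \ref{prop:nonSquare}(i) would yield $X-1 \in \overline{I(g;\bQ_p)} = \overline{I(g;\bQ_p)}'$, whence $p \in \Pi_{i_+,i_-}^F(f,g)$ and $f \sim g$, contradicting $g \notin J$. Consequently $v_p(F_{J'^\circ}(1))$ is even for every $J' \neq J$, so the identity $v_p(F_{12}(1)) = \sum_{J'} v_p(F_{J'^\circ}(1))$ forces $v_p(F_{12}(1))$ to be odd.

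Next I split on $m_+$. If $m_+ = 0$, then $m_-$ is even (as $\deg(F_0) = m_+ + m_-$ is even), and $F(1) = 2^{m_-} F_{12}(1)$ shows $v_p(F(1))$ has the same parity as $v_p(F_{12}(1))$, namely odd; this contradicts $|F(1)|$ being a square by \eqref{eq:Square}, so the hypothesis is vacuous in this case. If $m_+ \geq 2$, suppose for contradiction that $X-1 \notin J$. When $m_+ \geq 3$ we have $\overline{I(X-1;\bQ_p)}' = \{X-1\}$, giving $p \in \Pi_{i_+,i_-}^F(f, X-1)$ and hence $f \sim X-1$ directly, contradicting $X-1 \notin J$. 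When $m_+ = 2$ the same conclusion holds unless $\delta_+(F;i_+,i_-) = -1$ in $\bQ_p^\times/\bQ_p^{\times 2}$; but in that exceptional case $v_p(\delta_+)$ would be even, while $|\delta_+| = |F_{12}(1)|$ forces $v_p(\delta_+) = v_p(F_{12}(1))$, which we just showed is odd.

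The principal technical obstacle is the $m_+ = 2$ subcase, where $\overline{I(X-1;\bQ_p)}'$ may be empty and the equivalence relation itself depends on the value of $\delta_+$; there the direct equivalence argument does not by itself trigger $f \sim X-1$, and one must extract the contradiction from the parity of $v_p(\delta_+)$ using the identity $|\delta_+| = |F_{12}(1)|$ (which uses $m_+$ even, so the extra factor of $2$ in the definition of $\delta_+$ is absent). A secondary point is the $m_+ = 0$ case, which is handled not by the equivalence argument but by a global parity computation through \eqref{eq:Square}.
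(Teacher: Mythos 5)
Your proof is correct and follows essentially the same route as the paper's: choose a prime $p$ with $v_p(F_{J^\circ}(\pm1))$ odd, use Proposition~\ref{prop:nonSquare}~(i) together with the equivalence relation to rule out odd contributions from the other classes, deduce that $v_p(F_{12}(\pm1))$ is odd, force $m_\pm\geq 2$ via \eqref{eq:Square}, and dispose of the $m_\pm=2$ exception through the parity of $v_p(\delta_\pm)$. One small correction: the identity $v_p(F_{12}(1))=\sum_{J'}v_p(F_{J'^\circ}(1))$ omits the type-$2$ component $F_2$ of $F_{12}=F_1F_2$; since $F_2$ satisfies \eqref{eq:Square} by Lemma~\ref{lem:properties_for_Square}~(ii), the valuation $v_p(F_2(\pm1))$ is even and your parity conclusion is unaffected.
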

\begin{proof}
Suppose that $|F_{J^\circ}(\pm1)|$ is not a square, and let $p$ be a prime 
such that $v_p(F_{J^\circ}(\pm1))$ is odd. We remark that there is no 
$g\in I_1\setminus J$ such that $v_p(g(\pm1))$ is odd; because 
otherwise Proposition \ref{prop:nonSquare} implies that
$p \in \Pi_{i_+,i_-}^F(F_{J^\circ}, g)$, and we would have $g\in J$. 
Hence 
\[ v_p(F_{12}(\pm1)) \equiv v_p(F_1(\pm1)) \equiv v_p(F_{J^\circ}(\pm1)) \equiv 1 \mod 2. \]
This implies that $|F(\pm1)| = 0$ since $|F(\pm1)|$ is a square, 
and thus $m_\pm \geq 2$ by the assumption $m_\pm \neq 1$. Moreover, 
if $m_\pm = 2$ then $\delta_\pm(F;i_+, i_-) \neq -1$ in $\bQ_p^\times/\bQ_p^{\times2}$ 
since $v_p(\delta_\pm(F;i_+, i_-)) \equiv v_p(F_{12}(\pm1)) \equiv 1 \bmod 2$.  
Therefore $\overline{I(X\mp1;\bQ_p)}' = \{X\mp1\}$, 
which leads to $p\in \Pi_{i_+,i_-}^F(F_{J^\circ}, X\mp1)$, and $X\mp1 \in J$. 
This means that $F_J(\pm1) = 0$, and the proof is complete. 
\end{proof}

\begin{proposition}\label{prop:EC_satisfies_Square}
Suppose that $m_+\neq 1$ and $m_-\neq 1$. 
For any equivalence class $J$ in $I$ with respect to the equivalence relation 
defined by $(F; i_+, i_-)$, the corresponding factor 
$F_J$ has even degree and satisfies the condition \eqref{eq:Square}.  
\end{proposition}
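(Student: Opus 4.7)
My plan is a case analysis on whether $X - 1$ and $X + 1$ lie in $J$, treating the even-degree claim and the three parts (a) $|F_J(1)|$, (b) $|F_J(-1)|$, (c) $\sigma_J := (-1)^{\deg(F_J)/2} F_J(1) F_J(-1)$ of condition \eqref{eq:Square} in turn. The inputs I expect to rely on are Lemma \ref{lem:FJ_of_pm1_is_zero}, Proposition \ref{prop:nonSquare}, and Lemma \ref{lem:properties_for_Square}.

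For the even-degree claim I would write $\deg(F_J) = m_+ \bm1_{X - 1 \in J} + m_- \bm1_{X + 1 \in J} + \deg(F_{J^\circ})$. Since $F$ and $F_{12}$ are of even degree, $m_+ \equiv m_- \bmod 2$, and since every $f \in I_1$ has even degree, $\deg(F_{J^\circ})$ is even. If both $m_\pm$ are even the parity is zero. Otherwise both are odd and, by the hypothesis $m_\pm \neq 1$, at least $3$; then $\overline{I(X \mp 1; \bQ_2)}' = \{X \mp 1\}$, and because $X - 1$ and $X + 1$ coincide modulo $2$, $2 \in \Pi_{i_+, i_-}^F(X - 1, X + 1)$, so $X - 1 \sim X + 1$ and both lie in $J$ together (or neither does); the parity is again zero. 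The same dichotomy will also force $m_\mp$ to be even whenever $X \pm 1 \in J$ but $X \mp 1 \notin J$.

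For (a) and (b), the cases $X \mp 1 \in J$ are immediate since $F_J(\pm 1) = 0$. Otherwise $F_J(\pm 1) = (\pm 2)^{m_\mp \bm1_{X \pm 1 \in J}} F_{J^\circ}(\pm 1)$; Lemma \ref{lem:FJ_of_pm1_is_zero} supplies squareness of $|F_{J^\circ}(\pm 1)|$, and the parity observation above makes $2^{m_\mp}$ a square in the unique nontrivial subcase. For (c) the only nontrivial case is $X - 1, X + 1 \notin J$, whence $F_J = F_{J^\circ}$ and $|\sigma_J|$ is a rational square; we must rule out $\sigma_J < 0$. Assuming $\sigma_J < 0$, we have $\sigma_J \equiv -1 \notin \{1, -3\}$ in $\bQ_2^\times / \bQ_2^{\times 2}$, so Proposition \ref{prop:nonSquare}(iii) gives $X - 1 \in \overline{I(F_J; \bQ_2)}$, hence $X - 1 \in \overline{I(f; \bQ_2)}$ for some $f \in J$. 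When $m_+ \geq 3$, or $m_+ = 2$ with $\delta_+(F; i_+, i_-) \neq -1$ in $\bQ_2^\times / \bQ_2^{\times 2}$, we have $\overline{I(X - 1; \bQ_2)}' = \{X - 1\}$, which yields $2 \in \Pi_{i_+, i_-}^F(f, X - 1)$ and hence $X - 1 \in J$, contradicting our assumption; a symmetric argument handles the analogous condition on $m_-$, using $X - 1 \equiv X + 1$ in $\bF_2[X]$.

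The main obstacle I anticipate is the residual regime where both $m_+$ and $m_-$ simultaneously fall outside the above local hypotheses at $2$ (e.g., both equal to $0$, or both equal to $2$ with $\delta_\pm \equiv -1$ in $\bQ_2^\times / \bQ_2^{\times 2}$). There the $p = 2$ argument fails to place $X \mp 1 \in J$, and I would have to fall back on a global input: decomposing $F = F_2 \cdot \prod_J F_J$ and using Lemma \ref{lem:properties_for_Square}(i), (ii) to see that $\prod_J \sigma_J$ is a rational square, and then combining this with Lemma \ref{lem:1-e_mod4} (which identifies the sign of each $\sigma_J$ with the parity of $\sum_{f \in I_1(F_J; \bR)} m_f$) and a careful distribution of that parity across the equivalence classes, so as to preclude an individual $\sigma_J < 0$ class by class.
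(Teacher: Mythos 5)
Your handling of the even-degree claim, of the squareness of $|F_J(1)|$ and $|F_J(-1)|$, and of part (c) in the regime $m_+\geq 3$ or $m_-\geq 3$ (or $m_\pm=2$ with $\delta_\pm\neq-1$ at $2$) matches the paper's proof. The genuine gap is exactly the residual regime you flag at the end, and the fallback you sketch there cannot close it. Knowing that $\prod_J\sigma_J$ is a rational square only tells you that the number of classes with $\sigma_J\equiv-1$ modulo squares is even, and Lemma \ref{lem:1-e_mod4} is a constraint \emph{internal} to each class (it ties the sign of $\sigma_J$ to the number of unit-circle roots of $F_J$ modulo $4$); it imposes no relation between distinct classes, so ``real signs plus the global product'' cannot exclude the configuration of two classes each having $\sigma\equiv-1$. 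The mechanism that actually excludes it is again $2$-adic, via Proposition \ref{prop:nonSquare}(iii): if $\sigma_J\equiv-1$, then every $g\in I_1\setminus J$ must satisfy $(-1)^{\deg(g)/2}g(1)g(-1)\in\{1,-3\}$ in $\bQ_2^\times/\bQ_2^{\times2}$ (otherwise $X-1$ lies in both $\overline{I(g;\bQ_2)}$ and $\overline{I(F_{J^\circ};\bQ_2)}$, and $g$ is absorbed into $J$ at $p=2$), and since $\{1,-3\}$ is a subgroup of $\bQ_2^\times/\bQ_2^{\times2}$ this forces $(-1)^{\deg(F_{12})/2}F_{12}(1)F_{12}(-1)\in\{-1,3\}$. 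When $m_+=m_-=0$ this already contradicts \eqref{eq:Square} for $F$, which disposes of that case.

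For $(m_+,m_-)\in\{(2,2),(2,0),(0,2)\}$ the same conclusion feeds into the identity $\delta_+(F;i_+,i_-)\,\delta_-(F;i_+,i_-)=(-1)^nF_{12}(1)F_{12}(-1)$, obtained from Lemma \ref{lem:signature_of_F12(1)F12(-1)} after choosing an index map via Proposition \ref{prop:index_prolong}(ii); it shows that at least one of $\delta_\pm$ is $\neq-1$ in $\bQ_2^\times/\bQ_2^{\times2}$. In other words, the sub-case ``both $m_\pm=2$ with both $\delta_\pm\equiv-1$'' that you set aside is vacuous once $\sigma_{J^\circ}\equiv-1$ is assumed, and the $p=2$ linking to $X\mp1$ that you already set up then completes the argument. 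So the two missing ingredients are: the observation that all irreducible factors outside $J$ have $2$-adic value in $\{1,-3\}$, so that the product over $I_1$ is pinned down; and the computation of $\delta_+\delta_-$. Neither the real signatures nor Lemma \ref{lem:1-e_mod4} can substitute for them.
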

\begin{proof}
We begin with the case $F(1)F(-1) \neq 0$, i.e., $m_+ = m_- = 0$. 
Let $J$ be an equivalence class in $I$. 
The degree of $F_J$ is even since $F_J$ has no type $0$ component. 
We first show that $|F_J(1)|$ and $|F_J(-1)|$ are squares simultaneously. 
Suppose that $|F_J(\pm1)|$ were not a square, and let $p$ be prime such that 
$v_p(F_J(\pm1))$ is odd. Note that $v_p(F_1(\pm1))$ is even since 
the condition \eqref{eq:Square} holds for $F_1$. 
Then there exists $g\in I\setminus J$ such that $v_p(g(\pm1))$ is odd. 
In this case, we have $X\mp1 \in \overline{I(F_J;\bQ_p)} = \overline{I(F_J;\bQ_p)}'$ 
and $X\mp1 \in \overline{I(g;\bQ_p)} = \overline{I(g;\bQ_p)}'$ by 
Proposition \ref{prop:nonSquare}. 
Thus $p \in \Pi_{i_+,i_-}^F(F_J,g)$, and 
$g$ would be contained in the equivalence class $J$, but this is a contradiction.  
Hence $|F_J(1)|$ and $|F_J(-1)|$ are squares. 

We then show that $(-1)^{n_J}F_J(1)F_J(-1)$ is a square, where $n_J := \deg(F_J)/2$. 
Suppose that $(-1)^{n_J}F_J(1)F_J(-1)$ were not a square. 
Since $|F_J(1)|$ and $|F_J(-1)|$ are squares as proved now, 
we have $(-1)^{n_J}F_J(1)F_J(-1) = -1$ 
in $\bQ^\times/\bQ^{\times 2}$ and hence in $\bQ_2^\times/\bQ_2^{\times 2}$. 
Thus there exists $g\in I\setminus J$ such that 
$(-1)^{\deg(g)/2}g(1)g(-1) \neq 1$ nor $-3$ in $\bQ_2^\times/\bQ_2^{\times 2}$
because $(-1)^{\deg(F_1)/2}F_1(1)F_1(-1) = 1$ in $\bQ_2^\times/\bQ_2^{\times 2}$. 
In this case, we have $X-1 \in \overline{I(F_J;\bQ_p)} = \overline{I(F_J;\bQ_p)}'$ 
and $X-1 \in \overline{I(g;\bQ_p)} = \overline{I(g;\bQ_p)}'$ by 
Proposition \ref{prop:nonSquare}. Thus $2 \in \Pi_{i_+,i_-}^F(F_J,g)$, and 
$g$ would be contained in the equivalence class $J$, but this is a contradiction.  
Hence $(-1)^{n_J}F_J(1)F_J(-1)$ is a square. 
This completes the case $F(1)F(-1) \neq 0$.

We proceed to the case $F(1)F(-1) = 0$. Let $J\subset I$ be an equivalence class. 
Note that $F_{J^\circ}$ has even degree. 
If $m_+$ and $m_-$ are even then it is obvious that $\deg(F_J)$ is even. 
If $m_+$ and $m_-$ are odd then $m_+, m_- \geq 3$ by the assumption $m_+, m_- \neq 1$. 
This implies that $J = J^\circ$ or $\{X-1,X+1\}\subset J$, and 
$\deg(F_J)$ is even. 
We now show that $F_J$ satisfies the condition \eqref{eq:Square}. 
If $F_{J^\circ}$ satisfies \eqref{eq:Square} then so does $F_J$. 
Hence, it is enough to consider the case where $F_{J^\circ}$ does not satisfy 
\eqref{eq:Square}.
Lemma \ref{lem:FJ_of_pm1_is_zero} implies that if 
$|F_{J^\circ}(1)|$ or $|F_{J^\circ}(-1)|$ is not a square then 
$F_J$ satisfies \eqref{eq:Square}. 
Suppose then that $|F_{J^\circ}(1)|$ and $|F_{J^\circ}(-1)|$ are squares but 
$(-1)^{\deg(F_{J^\circ})/2}F_{J^\circ}(1)F_{J^\circ}(-1) = -1$ mod squares.  
If $m_+\geq 3$ or $m_-\geq 3$ then $\overline{I(X-1; \bQ_2)}' = \{X-1\}$ or 
$\overline{I(X+1; \bQ_2)}' = \{X-1\}$, and 
$2\in \Pi_{i_+,i_-}^F(F_J, X-1)$ or $2\in \Pi_{i_+,i_-}^F(F_J, X+1)$ 
by Proposition \ref{prop:nonSquare}. 
Thus $X-1\in J$ or $X+1\in J$, and $F_J(1)F_J(-1) = 0$. This leads to 
the condition \eqref{eq:Square} for $F_J$. Suppose that 
$(m_+, m_-) = (2,2)$, $(2,0)$ or $(0,2)$. 
Note that there is no $g\in I_1\setminus J$ such that 
$(-1)^{\deg(g)/2}g(1)g(-1) \neq 1$ nor $-3$ since otherwise we would have $g\in J$ 
by Proposition \ref{prop:nonSquare}. 
This shows that in $\bQ_2^\times/\bQ_2^{\times2}$ we have
\[ \begin{split}
&(-1)^{\deg(F_{12})/2} F_{12}(1) F_{12}(-1) \\
&= (-1)^{\deg(F_{J^\circ})/2} F_{J^\circ}(1) F_{J^\circ}(-1)
\times \prod_{g\in I_1\setminus J} (-1)^{\deg(g)/2}g(1)g(-1) 
\times (-1)^{\deg(F_2)/2} F_{2}(1) F_2(-1) \\ 
&= - \prod_{g\in I_1\setminus J} (-1)^{\deg(g)/2}g(1)g(-1) \\
&\in \{-1, 3\}.  
\end{split} \]
Thus 
\[\begin{split}
\delta_+(F; i_+, i_-)\delta_-(F; i_+, i_-)
&= (-1)^{(m_+ - i_+)/2} |F_{12}(1)| (-1)^{(m_+ - i_+)/2} |F_{12}(-1)| \\
&= (-1)^n F_{12}(1)F_{12}(-1) \\
&= (-1)^{(m_+ + m_-)/2} (-1)^{\deg(F_{12})/2} F_{12}(1)F_{12}(-1) \\
&= \begin{cases}
1 \text{ or } -3 & \text{if $(m_+, m_-) = (2,0)$ or $(0,2)$} \\
-1 \text{ or } 3 & \text{if $(m_+, m_-) = (2,2)$}
\end{cases}
\end{split}\]
in $\bQ_2^\times/\bQ_2^{\times2}$, where the second equality is obtained by applying 
Lemma \ref{lem:signature_of_F12(1)F12(-1)} 
after taking $\fraci$ as in Proposition \ref{prop:index_prolong} (ii). 
Hence, if $(m_+, m_-) = (2,0)$ then $\delta_+(F; i_+, i_-) \neq -1$;
if $(m_+, m_-) = (0,2)$ then $\delta_-(F; i_+, i_-) \neq -1$; and 
if $(m_+, m_-) = (2,2)$ then $\delta_-(F; i_+, i_-) \neq -1$ or 
$\delta_-(F; i_+, i_-) \neq -1$ in $\bQ_2^\times/\bQ_2^{\times2}$. 
These imply that $X-1\in \overline{I(X-1;\bQ_2)}'$ or 
$X-1\in \overline{I(X+1;\bQ_2)}'$, and $X-1\in J$ or $X+1 \in J$. 
Therefore $(-1)^{\deg(F_J)/2}F_J(1) F_J(-1) = 0$, which leads to the condition 
\eqref{eq:Square} for $F_J$. The proof is complete. 
\end{proof}

Let $J_\pm$ denote the equivalence class in $I$ (with respect to the 
equivalence relation defined by $(F; i_+, i_-)$) containing $X\mp1$. 
If $m_\pm = 0$ then $J_\pm$ is defined to be the empty set. 
Note that $J_+$ and $J_-$ coincide or have no intersection. 

\begin{lemma}\label{lem:ECweakest}
Suppose that $m_+\neq 1$ and $m_-\neq 1$. Then $\delta_+(F_{J_+};i_+, i_-) 
= \delta_+(F;i_+, i_-)$ and  
$\delta_-(F_{J_-};i_+, i_-) = \delta_-(F;i_+, i_-)$ in $\bQ^\times/ \bQ^{\times 2}$. 
As a result, 
for any equivalence class $J$ in $I$ with respect to the equivalence relation 
defined by $(F; i_+, i_-)$, the equivalence relation on $J = I(F_J; \bQ)$ 
defined by $(F_J; i_+, i_-)$ is weakest. 
\end{lemma}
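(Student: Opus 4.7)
The plan is to first establish the identity $\delta_+(F_{J_+};i_+,i_-) = \delta_+(F;i_+,i_-)$ in $\bQ^\times/\bQ^{\times 2}$ (the case of $\delta_-$ being symmetric), and then to deduce the second assertion as a consequence. Factoring $F(X) = \prod_J F_J(X)$ over equivalence classes and pulling out $(X-1)^{m_+}(X+1)^{m_-}$ gives $F_{12}(X) = \prod_J (F_J)_{12}(X)$. The multiplicity of $X-1$ in $F_{J_+}$ equals $m_+$, so the parity-dependent factor $c_+ \in \{1,2\}$ in the definition of $\delta_+$ agrees for $F$ and $F_{J_+}$, and the sign $(-1)^{(m_+-i_+)/2}$ agrees as well. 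The identity therefore reduces to showing that
\[
\frac{|F_{12}(1)|}{|(F_{J_+})_{12}(1)|} = \prod_{J \neq J_+} |(F_J)_{12}(1)|
\]
is a square modulo $\bQ^{\times 2}$.

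The computation splits on whether $J_+ = J_-$. If $J_+ = J_-$, then every $J \neq J_+$ satisfies $m_+^J = m_-^J = 0$, so $(F_J)_{12}(1) = F_J(1)$, which is a square by Proposition \ref{prop:EC_satisfies_Square}, and the ratio is a product of squares. If $J_+ \neq J_-$, the only nontrivial contribution comes from $|(F_{J_-})_{12}(1)| = |F_{J_-}(1)|/2^{m_-}$; since $|F_{J_-}(1)|$ is a square, this contributes $2^{m_-}$ modulo squares. The main obstacle is to rule out $J_+ \neq J_-$ when $m_-$ is odd: because $m_+ + m_- = \deg(F_0)$ is even, the parities of $m_+$ and $m_-$ coincide, and the hypothesis $m_\pm \neq 1$ forces $m_\pm \geq 3$ in the odd case. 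Then $\overline{I(X-1;\bQ_2)}' = \{X-1\}$ and $\overline{I(X+1;\bQ_2)}' = \{X+1\}$, but these coincide in $\bF_2[X]$ since $X-1 = X+1$ modulo $2$; hence $2 \in \Pi_{i_+, i_-}^F(X-1, X+1)$ and $X-1 \sim X+1$, forcing $J_+ = J_-$. Therefore whenever $J_+ \neq J_-$, necessarily $m_-$ is even and $2^{m_-}$ is a square, completing the proof of the first claim.

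For the second assertion, fix an equivalence class $J$ and elements $f, g \in J$; I will verify that $\Pi_{i_+, i_-}^{F_J}(f, g) = \Pi_{i_+, i_-}^F(f, g)$. For $h \in J$ with $h \neq X \pm 1$ the set $\overline{I(h;\bQ_p)}' = \overline{I(h;\bQ_p)}$ depends only on $h$ and $p$, so such factors contribute identically. For $h = X \mp 1 \in J$ (which forces $J = J_\pm$), the multiplicity of $X \mp 1$ in $F_J$ equals $m_\pm$, and the first claim yields $\delta_\pm(F_J;i_+,i_-) = \delta_\pm(F;i_+,i_-)$ in $\bQ_p^\times/\bQ_p^{\times 2}$, so the condition distinguishing $\overline{I(X \mp 1;\bQ_p)}' = \{X \mp 1\}$ from $\emptyset$ is the same for $F_J$ and for $F$. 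Hence the $\Pi$-sets agree, and any chain of direct $F$-relations joining $f$ to $g$ (which stays inside $J$ since $J$ is a single equivalence class) also realizes the equivalence with respect to $(F_J; i_+, i_-)$; this shows that the equivalence relation on $J$ defined by $(F_J; i_+, i_-)$ is weakest.
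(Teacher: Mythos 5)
Your proof is correct and structurally the same as the paper's: reduce the $\delta_\pm$-identity to computing $|F_{12}(\pm1)|$ modulo squares by factoring over equivalence classes, kill the contribution of every class other than $J_\pm$, and then deduce the second assertion by checking that $\Pi_{i_+,i_-}^{F_J}(f,g)=\Pi_{i_+,i_-}^{F}(f,g)$ for $f,g\in J$, which is only an issue for $X\mp1$ and is settled by the first claim. Two remarks. First, your factorization $F=\prod_J F_J$ omits the type $2$ component: the classes $J$ partition $I=I(F;\bQ)$, which contains only the $*$-symmetric irreducible factors, so in general $F=\prod_J F_J\times F_2$ and your displayed identity is off by $|F_2(1)|$. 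This is harmless since $|F_2(\pm1)|$ is a square by Lemma \ref{lem:properties_for_Square}(ii) (which is exactly how the paper handles it), but the factor has to be accounted for. Second, where the paper invokes Lemma \ref{lem:FJ_of_pm1_is_zero} to conclude directly that $|F_{J_-^\circ}(1)|$ is a square when $J_+\neq J_-$, you instead apply Proposition \ref{prop:EC_satisfies_Square} to $F_{J_-}$ and are left with the stray factor $2^{m_-}$, which you eliminate by showing that $J_+\neq J_-$ forces $m_\pm$ even (parity of $\deg F_0$ plus the relation $X-1\sim X+1$ at $p=2$ when $m_\pm\geq 3$). That detour is valid, and the same parity observation is in any case needed to see that the even/odd case distinction in the definition of $\delta_-$ (which is governed by the multiplicity of $X-1$, equal to $0$ in $F_{J_-}$ but to $m_+$ in $F$) selects the same branch for $F_{J_-}$ as for $F$; still, the route through Lemma \ref{lem:FJ_of_pm1_is_zero} is the more direct one. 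The second half of your argument coincides with the paper's.
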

\begin{proof}
Suppose first that $J_+ \neq J_-$. Then 
$F_{12} = F_{J_+^\circ} F_{J_-^\circ} \times \prod_{H\neq J_+,J_-}F_H \times F_2$, 
where $H$ ranges over all equivalence classes other than $J_+$ and $J_-$. 
Note that $|F_{J_-^\circ}(1)|$ must be a square since otherwise 
$X-1$ would belong to $J_-$ by Lemma \ref{lem:FJ_of_pm1_is_zero}, but this 
contradicts $J_+ \neq J_-$. Furthermore 
$|F_H(1)|$ for $H \neq J_+, J_-$ and $|F_2(1)|$ are also squares 
by Proposition \ref{prop:EC_satisfies_Square} and 
Lemma \ref{lem:properties_for_Square} (ii). Thus, we obtain 
\[ |F_{12}(1)| 
= |F_{J_+^\circ}(1)| |F_{J_-^\circ}(1)| \times \prod_{H\neq J_+,J_-}|F_H(1)| \times |F_2(1)|
= |F_{J_+^\circ}(1)|
\quad \text{in $\bQ^\times/ \bQ^{\times 2}$}.  
\] 
If $J_+ = J_-$ then 
$F_{12} = F_{J_+^\circ}\times \prod_{H\neq J_+}F_H \times F_2$, and 
it also follows from Proposition \ref{prop:EC_satisfies_Square} and 
Lemma \ref{lem:properties_for_Square} (ii) that 
$|F_{12}(1)| =  |F_{J_+^\circ}(1)|$ in $\bQ^\times/ \bQ^{\times 2}$.
Furthermore, the multiplicity of $X-1$ in $F_{J_+}$ is $m_+$. Hence 
$\delta_+(F_{J_+};i_+, i_-) 
= \delta_+(F;i_+, i_-)$ in $\bQ^\times/ \bQ^{\times 2}$. 
Similarly, we have $\delta_-(F_{J_-};i_+, i_-) = \delta_-(F;i_+, i_-)$
in $\bQ^\times/ \bQ^{\times 2}$. 

For the latter assertion, 
it suffices to check that $\Pi_{i_+, i_-}^{F_J}(f,g) = \Pi_{i_+, i_-}^{F}(f,g)$
for any $f,g \in J$. If $f,g \notin \{X-1, X+1\}$ then 
$\Pi_{i_+, i_-}^{F_J}(f,g)$ and $\Pi_{i_+, i_-}^{F}(f,g)$ are equal to 
$\Pi(f,g)$ (defined in Notation \ref{nt:Pi_fg_pure}), and they coincide. 
So it is enough to prove this assertion for $J_+$ and $J_-$, but it 
follows from the equalities $\delta_+(F_{J_+};i_+, i_-) = \delta_+(F;i_+, i_-)$ and
$\delta_-(F_{J_-};i_+, i_-) = \delta_-(F;i_+, i_-)$ 
in $\bQ^\times/ \bQ^{\times 2}$, which we proved above. 
\end{proof}

The following lemma is a special case of the main theorem \ref{th:chpl_on_index0}.  

\begin{lemma}\label{lem:if_J+=J-}
Suppose that $m_+ \neq 1$ and $m_- \neq 1$. 
Let $i_+, i_-\in \bZ$ be integers with \eqref{eq:prolong}. 
If $J_+\cup J_-$ is one equivalence class in $I$ with respect to 
the equivalence relation defined by $(F; i_+, i_-)$
then there exists $\fraci \in \Idx(n,n;F)$ with 
$\fraci(X-1) \equiv i_+$ and $\fraci(X+1) \equiv i_-$ mod $4$
such that $\Lambda_{n,n}$ admits a semisimple $(F,\fraci)$-isometry. 
\end{lemma}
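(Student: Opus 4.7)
The plan is to decompose $F$ along the equivalence classes of the relation defined by $(F; i_+, i_-)$, apply Theorem \ref{th:weakestER} to each piece, and assemble the resulting isometries by orthogonal direct sum inside $\Lambda_{n,n}$. Write $I = J_1 \sqcup J_2 \sqcup \cdots \sqcup J_N$ with $J_1 := J_+ \cup J_-$ one equivalence class by hypothesis, let $G$ be the type $2$ component of $F$, and factor $F = G \cdot \prod_{\ell} F_{J_\ell}$. Under the running assumption $m_+, m_- \neq 1$, Proposition \ref{prop:EC_satisfies_Square} shows that each $F_{J_\ell}$ has even degree and satisfies \eqref{eq:Square}, while Lemma \ref{lem:properties_for_Square}(ii) does the same for $G$. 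Set $n_\ell := \deg(F_{J_\ell})/2$ and $n_G := \deg(G)/2$, so that $\sum_\ell n_\ell + n_G = n$.

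The crucial construction is an index map $\fraci_1 \in \Idx(n_1, n_1; F_{J_1})$ with $\fraci_1(X \mp 1) \equiv i_\pm \pmod{4}$, supplied by Proposition \ref{prop:index_prolong}(ii) applied to $F_{J_1}$. The hypothesis of that proposition is \eqref{eq:prolong} for $F_{J_1}$, whose only nontrivial part is the congruence $i_+ + i_- \equiv 1 - e((F_{J_1})_{12}) \pmod{4}$. Because every $F_{J_\ell}$ for $\ell \geq 2$ and $G$ satisfies \eqref{eq:Square} and does not vanish at $\pm 1$, the invariant $e$ takes the value $1$ on each; the multiplicativity of $e$ on $*$-symmetric polynomials of even degree with nonvanishing values at $\pm 1$ then yields $e(F_{12}) = e(F_{J_1^\circ}) = e((F_{J_1})_{12})$, so the congruence transfers from $F$ to $F_{J_1}$. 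For $\ell \geq 2$ and for $G$, invoke Corollary \ref{cor:conditionSign} to select arbitrary index maps $\fraci_\ell \in \Idx(n_\ell, n_\ell; F_{J_\ell})$ and $\fraci_G \in \Idx(n_G, n_G; G)$; the latter is the empty map since $I(G; \bR) = \emptyset$.

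By Lemma \ref{lem:ECweakest}, the equivalence relation on $J_\ell$ defined by $(F_{J_\ell}; i_+, i_-)$ is weakest for each $\ell$, and the analogous relation on $I(G;\bQ) = \emptyset$ is vacuously so. Theorem \ref{th:weakestER} therefore yields a semisimple $(F_{J_\ell}, \fraci_\ell)$-isometry $t_\ell$ of $\Lambda_{n_\ell, n_\ell}$ for each $\ell$, and a semisimple isometry $t_G$ of $\Lambda_{n_G, n_G}$ with characteristic polynomial $G$; uniqueness of indefinite even unimodular lattices (Theorem \ref{th:EULoverZ}) identifies $\Lambda_{n_G, n_G} \oplus \bigoplus_\ell \Lambda_{n_\ell, n_\ell} \cong \Lambda_{n,n}$. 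The orthogonal direct sum $t := t_G \oplus \bigoplus_\ell t_\ell$ is then a semisimple $(F, \fraci)$-isometry of $\Lambda_{n,n}$ with $\fraci := \fraci_G \oplus \bigoplus_\ell \fraci_\ell \in \Idx(n,n; F)$ satisfying $\fraci(X \mp 1) \equiv i_\pm \pmod{4}$, as required. The main obstacle is verifying that \eqref{eq:prolong} descends to $F_{J_1}$, i.e.\ the identity $e((F_{J_1})_{12}) = e(F_{12})$, which is the chief payoff of Proposition \ref{prop:EC_satisfies_Square}.
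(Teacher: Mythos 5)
Your proof is correct and follows essentially the same route as the paper's: decompose $F$ along the equivalence classes, transfer \eqref{eq:prolong} to $F_{J_+\cup J_-}$ via the identity $e(F_{12})=e\bigl((F_{J_+\cup J_-})_{12}\bigr)$ supplied by Proposition \ref{prop:EC_satisfies_Square} and multiplicativity of $e$, build the piece on $J_+\cup J_-$ with Proposition \ref{prop:index_prolong}(ii) together with Lemma \ref{lem:ECweakest} and Theorem \ref{th:weakestER}, and assemble by orthogonal direct sum. You are in fact slightly more explicit than the paper in treating the type $2$ component $G=F_2$ as its own summand.
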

\begin{proof}
Let $H$ be an equivalence class in $I$ other than $J_+$ and $J_-$, 
and put $n_H = \deg(F_H)/2$. 
Then $F_H$ satisfies the conditions 
\eqref{eq:Sign}$_{n_H,n_H}$ and \eqref{eq:Square} 
by Proposition \ref{prop:index_prolong} (ii) and 
Proposition \ref{prop:EC_satisfies_Square}. 
Moreover, the equivalence relation on $H = I(F_H; \bQ)$ 
defined by $(F_H; i_+, i_-)$ is weakest by Lemma \ref{lem:ECweakest}. 
Let $\fraci_H\in \Idx(n_H,n_H; F_H)$ be an index map. 
Theorem \ref{th:weakestER} shows that
$\Lambda_{n_H, n_H}$ has a semisimple $(F_H, \fraci_H)$-isometry $t_H$. 

Suppose that $J := J_+\cup J_-$ is one equivalence class in $I$. 
Then $F_1 = F_J \times \prod_H F_H$, where $H$ ranges over all equivalence classes 
other than $J$. Note that $e(F_H) = 1$ for each $H\neq J$ and $e(F_2) = 1$
since $F_H$ and $F_2$ satisfy the condition \eqref{eq:Square}. Thus
\begin{equation}\label{eq:e(F_12)}
e(F_{12}) = e(F_J)\times \prod_{H\neq J} e(F_H) \times e(F_2) = e(F_J).
\end{equation}
Furthermore, the multiplicities of $X-1$ and $X+1$ in $F_J$ are $m_+$ and $m_-$ 
respectively. Hence, there exists $\fraci_J \in \Idx(n_J,n_J;F_J)$ with 
$\fraci_J(X-1) \equiv i_+$ and $\fraci_J(X+1) \equiv i_-$ mod $4$
by Proposition \ref{prop:index_prolong} (ii), where $n_J := \deg(F_J)/2$. 
Lemma \ref{lem:ECweakest} and Theorem \ref{th:weakestER} imply that 
$\Lambda_{n_J, n_J}$ has a semisimple $(F_J, \fraci_J)$-isometry $t_J$. 

We now define an isometry $t$ on 
$\Lambda_{n,n} = \Lambda_{n_J, n_J} \oplus \bigoplus_{H\neq J} \Lambda_{n_H, n_H}$
by $t := t_J\oplus \bigoplus_{H\neq J} t_H$, and define $\fraci\in \Idx(n,n;F)$ 
to be the index of $t$. Then 
$\fraci(X-1) \equiv \fraci_J(X-1) \equiv i_+$ and 
$\fraci(X+1) \equiv \fraci_J(X+1) \equiv i_-$ mod $4$. 
This completes the proof. 
\end{proof}

The following proposition is an essential part of the proof of the main theorem.  

\begin{proposition}\label{prop:if_m+=m-=2}
Let $i_+, i_-\in \bZ$ be integers with \eqref{eq:prolong}. 
Suppose that $m_+ = m_- = 2$. Then there exists $\fraci \in \Idx(n,n;F)$ with 
$\fraci(X-1) \equiv i_+$ and $\fraci(X+1) \equiv i_-$ mod $4$
such that $\Lambda_{n,n}$ admits a semisimple $(F,\fraci)$-isometry. 
\end{proposition}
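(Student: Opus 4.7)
By Theorem~\ref{th:obstruction}, it suffices to produce $\fraci \in \Idx(n,n;F)$ with $\fraci(X-1) \equiv i_+$ and $\fraci(X+1) \equiv i_-$ mod $4$ for which the obstruction map $\ob_\fraci$ vanishes. Such an $\fraci$ exists by Proposition~\ref{prop:index_prolong}(ii), so the content of the proof is forcing $\ob_\fraci = 0$. Consider the equivalence relation on $I = I(F;\bQ)$ defined by $(F;i_+,i_-)$, and let $J_\pm$ denote the equivalence class containing $X \mp 1$. When $J_+ = J_-$ (so that $J_+ \cup J_-$ is a single class), Lemma~\ref{lem:if_J+=J-} already concludes the proof.

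Suppose then that $J_+ \neq J_-$. For each equivalence class $H \neq J_+, J_-$, Proposition~\ref{prop:EC_satisfies_Square} shows that $F_H$ has even degree and satisfies \eqref{eq:Square}, Lemma~\ref{lem:ECweakest} shows that the equivalence relation on $H$ defined by $(F_H;\cdot,\cdot)$ is weakest, and Theorem~\ref{th:weakestER} then furnishes a semisimple $(F_H, \fraci_H)$-isometry of $\Lambda_{n_H, n_H}$ for an appropriate $\fraci_H$. Taking the direct sum of these pieces reduces the problem to the case $I = J_+ \sqcup J_-$ and $F = F_{J_+} F_{J_-}$.

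In this reduced setting, $\Omega_\fraci$ has $\bZ/2\bZ$-rank at most $2$ (spanned by $\bm1_{J_+}$ and $\bm1_{J_-}$), and by Proposition~\ref{prop:even1s} the reduced obstruction group $\widetilde\Omega_\fraci$ has rank at most $1$. If it is trivial we are done; otherwise, it is generated by $\bm1_{J_+}$ modulo $\bm1_I$, and we must arrange $\widetilde\ob_\fraci(\bm1_{J_+}) = 0$. The plan is to use the freedom in choosing $\fraci$ subject to fixed mod-$4$ values at $X \mp 1$: by Theorem~\ref{th:comparison}, replacing $\fraci$ by another admissible $\fraci'$ shifts $\ob_\fraci$ by $(\eta_\infty(\fraci') - \eta_\infty(\fraci)) \cdot c$, and by formula \eqref{eq:eta_infty} one can toggle $\eta_\infty(\fraci)(f)$ by replacing $\fraci(g) \mapsto \fraci(g) \pm 4$ at some $g \in I(f;\bR)$. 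Performing such shifts in compensating pairs preserves the total-sum constraint \eqref{eq:indexdata3}, and one selects a pair whose effect on $\bm1_{J_+}$ is non-trivial to cancel the obstruction.

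\textbf{Main obstacle.} The delicate step is showing that the admissible pair-shifts span a subgroup of $C(I)$ large enough to kill $\widetilde\ob_\fraci(\bm1_{J_+})$, particularly in corner configurations where $J_+^\circ \cup J_-^\circ$ contains no factor of large enough degree to absorb a $\pm 4$ shift (for example the extreme case $F = (X-1)^2(X+1)^2$ after the reduction, or when every $f \in J_+^\circ \cup J_-^\circ$ has $\deg(f^{m_f}) = 2$). For these one falls back on explicit constructions with hyperbolic planes: the swap isometry on $H = \Lambda_{1,1}$ has characteristic polynomial $(X-1)(X+1)$ with indices $(1,-1)$, and the direct sum of two such swaps realizes $(X-1)^2(X+1)^2$ on $\Lambda_{2,2}$ with indices $(2,-2) \equiv (2,2)$ mod $4$; combining such building blocks with the isometries produced in the reduction step covers the remaining residue classes of $(i_+,i_-)$ mod $4$ and completes the proof.
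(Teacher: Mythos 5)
Your opening reduction (splitting off the equivalence classes other than $J_\pm$ and invoking Lemma~\ref{lem:if_J+=J-} when $J_+\cup J_-$ is a single class) matches the paper, but your treatment of the core case $J_+\neq J_-$ has a genuine gap. You propose to start from an arbitrary admissible $\fraci$ and toggle $\widetilde\ob_\fraci(\bm1_{J_+})$ by compensating $\pm4$ shifts of the index. This only succeeds when a pair of shifts with net odd effect on $J_+$ is available, and you correctly flag that it can fail; but your fallback (hyperbolic-plane swaps) only realizes the purely type-$0$ polynomial $(X-1)^2(X+1)^2$ and does not address the genuinely problematic configurations. For instance, if $(i_+,i_-)\equiv(0,0)\bmod 4$, $J_-=\{X+1\}$, and every $f\in J_+^\circ$ has $\deg(f^{m_f})=2$ or is of type $2$ over $\bR$, then no shift preserving the residues of $\fraci(X\mp1)$ modulo $4$ exists at all (a shift by $4$ at $X\mp1$ would leave $\{-2,0,2\}$, and the remaining factors admit no shift either), so the obstruction cannot be toggled and you have no argument that it vanishes.

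What is missing is the arithmetic input that the paper uses \emph{instead of} toggling. The paper never manipulates the obstruction in this case: it builds the isometry as a direct sum over equivalence classes, applying Proposition~\ref{prop:index_prolong}(ii) to $F_{J_+}$ and $F_{J_-}$ separately, which requires $i_+\equiv 1-e(F_{J_+^\circ})$ and $i_-\equiv 1-e(F_{J_-^\circ})\bmod 4$. Since \eqref{eq:prolong} only gives the sum $i_++i_-\equiv 1-e(F_{12})=1-e(F_{J_+^\circ})e(F_{J_-^\circ})\bmod 4$ (using Lemma~\ref{lem:FJ_of_pm1_is_zero} to see that $|F_{J_+^\circ}(-1)|$ and $|F_{J_-^\circ}(1)|$ are squares), two of the four a priori possible combinations of $(e(F_{J_+^\circ}),e(F_{J_-^\circ}))$ and $(i_+,i_-)\bmod 4$ are incompatible with this separate construction. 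The heart of the paper's proof (Cases I-(c),(d) and II-(c),(d)) shows these bad combinations simply cannot occur: Lemma~\ref{lem:ECweakest} identifies $\delta_\pm(F;i_+,i_-)$ with $|F_{J_+^\circ}(1)|$ resp.\ $|F_{J_-^\circ}(-1)|$, and then Proposition~\ref{prop:nonSquare}(iii) together with the definition of $\overline{I(X\mp1;\bQ_2)}'$ forces $2\in\Pi^F_{i_+,i_-}(\cdot,\cdot)$ linking $J_+$ to $J_-$, contradicting $J_+\neq J_-$. This $2$-adic compatibility argument is the essential content of the proposition and is absent from your proposal; without it, the toggling heuristic cannot be completed.
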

\begin{proof}
If $J_- \cup J_+$ is one equivalence class then we are done by Lemma \ref{lem:if_J+=J-}. 
So we assume that $J_+$ and $J_-$ are distinct non-empty equivalence classes.   
Then $|F_{J_+^\circ}(-1)|$ and $|F_{J_-^\circ}(1)|$ are squares by 
Lemma \ref{lem:FJ_of_pm1_is_zero}. 
Thus, we have
\begin{equation}\label{eq*:J_pm_circ}
\begin{split}
(-1)^{\deg(F_{J_+^\circ})/2} F_{J_+^\circ}(1)F_{J_+^\circ}(-1)
&= e(F_{J_+^\circ})|F_{J_+^\circ}(1)||F_{J_+^\circ}(-1)|
= e(F_{J_+^\circ})|F_{J_+^\circ}(1)|,  \\
(-1)^{\deg(F_{J_-^\circ})/2} F_{J_-^\circ}(1)F_{J_-^\circ}(-1)
&= e(F_{J_-^\circ})|F_{J_-^\circ}(1)||F_{J_-^\circ}(-1)|
= e(F_{J_-^\circ})|F_{J_-^\circ}(-1)|  
\end{split}
\tag{$*$}  
\end{equation}
in $\bQ^\times/\bQ^{\times2}$. We also have the relation 
\[ e(F_{12}) 
= e(F_{J_+^\circ}) e(F_{J_-^\circ}) \times \prod_{H \neq J_+, J_-} e(F_H) \times e(F_2)
= e(F_{J_+^\circ}) e(F_{J_-^\circ}) 
\]
as in Equation \eqref{eq:e(F_12)}. 

\smallskip
\noindent
\textit{Case \textup{I}: $e(F_{12}) = 1$.}
We have $(e(F_{J_+^\circ}), e(F_{J_-^\circ})) = (1,1)$ or $(-1, -1)$. 
Furthermore $(i_+, i_-) \equiv (0,0)$ or $(2,2) \bmod 4$ by \eqref{eq:prolong}. 

\textit{Case \textup{I-(a)}: $(e(F_{J_+^\circ}), e(F_{J_-^\circ})) = (1,1)$ and 
$(i_+, i_-) \equiv (0,0) \bmod 4$.}
By applying Proposition \ref{prop:index_prolong} (ii) 
as $F = F_{J_+}$ and $(i_+, i_-) = (0, 0)$, 
we get $\fraci_{J_+}\in \Idx(n_{J_+}, n_{J_+}; F_{J_+})$ with 
$\fraci_{J_+}(X-1) = 0$. 
Similarly, there exists $\fraci_{J_-}\in \Idx(n_{J_-}, n_{J_-}; F_{J_-})$
with $\fraci_{J_-}(X+1) = 0$. 
Then we can obtain the desired $\fraci\in \Idx(n,n;F)$ as in Lemma \ref{lem:if_J+=J-}. 

\textit{Case \textup{I-(b)}: $(e(F_{J_+^\circ}), e(F_{J_-^\circ})) = (-1,-1)$ and 
$(i_+, i_-) \equiv (2,2) \bmod 4$.}
By applying Proposition \ref{prop:index_prolong} (ii) 
as $F = F_{J_+}$ and $(i_+, i_-) = (2, 0)$, 
we get $\fraci_{J_+}\in \Idx(n_{J_+}, n_{J_+}; F_{J_+})$ with 
$\fraci_{J_+}(X-1) = 2$. 
Similarly, there exists $\fraci_{J_-}\in \Idx(n_{J_-}, n_{J_-}; F_{J_-})$
with $\fraci_{J_-}(X+1) = 2$. So we are done as in Case I-(a). 

\textit{Case \textup{I-(c)}: $(e(F_{J_+^\circ}), e(F_{J_-^\circ})) = (1,1)$ and 
$(i_+, i_-) \equiv (2,2) \bmod 4$.}
We show that this case does not occur. Note that in $\bQ^\times/\bQ^{\times2}$ we have  
\[
(-1)^{\deg(F_{J_+^\circ})/2}F_{J_+^\circ}(1)F_{J_+^\circ}(-1) = |F_{J_+^\circ}(1)|, 
\quad
(-1)^{\deg(F_{J_-^\circ})/2}F_{J_-^\circ}(1)F_{J_-^\circ}(-1) = |F_{J_-^\circ}(-1)|
\]
by \eqref{eq*:J_pm_circ} and 
\[ \delta_+(F;i_+, i_-) = |F_{J_+^\circ}(1)|, 
\quad 
\delta_-(F;i_+, i_-) = |F_{J_-^\circ}(-1)|
\]
by Lemma \ref{lem:ECweakest}.    
%
%
These equations imply that 
\begin{alignat*}{4}
& X-1 \in \overline{I(F_{J_+^\circ}; \bQ_2)}' \qquad
&&\text{ if $|F_{J_+^\circ}(1)| = -1$ in $\bQ_2^\times/\bQ_2^{\times 2}$,} \\
& X-1 \in \overline{I(F_{J_-^\circ}; \bQ_2)}' 
&&\text{ if $|F_{J_-^\circ}(-1)| = -1$ in $\bQ_2^\times/\bQ_2^{\times 2}$,} \\
& X-1 \in \overline{I(X-1; \bQ_2)}' 
&&\text{ if $|F_{J_+^\circ}(1)|\neq -1$ in $\bQ_2^\times/\bQ_2^{\times 2}$,} \\
& X-1 \in \overline{I(X+1; \bQ_2)}' 
&&\text{ if $|F_{J_-^\circ}(-1)|\neq -1$ in $\bQ_2^\times/\bQ_2^{\times 2}$}
\end{alignat*}
by Proposition \ref{prop:nonSquare} (iii) and the definition of 
the set $\overline{I(f; \bQ_2)}'$. 
Hence, if $|F_{J_+^\circ}(1)| = -1$ and $|F_{J_-^\circ}(-1)| = -1$ in 
$\bQ_2^\times/\bQ_2^{\times 2}$ then 
$2\in \Pi_{i_+, i_-}^F(F_{J_+^\circ}, F_{J_-^\circ})$, 
and we would have $J_+ = J_-$; 
if $|F_{J_+^\circ}(1)| = -1$ and $|F_{J_-^\circ}(-1)| \neq -1$ in 
$\bQ_2^\times/\bQ_2^{\times 2}$ then 
$2\in \Pi_{i_+, i_-}^F(F_{J_+^\circ}, X+1)$, 
and we would have $J_+ = J_-$; 
if $|F_{J_+^\circ}(1)| \neq -1$ and $|F_{J_-^\circ}(-1)| = -1$ in 
$\bQ_2^\times/\bQ_2^{\times 2}$ then 
$2\in \Pi_{i_+, i_-}^F(X-1, F_{J_+^\circ})$, 
and we would have $J_+ = J_-$;  
if $|F_{J_+^\circ}(1)| \neq -1$ and $|F_{J_-^\circ}(-1)| \neq -1$ in 
$\bQ_2^\times/\bQ_2^{\times 2}$ then 
$2\in \Pi_{i_+, i_-}^F(X-1, X+1)$, 
and we would have $J_+ = J_-$.  
Therefore Case I-(c) does not occur. 

\textit{Case \textup{I-(d)}: $(e(F_{J_+^\circ}), e(F_{J_-^\circ})) = (-1,-1)$ and 
$(i_+, i_-) \equiv (0,0) \bmod 4$.}
This case does not occur for similar reasons to Case I-(c). 
So we are done in Case I. 

\smallskip
\noindent 
\textit{Case \textup{II}: $e(F_{12}) = -1$.}
We have $(e(F_{J_+^\circ}), e(F_{J_-^\circ})) = (1,-1)$ or $(-1, 1)$. 
Furthermore $(i_+, i_-) \equiv (2,0)$ or $(0,2) \bmod 4$ by \eqref{eq:prolong}. 
Hence there are $4$ cases; 
(a) $(e(F_{J_+^\circ}), e(F_{J_-^\circ})) = (1,-1), (i_+, i_-) \equiv (0,2)$; 
(b) $(e(F_{J_+^\circ}), e(F_{J_-^\circ})) = (-1,1), (i_+, i_-) \equiv (2,0)$; 
(c) $(e(F_{J_+^\circ}), e(F_{J_-^\circ})) = (1,-1), (i_+, i_-) \equiv (2,0)$; and
(d) $(e(F_{J_+^\circ}), e(F_{J_-^\circ})) = (-1,1), (i_+, i_-) \equiv (0,2)$. 
As in Case I, we obtain the desired $\fraci\in \Idx(n,n;F)$ in the cases 
(a) and (b), and it can be seen that the cases (c) and (d) do not occur. 
This completes the proof. 
\end{proof}

\begin{theorem}\label{th:chpl_on_index0}
Let $F\in \bZ[X]$ be a $*$-symmetric polynomial of even degree, and 
$i_+, i_-\in \bZ$ integers with \eqref{eq:prolong}. 
Suppose that $m_+ \neq 1$ and $m_- \neq 1$, where $m_\pm$ is the multiplicity 
of $X\mp1$ in $F$. 
Then there exists $\fraci \in \Idx(n,n;F)$ with 
$\fraci(X-1) \equiv i_+$ and $\fraci(X+1) \equiv i_-$ mod $4$
such that $\Lambda_{n,n}$ admits a semisimple $(F,\fraci)$-isometry. 
\end{theorem}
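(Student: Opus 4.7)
Plan: The idea is to decompose $F$ along the equivalence classes of the relation defined by $(F; i_+, i_-)$ and to assemble the $(F, \fraci)$-isometry of $\Lambda_{n, n}$ as an orthogonal direct sum. Let $J_\pm \subset I := I(F; \bQ)$ denote the equivalence class containing $X \mp 1$ (empty if $m_\pm = 0$), and let $H$ range over the remaining classes. For each $H \neq J_\pm$, Proposition \ref{prop:EC_satisfies_Square} gives \eqref{eq:Square}, Lemma \ref{lem:ECweakest} says that the equivalence relation on $I(F_H;\bQ) = H$ is weakest, and Proposition \ref{prop:index_prolong}(ii) provides an index map, so Theorem \ref{th:weakestER} produces a semisimple $(F_H, \fraci_H)$-isometry of $\Lambda_{n_H, n_H}$. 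The task therefore reduces to building the piece corresponding to $F_{J_+} \cdot F_{J_-}$.

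When $J_+ \cup J_-$ is already a single equivalence class of $F$ (in particular whenever one of $m_\pm$ is $0$), Lemma \ref{lem:if_J+=J-} applied to $F_{J_+ \cup J_-}$ supplies the remaining piece with the correct congruences. When $J_+ \neq J_-$ are distinct non-empty classes (so $m_+, m_- \geq 2$), I split by parity. If $m_+, m_-$ are both even, I apply Lemma \ref{lem:if_J+=J-} separately to $F_{J_+}$ with input $(i_+, 0)$ and to $F_{J_-}$ with input $(0, i_-)$; the weakest-relation hypothesis on $I(F_{J_\pm}; \bQ) = J_\pm$ holds by Lemma \ref{lem:ECweakest} (and $\delta_+(F_{J_+}; i_+, \cdot)$ does not involve $i_-$), so the conditions for Lemma \ref{lem:if_J+=J-} reduce to the individual congruences $i_+ \equiv 1 - e(F_{J_+^\circ})$ and $i_- \equiv 1 - e(F_{J_-^\circ}) \bmod 4$, whose sum recovers the global condition \eqref{eq:prolong} via $e(F_{12}) = e(F_{J_+^\circ}) \, e(F_{J_-^\circ})$ (from Proposition \ref{prop:EC_satisfies_Square} and Lemma \ref{lem:properties_for_Square}(ii)). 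If $m_+, m_-$ are both odd (hence $\geq 3$), the separate application fails on parity grounds, so I factor off one copy of $(X-1)(X+1)$: write $F = (X-1)(X+1) \cdot F^\sharp$, realise a signature-$(1,1)$ order-$2$ isometry on $\Lambda_{1,1}$ of index $(\epsilon, -\epsilon)$ for a suitably chosen $\epsilon \in \{\pm 1\}$, and invoke the even-multiplicities argument on $F^\sharp$ (whose multiplicities $m_+ - 1, m_- - 1$ are both even $\geq 2$ and which satisfies \eqref{eq:Square} trivially since $F^\sharp(\pm 1) = 0$) on $\Lambda_{n-1, n-1}$ with shifted input $(i_+ - \epsilon, i_- + \epsilon)$; the choice of $\epsilon$ absorbs the residue shift, and \eqref{eq:prolong} for $F^\sharp$ follows from that for $F$ since $F^\sharp_{12} = F_{12}$.

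The main obstacle is the verification, in the even-multiplicity case, of the individual congruence $i_+ \equiv 1 - e(F_{J_+^\circ}) \bmod 4$ (and symmetrically for $i_-$). This is the generalisation of the 2-adic Hasse-Witt case analysis of Proposition \ref{prop:if_m+=m-=2}: if the congruence were to fail, the explicit form of $\delta_+(F; i_+, i_-)$ (controlled by $i_+$ and $m_+$ modulo $4$) together with Proposition \ref{prop:nonSquare} would force a common element of $\overline{I(\,\cdot\,; \bQ_2)}'$ for some $f \in J_+$ and $g \in J_-$, so that $2 \in \Pi_{i_+, i_-}^F(f, g)$ and hence $J_+ = J_-$, contradicting our standing assumption. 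This analysis is direct when $m_+ \geq 4$ or $m_- \geq 4$ because the corresponding set $\overline{I(X \mp 1; \bQ_p)}'$ is unconditionally $\{X \mp 1\}$, and the subtle residual case $m_+ = m_- = 2$ is already settled by Proposition \ref{prop:if_m+=m-=2}. Once these compatibility conditions are in place, the direct sum of the pieces constructed above yields a semisimple $(F, \fraci)$-isometry of $\Lambda_{n, n}$ with $\fraci(X \mp 1) \equiv i_\pm \bmod 4$ by construction.
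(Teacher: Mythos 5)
Your overall strategy is sound and differs from the paper's in a substantive way, so let me compare. The paper handles the odd case by observing that $m_+,m_-\geq 3$ forces $2\in\Pi^F_{i_+,i_-}(X-1,X+1)$ (since $X-1=X+1$ in $\bF_2[X]$ and both primed sets are unconditionally nonempty), hence $J_+=J_-$ and Lemma \ref{lem:if_J+=J-} applies at once; your ``both odd with $J_+\neq J_-$'' subcase is therefore vacuous, and the $(X-1)(X+1)$-splitting with an order-$2$ isometry of $\Lambda_{1,1}$ is machinery for a case that never occurs. In the even case the paper does \emph{not} attack general multiplicities directly: it writes $F=(X-1)^{m_+-2}(X+1)^{m_--2}\widetilde F$ with $\widetilde F=(X-1)^2(X+1)^2F_{12}$, puts $\id\oplus(-\id)$ on the split-off hyperbolic summands (index $0$, so the congruences are untouched), and thereby reduces everything to the single case $m_+=m_-=2$, which is Proposition \ref{prop:if_m+=m-=2}. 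Your route instead requires proving, for arbitrary even $m_\pm\geq 2$ with $J_+\neq J_-$, the individual congruences $i_\pm\equiv 1-e(F_{J_\pm^\circ})\bmod 4$; this is the crux, and your justification (``direct when $m_+\geq 4$ or $m_-\geq 4$'') is too thin as stated. In particular, when $m_+\geq 4$ and $m_-\geq 4$ one again gets $J_+=J_-$ for free, so the genuinely new cases are the mixed ones such as $(m_+,m_-)=(2,6)$, where one must combine $\delta_+=-1$ in $\bQ_2^\times/\bQ_2^{\times 2}$ (forced by $J_+\neq J_-$ and $m_+=2$) with Proposition \ref{prop:nonSquare}(iii) and Lemma \ref{lem:ECweakest} to pin down $|F_{J_+^\circ}(1)|=(-1)^{i_+/2}$ and hence $e(F_{J_+^\circ})$. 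This computation does go through, so your plan is salvageable, but as written it leaves the essential case analysis unexecuted; adopting the paper's $\id\oplus(-\id)$ reduction would let you discharge it entirely by citing Proposition \ref{prop:if_m+=m-=2}.
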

\begin{proof}
Suppose first that $m_+$ and $m_-$ are odd. Then 
$m_+, m_-\geq 3$ by the assumption $m_+, m_- \neq 1$. 
Thus $\Pi_{i_+,i_-}^F(X-1, X+1) = \Pi(X-1, X+1) \ni 2$. 
This implies that $J_+ = J_-$, and Lemma \ref{lem:if_J+=J-} leads to the assertion. 

Suppose then that $m_+$ and $m_-$ are even. 
If $m_+ = 0$ or $m_- = 0$ then $J_+ = \emptyset$ or $J_+ = \emptyset$, and  
we arrive at the assertion by Lemma \ref{lem:if_J+=J-}. 
Suppose that $m_+, m_- \geq 2$, and put $n_\pm' = (m_\pm - 2)/2$. 
Then $F$ can be written as 
\[ F(X) = (X-1)^{2n_+'}(X+1)^{2n_-'} \widetilde{F}(X),  \]
where $\widetilde{F}(X) := (X-1)^2(X+1)^2F_{12}(X)$. 
Let $\Lambda_\pm'$ and $\widetilde{\Lambda}$ denote the even unimodular lattices 
of signatures $(n_\pm', n_\pm')$ and $(\widetilde{n}, \widetilde{n})$, 
where $\widetilde{n} := n - n_+' - n_-'$.  
By Proposition \ref{prop:if_m+=m-=2}, there exists 
$\widetilde{\fraci} \in I(\widetilde{n},\widetilde{n};\widetilde{F})$ with 
$\widetilde{\fraci}(X-1) \equiv i_+$ and $\widetilde{\fraci}(X+1) \equiv i_-$ mod $4$
such that $\widetilde{\Lambda}$ admits a semisimple 
$(\widetilde{F}, \widetilde{\fraci})$-isometry $\widetilde{t}$. 
We now define an isometry $t$ of 
$\Lambda_{n,n} = \Lambda_+' \oplus \Lambda_-' \oplus \widetilde{\Lambda}$ 
by 
$t = \id_{\Lambda_+'} \oplus (-\id_{\Lambda_-'}) \oplus \widetilde{t}$. 
Then $t$ is a semisimple isometry of $\Lambda_{n,n}$ with 
characteristic polynomial $F$. Moreover, if $\fraci$ denotes the 
index of $t$ then 
$\fraci(X-1) \equiv i_+$ and $\fraci(X+1) \equiv i_-$ mod $4$
by the construction of $t$. This completes the proof. 
\end{proof}

This theorem is half of Theorem \ref{th:chpl_on_index0_intro}. 
On the question of whether the assumption $m_+, m_- \neq 1$ can be removed, 
the author has neither proof nor counterexample. 
However, we will see in the next subsection that
it is possible if the polynomial $F$ is a product of cyclotomic polynomials.

\subsection{Cyclotomic case} 
The aim of this subsection is to show the following theorem, 
which is the remaining half of Theorem \ref{th:chpl_on_index0_intro}. 

\begin{theorem}\label{th:chpl_on_index0_cyclo}
Let $F\in \bZ[X]$ be a product of cyclotomic polynomials. 
Assume that $\deg(F)$ is even, say $2n$. 
 Let $i_+, i_-\in \bZ$ be integers with \eqref{eq:prolong}. 
Then there exists $\fraci \in \Idx(n,n;F)$ with 
$\fraci(X-1) \equiv i_+$ and $\fraci(X+1) \equiv i_-$ mod $4$
such that $\Lambda_{n,n}$ admits a semisimple $(F,\fraci)$-isometry. 
\end{theorem}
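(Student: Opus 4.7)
My plan is to reduce to Theorem~\ref{th:chpl_on_index0} by splitting $F$ into smaller factors and realizing each on an appropriate unimodular lattice. If $m_+ \neq 1$ and $m_- \neq 1$, Theorem~\ref{th:chpl_on_index0} applies immediately. Otherwise, using the involution $X \mapsto -X$ (which preserves the class of products of cyclotomic polynomials and interchanges the roles of $X-1$ and $X+1$), I may assume $m_+ = 1$; since $\deg F$ is even and $\varphi(l)$ is even for every $l \geq 3$, the multiplicity $m_-$ is then odd.

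The main reduction writes $F = (X-1)(X+1)\widetilde F$, where $\widetilde F = F/((X-1)(X+1))$ is a product of cyclotomic polynomials with $m_+(\widetilde F) = 0$, $m_-(\widetilde F) = m_- - 1$ (even, hence $\neq 1$), and $\widetilde F_{12} = F_{12}$. Theorem~\ref{th:chpl_on_index0} applies to $\widetilde F$. Choosing the target $(\widetilde i_+, \widetilde i_-) = (0, j_-)$ for some even integer $j_-$ with $|j_-| \leq m_- - 1$ and $j_- \equiv i_+ + i_- \pmod 4$ (such $j_-$ exists whenever $m_- \geq 3$, and also when $m_- = 1$ provided $i_+ + i_- \equiv 0 \pmod 4$), one obtains a semisimple $(\widetilde F, \widetilde\fraci)$-isometry $\widetilde t$ of $\Lambda_{n-1,n-1}$. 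Combined with the swap $\pm s$ on $U = \Lambda_{1,1}$ (whose $(X-1)$-contribution $\pm 1$ is chosen to match $i_+$), this yields the desired $(F, \fraci)$-isometry of $\Lambda_{n,n}$, since $\fraci(X-1) \equiv i_+$ and $\fraci(X+1) = \mp 1 + j_- \equiv i_- \pmod 4$. The compatibility of the congruence $j_- \equiv i_+ + i_- \pmod 4$ with the constraint from \eqref{eq:prolong} applied to $(\widetilde F; 0, j_-)$ follows from $\widetilde F_{12} = F_{12}$ together with \eqref{eq:prolong} for $(F, i_+, i_-)$.

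The residual case is $m_+ = m_- = 1$ and $i_+ + i_- \equiv 2 \pmod 4$, equivalently $e(F_{12}) = -1$. Because $\Phi_l(\pm 1) > 0$ for $l \geq 3$, we have $e(F_{12}) = (-1)^{\deg F_{12}/2}$, so $\deg F_{12} \equiv 2 \pmod 4$; since each $\varphi(l)$ is $\equiv 0$ or $2 \pmod 4$ for $l \geq 3$, some cyclotomic factor $\Phi_{l_0}$ of $F_{12}$ with $\varphi(l_0) \equiv 2 \pmod 4$ must appear with odd multiplicity. I then split $F = ((X-1)(X+1)\Phi_{l_0}) \cdot G$ with $\deg G \equiv 0 \pmod 4$, apply Theorem~\ref{th:chpl_on_index0} to $G$ (both $m_\pm(G)$ are $0$) to obtain an isometry of $\Lambda_{n'', n''}$, and construct by hand a semisimple isometry of $\Lambda_{1+\varphi(l_0)/2, 1+\varphi(l_0)/2}$ with characteristic polynomial $(X-1)(X+1)\Phi_{l_0}$ and index $(i_+, i_-, -(i_++i_-))$. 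In the base case $\varphi(l_0) = 2$ (so $l_0 \in \{3,4,6\}$), such an isometry is produced by gluing a positive-definite rank-$2$ lattice $L_+$ carrying a reflection of index $(\pm 1, \pm 1)$ to a negative-definite rank-$2$ lattice $L_-$ admitting an isometry with characteristic polynomial $\Phi_{l_0}$; for instance, for $l_0 = 3$ take $L_+ = A_2$ with a simple reflection and $L_- = A_2(-1)$ with multiplication by a primitive cube root of unity, glued via the natural isomorphism $\bZ/3\bZ \cong \bZ/3\bZ$ of discriminant groups to produce the unique even unimodular lattice of signature $(2,2)$, namely $\Lambda_{2,2}$ (Theorem~\ref{th:EULoverZ}). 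Both components act trivially on the discriminant, so the combined isometry extends; the cases $l_0 = 4, 6$ are analogous, and for larger $l_0$ one uses $L_- = \bZ[\zeta_{l_0}]$ with a suitably scaled trace form paired with a matching $L_+$.

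The main obstacle lies in this last step: verifying the gluing construction for each $l_0$ with $\varphi(l_0) \equiv 2 \pmod 4$, together with the compatibility of the combined isometry with the discriminant-group identification. An alternative route proceeds via the local-global principle (Theorem~\ref{th:obstruction}): since $\overline{I(X \mp 1; \bQ_p)}' = \emptyset$ when $m_\pm = 1$, all three elements of $I = \{X-1, X+1, \Phi_{l_0}\}$ form singleton equivalence classes, so the reduced obstruction group has a simple description, and vanishing of $\widetilde\ob_\fraci$ can be checked by means of Theorem~\ref{th:comparison}.
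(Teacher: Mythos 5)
Your reduction has a genuine gap: both orthogonal splittings you propose produce factors that need not satisfy the condition \eqref{eq:Square}, and hence need not be realizable on \emph{any} even unimodular lattice (Proposition \ref{prop:conditionSquare}). Concretely, in your main reduction take $F=(X-1)(X+1)\Phi_3(X)\Phi_6(X)$, which satisfies \eqref{eq:Square} (since $F(\pm1)=0$) and has $e(F_{12})=+1$, $m_+=m_-=1$; your $\widetilde F=\Phi_3\Phi_6$ has $|\widetilde F(1)|=3$, not a square, so Theorem \ref{th:chpl_on_index0} (whose standing hypothesis in \S5.1 includes \eqref{eq:Square}) does not apply to $\widetilde F$, and indeed no lattice $\Lambda_{2,2}$ carries a semisimple isometry with that characteristic polynomial. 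The same failure occurs in your residual case: for $F=(X-1)(X+1)\Phi_3\Phi_5$ you would split off $G=\Phi_5$, with $|G(1)|=5$ not a square. The point is that $X-1$ and $X+1$ cannot in general be detached from the cyclotomic part as a separate hyperbolic summand: the factors lying in one equivalence class of $\sim$ must be realized jointly, and when $m_\pm=1$ the square defect of $F_{12}(\pm1)$ is absorbed precisely by keeping one copy of each of $X\mp1$ attached.

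This is why the paper's reduction goes the other way: it peels off the \emph{even} powers $(X-1)^{2n_+'}(X+1)^{2n_-'}$ (realized by $\pm\id$ on $\Lambda_{n_\pm',n_\pm'}$), leaving $\widetilde F=(X-1)(X+1)F_{12}$, which automatically satisfies \eqref{eq:Square}, and then proves the case $m_+=m_-=1$ directly (Proposition \ref{prop:if_m+=m-=1}) by a local--global induction: starting from any $\beta_\fraci\in\cB_\fraci$, one uses the observation that $\eta(\beta_\fraci)(X\mp1)=0$ and $\eta(\beta_\fraci)\cdot\bm1_I=0$, and repeatedly shrinks $\Supp(\eta(\beta_\fraci))$ by transferring Hasse--Witt invariants at $\infty$ between cyclotomic factors via the $4$-shift of indices (Lemma \ref{lem:4shift}) and via Theorem \ref{th:imageofeta}. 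Your closing ``alternative route'' for the three-element set $\{X-1,X+1,\Phi_{l_0}\}$ does capture the easy instance of this argument (with a single cyclotomic factor the obstruction vanishes automatically), but the substance of the proof is the case of several cyclotomic factors, which your proposal does not address.
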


In the following, we assume that $F$ is a product of cyclotomic polynomials
of degree $2n$. 
For a map $\fraci:I(F;\bR)\to \bZ$ and a factor $f\in I_1(F;\bQ)$ of $F$ in $\bQ[X]$, 
we write $\fraci(f)$ for the sum $\sum_{g\in I_1(f;\bR)}\fraci(g)$ 
(although it may be a slight abuse of notation).

\begin{lemma}\label{lem:4shift}
Let $r,s\in \bZ_{\geq 0}$ be non-negative integers with $r + s = 2n$. 
Suppose that $F$ satisfies the condition \eqref{eq:Sign}$_{r,s}$. 
Let $\fj \in \Idx(r,s;F)$ be an index map, and let $f\in I_1(F;\bQ)$ be a 
$+1$-symmetric irreducible factor of $F$ other than $X+1$. 
\begin{enumerate}
\item If $\fj(f) < \deg(f^{m_f})$ then $F$ satisfies the condition 
\eqref{eq:Sign}$_{r+2,s-2}$ and there exists $\fraci\in \Idx(r+2,s-2;F)$ such that 
\[ \fraci(h) = \begin{cases}
  \fj(h) + 4 & \text{if $h = f$}\\
  \fj(h) & \text{if $h \neq f$}
\end{cases}
\quad\text{for $h \in I(F;\bQ)$.}
\]
\item If $\fj(f) > - \deg(f^{m_f})$ then $F$ satisfies the condition 
\eqref{eq:Sign}$_{r-2,s+2}$ and there exists $\fraci\in \Idx(r-2,s+2;F)$ such that 
\[ \fraci(h) = \begin{cases}
  \fj(h) - 4 & \text{if $h = f$}\\
  \fj(h) & \text{if $h \neq f$}
\end{cases}
\quad\text{for $h \in I(F;\bQ)$.}
\]
\end{enumerate}
\end{lemma}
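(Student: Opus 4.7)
The two cases (i) and (ii) are symmetric (replace $\fj$ by $-\fj$, or swap $(r,s)\leftrightarrow(s,r)$), so I focus on (i). The plan is to find a single $\bR$-irreducible factor $g_{j_0}$ of $f$ for which $\fj(g_{j_0})$ can be legitimately increased by $4$, then set $\fraci$ equal to $\fj$ elsewhere.

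First I would unpack the hypothesis. Since $F$ is a product of cyclotomic polynomials, $f = \Phi_n$ for some $n \geq 3$, and over $\bR$ it splits as $f = g_1 \cdots g_k$ where each $g_j = X^2 - (\zeta_n^{j} + \zeta_n^{-j})X + 1$ has multiplicity $m_f$ in $F$, so $\deg(g_j^{m_f}) = 2m_f$ and $\deg(f^{m_f}) = 2k\,m_f$. Each $g_j$ lies in $I_1(F;\bR)$, hence by \eqref{eq:indexdata1} and \eqref{eq:indexdata2} we have $\fj(g_j) \equiv 2m_f \bmod 4$ and $|\fj(g_j)| \leq 2m_f$; in particular the only way $\fj(g_j)$ can equal its maximum is to equal $2m_f$ exactly. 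The assumption $\fj(f) = \sum_j \fj(g_j) < 2k\,m_f$ therefore forces some index $j_0$ with $\fj(g_{j_0}) < 2m_f$, and hence $\fj(g_{j_0}) \leq 2m_f - 4$.

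Next I would define $\fraci : I(F;\bR) \to \bZ$ by $\fraci(g_{j_0}) := \fj(g_{j_0}) + 4$ and $\fraci(g) := \fj(g)$ for every other $g \in I(F;\bR)$. Verifying that $\fraci \in \Idx(r+2,s-2;F)$ is then routine: condition \eqref{eq:indexdata1} at $g_{j_0}$ follows from $\fj(g_{j_0}) + 4 \leq 2m_f$ and the lower bound is trivial; condition \eqref{eq:indexdata2} is preserved because adding $4$ does not change the class mod $4$; and the total sum changes by $+4$, giving $\sum_g \fraci(g) = (r-s)+4 = (r+2)-(s-2)$. For the pointwise statement in $I(F;\bQ)$: factors $h \neq f$ in $I_1(F;\bQ)$ have $I_1(h;\bR)$ disjoint from $I_1(f;\bR)$, so $\fraci(h) = \fj(h)$; and for $h = X\pm 1$ nothing is touched.

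The one step that needs attention, and which I expect to be the main obstacle, is verifying that $F$ satisfies \eqref{eq:Sign}$_{r+2,s-2}$. The parity half is immediate since $r+2 \equiv r$ and $s-2 \equiv s \bmod 2$. The inequality $r+2 \geq m(F)$ follows from $r \geq m(F)$. For $s - 2 \geq m(F)$ I would use the decomposition appearing in the proof of Theorem~\ref{th:condition_Sign}:
\[
s \;=\; s_+ + s_- + \sum_{g \in I_1(F;\bR)} s_g + m(F),
\qquad s_g = \tfrac{\deg(g^{m_g}) - \fj(g)}{2}.
\]
Since $g_{j_0} \in I_1(F;\bR)$, condition \eqref{eq:idx2} forces $s_{g_{j_0}}$ to be a nonnegative even integer, and our choice $\fj(g_{j_0}) \leq 2m_f - 4$ gives $s_{g_{j_0}} \geq 2$. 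Therefore $s \geq s_{g_{j_0}} + m(F) \geq m(F) + 2$, as required. Case (ii) is handled identically after selecting $g_{j_0}$ with $\fj(g_{j_0}) > -2m_f$, i.e.\ $\fj(g_{j_0}) \geq -2m_f + 4$, and shifting by $-4$; the bound $r - 2 \geq m(F)$ then comes from $r_{g_{j_0}} \geq 2$ in the analogous decomposition of $r$.
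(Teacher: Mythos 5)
Your proof is correct and follows essentially the same route as the paper: pick an $\bR$-irreducible factor $g_{j_0}$ of $f$ whose index is non-maximal, use the mod-$4$ congruence from \eqref{eq:indexdata2} to see it can be shifted by $4$, and check the defining conditions of $\Idx(r+2,s-2;F)$. The only difference is that the paper obtains \eqref{eq:Sign}$_{r+2,s-2}$ for free from Corollary \ref{cor:conditionSign} once $\fraci$ is constructed, whereas you verify it directly via the decomposition of $s$; both are fine (your version also makes explicit that $s\geq 2$, so $s-2\geq 0$).
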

\begin{proof}
We remark that $f$ is a cyclotomic polynomial other than $X-1$ and $X+1$. This means 
that $f$ is of type $1$ over $\bR$. In other words, it can be written as 
$f = \prod_{g\in I_1(f;\bR)} g$. The multiplicity of each $g\in I_1(f;\bR)$ in $F$ 
is $m_f$. 
We prove the assertion (i). It is enough to show the existence of $\fraci$
by Corollary \ref{cor:conditionSign}. 
Suppose that $\fj(f) < \deg(f^{m_f})$. Then there exists 
$g_0\in I(f;\bR)$ such that $\fj(g_0) < \deg(g_0^{m_f})$. 
Noting that $\fj(g_0) \equiv \deg(g_0^{m_f}) \bmod 4$ by \eqref{eq:indexdata2}, 
we get $\fj(g_0) \geq \deg(g_0^{m_f}) - 4$. 
Thus, the map $\fraci:I(F;\bR)\to \bZ$ defined by 
\[ \fraci(g) = \begin{cases}
\fj(g) + 4 & \text{if $g = g_0$} \\
\fj(g) & \text{if $g \neq g_0$} 
\end{cases}
\quad \text{for $g\in I(F;\bR)$}
\]
satisfies the conditions \eqref{eq:indexdata1} and \eqref{eq:indexdata2}. Furthermore
\[ \sum_{g\in I(F;\bR)}\fraci(g) 
= r - s + 4
= (r+2) - (s-2).
\]
These mean that the map $\fraci$ belongs to $\Idx(r+2, s-2; F)$, and 
it is the desired index map. 
The assertion (ii) is obtained similarly. 
\end{proof}

In the situation of this lemma, we have 
\[ \eta_\infty(\fraci)(f) \neq \eta_\infty(\fj)(f) \quad \text{and} \quad
\eta_\infty(\fraci)(h) = \eta_\infty(\fj)(h) 
\text{ for all $h\in I(F;\bQ)\setminus\{f\}$}
\]
by \eqref{eq:eta_infty}. 

\begin{proposition}\label{prop:if_m+=m-=1}
Suppose that the multiplicities of $X-1$ and $X+1$ in $F$ are $1$. 
Then $\Lambda_{n,n}$ admits a semisimple isometry with characteristic polynomial $F$. 
\end{proposition}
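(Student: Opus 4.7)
Write $F(X) = (X-1)(X+1)\, F_{12}(X)$ where $F_{12}$ is a product of cyclotomic polynomials $\Phi_k$ with $k \geq 3$. Since all roots of $F$ lie on the unit circle, $m(F) = 0$ and \eqref{eq:Sign}$_{n,n}$ holds trivially; and since $F(\pm 1) = 0$, the condition \eqref{eq:Square} holds vacuously. By Theorem~\ref{th:obstruction}, it suffices to exhibit a semisimple $(F,\fraci)$-isometry on $\Lambda_{n,n}$ for some $\fraci \in \Idx(n,n;F)$.

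My plan is to split into two cases. First, if $F_{12}$ itself satisfies \eqref{eq:Square}: since $\Phi_k(\pm 1) > 0$ for every $k \geq 3$, this forces $\deg(F_{12})\equiv 0 \bmod 4$ and $e(F_{12}) = 1$, so Theorem~\ref{th:chpl_on_index0} applied to $F_{12}$ with $m_\pm = 0$ and $i_\pm = 0$ yields a semisimple $(F_{12},\fraci_{12})$-isometry $t_{12}$ on $\Lambda_{n-1,n-1}$. Taking the direct sum with the swap involution $\tau$ on the hyperbolic plane $U$---a semisimple isometry of characteristic polynomial $(X-1)(X+1)$---produces a semisimple $(F,\fraci)$-isometry on $\Lambda_{n-1,n-1}\oplus U \cong \Lambda_{n,n}$.

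The harder case is when $F_{12}$ fails \eqref{eq:Square}, so that $F_{12}$ is not realizable on any even unimodular lattice and the direct sum approach is blocked. Here I would pass to the auxiliary polynomial $\widetilde{F}(X) := (X-1)^3(X+1)^3 F_{12}(X)$, which is cyclotomic of degree $2n+4$ with $\widetilde{m}_\pm = 3 \neq 1$ and trivially satisfies \eqref{eq:Square}. Choosing integers $(i_+,i_-)$ satisfying \eqref{eq:prolong} for $\widetilde{F}$ (e.g.\ $(1,-1)$ or $(1,1)$ depending on $e(F_{12})$), Theorem~\ref{th:chpl_on_index0} furnishes a semisimple $(\widetilde{F},\widetilde{\fraci})$-isometry $\widetilde{t}$ of $\Lambda_{n+2,n+2}$ such that each generalized eigenspace $V(X\mp 1; \widetilde{t})$ has indefinite signature (either $(2,1)$ or $(1,2)$). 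I would then seek a $\widetilde{t}$-invariant primitive sublattice $N \subset \Lambda_{n+2,n+2}$ isomorphic to $U^{\oplus 2}$ (the unique even unimodular lattice of signature $(2,2)$) on which $\widetilde{t}$ acts as a pair of swap involutions, so that $\widetilde{t}|_N$ has characteristic polynomial $(X-1)^2(X+1)^2$; the orthogonal complement $N^\perp$ is then an even unimodular lattice of signature $(n,n)$, hence isomorphic to $\Lambda_{n,n}$, and $\widetilde{t}|_{N^\perp}$ is the desired semisimple $(F,\fraci)$-isometry.

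The principal obstacle is the explicit construction of $N$ in the harder case. Equivalently, one needs to find orthogonal pairs $u_+^{(1)}, u_+^{(2)} \in L_+ := V(X-1;\widetilde{t})\cap \Lambda_{n+2,n+2}$ of norm $2$ and $u_-^{(1)}, u_-^{(2)} \in L_- := V(X+1;\widetilde{t})\cap \Lambda_{n+2,n+2}$ of norm $-2$ satisfying the divisibility condition $(u_+^{(j)} \pm u_-^{(j)})/2 \in \Lambda_{n+2,n+2}$ for $j=1,2$. The discriminants of the even indefinite rank-$3$ lattices $L_\pm$ are determined, up to squares, by $|F_{12}(\pm 1)|$, and the cyclotomic-specific factorization of $F_{12}$ modulo primes (via Theorem~\ref{th:factorizationofCTPmodp} and Proposition~\ref{prop:nonSquare}) supplies the local arithmetic control needed to verify existence; the divisibility-by-$2$ condition is then settled by a global gluing argument in the spirit of the proof of Proposition~\ref{prop:if_m+=m-=2}.
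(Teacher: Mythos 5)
Your first case is correct: when $F_{12}$ satisfies \eqref{eq:Square} one does get $e(F_{12})=1$ and $n-1$ even, so Theorem \ref{th:chpl_on_index0} applies to $F_{12}$ with $m_\pm=0$, and adding the swap involution on a hyperbolic plane $U$ gives the desired isometry of $\Lambda_{n-1,n-1}\oplus U\cong\Lambda_{n,n}$. That is a legitimate shortcut, and it is not how the paper argues.

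The second case, however, has a genuine gap, and it sits exactly where the difficulty of the proposition lives. You reduce everything to finding, inside $\Lambda_{n+2,n+2}$ equipped with the $(\widetilde F,\widetilde{\fraci})$-isometry $\widetilde t$, a $\widetilde t$-invariant unimodular sublattice $N\cong U\oplus U$ on which $\widetilde t$ acts by swaps --- equivalently, orthogonal vectors $u_+^{(j)}$ of norm $2$ in the rank-$3$ lattice $L_+$ and $u_-^{(j)}$ of norm $-2$ in $L_-$ with $(u_+^{(j)}+u_-^{(j)})/2\in\Lambda_{n+2,n+2}$. You never prove these exist. This is not a routine verification: $L_\pm$ are indefinite even lattices of rank exactly $3$, which is precisely the rank at which representation of a given integer is not guaranteed by local conditions alone (spinor exceptions), and the integrality of $(u_+^{(j)}+u_-^{(j)})/2$ is a condition on the images of $u_\pm^{(j)}$ in the discriminant groups of $L_\pm$, whose glue to the rest of $\Lambda_{n+2,n+2}$ depends on $F_{12}(\pm1)$ in a way you have not controlled. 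The appeal to ``a global gluing argument in the spirit of Proposition \ref{prop:if_m+=m-=2}'' does not close this: that proposition is a statement about index maps and equivalence classes of irreducible factors, not about equivariant splittings of sublattices, and there is no gluing argument of the kind you need anywhere in the paper. So the hard half of the statement is asserted rather than proved. For contrast, the paper's proof never touches lattice vectors: it fixes an arbitrary $\fraci\in\Idx(n,n;F)$ and $\beta_\fraci\in\cB_\fraci$ and runs a support-reduction induction on $\eta(\beta_\fraci)$, shifting the index at the infinite place by $\pm4$ on suitable factors (Lemma \ref{lem:4shift}) and using the coset structure of $\eta(\cB_\fraci)$ (Theorem \ref{th:imageofeta}); the cyclotomic hypothesis enters through the fact that only $\Phi_3,\Phi_4,\Phi_6$ have degree $2$ and $\Pi(\Phi_3,\Phi_6)=\{2\}$. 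Some arithmetic input of that kind would have to resurface in your missing construction of $N$.
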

\begin{proof}
We can write
\[ F(X) = (X-1)(X+1)f_1(X)^{m_1} f_2(X)^{m_2} \cdots f_l(X)^{m_l}, \]
where $f_1, \ldots, f_l$ are distinct cyclotomic polynomials and 
$m_1, \ldots, m_l$ are positive integers. 
Let $\fraci\in \Idx(n,n; F)$ be an index map, and take 
$\beta_\fraci = \{b_\fraci\}\cup\{b_p\}_{p\in \cV\setminus\{\infty\}} \in \cB_\fraci$. 
The following claim is essential.  

\smallskip
\noindent
\textit{Claim: If $\eta(\beta_\fraci)\neq \bm{0}$ then there exists $\fj\in\Idx(n,n;F)$ and 
$\beta_\fj\in \cB_\fj$ such that $\fj(X-1) = \fraci(X-1), \fj(X+1) = \fraci(X+1)$ and 
$\Supp(\eta(\beta_\fj))\subsetneq \Supp(\eta(\beta_\fraci))$.}
Note that 
$\eta(\beta_\fraci)(X-1) = 0$ and $\eta(\beta_\fraci)(X+1) = 0$ 
since each $b_v|_{M_v^\pm}$ is one dimensional, and 
$\eta(\beta_\fraci)\cdot \bm{1}_I = 0$ by Proposition \ref{prop:even1s}. 
Suppose that $\eta(\beta_\fraci)\neq \bm{0}$. 
Then, there exist $k,k'\in\{1, \ldots, l\}$ such that 
$\eta(\beta_\fraci)(f_k) = \eta(\beta_\fraci)(f_{k'}) = 1$. 
We assume $k = 1$ and $k' = 2$ without loss of generality. 
Note that we have 
$|\fraci(f_1) + \fraci(f_2)|\leq \deg(f_1^{m_1}) + \deg(f_2^{m_2})$
and the equality holds if and only if one of the following two conditions holds:
\begin{equation}\label{eq:maximal_idx}
\fraci(f_1) = \deg(f_1^{m_1}) \quad\text{and}\quad \fraci(f_2) = \deg(f_2^{m_2}), 
\end{equation}
\begin{equation}\label{eq:minimal_idx}
\fraci(f_1) = -\deg(f_1^{m_1}) \quad\text{and}\quad \fraci(f_2) = -\deg(f_2^{m_2}).
\end{equation}

\noindent
\textit{Case\textup{\Rnum{1}:}
$|\fraci(f_1) + \fraci(f_2)|< \deg(f_1^{m_1}) + \deg(f_2^{m_2})$.} 
We may assume that $\fraci(f_1) < \deg(f_1^{m_1})$. 
If $\fraci(f_2) > -\deg(f_2^{m_2})$ then 
it can be seen that there exists $\fj\in \Idx(n,n;F)$ such that 
\[ \fj(h) = \begin{cases}
\fraci(h) + 4 & \text{if $h = f_1$}\\
\fraci(h) - 4 & \text{if $h = f_2$}\\
\fraci(h) & \text{if $h \neq f_1, f_2$}
\end{cases}
\qquad (h\in I(F;\bQ))
\]
by using Lemma \ref{lem:4shift} twice. We now define 
a family $\beta_\fj$ of inner products by  
$\beta_\fj := \{b_\fj\}\cup \{b_p\}_{p\in \cV\setminus\{\infty\}}$, where 
$b_\fj$ is an inner product on $M_\infty$ such that $\idx_\alpha^{b_\fj} = \fj$. 
Then $\beta_\fj$ belongs to $\cB_\fj$. Moreover, we have 
\[ \begin{split}
\eta(\beta_\fj)(h) - \eta(\beta_\fraci)(h)
= \eta_\infty(\fj)(h) - \eta_\infty(\fraci)(h)
= \begin{cases}
1 &\text{if $h = f_1, f_2$} \\
0 &\text{if $h \neq f_1, f_2$}
\end{cases}
\end{split}
\]
by \eqref{eq:eta_infty}. Hence 
$\Supp(\eta(\beta_\fj)) = \Supp(\eta(\beta_\fraci))\setminus\{f_1, f_2\}
\subsetneq \Supp(\eta(\beta_\fraci))$ as required. 
If $\fraci(f_2) = -\deg(f_2^{m_2})$ then $\fraci(f_1) > -\deg(f_1^{m_1})$ 
by the assumption $|\fraci(f_1) + \fraci(f_2)|< \deg(f_1^{m_1}) + \deg(f_2^{m_2})$. 
In this case, there exists $\fj\in \Idx(n,n;F)$ such that 
\[ \fj(h) = \begin{cases}
\fraci(h) - 4 & \text{if $h = f_1$}\\
\fraci(h) + 4 & \text{if $h = f_2$}\\
\fraci(h) & \text{if $h \neq f_1, f_2$}
\end{cases}
\qquad (h\in I(F;\bQ))
\]
by Lemma \ref{lem:4shift}. Hence, as above, we obtain 
$\Supp(\eta(\beta_\fj)) \subsetneq \Supp(\eta(\beta_\fraci))$ for 
$\beta_\fj := \{b_\fj\}\cup \{b_p\}_p\in \cB_\fj$. 

\smallskip
\noindent
\textit{Case\textup{\Rnum{2}:}
$|\fraci(f_1) + \fraci(f_2)| = \deg(f_1^{m_1}) + \deg(f_2^{m_2})$ 
and \eqref{eq:maximal_idx} holds.}
In this case, we remark that if 
\begin{equation}\label{eq_star:cyclo}
\text{there exists $i \geq 3$ such that 
$\fraci(f_i)\leq -2$ and $\deg(f_i^{m_i})\geq 4$}
\tag{$\star$} 
\end{equation}
then we arrive at Claim. Indeed, if \eqref{eq_star:cyclo} holds then 
it can be shown that there exists $\fj\in \Idx(n,n;F)$ such that 
\[ \fj(h) = \begin{cases}
\fraci(h) - 4 & \text{if $h = f_1, f_2$}\\
\fraci(h) + 8 & \text{if $h = f_i$}\\
\fraci(h) & \text{if $h \neq f_1, f_2$}
\end{cases}
\qquad (h\in I(F;\bQ))
\]
by using Lemma \ref{lem:4shift} repeatedly. Therefore, as in Case\Rnum{1}, 
we obtain 
$\Supp(\eta(\beta_\fj))
= \Supp(\eta(\beta_\fraci))\setminus\{f_1, f_2\}
 \subsetneq \Supp(\eta(\beta_\fraci))$ for 
$\beta_\fj := \{b_\fj\}\cup \{b_p\}_p\in \cB_\fj$. 
We also remark that 
\begin{align}\label{eq:sum_over_kgeq3}
\textstyle\sum_{i=3}^l \fraci(f_i) 
&= - (\fraci(X-1) + \fraci(X+1)) - (\fraci(f_1) + \fraci(f_2)) \notag\\
&\leq 2 - (\deg(f_1^{m_1}) + \deg(f_2^{m_2}))  
\end{align}
by the assumption \eqref{eq:maximal_idx}. 

\smallskip
\textit{Case\textup{\Rnum{2}-(a):}
$\deg(f_1^{m_1}) + \deg(f_2^{m_2}) \geq 10$.}
It follows from equation \eqref{eq:sum_over_kgeq3} that $\sum_{i=3}^l \fraci(f_i) \leq -8$. 
Because the number of cyclotomic polynomials of degree $2$ is three (that is, 
$\Phi_3, \Phi_4$ and $\Phi_6$), the inequality $\sum_{i=3}^l \fraci(f) \leq -8$
shows that \eqref{eq_star:cyclo} holds. Thus we are done. 

\smallskip
\textit{Case\textup{\Rnum{2}-(b):}
$\deg(f_1^{m_1}) + \deg(f_2^{m_2}) = 8$.}
It follows from equation \eqref{eq:sum_over_kgeq3} that $\sum_{i=3}^l \fraci(f_i) \leq -6$. 
If $\deg(f_1^{m_1}) = 2$ or $\deg(f_1^{m_2}) = 2$ then 
the number of cyclotomic polynomials of degree $2$ which are contained in 
$\{f_i \mid i = 3,\ldots, l \}$ is at most two. 
Thus, the inequality $\sum_{i=3}^l \fraci(f_i) \leq -6$ leads to 
\eqref{eq_star:cyclo} as in Case\Rnum{2}-(a). 
Suppose that $\deg(f_1^{m_1}) = \deg(f_2^{m_2}) = 4$ and 
\eqref{eq_star:cyclo} does not hold. 
In this case, we have $l=5$, $\{f_3, f_4, f_5\} = \{ \Phi_3, \Phi_4, \Phi_6 \}$, 
$m_3 = m_4 = m_5 = 1$, and $\fraci(\Phi_3) = \fraci(\Phi_4) = \fraci(\Phi_6) = -2$.  
By Lemma \ref{lem:4shift}, we can take $\fj\in \Idx(n,n;F)$ such that 
\[ \fj(h) = \begin{cases}
\fraci(h) - 4 & \text{if $h = f_1, f_2$}\\
\fraci(h) + 4 & \text{if $h = \Phi_3, \Phi_6$}\\
\fraci(h) & \text{if $h \neq f_1, f_2, \Phi_3, \Phi_6$}
\end{cases}
\qquad (h\in I(F;\bQ))
\]
We define $\beta_\fj' := \{ b_\fj \} \cup \{b_p\}_p\in\cB_\fj$.  
Because $\Pi_\fj^F(\Phi_3, \Phi_6) = \Pi(\Phi_3, \Phi_6) = \{2\} \neq \emptyset$
by Theorem \ref{th:Pi_cyclos},  
there exists $\beta_\fj\in \cB_\fj$ such that 
$\eta(\beta_\fj) = \eta(\beta_\fj') + \bm{1}_{\{\Phi_3, \Phi_6\}}$
by Theorem \ref{th:imageofeta}. 
This is the desired family because 
$\Supp(\eta(\beta_\fj)) = \Supp(\eta(\beta_\fraci))\setminus\{f_1, f_2\}$.

\smallskip
\textit{Case\textup{\Rnum{2}-(c):}
$\deg(f_1^{m_1}) + \deg(f_2^{m_2}) = 6$.}
We assume that $\deg(f_1^{m_1}) = 2$ and $\deg(f_2^{m_2}) = 4$ without loss of
generality, and suppose that \eqref{eq_star:cyclo} does not hold. 
Then $l = 4$, $m_3 = m_4 = 1$, $\deg(f_3) = \deg(f_4) = 2$, and
$\fraci(f_3) = \fraci(f_4) = -2$. 
Note that $\{f_1, f_3, f_4\} = \{ \Phi_3, \Phi_4, \Phi_6 \}$. 
If $f_1 = \Phi_4$ then $\{f_3, f_4\} = \{ \Phi_3, \Phi_6 \}$, and 
we can obtain the desired index map $\fj\in \Idx(n,n;F)$ and family 
$\beta_\fj\in\cB_\fj$ as in Case\Rnum{2}-(b). 
Suppose that $f_1 \neq \Phi_4$. We may assume that $\{f_1 , f_3\} = \{\Phi_3, \Phi_6\}$. 
Since 
$\eta(\beta_\fraci)(X-1) = \eta(\beta_\fraci)(X+1) = 0$ and 
$\eta(\beta_\fraci)(f_1) = \eta(\beta_\fraci)(f_2) = 1$, 
one of the following two cases occurs by Proposition \ref{prop:even1s}:
(i) $\eta(\beta_\fraci)(f_3) = \eta(\beta_\fraci)(f_4) = 1$; or  
(ii) $\eta(\beta_\fraci)(f_3) = \eta(\beta_\fraci)(f_4) = 0$.  
Note that $\Pi^F_\fraci(f_1, f_3) = \Pi(\Phi_3, \Phi_6) = \{2\}$. 
Then, there exists a family $\beta_\fraci' = \{b_v'\}_{v\in \cV}\in \cB_\fraci$
such that $\eta(\beta_\fraci') = \eta(\beta_\fraci) + \bm1_{\{f_1, f_3\}}$ 
by Theorem \ref{th:imageofeta}.
In the case (i), this is the desired family because 
$\Supp(\eta(\beta_\fraci')) = \{ f_2, f_4 \} \subsetneq 
\{f_1, f_2, f_3, f_4\} = \Supp(\eta(\beta_\fraci))$. 
In the case (ii), there exists $\fj\in\Idx(n,n;F)$ such that 
\[ \fj(h) = \begin{cases}
\fraci(h) - 4 & \text{if $h = f_2$}\\
\fraci(h) + 4 & \text{if $h = f_4$}\\
\fraci(h) & \text{if $h \neq f_2, f_4$}
\end{cases}
\qquad (h\in I(F;\bQ))
\]
by Lemma \ref{lem:4shift}. We define $\beta_j := \{ b_\fj \} \cup \{b_p'\}_p\in\cB_\fj$.
Then $\Supp(\eta(\beta_\fj)) = \emptyset$ since 
$\Supp(\eta(\beta_\fraci')) = \{f_2, f_4\}$. 
Hence we are done. 

\smallskip
\textit{Case\textup{\Rnum{2}-(d):}
$\deg(f_1^{m_1}) + \deg(f_2^{m_2}) = 4$.}
This case is similar to Case\Rnum{2}-(c). 
We have $m_1 = m_2 = 1$ and $\deg(f_1) = \deg(f_2) = 2$. 
Suppose that \eqref{eq_star:cyclo} does not hold. 
Then $l = 3$, $m_3 = 1$, $\deg(f_3) = 2$, and $\fraci(f_3) = -2$. 
Thus $\{f_1, f_2, f_3\} = \{ \Phi_3, \Phi_4, \Phi_6 \}$, and moreover 
we have $\eta(\beta)(f_3) = 0$ by Proposition \ref{prop:even1s}. 
If $\{ f_1, f_2 \} = \{ \Phi_3, \Phi_6 \}$ then 
a family $\beta_\fraci'\in \cB_\fraci$ with 
$\eta(\beta_\fraci') = \eta(\beta_\fraci) + \bm1_{\{f_1, f_2\}}$
is the desired one. 
Suppose that $\{ f_1, f_2 \} \neq \{ \Phi_3, \Phi_6 \}$. 
We may assume $\{ f_1, f_3 \} = \{ \Phi_3, \Phi_6 \}$. 
Let $\beta_\fraci' = \{b_v'\}_{v\in \cV}\in \cB_\fraci$ be a family with 
$\eta(\beta_\fraci') = \eta(\beta_\fraci) + \bm1_{\{f_1, f_3\}}$, and 
define $\beta_\fj := \{ b_\fj \} \cup \{b_p'\}_p\in\cB_\fj$, where 
$\fj \in \Idx(n,n;F)$ is an index map satisfying 
\[ \fj(h) = \begin{cases}
\fraci(h) - 4 & \text{if $h = f_2$}\\
\fraci(h) + 4 & \text{if $h = f_3$}\\
\fraci(h) & \text{if $h \neq f_2, f_3$}
\end{cases}
\qquad (h\in I(F;\bQ)).
\]
Then $\Supp\eta(\beta_\fj) = \emptyset$, and we are done. 

\smallskip
\noindent
\textit{Case\textup{\Rnum{3}:}
$|\fraci(f_1) + \fraci(f_2)| = \deg(f_1^{m_1}) + \deg(f_2^{m_2})$ 
and \eqref{eq:minimal_idx} holds.}
In this case, Claim can be shown as in Case\Rnum{2}. 
The proof of Claim is complete. 

\smallskip

By applying Claim repeatedly, we obtain $\fj\in\Idx(n,n;F)$ and 
$\beta_\fj\in \cB_\fj$ such that $\fj(X-1) = \fraci(X-1), \fj(X+1) = \fraci(X+1)$ and 
$\eta(\beta_\fj) = \bm{0}$.
This means that $\Lambda_{n,n}$ admits a semisimple $(F,\fj)$-isometry
by Theorem \ref{th:obstruction}. The proof is complete. 
\end{proof}

\begin{proof}[Proof of Theorem \textup{\ref{th:chpl_on_index0_cyclo}}.]
If $m_+$ and $m_-$ are even then the theorem follows from Theorem 
\ref{th:chpl_on_index0}. 
Suppose that $m_+$ and $m_-$ are odd, and put $n_\pm' = (m_\pm-1)/2$. 
Then $F$ can be written as 
$F(X) = (X-1)^{2n_+'}(X+1)^{2n_-'}\widetilde{F}(X)$, 
where $\widetilde{F}(X) = (X-1)(X+1)F_{12}(X)$. 
Let $\Lambda_\pm'$ and $\widetilde\Lambda$ denote the even unimodular lattices 
of signatures $(n_\pm', n_\pm')$ and $(\widetilde{n}, \widetilde{n})$, 
where $\widetilde{n} := n - n_+ - n_-$.  
By Proposition \ref{prop:if_m+=m-=1}, there exists a semisimple isometry 
$\widetilde{t}$ of $\widetilde\Lambda$ with characteristic polynomial $F$. 
Let $\widetilde{b}$ denote the inner product of the lattice $\widetilde\Lambda$. 
Note that $(\widetilde\Lambda, -\widetilde{b})$ is also an even unimodular lattice of 
signature $(\widetilde{n}, \widetilde{n})$, and 
$\widetilde{t}$ is a semisimple $(F, - \widetilde{\fraci})$-isometry
of $(\widetilde\Lambda, -\widetilde{b})$, where 
$\widetilde\fraci := \idx_{\widetilde{t}}^{\widetilde{b}}$. 

Suppose that $e(F_{12})=1$. Since 
$\widetilde\fraci(X-1) \equiv \widetilde\fraci(X+1) \equiv 1 \bmod 2$ 
by \eqref{eq:idx1} and 
$\widetilde\fraci(X-1) + \widetilde\fraci(X+1) \equiv 0 \bmod 4$ 
by Proposition \ref{prop:index_prolong} (i), we have 
$(\widetilde\fraci(X-1), \widetilde\fraci(X+1)) \equiv (1,-1)$ or $(-1,1) \bmod 4$. 
Similarly, we have $(i_+, i_-) \equiv (1,-1)$ or $(-1,1) \bmod 4$ 
by the assumption \eqref{eq:prolong}. 
Hence, we assume without loss of generality that 
$(\widetilde\fraci(X-1), \widetilde\fraci(X+1)) \equiv (i_+, i_-) \bmod 4$
by replacing $\widetilde{b}$ by $-\widetilde{b}$ if necessary. 
We now define an isometry $t$ of 
$\Lambda_{n,n} = \Lambda_+' \oplus \Lambda_-' \oplus \widetilde\Lambda$ by 
$t := \id_{\Lambda_+'} \oplus (-\id_{\Lambda_-'}) \oplus \widetilde{t}$. 
Then $t$ is a semisimple isometry of $\Lambda_{n,n}$ with characteristic polynomial $F$, 
and its index is the desired index map by construction. 
In the case $e(F_{12})=-1$, we can obtain the desired index map similarly.  
The proof is complete. 
\end{proof}

\begin{proof}[Proof of Theorem \textup{\ref{th:chpl_on_index0_intro}}.]
This is the union of Theorems \ref{th:chpl_on_index0} and \ref{th:chpl_on_index0_cyclo}. 
\end{proof}

\section{Dynamical degrees of K3 surface automorphisms}\label{sec:DDofK3SA}
A \textit{K3 surface} is a compact complex surface $\Sigma$ such that its canonical 
bundle is trivial and $\dim_\bC(H^{0,1}(\Sigma)) = 0$. 
For any K3 surface $\Sigma$, the second cohomology group $H^2(\Sigma,\bZ)$
is a free $\bZ$-module of rank $22$, and the intersection form makes $H^2(\Sigma,\bZ)$
an even unimodular lattice of signature $(3,19)$. 
We refer to an even unimodular lattice of signature $(3,19)$ as a \textit{K3 lattice}. 
Let $\phi$ be an automorphism of a K3 surface $\Sigma$. 
Then, the induced homomorphism $\phi^*:H^2(\Sigma, \bZ) \to H^2(\Sigma, \bZ)$
is an isometry of the lattice $H^2(\Sigma, \bZ)$. 
Let $d(\phi)$ denote the spectral radius of 
$\phi^*:H^2(\Sigma, \bC) \to H^2(\Sigma, \bC)$:
\[ d(\phi) := \max\{ |\mu| \mid \text{$\mu\in \bC$ is an eigenvalue of
$\phi^*:H^2(X,\bC) \to H^2(X,\bC)$} \}. 
\] 
We refer to this value $d(\phi)$ as the \textit{dynamical degree} of $\phi$. 
It is known that the entropy of $\phi$ is given by $\log d(\phi)$. 
It is also known that if an automorphism $\phi$ of a K3 surface $\Sigma$ has 
dynamical degree greater than $1$ then it is a \textit{Salem number}, 
that is, a real algebraic number $\lambda > 1$
such that it is conjugate to $\lambda^{-1}$ and all of its conjugates other than 
$\lambda$ and $\lambda^{-1}$ have absolute value $1$. 
More strongly, in this case, the induced homomorphism 
$\phi^*:H^2(\Sigma, \bC)\to H^2(\Sigma, \bC)$ is semisimple, and 
its characteristic polynomial is of the form $F = SC$, 
where $S$ is a \textit{Salem polynomial}, i.e., the minimal polynomial of a Salem number,  
and $C$ is a product of cyclotomic polynomials. 
We refer to \cite[\S 20]{Ba22} and references therein for these facts. 

Our concern is to know which Salem number can be realized as 
the dynamical degree of an automorphism of a K3 surface.  

\begin{definition}
We use the following terminology. 
\begin{enumerate}
\item We refer to a polynomial $F(X)\in \bZ[X]$ of degree $22$ as a 
\textit{complemented Salem polynomial} if it can be expressed as $F=SC$, 
where $S$ is a Salem polynomial and $C$ is a product of cyclotomic polynomials.
In this case, $S$ is called the \textit{Salem factor} of $F$. 
\item A Salem number $\lambda$ is 
\textit{projectively} (resp.~\textit{nonprojectively}) \textit{realizable}
if there exists an automorphism of a projective (resp. nonprojective)
K3 surface with dynamical degree $\lambda$.
\end{enumerate}
\end{definition}

Let $F = SC$ be a complemented Salem polynomial with Salem factor $S$. 
Note that $F$ satisfies the condition \eqref{eq:Sign}$_{3, 19}$. 
For a root $\delta$ of $S$ with $|\delta| = 1$, we define the index map 
$\fraci_\delta \in \Idx(3,19;F)$ by 
\[ \fraci_\delta = \begin{cases}
2  & \text{if $f(X)= X^2 - (\delta + \delta^{-1})X + 1$} \\
-\deg(f^{m_f}) & \text{if $f(X)\neq X^2 - (\delta + \delta^{-1})X + 1$}
\end{cases}
\quad\text{for $f\in I(F;\bR)$.}
\]
We remark that any Salem number has even degree, and 
any nonprojectively realizable Salem number has degree at least $4$
(this fact follows from \cite[Proposition 20.6]{Ba22}). 

\begin{proposition}\label{prop:nonproj_realizable}
Let $\lambda$ be a Salem number with $4\leq \deg\lambda \leq 22$, and 
$S$ its minimal polynomial. The following are equivalent:
\begin{enumerate}
\item $\lambda$ is nonprojectively realizable.
\item There exists a conjugate $\delta$ of $\lambda$ with $|\delta| = 1$ and 
a complemented Salem polynomial $F$ with Salem factor $S$ such that
a K3 lattice admits a semisimple $(F, \fraci_\delta)$-isometry. 
\item There exists a conjugate $\delta$ of $\lambda$ with $|\delta| = 1$ and 
a complemented Salem polynomial $F$ with Salem factor $S$ such that
it satisfies \eqref{eq:Square} and the obstruction map for $(F, \fraci_\delta)$ vanishes. 
\item For any conjugate $\delta$ of $\lambda$ with $|\delta| = 1$, there exists 
a complemented Salem polynomial $F$ with Salem factor $S$ such that
it satisfies \eqref{eq:Square} and the obstruction map for $(F, \fraci_\delta)$ vanishes. 
\item For any conjugate $\delta$ of $\lambda$ with $|\delta| = 1$, there exists 
a complemented Salem polynomial $F$ with Salem factor $S$ such that
a K3 lattice admits a semisimple $(F, \fraci_\delta)$-isometry. 
\end{enumerate}
\end{proposition}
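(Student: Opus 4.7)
The plan is to use Theorems \ref{th:obstruction} and \ref{th:comparison} of this paper, together with classical K3 surface theory (the surjectivity of the period map and the strong Torelli theorem), to set up a cyclic chain of implications. Concretely, (ii)$\Leftrightarrow$(iii) and (iv)$\Leftrightarrow$(v) will come directly from Theorem \ref{th:obstruction} applied to a K3 lattice, which is the unique even unimodular lattice of signature $(3,19)$. Condition \eqref{eq:Sign}$_{3,19}$ is already built into the assertion $\fraci_\delta\in\Idx(3,19;F)$, and \eqref{eq:Square} is either hypothesized in (iii)/(iv) or is forced by Proposition \ref{prop:conditionSquare} in the setting of (ii)/(v).

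The decisive step is the equivalence (iii)$\Leftrightarrow$(iv), which I would prove by showing that the obstruction map $\ob_{\fraci_\delta}$ does not depend on the choice of conjugate $\delta$; then a single $F$ witnessing (iii) for some $\delta$ automatically witnesses (iv) for every $\delta'$. I would apply Theorem \ref{th:comparison} with $(r,s)=(r',s')=(3,19)$, $\fraci=\fraci_\delta$, and $\fj=\fraci_{\delta'}$. The congruences $\fraci(X\mp1)\equiv\fj(X\mp1)\bmod 4$ hold trivially since $\fraci_\delta(X\mp1)=-m_\pm$ is manifestly $\delta$-independent, which also gives $\Omega_{\fraci_\delta}=\Omega_{\fraci_{\delta'}}$. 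To verify that $\eta_\infty(\fraci_\delta)=\eta_\infty(\fraci_{\delta'})$, observe from \eqref{eq:eta_infty} that $\eta_\infty(\fraci_\delta)(f)$ depends only on $\sum_{g\in I(f;\bR)}\fraci_\delta(g)\bmod 4$; changing $\delta$ merely reassigns the single value $+2$ among the $\bR$-irreducible factors $f_{\delta_i}$ of $S$ (all of which contribute to the same $\bQ$-irreducible factor $S$), leaving every such sum unchanged. Theorem \ref{th:comparison} then yields $\ob_{\fraci_\delta}=\ob_{\fraci_{\delta'}}$.

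For (i)$\Rightarrow$(ii), given an automorphism $\phi$ of a nonprojective K3 surface $\Sigma$ with dynamical degree $\lambda$, the induced $\phi^*$ on $H^2(\Sigma,\bZ)$ is a semisimple isometry of the K3 lattice with characteristic polynomial $F=SC$. Let $\delta_0$ be the eigenvalue of $\phi^*$ on $H^{2,0}(\Sigma)$; then $|\delta_0|=1$ and $\delta_0$ is a root of $S$, and since $f_{\delta_0}=X^2-(\delta_0+\delta_0^{-1})X+1$ has multiplicity one in $F$, the complexification of the real subspace $V(f_{\delta_0};\phi^*)$ coincides with $H^{2,0}\oplus H^{0,2}$. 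By the Hodge index theorem this is a positive-definite real plane, so $\idx_{\phi^*}(f_{\delta_0})=+2$. The type-$2$ pair $(X-\lambda,\,X-\lambda^{-1})\in I_2(F;\bR)$ contributes a signature-$(1,1)$ subspace inside $H^{1,1}\cap H^2(\Sigma,\bR)$, and the signature identity $(1,19)=(1,1)+(0,18)$ on $H^{1,1}\cap H^2(\Sigma,\bR)$ forces every other $f\in I(F;\bR)$ to have a negative-definite eigenspace, i.e., $\idx_{\phi^*}(f)=-\deg(f^{m_f})$. Hence $\idx_{\phi^*}=\fraci_{\delta_0}$, establishing (ii).

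The main obstacle is the converse (v)$\Rightarrow$(i), which requires upgrading lattice data to a surface automorphism. Given a semisimple $(F,\fraci_\delta)$-isometry $t$ of a K3 lattice $L$, I would use that $f_\delta$ has multiplicity one in $F$ (a simple real factor of the irreducible Salem polynomial $S$, and not a factor of $C$ since $\delta$ is not a root of unity) to select a nonzero $\delta$-eigenvector $v\in L\otimes\bC$. The condition $\fraci_\delta(f_\delta)=2$ makes $V(f_\delta;t)$ positive-definite, so declaring $H^{2,0}:=\bC v$ defines a Hodge structure of K3 type on $L$; the surjectivity of the period map yields a K3 surface $\Sigma$ realizing it. Because $t(\bC v)=\bC v$, $t$ is a Hodge isometry, and the strong Torelli theorem, together with the standard Weyl-group adjustment to land in the K\"ahler chamber, realizes $t$ (up to sign) as $\phi^*$ for an automorphism $\phi$ of $\Sigma$ with $d(\phi)=\lambda$; nonprojectivity is handled exactly as in \cite[\S 20]{Ba22}. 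Combined with the previous steps, this closes the proposition.
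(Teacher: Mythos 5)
Your proposal is correct and follows essentially the same route as the paper: (ii) $\Leftrightarrow$ (iii) and (iv) $\Leftrightarrow$ (v) come from Theorem \ref{th:obstruction}, and (iii) $\Leftrightarrow$ (iv) from Theorem \ref{th:comparison} via the observation that $\fraci_\delta(X\mp 1)$ and $\eta_\infty(\fraci_\delta)$ are independent of $\delta$ --- your verification of the latter (changing $\delta$ only moves the single value $+2$ among the real quadratic factors of $S$, all lying over the same $\bQ$-irreducible factor, so the sum over $I(S;\bR)$ entering \eqref{eq:eta_infty} is unchanged) is exactly the point the paper's proof rests on. The only divergence is at (i) $\Leftrightarrow$ (ii), which the paper dispatches by citing \cite[Proposition 7.4]{Ta23}: you instead sketch the geometric argument, and your (i) $\Rightarrow$ (ii) direction (multiplicity one of $f_{\delta_0}$ plus the Hodge index theorem forcing $\idx_{\phi^*}=\fraci_{\delta_0}$) is sound, while your (v) $\Rightarrow$ (i) direction defers the genuinely delicate points (the K\"ahler-chamber/Weyl-group adjustment and nonprojectivity) to \cite{Ba22}, which is acceptable but is precisely where the substance of the cited proposition lies.
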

\begin{proof}
See \cite[Proposition 7.4]{Ta23} for the equivalence (i) $\Leftrightarrow$ (ii).  
The implications (ii) $\Rightarrow$ (iii) and (iv) $\Rightarrow$ (v) 
follow from Theorem \ref{th:obstruction}. 
Furthermore (v) $\Rightarrow$ (ii) is obvious. So it remains to prove 
(iii) $\Rightarrow$ (iv). Let $\delta'$ be a conjugate of $\lambda$ with $|\delta'| = 1$, 
and suppose that there exists a complemented Salem polynomial $F$ with Salem factor $S$ 
such that it satisfies \eqref{eq:Square} and the obstruction map for 
$(F, \fraci_{\delta'})$ vanishes. Let $\delta$ be another conjugate of $\lambda$ 
with $|\delta| = 1$. Then, Theorem \ref{th:comparison} shows that 
the obstruction map for $(F, \fraci_\delta)$ is also zero 
because $\eta_\infty(\fraci_\delta) = \eta_\infty(\fraci_{\delta'})$.  
This completes the proof. 
\end{proof}

In the following, $\lambda$ is a Salem number of degree $d$ with $4\leq d \leq 22$, and 
$S$ is its minimal polynomial. Furthermore, we fix a conjugate $\delta$ of $\lambda$
with $|\delta| = 1$. 

\begin{theorem}\label{th:dleq18_notSq}
Suppose that $d\leq 18$. 
If $S$ does not satisfy the condition \eqref{eq:Square} then 
$\lambda$ is nonprojectively realizable. 
\end{theorem}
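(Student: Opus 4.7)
My plan is to apply Proposition \ref{prop:nonproj_realizable}: I will construct a complemented Salem polynomial $F = SC$ of degree $22$ with Salem factor $S$ such that $F$ satisfies \eqref{eq:Square} and the obstruction map for $(F, \fraci_\delta)$ vanishes. Concretely, take $C = (X-1)^a (X+1)^b$ with $a + b = 22 - d \geq 4$ and $a, b \geq 1$; the natural choice is $a = b = (22 - d)/2$, which gives $a = b \geq 3$ for $d \leq 16$ and $a = b = 2$ for $d = 18$. Since $a, b \geq 1$, one has $F(1) = F(-1) = 0$, so \eqref{eq:Square} is trivial, and direct checks confirm \eqref{eq:Sign}${}_{3, 19}$ (from $m(F) = 1$) and $\fraci_\delta \in \Idx(3, 19; F)$.

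The crux is to prove that the equivalence relation $\sim$ on $I(F; \bQ) = \{S, X-1, X+1\}$ defined by $(F, \fraci_\delta)$ is weakest, since Theorem \ref{th:weakestER} will then yield a K3 lattice carrying a semisimple $(F, \fraci_\delta)$-isometry, whence nonprojective realizability via Proposition \ref{prop:nonproj_realizable}. First I would verify $X - 1 \sim X + 1$ at the prime $p = 2$. When $d \leq 16$ this is immediate from $m_\pm \geq 3$ and the definition of $\overline{I(X \mp 1; \bQ_2)}'$. When $d = 18$ we have $m_\pm = 2$ and $\fraci_\delta(X \mp 1) = -2$, so $\delta_\pm = (-1)^{(2-(-2))/2}|S(\pm 1)| = |S(\pm 1)|$, a positive integer; since the class of $-1$ in $\bQ_2^\times/\bQ_2^{\times 2}$ contains only negative elements, $|S(\pm 1)| \neq -1$ there, so $\overline{I(X \mp 1; \bQ_2)}' = \{X \mp 1\}$ and $2 \in \Pi_{\fraci_\delta}^F(X-1, X+1)$.

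Next I would show $S \sim X - 1$ or $S \sim X + 1$ by exploiting the failure of \eqref{eq:Square} for $S$. If $|S(1)|$ is not a square, pick a prime $p$ with $v_p(S(1))$ odd. Proposition \ref{prop:nonSquare}(i) gives $X - 1 \in \overline{I(S; \bQ_p)}$, and since $v_p(|S(1)|)$ is odd while $v_p(-1) = 0$, the classes differ, so $\delta_+ = |S(1)| \neq -1$ in $\bQ_p^\times/\bQ_p^{\times 2}$; this (or $m_+ \geq 3$ when $d \leq 16$) forces $\overline{I(X-1; \bQ_p)}' = \{X-1\}$, so $p \in \Pi_{\fraci_\delta}^F(S, X-1)$. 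The case $|S(-1)|$ not a square is symmetric. The only remaining possibility, which forces $d \equiv 0 \pmod 4$ and hence $d \leq 16$, is that $|S(1)|$ and $|S(-1)|$ are both squares while $(-1)^{d/2}S(1)S(-1) = -|S(1)||S(-1)|$ is not; then this expression equals $-1$ in $\bQ_2^\times/\bQ_2^{\times 2}$, which is neither $1$ nor $-3$, so Proposition \ref{prop:nonSquare}(iii) supplies $X - 1 \in \overline{I(S; \bQ_2)}$, and $m_+ \geq 3$ finishes $S \sim X - 1$.

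The main technical obstacle, and the reason $d = 18$ requires separate attention from $d \leq 16$, is the $m_\pm = 2$ analysis: here $\overline{I(X \mp 1; \bQ_p)}'$ is not automatically $\{X \mp 1\}$, and one must verify that $\delta_\pm = |S(\pm 1)|$ lies outside the class of $-1$ in each relevant $\bQ_p^\times/\bQ_p^{\times 2}$. This reduces cleanly to the observations that no positive integer is $-1$ in $\bQ_2^\times/\bQ_2^{\times 2}$, and that at any prime $p$ where $|S(\pm 1)|$ has odd $p$-adic valuation the class of $|S(\pm 1)|$ differs from that of the unit $-1$; with these in hand, the argument goes through uniformly for all $d \leq 18$.
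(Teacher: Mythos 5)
Your construction works for $d\le 16$, where $m_\pm=(22-d)/2\ge 3$ makes every set $\overline{I(X\mp1;\bQ_p)}'$ equal to $\{X\mp1\}$ and the reduction $X+1\equiv X-1$ in $\bF_2[X]$ gives $X-1\sim X+1$; combined with your (correct) use of Proposition \ref{prop:nonSquare}, the equivalence relation is weakest and Theorem \ref{th:weakestER} applies. But the case $d=18$ — the only genuinely delicate one — contains a false step. You assert that the class of $-1$ in $\bQ_2^\times/\bQ_2^{\times2}$ ``contains only negative elements,'' hence $\delta_\pm=|S(\pm1)|\neq -1$ there. This is wrong: a positive rational $x$ represents $-1$ in $\bQ_2^\times/\bQ_2^{\times2}$ whenever $-x$ is a $2$-adic square, e.g.\ $7\equiv -1$ since $-7\equiv 1\bmod 8$. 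So if, say, $|S(1)|\equiv 7 \bmod 8$ (with $|S(-1)|$ a perfect square), then $\delta_+\equiv-1$ in $\bQ_2^\times/\bQ_2^{\times2}$, $\overline{I(X-1;\bQ_2)}'=\emptyset$, the relation $X-1\sim X+1$ fails at $p=2$ (and can hold at no other prime, since $X-1$ and $X+1$ have distinct reductions mod $p>2$), and there is no guarantee that $X+1$ is linked to $S$ either. Then $\bm1_{\{X+1\}}$ lies in the obstruction group with $M_v^-$ two-dimensional, and you have no argument that $\ob_{\fraci_\delta}(\bm1_{\{X+1\}})=0$; the proof does not close.

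The paper sidesteps exactly this by never taking both multiplicities equal to $2$: it uses $F=(X-1)^{21-d}(X+1)S$ when $|S(1)|$ is not a square (and the mirror image for $|S(-1)|$), so that $m_+\ge 3$ handles the $X-1$ side unconditionally, while the isolated factor $X+1$ has multiplicity $1$, making each $b_v|_{M_v^-}$ one-dimensional so that $\sum_v\hw_v(b_v|_{M_v^-})=0$ automatically; the remaining generator $\bm1_{\{S,X-1\}}$ of the obstruction group is killed by Proposition \ref{prop:even1s}. In the third case it drops $X+1$ altogether and takes $F=(X-1)^{22-d}S$. To repair your argument for $d=18$ you would need either to switch to such an asymmetric choice of $C$, or to add a separate analysis of the subcase where exactly one of $\delta_\pm$ represents $-1$ in $\bQ_2^\times/\bQ_2^{\times2}$.
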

\begin{proof}
Suppose first that $|S(1)|$ is not a square. 
Put $F(X) = (X-1)^{21-d}(X+1)S(X)$. Then $F$ is a complemented Salem polynomial 
with the condition \eqref{eq:Square}. Since the multiplicity of $X-1$ in $F$, $21-d$, 
is greater than $3$, Proposition \ref{prop:nonSquare} (i) implies that 
$\Pi_{\fraci_\delta}^F(S, X-1)$ is not empty.  
Thus, the obstruction group for $(F,\fraci_\delta)$ is generated by 
$\bm1_{\{S, X-1\}}$ and $\bm1_{\{X+1\}}$. 
Now, let $\{b_v\}_{v\in\cV}$ be a family in $\cB_{\fraci_\delta}$, 
where $\cB_{\fraci_\delta}$ is defined in Notation \ref{nt:P1P2P3}. 
Since the multiplicity of $X+1$ in $F$ is $1$, we have 
\[ \eta(\{b_v\}_{v})\cdot \bm1_{\{X+1\}} = \eta(\{b_v\}_{v})(X+1) 
= \sum_{v\in \cV} \hw_v(b_v|_{M_v^-}) = 0
\]
where we use notation in \S\ref{sec:LGPO}. Moreover
\[ \eta(\{b_v\}_{v})\cdot \bm1_{\{S, X-1\}}  
= \eta(\{b_v\}_{v})\cdot \bm1_{\{S, X-1\}} + \eta(\{b_v\}_{v})\cdot \bm1_{\{X+1\}}
= \eta(\{b_v\}_{v})\cdot \bm1_{\{S, X-1, X+1\}}
= 0, 
\]
where the last equation is by Proposition \ref{prop:even1s}. 
These mean that the obstruction map for $(F, \fraci_\delta)$ vanishes. 
Hence $\lambda$ is nonprojectively realizable by Proposition 
\ref{prop:nonproj_realizable}. 
Similarly, if $|S(-1)|$ is not a square then it can be checked that
$\lambda$ is nonprojectively realizable by putting $F(X) = (X-1)(X+1)^{21-d}S(X)$. 

Suppose then that $|S(1)|$ and $|S(-1)|$ are squares but 
$(-1)^{d/2}S(1)S(-1)$ is not a square. In this case, we put 
$F(X) = (X-1)^{22-d}S(X)$. This is a complemented Salem polynomial 
with the condition \eqref{eq:Square}. 
Since $(-1)^{d/2}S(1)S(-1) = -1$ in $\bQ^\times/\bQ^{\times2}$ and 
$22-d\geq 3$, Proposition \ref{prop:nonSquare} (iii) implies that 
$\Pi_{\fraci_\delta}^F(S, X-1)$ contains $2$ and is not empty. This means that 
the equivalence relation on $I(F;\bQ) = \{X-1, S\}$ defined by 
$(F, \fraci_\delta)$ is weakest. So $\lambda$ is nonprojectively realizable by 
Theorem \ref{th:weakestER} and Proposition \ref{prop:nonproj_realizable}. 
Therefore, if $S$ does not satisfy the condition \eqref{eq:Square} then 
$\lambda$ is nonprojectively realizable. 
\end{proof}

\begin{remark}
\begin{enumerate}
\item In the proof of Theorem \ref{th:dleq18_notSq}, the complemented Salem polynomial $F$
can be chosen in a different way. For example, put 
$F(X) = (X-1)^{(22-d)/2}(X-1)^{(22-d)/2}S(X)$. 
If $d \leq 16$ then one can show that the equivalence relation on 
$I(F;\bQ) = \{X-1, X+1, S\}$ defined by $(F, \fraci_\delta)$ is weakest, 
and conclude that $\lambda$ is nonprojectively realizable. 
\item One can show that any Salem polynomial whose degree is a multiple of $4$ 
does not satisfy \eqref{eq:Square}. So, it follows from Theorem \ref{th:dleq18_notSq}
that if $d = 4,8,12$, or $16$ then $\lambda$ is nonprojectively realizable. 
\end{enumerate}
\end{remark}

For $d = 10$ or $18$, let $\widetilde\cC_d$ denote the set of integers $l\geq 1$ such 
that $\varphi(l)<22-d$, or $\varphi(l)=22-d$ and $\Phi_{l}$ satisfies \eqref{eq:Square}. 
Integers $l$ with $\varphi(l) \leq 12$ are listed in Table \ref{table:fibers_of_varphi}. 

\begin{table}
\caption{Integers $l$ with $\varphi(l) \leq 12$}
\label{table:fibers_of_varphi}
\centering
\begin{tabular}{ll}\hline
$\varphi(l)$ & $l$ \\\hline\hline
$1$ & $1, 2$ \\
$2$ & $3,4,6$ \\
$4$ & $5,8,10,12$ \\
$6$ & $7,9,14,18$ \\
$8$ & $15,16,20,24,30$ \\
$10$ & $11,22$ \\
$12$ & $21,28,36,42$ \\\hline
\end{tabular}
\end{table}

\begin{lemma}\label{lem:cC_d}
Suppose that $d = 10$ or $18$, and let $F=SC$ be a complemented Salem polynomial
with Salem factor $S$.  
Suppose that $F$ and $S$ satisfies the condition \eqref{eq:Square}. Then 
$C$ is also satisfies \eqref{eq:Square}, and any irreducible factor of $C$ 
can be written as $\Phi_l$ for some $l\in \widetilde\cC_d$.
\end{lemma}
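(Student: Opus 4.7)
The plan is to split the lemma into its two assertions and handle each in turn, using the multiplicativity of the condition \eqref{eq:Square} and an elementary degree count.

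First I would verify that $S(1)S(-1) \neq 0$. Since $S$ is the minimal polynomial of a Salem number $\lambda$ of degree $d \in \{10,18\} \geq 4$, the polynomial $S$ is irreducible of degree at least $4$, so neither $X-1$ nor $X+1$ can divide it. Hence $S(1)S(-1) \neq 0$, which is exactly the nonvanishing hypothesis needed to run the second half of Lemma \ref{lem:properties_for_Square}(i): from the assumption that both $S$ and $F = SC$ satisfy \eqref{eq:Square}, that lemma then yields that $C$ satisfies \eqref{eq:Square}. This gives the first assertion.

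For the second assertion, let $\Phi_l$ be any irreducible factor of $C$, and let $m_l \geq 1$ denote its multiplicity in $C$. Since $C$ has degree $22 - d$, we have $m_l\,\varphi(l) \leq 22 - d$, so in particular $\varphi(l) \leq 22 - d$. If $\varphi(l) < 22 - d$, then $l \in \widetilde{\cC}_d$ by definition. Otherwise $\varphi(l) = 22 - d$; then $m_l\,\varphi(l) \leq 22 - d$ forces $m_l = 1$, and since $\varphi(l) = \deg C$ this factor exhausts $C$, i.e.\ $C = \Phi_l$. In this case $\Phi_l = C$ inherits the condition \eqref{eq:Square} from the first part of the proof, so again $l \in \widetilde{\cC}_d$.

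The argument is essentially bookkeeping, and no part is a real obstacle: the only non-trivial input is Lemma \ref{lem:properties_for_Square}(i), whose applicability hinges on the observation that Salem polynomials do not vanish at $\pm 1$. The dichotomy on $\varphi(l)$ is the natural reason why the defining set $\widetilde{\cC}_d$ is stated with the strict/equality split.
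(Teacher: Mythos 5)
Your proof is correct and follows essentially the same route as the paper: the first assertion via Lemma \ref{lem:properties_for_Square}(i) (with your explicit check that $S(\pm1)\neq 0$, which the paper leaves implicit), and the second via the degree count $\varphi(l)\leq \deg C = 22-d$ together with the observation that equality forces $C=\Phi_l$, which then inherits \eqref{eq:Square}.
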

\begin{proof}
It follows from Lemma \ref{lem:properties_for_Square} (i) that  
$C$ is satisfies \eqref{eq:Square}. 
Let $\Phi_l$ be an irreducible factor of $C$, where $l\in \bZ_{>0}$. Then
$ \varphi(l) = \deg(\Phi_l) \leq \deg(C) = 22 - d. $ 
Moreover, if $\varphi(l) = 22-d$ then $\Phi_l= C$, and it satisfies 
\eqref{eq:Square}. This completes the proof. 
\end{proof}

Put $\cC_{10} = \widetilde\cC_{10}\setminus\{20\}$ and $\cC_{18}= \widetilde\cC_{18}$.  
These sets can be expressed explicitly as 
\[ \begin{split}
\cC_{10} &= \{1,2,3,4,5,6,7,8,9,10,11,12,14,15,16,18,21,22,24,28,30,36,42\}, \\
\cC_{18} &= \{1,2,3,4,6,12\},  
\end{split}
\]
as defined in Theorem \ref{th:NPR}. 

\begin{proposition}\label{prop:d=10or18_ness}
Suppose that $d = 10$ or $18$. If $\lambda$ is nonprojectively realizable
then there exists $l\in \cC_d$ such that $\Pi(S, \Phi_l)\neq \emptyset$. 
\end{proposition}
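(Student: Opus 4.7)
The plan is to argue by contradiction. Assume $\Pi(S,\Phi_l)=\emptyset$ for every $l\in\cC_d$; I will exhibit an element $c\in\Omega_{\fraci_\delta}$ on which the obstruction map does not vanish, contradicting Proposition \ref{prop:nonproj_realizable}. By that proposition, fix a complemented Salem polynomial $F=SC$ satisfying \eqref{eq:Square} for which $\ob_{\fraci_\delta}\equiv 0$; by Lemma \ref{lem:cC_d}, each irreducible factor of $C$ has the form $\Phi_l$ with $l\in\widetilde\cC_d$. Replacing $F$ if necessary (for instance taking $C$ to be a product of cyclotomic polynomials of index $\geq 3$), I arrange $m_+=m_-=0$ so that Theorem \ref{th:chpl_on_index0_intro}(a) applies. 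Since $1,2\in\cC_d$, Proposition \ref{prop:nonSquare}(i)--(iii) applied to $\Pi(S,\Phi_1)=\Pi(S,\Phi_2)=\emptyset$ forces $|S(\pm 1)|$ and $(-1)^{d/2}S(1)S(-1)$ to all be squares, so $S$ itself satisfies \eqref{eq:Square}.

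Let $J\subset I(F;\bQ)$ denote the $\sim$-equivalence class of $S$ under $(F,\fraci_\delta)$. A direct link from $S$ must go to some $\Phi_l$ with $l\in\widetilde\cC_d\setminus\cC_d$, which is empty for $d=18$ and $\{20\}$ for $d=10$. For $d=10$, Theorem \ref{th:Pi_cyclos} further allows $\Phi_{20}$ to link transitively only to $\Phi_5$ and $\Phi_{10}$. I claim neither of the latter can lie in $J$: if $\Phi_5\in J$, then because $m_+=0$ excludes $X-1$ from $I(F;\bQ)$, the polynomial $F_J$ would satisfy $|F_J(1)|=5|S(1)|\cdot(\text{square})$, not a square, contradicting Proposition \ref{prop:EC_satisfies_Square} in view of Lemma \ref{lem:FJ_of_pm1_is_zero}; an analogous argument at $-1$ excludes $\Phi_{10}$. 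Hence $J\subset\{S,\Phi_{20}\}$ for $d=10$, and $J=\{S\}$ for $d=18$.

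By Theorem \ref{th:chpl_on_index0_intro}(a) take $\fj\in\Idx(11,11;F)$ with $\fj(X\mp 1)\equiv\fraci_\delta(X\mp 1)\pmod 4$ and $\ob_\fj=0$; since the equivalence on $F_J$ is weakest by Lemma \ref{lem:ECweakest}, Theorem \ref{th:weakestER} permits any choice $\fj|_J\in\Idx(n_J,n_J;F_J)$. Theorem \ref{th:comparison} applied to $\bm1_J\in\Omega_{\fraci_\delta}$ then yields
\[
0=\ob_{\fraci_\delta}(\bm1_J)=\sum_{f\in J}\bigl(\eta_\infty(\fraci_\delta)-\eta_\infty(\fj)\bigr)(f),
\]
and a direct computation via \eqref{eq:eta_infty} will show the right-hand side equals $1$, yielding the desired contradiction. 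Indeed, from $\fraci_\delta(S)=2+(-2)\bigl((d-2)/2-1\bigr)=6-d$ one has $(d-\fraci_\delta(S))/2=d-3\equiv 3\pmod 4$ for $d\in\{10,18\}$, so $\eta_\infty(\fraci_\delta)(S)=1$; likewise $\eta_\infty(\fraci_\delta)(\Phi_{20})=0$. For $J=\{S\}$, $\fj(S)=0$ is forced by $\Idx(d/2,d/2;S)$, whence $\eta_\infty(\fj)(S)=0$ (as $d/2\equiv 1\pmod 4$) and the sum equals $1$. For $J=\{S,\Phi_{20}\}$, a finite case check over the five admissible pairs $(\fj(S),\fj(\Phi_{20}))\in\{\pm 8,\pm 4,0\}^2$ with $\fj(S)+\fj(\Phi_{20})=0$ confirms the sum is uniformly $1$.

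The main obstacle is the structural analysis of the second paragraph, namely the exclusion of the secondary cyclotomic factors $\Phi_5,\Phi_{10}$ from $J$ when $d=10$; this rests on Proposition \ref{prop:EC_satisfies_Square} together with the squareness of $|S(\pm 1)|$, and the analysis is cleanest under the arrangement $m_+=m_-=0$. A subsidiary but nontrivial step is verifying that this arrangement is compatible with nonprojective realizability, which uses the flexibility afforded by Theorem \ref{th:chpl_on_index0_intro}.
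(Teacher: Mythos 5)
Your overall skeleton — compare $\ob_{\fraci_\delta}$ with an $\ob_\fj$ known to vanish via Theorem \ref{th:comparison}, identify the equivalence class $J$ of $S$ under the contradiction hypothesis, and compute $(\eta_\infty(\fraci_\delta)-\eta_\infty(\fj))\cdot\bm1_J=1$ — is exactly the paper's strategy, and your $\eta_\infty$ computations (including $\fraci_\delta(S)=6-d$ and the finite check for $J=\{S,\Phi_{20}\}$) are correct. But there is a genuine gap at the step ``Replacing $F$ if necessary \dots I arrange $m_+=m_-=0$.'' The polynomial $F=SC$ is \emph{handed to you} by Proposition \ref{prop:nonproj_realizable}: nonprojective realizability only guarantees that \emph{some} complemented Salem polynomial with Salem factor $S$ has vanishing obstruction, and you must derive a contradiction from whichever one exists. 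You have no right to replace it by one with $m_+=m_-=0$, because nothing shows that the vanishing of the obstruction survives such a replacement. This matters downstream: your construction of the comparison map $\fj$ goes through Theorem \ref{th:chpl_on_index0_intro}(a), which requires $m_+\neq 1$ and $m_-\neq 1$, and part (b) does not apply to $F$ itself since $S$ is not cyclotomic. So if the given $F$ has $m_+=1$ or $m_-=1$ (e.g.\ $C=(X-1)(X+1)\Phi_3$ for $d=18$), your argument produces no $\fj$ at all.

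The paper closes exactly this hole by building the comparison isometry as a direct sum rather than invoking a global existence theorem for $F$: take $t_S$ on $\Lambda_{d/2,d/2}$ with characteristic polynomial $S$ (Theorem \ref{th:weakestER}, $S$ irreducible) and $t_C$ on $\Lambda_{(22-d)/2,(22-d)/2}$ with characteristic polynomial $C$ and $\fj_C(X\mp1)\equiv\fraci_\delta(X\mp1)\bmod 4$ (Theorem \ref{th:chpl_on_index0_cyclo}, which covers $m_\pm=1$ precisely because $C$ is a product of cyclotomic polynomials); then $\fj=\fj_S\oplus\fj_C$ has $\ob_\fj=0$ for the \emph{given} $F$, and the mod-$4$ congruence keeps the obstruction group unchanged. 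If you patch your proof this way, the rest goes through: your identification of $J$ (in particular the exclusion of $\Phi_5$ and $\Phi_{10}$, which in the paper is packaged as ``every factor of $C'$ is $\Phi_l$ with $l\in\cC_{18}$, and $\Pi(\Phi_{20},\Phi_l)=\emptyset$ for those $l$'') does not actually need $m_+=m_-=0$, since $\Pi(S,\Phi_1)=\Pi(S,\Phi_2)=\emptyset$ and Theorem \ref{th:Pi_cyclos} already keep $X\mp1$ out of $J$, and the squareness of $|C(\pm1)|$ follows from Lemma \ref{lem:cC_d} rather than from Proposition \ref{prop:EC_satisfies_Square}.
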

\begin{proof}
If $S$ does not satisfy \eqref{eq:Square}, then $\Pi(S, \Phi_1)\neq \emptyset$ or 
$\Pi(S, \Phi_2)\neq \emptyset$ by Proposition \ref{prop:nonSquare}, and we are done. 
So we assume that $S$ satisfies \eqref{eq:Square}. 
Suppose that $\lambda$ is nonprojectively realizable. Then, by Proposition 
\ref{prop:nonproj_realizable}, there exists a complemented Salem polynomial $F=SC$
with the condition \eqref{eq:Square} 
such that the obstruction map $\ob_{\fraci_\delta}$ for $(F, \fraci_\delta)$ is zero. 
We remark that $C$ satisfies \eqref{eq:Square}, and 
any irreducible factor of $C$ can be written as $\Phi_l$ for some 
$l\in \widetilde\cC_d$ by Lemma \ref{lem:cC_d}. 

Let $\Lambda_S$ and $\Lambda_C$ be even unimodular lattices of signature 
$(d/2, d/2)$  and $((22-d)/2, (22-d)/2)$ respectively. Let 
$\fj_S\in \Idx(d/2, d/2; S)$ be an index map. Then $\Lambda_S$ admits a semisimple
$(S, \fj_S)$-isometry $t_S$ by Theorem \ref{th:weakestER} since $S$ is irreducible. 
On the other hand, Theorem \ref{th:chpl_on_index0_cyclo} shows that 
there exists $\fj_C\in \Idx((22-d)/2, (22-d)/2; F)$ with 
\begin{equation}\label{eq*:congruence}
\text{$\fj_C(X-1)\equiv \fraci_\delta(X-1)$ 
and $\fj_C(X+1)\equiv \fraci_\delta(X+1)$ mod $4$} \tag{$*$}
\end{equation}
such that $\Lambda_C$ admits a semisimple $(C, \fj_C)$-isometry $t_C$. 
Then $t := t_S \oplus t_C$ is a semisimple $(F,\fj)$-isometry on the even 
unimodular lattice $\Lambda_S\oplus \Lambda_C$, where 
$\fj := \fj_S \oplus \fj_C \in \Idx(11,11;F)$. 
In particular, the obstruction map $\ob_\fj$ for $(F, \fj)$ vanishes. 
Note that the equivalence relation on $I(F;\bQ)$ defined by $(F, \fj)$ is the 
same as the one defined by $(F, \fraci_\delta)$ because of the relation 
\eqref{eq*:congruence}. Let $\sim$ denote the equivalence relation on $I(F;\bQ)$, 
and $\Omega$ the obstruction group. 

Now, suppose that $\Pi(S, \Phi_l)$ were empty for all $l \in \widetilde\cC_d$. 
Then $\{S\} \subset I(F;\bQ)$ forms an equivalence class with respect to $\sim$, 
since any irreducible factor of $C$ can be written as $\Phi_l$ for some 
$l\in \widetilde\cC_d$. 
This would mean that $\bm1_{\{S\}}$ belongs to $\Omega$. On the other hand, 
a calculation yields 
\[ (\eta_\infty(\fraci_\delta) - \eta_\infty(\fj)) \cdot \bm1_{\{S\}}
= 1-0 = 1 \quad \text{(in $\bZ/2\bZ$)}.  
\] 
This contradicts Theorem \ref{th:comparison} since $\ob_{\fraci_\delta}$ and 
$\ob_\fj$ are zero. Therefore,  
there exists $l \in \widetilde\cC_d$ such that $\Pi(S, \Phi_l)\neq \emptyset$. 
This complete the proof of the case $d = 18$ since $\cC_{18} = \widetilde\cC_{18}$. 

Suppose that $d = 10$. It remains to prove that the following case does not occur: 
$20$ is the only integer in $\widetilde\cC_{10}$ such that 
$\Pi(S, \Phi_{20})\neq \emptyset$. 
Suppose that this case occurred. Then $C$ must be divisible by $\Phi_{20}$
by the same reason as above, and $F$ can be expressed as 
$F = S\Phi_{20}C'$, where $C'$ is the remaining factor of $C$.  
Note that $\Phi_{20}(X) = X^8 - X^6 + X^4 - X^2 + 1$ satisfies \eqref{eq:Square}, 
and we may assume that $\fj(\Phi_{20}) = 0$. 
Because $S\Phi_{20}$ has degree $18$ and satisfies \eqref{eq:Square}, 
any factor of $C'$ can be written as $\Phi_l$ for some $l \in \cC_{18}$ by the 
same reason as Lemma \ref{lem:cC_d}. 
By using Theorem \ref{th:Pi_cyclos}, it can be checked that 
$\Pi(\Phi_{20}, \Phi_l) = \emptyset$ for all $l \in \cC_{18}$. This would mean that 
$\{S, \Phi_{20}\} \subset I(F;\bQ)$ forms an equivalence class with respect to $\sim$, 
and $\bm1_{\{S, \Phi_{20}\}} \in \Omega$. 
On the other hand, we have 
\[ \begin{split}
(\eta_\infty(\fraci_\delta) - \eta_\infty(\fj)) \cdot \bm1_{\{S, \Phi_{20}\}} 
&= \eta_\infty(\fraci_\delta)(S) - \eta_\infty(\fj)(S)
 + \eta_\infty(\fraci_\delta)(\Phi_{20}) - \eta_\infty(\fj)(\Phi_{20}) \\
&= 1 - 0 + 0 - 0 \\
&= 1. 
\end{split} \] 
However, this contradicts Theorem \ref{th:comparison}. 
Hence, there exists $l\in \cC_{10} = \widetilde\cC_{10}\setminus\{20\}$ such that 
$\Pi(S, \Phi_l)\neq \emptyset$. This completes the proof. 
\end{proof}

\begin{proposition}\label{prop:d=10or18_suff}
Suppose that $d = 10$ or $18$. 
If there exists $l\in \cC_{d}$ such that $\Pi(S, \Phi_l)\neq \emptyset$ then 
$\lambda$ is nonprojectively realizable. 
\end{proposition}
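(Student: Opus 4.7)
The plan is to construct a complemented Salem polynomial $F = SC$ with $\Phi_l \mid C$, to build an auxiliary index map $\fj \in \Idx(11, 11; F)$ whose obstruction vanishes, and to transfer this to $\ob_{\fraci_\delta} = 0$ via Theorem \ref{th:comparison}; nonprojective realizability then follows from Proposition \ref{prop:nonproj_realizable}. First I would reduce to the case where $S$ satisfies \eqref{eq:Square}: otherwise Theorem \ref{th:dleq18_notSq} already provides the conclusion. Assuming this, I would choose $C = \Phi_l \cdot D$ with $D$ a product of cyclotomic polynomials of degree $22 - d - \varphi(l)$, so that $C$ (and hence $F$, by Lemma \ref{lem:properties_for_Square}) satisfies \eqref{eq:Square} and the multiplicities $m_\pm$ of $X \mp 1$ in $F$ are each $0$ or at least $2$. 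The condition $l \in \cC_d$ is exactly what enables this: when $\varphi(l) = 22 - d$ one sets $C = \Phi_l$ (using the defining property of $\widetilde\cC_d$), and when $\varphi(l) < 22 - d$ one can insert copies of $(X-1)(X+1)$ into $D$, which makes $F(\pm 1) = 0$ and renders \eqref{eq:Square} automatic.

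Next I set $\fj := \fj_S \oplus \fj_C$, where $\fj_S \in \Idx(d/2, d/2; S)$ is arbitrary (realized on $\Lambda_{d/2, d/2}$ by Theorem \ref{th:weakestER}, since $S$ is irreducible) and $\fj_C \in \Idx((22-d)/2, (22-d)/2; C)$ is produced by Theorem \ref{th:chpl_on_index0_intro} with $\fj_C(X \mp 1) \equiv \fraci_\delta(X \mp 1) \bmod 4$, realized on $\Lambda_{(22-d)/2, (22-d)/2}$. Then $\Lambda_{11,11}$ admits a semisimple $(F, \fj)$-isometry, so $\ob_\fj = 0$ by Theorem \ref{th:obstruction}. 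Since $\fj$ and $\fraci_\delta$ agree mod $4$ on $X \mp 1$, the equivalence relations they induce on $I(F; \bQ)$ coincide, and Theorem \ref{th:comparison} reduces the task to showing that $\mu := \eta_\infty(\fraci_\delta) - \eta_\infty(\fj)$ annihilates $\Omega_{\fraci_\delta}$.

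A direct computation from \eqref{eq:eta_infty} yields $\mu(S) = 1$ in both $d = 10$ and $d = 18$ (since $\fj_S$ necessarily has index-sum $0$ on the $\bR$-factors of $S$ while $\fraci_\delta$ contributes $6 - d$, so the relevant integer is $d - 3 \equiv 3 \bmod 4$). Applying Proposition \ref{prop:even1s} to families $\beta_{\fraci_\delta} \in \cB_{\fraci_\delta}$ and $\beta_\fj \in \cB_\fj$ that agree at every finite place yields $\sum_{h \in I(F;\bQ)} \mu(h) \equiv 0 \bmod 2$. Since $\Pi(S, \Phi_l) \neq \emptyset$ forces $S \sim \Phi_l$, every $c \in \Omega_{\fraci_\delta}$ satisfies $c(S) = c(\Phi_l)$. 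The remaining step is to choose $\fj_C$ so that the restriction of $\mu$ to each $\sim$-equivalence class other than the one containing $\{S, \Phi_l\}$ has even support weight; combined with the total-weight parity, this forces $\mu \cdot c = 0$ for every $c \in \Omega_{\fraci_\delta}$.

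The principal obstacle will be this final balancing. The main tool is Lemma \ref{lem:4shift}, which allows shifting $\fj_C$ by $\pm 4$ between pairs of $\bR$-factors of $C$, toggling $\eta_\infty(\fj_C)$ on the corresponding $\bQ$-factors in pairs. The boundary case $\varphi(l) = 22 - d$ is immediate because $I(F; \bQ) = \{S, \Phi_l\}$ and the total-weight constraint then forces $\mu(\Phi_l) = 1$ directly, giving $\mu \cdot c = 2\, c(S) = 0$; in all other cases the abundance of cyclotomic factors (in particular the inserted copies of $X \mp 1$) provides enough admissible shifts to align $\mu$ with the equivalence-class structure through a short case analysis in $l$, where the special cases $l \in \{1, 2\}$ with $\Phi_l$ of type $0$ are treated by the same device as in Theorem \ref{th:dleq18_notSq}.
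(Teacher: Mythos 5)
Your setup (reduce to $S$ satisfying \eqref{eq:Square}, build $\fj=\fj_S\oplus\fj_C$ realized on $\Lambda_{11,11}$, and invoke Theorem \ref{th:comparison}) is coherent, and your computations $\mu(S)=1$ and $\mu\cdot\bm1_I=0$ are correct. But the final ``balancing'' step, which you yourself flag as the principal obstacle, contains a genuine gap, and it is not a gap that a short case analysis can close from where you stand. The quantity $\mu \bmod C_0$ is an invariant of $(F,\fraci_\delta)$ alone: Theorem \ref{th:chpl_on_index0_intro} gives you no control over $\fj_C$ away from $X\pm1$, and the adjustments supplied by Lemma \ref{lem:4shift} cannot help --- if the two shifted factors lie in one equivalence class, $\mu$ changes by an element of $C_0$ and nothing is gained; if they lie in different classes, the shifted index map $\fj'$ has $\ob_{\fj'}\neq 0$ and can no longer serve as the comparison point. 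So you cannot ``choose $\fj_C$ so that $\mu$ has even weight on each class''; either it does for your $F$ or it does not, and your recipe for $F$ (insert copies of $(X-1)(X+1)$ into $D$) does not guarantee it. Concretely, for $d=10$ and $l=12$ your recipe suggests $F=S\Phi_{12}(X-1)^4(X+1)^4$; since $\Pi(\Phi_{12},\Phi_1)=\Pi(\Phi_{12},\Phi_2)=\emptyset$ by Theorem \ref{th:Pi_cyclos}, the classes $\{S,\Phi_{12}\}$ and $\{X-1,X+1\}$ can be distinct, and then $\mu\in C_0$ forces $\mu(\Phi_{12})=1$, a condition on the (unknown) realized value $\fj_C(\Phi_{12})\in\{-4,0,4\}$ that you cannot enforce. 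For such an $F$ the obstruction for $(F,\fraci_\delta)$ may simply fail to vanish; Proposition \ref{prop:nonproj_realizable} only requires \emph{some} complemented Salem polynomial to work, so the burden is on choosing $F$ correctly, not on massaging $\fj$.

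The paper avoids all of this by choosing $C$, for each $l$, so that the equivalence relation defined by $(F,\fraci_\delta)$ on $I(F;\bQ)$ is \emph{weakest} (e.g.\ $F=S\Phi_{12}^{3}$ rather than $S\Phi_{12}(X-1)^4(X+1)^4$ for $d=10$, $l=12$; $F=S\Phi_l\Phi_3^2$ for $l=15,24$; $F=S\Phi_{11}(X-1)^2$ for $l=11$; etc.), verifying weakestness with Theorem \ref{th:Pi_cyclos} and Proposition \ref{prop:nonSquare}. Then $\widetilde\Omega_{\fraci_\delta}$ is trivial and Theorem \ref{th:weakestER} gives the conclusion at once --- no comparison, no computation of $\eta_\infty$, no balancing. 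Note that when the relation is weakest your argument degenerates to the paper's anyway (every even-weight $\mu$ lies in $C_0$), which is why your boundary case $\varphi(l)=22-d$ goes through; the non-boundary cases are exactly where your method needs the weakestness you have not arranged. To repair your proof, replace the generic $D$ by the paper's explicit choices and then discard the comparison step entirely.
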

\begin{proof}
If $S$ does not satisfy \eqref{eq:Square} then $\lambda$ is nonprojectively realizable
by Theorem \ref{th:dleq18_notSq}. So we assume that $S$ satisfies \eqref{eq:Square}. 
Suppose that there exists $l\in \cC_d$ such that $\Pi(S, \Phi_l)\neq \emptyset$. 

\smallskip
\noindent
\textit{Case $d = 18$.} We define a complemented Salem polynomial $F$ as 
\[ F(X) = \begin{cases}
S(X)\Phi_l(X)^4 & \text{if $l = 1,2$} \\
S(X)\Phi_l(X)^2 & \text{if $l = 3,4,6$} \\
S(X)\Phi_l(X) & \text{if $l = 12$}. 
\end{cases}
\]
Then $F$ satisfies \eqref{eq:Square}, and 
$\Pi_{\fraci_\delta}^F(S, \Phi_l) = \Pi(S, \Phi_l) \neq \emptyset$. Thus, 
the equivalence relation on $I(F;\bQ) = \{S, \Phi_l\}$ defined by $(F, \fraci_\delta)$ 
is weakest, and hence $\lambda$ is nonprojectively realizable by 
Theorem \ref{th:weakestER} and Proposition \ref{prop:nonproj_realizable}.

\smallskip
\noindent
\textit{Case $d = 10$.} We define a complemented Salem polynomial $F$ as 
\[ F(X) = \begin{cases}
S(X)\Phi_l(X)^{12/\varphi(l)} & \text{if $\varphi(l) = 1,2,6,12$} \\
S(X)\Phi_l(X)(X-1)^4(X+1)^4 & \text{if $\phi(l) = 4$ but $l\neq 12$, i.e., $l=5,8,10$} \\
S(X)\Phi_{12}(X)^3& \text{if $l= 12$} \\
S(X)\Phi_l(X)\Phi_3(X)^2 & \text{if $l = 15, 24$} \\ 
S(X)\Phi_{16}(X)(X-1)^4 & \text{if $l = 16$} \\ 
S(X)\Phi_{30}(X)\Phi_6(X) & \text{if $l = 30$} \\ 
S(X)\Phi_{11}(X)(X-1)^2 & \text{if $l = 11$} \\ 
S(X)\Phi_{22}(X)(X+1)^2 & \text{if $l = 22$}. 
\end{cases}.
\]
Then $F$ satisfies \eqref{eq:Square}, and it can be checked that 
the equivalence relation on $I(F;\bQ)$ defined by $(F, \fraci_\delta)$ 
is weakest by using Theorem \ref{th:Pi_cyclos}. 
Hence, we conclude that $\lambda$ is nonprojectively realizable by 
Theorem \ref{th:weakestER} and Proposition \ref{prop:nonproj_realizable}. 
\end{proof}

\begin{proof}[Proof of Theorem \textup{\ref{th:NPR}}]
This is a consequence of Propositions 
\ref{prop:d=10or18_ness} and \ref{prop:d=10or18_suff}. 
\end{proof}

As mentioned in Introduction, 
Bayer-Fluckiger gives an example of a Salem number of degree $18$ that is not 
realizable as the dynamical degree of an automorphism of a K3 surface, 
projective or not, see \cite[\S 26]{Ba22}.  
According to Theorem \ref{th:NPR}, it seems that any Salem number of degree $10$ 
is nonprojectively realizable, but the author has no proof. 

\begin{remark}\label{otherdegrees}
For the other degrees, the following results are known.
Let $\lambda$ be a Salem number of degree $d$ with $4\leq d \leq 22$, and 
$S$ its minimal polynomial. 
\begin{enumerate}
\item Suppose that $d = 22$. Then $\lambda$ is nonprojectively realizable
if and only if $|S(1)|$ and $|S(-1)|$ are squares. 
\item Suppose that $d = 4,6,8,12,14,16$, or $20$. Then $\lambda$ is nonprojectively 
realizable. 
\end{enumerate}
The proof of (i) was given by Bayer-Fluckiger and Taelman in \cite{BT20} for the 
first time. The assertion (ii) was proved by Bayer-Fluckiger \cite{Ba21, Ba22} 
for $d = 4,6,8,12,14,16$, and by the author \cite{Ta23} for $d = 20$. 

As for projective realizability, there is a (computer-aided) criterion, 
see \cite{Mc16} and \cite{BG18}. 
However, a characterization of projectively realizable Salem numbers has not yet 
been obtained. There is a result of another type by S. Brandhorst, see \cite{Br20}. 
\end{remark}


\begin{thebibliography}{99}
\bibitem[Ap70]{Ap70}
T.M.~Apostol,
\textit{Resultants of cyclotomic polynomials.} 
Proc. Amer. Math. Soc. \textbf{24} (1970), 457--462.



\bibitem[BT20]{BT20}
E.~Bayer-Fluckiger and L.~Taelman, 
\textit{Automorphisms of even unimodular lattices and equivariant Witt groups.}
J. Eur. Math. Soc. \textbf{22} (2020), 3467--3490.

\bibitem[Ba15]{Ba15}
E.~Bayer-Fluckiger, 
\textit{Isometries of quadratic spaces.}
J. Eur. Math. Soc. (JEMS) \textbf{17} (2015), no. 7, 1629--1656.

\bibitem[Ba20]{Ba20}
E.~Bayer-Fluckiger, 
\textit{Isometries of lattices and Hasse principles.}
arXiv:2001.07094.

\bibitem[Ba21]{Ba21}
E.~Bayer-Fluckiger, 
\textit{Isometries of lattices and automorphisms of K3 surfaces.} 
arXiv:2107.07583.

\bibitem[Ba22]{Ba22}
E.~Bayer-Fluckiger, 
\textit{Automorphisms of K3 surfaces, signatures, and isometries of lattices.}
arXiv:2209.06698. 

\bibitem[BG18]{BG18}
S.~Brandhorst and V.~González-Alonso,
\textit{Automorphisms of minimal entropy on supersingular K3 surfaces.}
J. Lond. Math. Soc. (2) \textbf{97} (2018), no.2, 282--305.

\bibitem[Br20]{Br20}
S.~Brandhorst, 
\textit{On the stable dynamical spectrum of complex surfaces.} 
Math. Ann. \textbf{377} (2020), no. 1-2, 421--434.


\bibitem[GM02]{GM02}
B.H.~Gross and C.T.~McMullen,
\textit{Automorphisms of even unimodular lattices and unramified Salem numbers.}
J. Algebra \textbf{257} (2002), no. 2, 265--290.



\bibitem[Mc16]{Mc16}
C.T.~McMullen,  
\textit{Automorphisms of projective K3 surfaces with minimum entropy.}
Invent. Math. \textbf{203} (2016), no. 1, 179--215.

\bibitem[Mi69]{Mi69}
J.~Milnor,
\textit{On isometries of inner product spaces.}
Invent. Math. \textbf{8} (1969), 83--97.


\bibitem[Ne99]{Ne99}
J.~Neukirch, 
{Algebraic Number Theory.} 
Springer-Verlag, Berlin, 1999. 








\bibitem[Se73]{Se73}
J.P.~Serre,
{A Course in Arithmetic.}
Springer Sience+Business Media New York, 1973.



\bibitem[Ta23]{Ta23}
Y. Takada, 
\textit{Lattice isometries and K3 surface automorphisms: Salem numbers of degree 20.}
J. Number Theory \textbf{252} (2023), 195--242.


\end{thebibliography}
\end{document}